\newcommand\bes{\begin{eqnarray}}
\newcommand\ees{\end{eqnarray}}
\newtheorem{theorem}{Theorem}[section]
\newtheorem{lemma}[theorem]{Lemma}
\newtheorem{corollary}[theorem]{Corollary}
\newtheorem{definition}[theorem]{Definition}
\newtheorem{remark}[theorem]{Remark}
\numberwithin{equation}{section}
\begin{document}

\title[Reaction-diffusion-advection systems in periodic media]{Uniqueness and stability of monostable pulsating fronts for multi-dimensional reaction-diffusion-advection systems in periodic media}

\author[Du, Li and Xin]{Li-Jun Du$^{1}$, Wan-Tong Li$^{2,*}$ and Ming-Zhen Xin$^{2}$}
\thanks{\hspace{-.6cm}
$^1$ School of Science, Chang'an University, Xi'an, Shaanxi 710064, P. R. China.}
\thanks{\hspace{-.6cm}
$^2$ School of Mathematics and Statistics, Lanzhou University, Lanzhou, Gansu 730000, P. R. China.}
\thanks{\hspace{-.6cm}
$^*$ {Corresponding author} (wtli@lzu.edu.cn)}

\date{\today}
\maketitle

\begin{abstract}

In this paper, we consider the phenomenon of monostable pulsating fronts for multi-dimensional reaction-diffusion-advection systems in periodic media. Recent results have addressed the existence of pulsating fronts and the linear determinacy of spreading speed  (Du, Li and Shen, \textit{J. Funct. Anal.} \textbf{282} (2022) 109415). In the present paper, we investigate the uniqueness and stability of monostable pulsating fronts with nonzero speed. We first derive precise asymptotic behaviors of these fronts as they approach the unstable limiting state. Utilizing these properties, we then prove the uniqueness modulo translation of pulsating fronts with nonzero speed. Furthermore, we show that these pulsating fronts are globally asymptotically stable for solutions of the Cauchy problem with front-like initial data. In particular,  we establish the uniqueness and global stability of the critical pulsating front in such systems. These results  are subsequently applied to a two-species competition system.

\noindent\textbf{Keywords:} Cooperative system; Uniqueness; Asymptotic stability; Critical pulsating traveling front; Competition system.

\noindent\textbf{AMS Subject Classification (2000)}: 35K57; 37C65; 92D30.
\end{abstract}

\tableofcontents

\section{Introduction}
\noindent

Different species inhabited in a common environment may cooperate or compete for living. Due to the presence of heterogeneities in natural environments, the spatial dynamics of reaction-diffusion systems in heterogeneous media is gaining more and more attention. The evolution of multiple components is often described by following reaction-diffusion-advection systems
\begin{equation}\label{u1-um}
\begin{cases}
\frac{\partial u_i(t,x)}{\partial t}=d_i(t,x)\Delta u_i+q_i(t,x)\cdot\nabla u_i
+f_i(t,x,u_1,u_2,\cdots,u_m),\quad \ \ x\in\mathbb R^N,\\
i=1,2,\cdots,m,
\end{cases}
\end{equation}
where $(u_1,u_2,\cdots,u_m)\in\mathbb R^m$, $m\ge 2$ and $N\ge 1$. In the biological context, $u_i$ may refer to population destinies of $m$ cooperative species under the settings $\partial f_i/\partial u_j\ge 0$ for $i, j=1,2,\cdots,m$, $i\not=j$. Among central dynamical issues of reaction-diffusion systems are propagation phenomena due to their widespread applications in biology, epidemiology, physics and chemistry, and a large number of researches have been carried out toward spreading speeds and monostable traveling wave solutions of some special kinds of multi-component system \eqref{u1-um}. For example, one can see \cite{Sattinger1976,Gardner1982,dunbar1983,kan1995,kan1996,Roquejoffre1996,lewis2002,LiweiLe2005, LiLinRuan2006,LiangZhao2007,huang2010,Liang2010,Faye2020} for the study of propagation phenomena in homogeneous media, \cite{zhao2011,zhao2014,bao2013,Yu2017,duJDE2019,DuBao2021,LiuOu2021,WangOu2021} for the study of reaction-diffusion systems with two components, \cite{LiangYiZhao2006,fang2014,Weinberger2002} for some abstract results in time or space periodic media, and \cite{Nolen2005,Nadin2009,fang2017,DuLiShen2022} for the study of propagation in
time-space periodic media. Recently, Du et al. \cite{DuLiShen2022} established some abstract results on monotone semiflows which can be used to study spreading speeds and periodic traveling waves of system \eqref{u1-um} with $m\ge 1$ in time-space periodic media.

However, the study on the uniqueness of traveling wave solutions and the convergence of the profile of solutions of the Cauchy problem to that of traveling wave solutions in heterogeneous media is much less known in literature. For scalar reaction-diffusion equations, Hamel and Roques \cite{hamel2011} proved the uniqueness and global stability of pulsating traveling fronts in spatially periodic media by using some qualitative properties of pulsating traveling fronts in periodic media  established in \cite{hamel2008}. Very recently, Guo \cite{guohj2023} proved some qualitative properties of pushed fronts for periodic reaction-diffusion-advection equations with general monostable nonlinearities. Shen \cite{Shen2011} investigated the existence, uniqueness and stability of generalized traveling solutions in time dependent equations, and further proved the stability of transition waves of Fisher-KPP equations with general
time and space dependence in \cite{Shen2017}. As long as the multi-component systems are concerned, the issues become more subtle and not much is known in the general case. In the time periodic media, Zhao and Ruan \cite{zhao2011,zhao2014} studied the existence, uniqueness and asymptotic stability of time periodic traveling waves for two-component reaction-diffusion competitive and cooperative systems.

To the best of our knowledge, there is no work on the uniqueness and stability of monostable traveling wave solutions of \eqref{u1-um} for $m\ge 2$ with space periodic and time independent coefficients, that is, concerning the following system
\begin{equation}\label{u1-um-system}
\begin{cases}
\frac{\partial u_i(t,x)}{\partial t}=d_i(x)\Delta u_i+q_i(x)\cdot\nabla u_i
+f_i(x,u_1,u_2,\cdots,u_m),\quad \ \ x\in\mathbb R^N, \\
i=1,2,\cdots,m,
\end{cases}
\end{equation}
where $\Delta:=\sum_{i=1}^{N}\frac{\partial^2}{\partial x_i^2}$, $\nabla:=(\frac{\partial}{\partial x_1},\cdots,\frac{\partial}{\partial x_N})$, $q_i=(q_{i1},\cdots,q_{iN})$, $d_i$, $q_{ik}\in C^{\nu}(\mathbb R^N)$ for some $\nu\in(0,1)$, $d_i(\cdot)\ge d_0>0$, $f_i(x,u_1,u_2,\cdots,u_m)$ are of class $C^{\nu}(\mathbb R^N)$
with respect to $x$ locally uniformly in $(u_1,u_2,\cdots,u_m)\in\mathbb R^m$, and of class $C^{2}(\mathbb R^m)$ with respect to $u_i$ locally uniformly in $x\in\mathbb R^N$, and $\partial f_i/\partial u_j\ge 0$ for $i,j=1,\cdots,m$, $i\not =j$. Moreover, the system is assumed to be $L$-periodic with respect to $L=(L_1,L_2,\cdots,L_N)$, in the sense that
$$d_i(x)=d_i(x+p), \quad q_i(x)=q_i(x+p), \quad f_i(x,u_1,u_2,\cdots,u_m)=f_i(x+p,u_1,u_2,\cdots,u_m)$$
for all $i=1,2,\cdots,m$, $x\in\mathbb R^N$ and $p\in\mathcal L$, where
$$\mathcal L:=\Pi_{i=1}^{N}L_i\mathbb Z,$$
and $L_1,\cdots,L_N$ are given positive real numbers, with the periodicity cell defined by
$$\mathcal D=\left\{x\in\mathbb R^N: x\in(0,L_1)\times\cdots\times(0,L_N)\right\}.$$

The objective of the current paper, as the follow-up of the paper \cite{DuLiShen2022} on propagation phenomena for periodic monotone semiflows and applications to cooperative systems in multi-dimensional media, is to further investigate the uniqueness and stability of pulsating traveling fronts (see Definition \ref{def}) of system \eqref{u1-um-system}. Noting that the evolution of two competitive species in the whole space is often described by the following reaction-diffusion-advection competition system
\begin{equation}\label{competition-sys}
\begin{cases}
\frac{\partial u_1(t,x)}{\partial t}=d_1(x)\Delta u_1+a_1(x)\cdot\nabla u_1
+u_1\left(b_1(x)-a_{11}(x)u_1-a_{12}(x)u_2\right),\\
\frac{\partial u_2(t,x)}{\partial t}=d_2(x)\Delta u_2+a_2(x)\cdot\nabla u_2
+u_2\left(b_2(x)-a_{21}(x)u_1-a_{22}(x)u_2\right),
\end{cases}
x\in\mathbb{R}^N,
\end{equation}
where $d_i$, $a_i$, $b_i$, $a_{ij}\in C^{\frac{\nu}{2},\nu}(\mathbb{R}^N)\;(\nu\in (0,1))$ are $L$-periodic functions, and $d_i(\cdot)\ge d_0>0$, $i,j=1,2$.
Under certain assumptions and by a change of variables, the competition system \eqref{competition-sys} can be transformed into a cooperative system
in the form \eqref{u1-um-system}. As an application, the uniqueness and stability of traveling wave fronts of \eqref{competition-sys} are discussed in this work.

As mentioned above, the study of uniqueness and stability of pulsating traveling waves become much more subtle in the general case. In \cite{hamel2011}, Hamel and Roques proved the asymptotic stability for solutions of the Cauchy problem with front-like initial data for spatially periodic scalar equations with general monostable nonlinearities, by using the result of exponential decay of traveling fronts in \cite{hamel2008}.  Later, Zhao and Ruan \cite{zhao2011,zhao2014} proved the asymptotic stability of time periodic traveling waves for two-component reaction-diffusion systems. Nevertheless, all these mentioned issues have been left open so far for multi-component systems with space dependence. Motivated by \cite{hamel2008,hamel2011,zhao2011,zhao2014}, this work aims to study the uniqueness and global stability of pulsating traveling fronts with nonzero speed for a general reaction-diffusion-advection cooperative systems \eqref{u1-um-system} in periodic media.

Firstly, we present some results concerning the existence and monotonicity in the co-moving frame coordinate of monostable pulsating traveling fronts, and give a set of sufficient conditions for the spreading speed to be linearly determinate. In fact, similar results were earlier established in \cite{DuBao2021,DuLiShen2022},
where some more general results on the existence and linear determinacy of the spreading speed in time-space periodic media were proved in \cite{DuLiShen2022},
and the results on the existence and monotonicity of pulsating traveling fronts for two-component cooperative systems in \cite{DuBao2021} can be extended to
the study of multi-component cooperative systems \eqref{u1-um-system}. In this part, we only state some main results and refer to \cite{DuBao2021,DuLiShen2022} for more details.

Secondly, we establish some exact asymptotic behavior properties of pulsating traveling fronts as they approach the unstable limiting state. These properties are not only of essential importance in deriving the uniqueness and stability of pulsating traveling fronts, but also play a key role in constructing some front-like entire solutions (see, e.g., \cite{duJDE2019}). One of the main difficulties relies on the interaction between multiple components of the system, as compared with the case of scalar equations, and hence some priori estimates of different components need to be established. In particular, we investigate the exact asymptotic behavior of the critical pulsating traveling front.

Thirdly, we prove the uniqueness of pulsating traveling fronts with any given speed. The general strategy is based on the sliding method, and the main difficulty comes to compare two given traveling wave fronts globally in $(x,s)\in\mathbb R^N\times\mathbb R$, especially in the region where they approach $\bm 0$ as $s\to-\infty$, and in particular, one needs to obtain a unified estimate for multiple components of the system, which is not present in the case of scalar equations.

Finally, we prove the global stability for solutions of the Cauchy problem with front-like initial data. The initial data is assumed to be close to the pulsating traveling front at $t=0$ at both ends, and it is proved that solutions of the Cauchy problem with such initial conditions converge to pulsating traveling fronts with a shift in time at large times. The general strategy of the proof is to trap the solution of the Cauchy problem with front-like initial data between appropriate sub- and supersolutions which are close to some shifts of the pulsating traveling front, and then to show that the shifts can be chosen small enough as $t\to\infty$. One of the main difficulties relies on the fact that the critical pulsating traveling front is not decaying as a purely exponential function but which multiplied with a polynomial factor $|s|$, and one must take this fact into account in constructing appropriate sub- and supersolutions in the critical case.

To give some precise observation of the main results, we consider the two-species competition system \eqref{competition-sys}. By introducing some specified assumptions, we shall show that \eqref{competition-sys} admits pulsating traveling fronts if and only if $c\ge c_+^0(e)$, where $c_+^0(e)$ is explicitly given by the eigenvalues of the periodic linearized problem. Furthermore, the pulsating traveling front with any given nonzero speed is unique modulo translation, and it is globally stable for solutions of the Cauchy problem with front-like initial data.

We would like to mention here that, though the general strategy of the current paper is motivated by  \cite{hamel2008,hamel2011,zhao2011,zhao2014}, our techniques and arguments become much more involved and complicated, and one needs to be more careful in dealing with system \eqref{u1-um-system}
due to the space dependence of the coefficients and the general coupling between different components in multi-component systems which becomes a nontrivial work. We also mention here that the critical pulsating traveling front presents a completely different asymptotic behavior at infinity which requires a different treatment comparing with the non-critical one. It seems to be the first time that the uniqueness and stability of general multi-component systems in periodic media is studied.

\subsection{Basic notations and assumptions}

In this subsection, we give some basic notations and assumptions of this paper.

Let
$$I=\{1,2,\cdots,m\}.$$
Denote
$$\mathbb R^m=\{\bm u=(u_1,u_2,\cdots,u_m):\ u_i\in\mathbb R,\ \forall\; i\in I\},\quad m\ge2,$$
where we equip $\mathbb R^m$ with the norm
$$|\bm u|:=\sum_{i=1}^m|u_i|,\quad \forall\;\bm u\in\mathbb R^m.$$
Usual notations for partial order in the space of functions in
$\mathbb R^m$ are used here, that is, for any $\bm{u}=(u_1,u_2,\cdots,u_m)$, $\bm{v}=(v_1,v_2,\cdots,v_m)$ and $c_1,c_2\in\mathbb R$,
$c_1\bm u\pm c_2\bm v=(c_1u_1\pm c_2v_1,c_1u_2\pm c_2v_2,\cdots,c_1u_m\pm c_2v_m)$,
the relation $\bm{u}\le\bm{v}$ (resp.  $\bm{u}\ll\bm{v}$)
is to be understood as $u_i\le v_i$ (resp. $u_i< v_i$)
for each $i$, and $\bm{u}<\bm{v}$ is to be understand $\bm{u}\leq \bm{v}$ but $\bm{u}\not\equiv \bm {v}$.
The other relations, such as ``max", ``min", ``sup" and ``inf", are similarly to be understood componentwise. In particular, denote $$\bm{0}=(0,\cdots,0),\qquad \bm{1}=(1,\cdots,1),\qquad [\bm{0},\bm{1}]=\{\bm{u}:\ \bm{0}\le\bm{u}\le\bm{1}\}.$$

In the following, we always use the vector-valued function $$\bm{u}(t,x)=(u_1(t,x),u_2(t,x),\cdots,u_m(t,x))$$
to denote the densities of $m$ species, and rewrite system \eqref{u1-um-system} as
\begin{equation}\label{m-system}
\frac{\partial\bm{u}(t,x)}{\partial t}=D(x)\Delta\bm{u}+q(x)\cdot\nabla\bm{u}
+\bm{F}(x,\bm{u}),\quad x\in\mathbb R^N,
\end{equation}
where $D(x)={\rm diag}\{d_i(x)\}_{i\in I}$, $q(x)={\rm diag}\{q_i(x)\}_{i\in I}$  with $q_i(x)=(q_{i1}(x),q_{i2}(x),\cdots,q_{iN}(x))$, and $$\bm{F}(x,\bm{u})=(f_1(x,\bm{u}),f_2(x,\bm{u}),\cdots,f_m(x,\bm{u})).$$

Let $X_p$ be the set of all continuous and $L$-periodic functions from $\mathbb R^N$ to $\mathbb R^m$ with the norm
$$|\bm{w}|_{p}:=\max_{x\in\mathbb R^N}|\bm{w}(x)|,\quad\forall\; \bm w\in X_p,$$
and $X_p^+:=\{\bm{w}\in X_p:\; \bm{w}(x)\ge\bm{0},\ \forall\; x\in\mathbb R^N\}$. For system \eqref{m-system}, we always assume that it admits two periodic solutions $\bm{p}^-(x)\ll\bm{p}^+(x)$ in $X_p$, and consider its propagation between $\bm{p}^-$ and $\bm{p}^+$. Noting that, without loss of generality, one can always assume that $\bm{p}^-=\bm{0}$ and $\bm{p}^+=\bm{1}$. In fact, by a change of variables
$$\tilde{\bm{u}}(t,x)=\frac{\bm{u}(t,x)-\bm{p}^-(x)}{\bm{p}^+(x)-\bm{p}^-(x)},$$
$\bm{0}$ and $\bm{1}$ can always be referred to as two periodic solutions of system \eqref{m-system}.
Let $E$ be the set of all periodic solutions of system \eqref{m-system} between
$\bm 0$ and $\bm 1$, that is,
\begin{align*}
E=\left\{\bm{\nu}\in X_p^+: \ \bm{0}\le\bm{\nu}\le\bm{1},\
\ \bm{\nu}(x)\ \text{is a periodic solution of \eqref{m-system}}\right\}.
\end{align*}
For any $\bm{\nu}=(\nu_1,\nu_2,\cdots,\nu_m)\in E\setminus\{\bm{1}\}$ and
$h_i\in C^{\nu,1}(\mathbb R^N\times\mathbb R^m)$, $i\in I$, denote
\begin{align*}
h_1^{\bm{\nu}}(x,u)&=h_1(x,u,\nu_2,\cdots,\nu_m),\\
h_i^{\bm{\nu}}(x,u)&=h_i(x,u_1,\cdots,u_{i-1},u,u_{i+1},\cdots,u_m),\quad i=2,3,\cdots,m.
\end{align*}

Assume that $d$, $q$ and $b$ are $L$-periodic functions in $C^{\nu}(\mathbb R^N)$,
and $d(x)\ge d_0>0$~for any $x\in\mathbb{R}^N$. By \cite[Proposition 1.12]{Berestycki2008},
the periodic eigenvalue problem
\begin{equation}\label{T-L-PE}
\begin{cases}
\lambda_0\phi=d(x)\Delta\phi+q(x)\cdot\nabla\phi+b(x)\phi, \quad x\in\mathbb R^N,\\
\phi(x)=\phi(x+p),\quad\forall\; p\in\mathcal{L}, \quad x\in\mathbb R^N
\end{cases}
\end{equation}
admits a principal eigenvalue $\lambda_0=\lambda_0(d,q,b)$ associated with
a periodic eigenfunction $\phi(x)>0$ for any $x\in\mathbb{R}^N$.

We make the following standing assumptions on \eqref{m-system}.

\begin{description}
\item[(H1)] $f_1(x,\bm{u})=u_1h_1(x,\bm{u})$ and
$f_i(x,\bm{u})=\sum_{j=1}^{i-1}a_{ij}(x)u_j+u_ih_i(x,\bm{u})$ for each $i\ge2$,
where $h_i\in C^{\nu,2}(\mathbb R^N\times\mathbb R^m)$ and $a_{ij}\in C^{\nu}(\mathbb R^N)$
are periodic in $x$. Moreover, for each $i\ge2$,
$h_i(x,\bm 0)<0$ and $a_{ij}(x)\ge0$ for any $x\in\mathbb R^N$,
and there exists $j\le i-1$ such that $a_{ij}(x)>0$ for any $x\in\mathbb R^N$.

\item[(H2)] $\bm F(x,\bm{1})\equiv\bm 0$, and
$h_1^{\bm{\nu}}(x,u)<h_1^{\bm{\nu}}(x,0)$ for any $u\in(0,1)$
and $\bm{\nu}\in E\setminus\{\bm{1}\}$.
\item[(H3)]
$\partial f_i(x,\bm{u})/\partial u_j\ge0$ for all $(x,\bm{u})\in\mathbb R^N\times[\bm 0,\bm 1]$,
where $i\not=j$, $i,j=1,\cdots,m$,
that is, $\bm F(x,\bm u)$ is cooperative in $\mathbb R^N\times[\bm 0,\bm 1]$.

\item[(H4)]
$\lambda_0(d_1,q_1,\zeta^1)>0$, where $\zeta^1(x):=h_1(x,\bm 0)$.

\item[(H5)] For any $\bm{u}_0\in X_p^+$ with $\bm{0}\ll\bm{u}_0\le\bm{1}$,
$$\lim_{t\to+\infty}|\bm{u}(t,x;\bm{u}_0)-\bm{1}|=0\quad\text{uniformly in}\ x\in\mathbb R^N,$$
where $\bm{u}(t,x;\bm{u}_0)$ is the solution of system \eqref{m-system} with
$\bm{u}(0,\cdot;\bm{u}_0)=\bm{u}_0$.
\end{description}

\begin{remark}\rm
(i) In view of (H1), it is easy to see that $\bm F(x,\bm{0})\equiv\bm 0$.

(ii) By (H1) and (H4), the Jacobian matrix $D_{\bm{u}}\bm{F}(\cdot,\bm{0})$
of $\bm F$ at $\bm 0$ admits a principal eigenvalue
$\lambda_0(d_1,q_1,\zeta^1)>0$ associated with a positive periodic eigenfunction,
and hence $\bm{0}$ is an unstable (invadable) periodic solution.

(iii) Noting that system \eqref{m-system} may admit boundary periodic solutions
between $\bm 0$ and $\bm 1$.

(iv) By (H2) and (H5), the periodic solution $\bm 1$ is globally stable
with respect to initial values in $X_p^+$. Moreover,
if $\bm{\nu}=(\nu_1,\nu_2,\cdots,\nu_m)$ is a periodic solution of \eqref{m-system}
such that $\bm{\nu}\in E\setminus\{\bm{1}\}$, then $\nu_1\equiv0$ (see, e.g., \cite{DuLiShen2022}).
\end{remark}

Under the periodic framework, the usual notion of traveling wave solutions
which are invariant in the frame moving in the direction of $e\in\mathcal S^{N-1}$
needs to be extended to that of pulsating traveling fronts, the definition of which is given as follows.

\begin{definition}\label{def}
Given a unit vector $e\in\mathcal S^{N-1}$, a pulsating traveling
front of \eqref{m-system} propagating in the direction of $e$
is a time-global solution $\bm{u}\in C^{1,2}(\mathbb R\times\mathbb R^N,[\bm{0},\bm{1}])$,
which can be written as
\begin{equation}\label{PF}
\bm{u}(t,x)=U(x,ct-x\cdot e),\quad\forall\;(t,x)\in\mathbb R\times\mathbb R^N,
\end{equation}
where $U(x,s)=(U_1(x,s),U_2(x,s),\cdots,U_m(x,s))$ is periodic in $x$ and nondecreasing in $s$,
and $c\not=0$ is called the wave speed.
Furthermore, we say that $U$ connects $\bm{0}$ to $\bm{1}$, if
$$\mathop{\lim}\limits_{s\to-\infty}|U(x,s)|=0,\quad
\mathop{\lim}\limits_{s\to+\infty}|U(x,s)-\bm{1}|=0\quad\text{uniformly in}\ x\in\mathbb R^N.$$
\end{definition}

Let
\begin{align*}
\Omega_z^-:&=\{(t,x)\in\mathbb R\times\mathbb R^N:\ ct-x\cdot e\le z\},\\
\Omega_z^+:&=\{(t,x)\in\mathbb R\times\mathbb R^N:\ ct-x\cdot e\ge z\},\\
\Omega_z^\sigma:&=\{(t,x)\in\mathbb R\times\mathbb R^N: z\le ct-x\cdot e\le\sigma\},
\ \ \ \forall\;z\le\sigma.
\end{align*}
Notice that \eqref{PF} can be rewritten as
\begin{equation*}
\bm{u}\left(\frac{s+x\cdot e}{c},x\right)=U(x,s),\quad\forall\;(x,s)\in\mathbb R^N\times\mathbb R,
\end{equation*}
where
\begin{gather}\label{uv-P}
\bm{u}\left(t-\frac{p\cdot e}{c},x\right)=\bm{u}(t,x+p),
\quad\forall\;(t,x)\in\mathbb R\times\mathbb R^N,\quad \forall\; p\in\mathcal L,
\end{gather}
and
$$
\mathop{\lim}\limits_{\varsigma\to-\infty}\mathop{\sup}
\limits_{(t,x)\in\Omega_{\varsigma}^-}|\bm{u}(t,x)|=0,\qquad
\mathop{\lim}\limits_{\varsigma\to+\infty}\mathop{\sup}
\limits_{\Omega_{\varsigma}^+}|\bm{u}(t,x)-\bm{1}|=0.
$$

To study exact asymptotic behaviors of pulsating traveling fronts as they
approach the unstable periodic solution, we need to introduce a few more notations.

Assume that $d$, $q$ and $\eta$ are $L$-periodic functions in $C^{\nu}(\mathbb R^N)$,
and $d(x)\ge d_0>0$~for any $x\in\mathbb{R}^N$. For any $e\in\mathcal S^{N-1}$ and $\lambda\in\mathbb R$, let
$\kappa_{e}(d,q,\eta,\lambda)$ be the principal eigenvalue of the operator
\begin{equation}\label{PE}
L_{e}(d,q,\eta,\lambda)
:=d(x)\Delta+(q(x)-2\lambda d(x)e)\cdot\nabla+(d(x)\lambda^2-\lambda q(x)\cdot e+\eta(x))
\end{equation}
acting on
$$C_{per}^N:={\left\{\phi\in C^2(\mathbb R^N):\; \phi(x)\,\,
\text{is periodic in}\,\, x\right\}},$$
associated with a periodic eigenfunction $\phi(x)>0$  for any $x\in\mathbb{R}^N$
(see, e.g., \cite[Proposition 1.12]{Berestycki2008}).

Denote
\begin{equation}\label{kappa-i}
\kappa_i(\lambda,e):=\kappa_{e}(d_i,q_i,\zeta^i,\lambda),\quad i=1,2,\cdots,m,
\end{equation}
where
\begin{equation}\label{zeta-i}
\zeta^i(x):=h_i(x,\bm 0),\quad i=1,2,\cdots,m.
\end{equation}
Noting that the function $\lambda\mapsto\kappa_1(\lambda,e)$ is analytic and convex in $\mathbb R$
for any fixed $e$ (see, e.g., \cite[Lemma 3.1]{fang2017}). Moreover, $\kappa_1(0,e)=\lambda_0(d_1,q_1,\zeta^1)>0$ by (H4).
Let
\begin{equation*}
c_+^0(e):=\inf_{\lambda>0}\frac{\kappa_1(\lambda,e)}{\lambda},
\end{equation*}
then $c_+^0(e)$ is well defined for each $e$, and there exists $\lambda_+^0(e)>0$ such that
\begin{equation}\label{minimal-ss}
c_+^0(e)=\inf_{\lambda>0}\frac{\kappa_1(\lambda,e)}{\lambda}
=\frac{\kappa_1(\lambda_+^0(e),e)}{\lambda_+^0(e)}.
\end{equation}
Let
$$F_{c}=\{\lambda>0: \kappa_1(\lambda,e)-c\lambda=0\},\quad\forall\;c\ge c_+^0(e).$$
It is known that (see, e.g., \cite[Lemma 2.1]{hamel2008}) the positive real number
\begin{equation}\label{c1}
\lambda_c:=\min F_{c}
\end{equation}
is well defined, and in particular, $F_{c_+^0(e)}=\{\lambda_+^0(e)\}$.
Moreover, $0<\lambda_c\le\lambda_+^0(e)$ for any $c\ge c_+^0(e)$.

In the rest of the paper, we let $e\in\mathcal S^{N-1}$ be any given unit vector,
and use the notation
$$c_+^0=c_+^0(e),\quad\lambda_+^0=\lambda_+^0(e),
\quad\kappa_i(\lambda)=\kappa_i(\lambda,e),\ \ i=1,2,\cdots,m$$
without confusion of the dependence of $c_+^0(\cdot)$, $\lambda_+^0(\cdot)$
and $\kappa_i(\lambda,\cdot)$ on $e$.

Consider the following periodic eigenvalue problem
\begin{equation}\label{u1um-linear-PE}
\begin{cases}
\kappa\phi_1=d_1\Delta\phi_1+(q_1-2\lambda d_1e)\cdot\nabla\phi_1
+(d_1\lambda^2-\lambda q_1\cdot e+\zeta^1(x))\phi_1,\\
\kappa\phi_j=d_j\Delta\phi_j+(q_j-2\lambda d_je)\cdot\nabla\phi_j
+\sum\limits_{k=1}^{j-1}a_{jk}\phi_k+(d_j\lambda^2-\lambda q_j\cdot e+\zeta^j(x))\phi_j,
\ \ j=2,3,\cdots,m,\\
\phi_i(x)=\phi_i(x+p),\quad\forall\;x\in\mathbb R^N,\ \ i=1,2,\cdots,m,\ \ p\in\mathcal{L},
\end{cases}
\end{equation}
where $\zeta^i(x)$ is given by \eqref{zeta-i}, and $\lambda\in\mathbb R$ is a constant.

\begin{lemma}\label{P-EFunc}
Assume (H1)-(H5). If $\kappa_1(\lambda_+^0)>\max_{j=2,3,\cdots,m}\kappa_j(\lambda_+^0)$,
then for any $0\le\lambda\le\lambda_+^0$,
problem \eqref{u1um-linear-PE} admits a positive periodic eigenfunction
$\bm{\Phi}_{\lambda}(x)=(\phi_1^{\lambda}(x),\phi_2^{\lambda}(x),\cdots,\phi_m^{\lambda}(x))$
associated with the principal eigenvalue $\kappa=\kappa_1(\lambda)$.
\end{lemma}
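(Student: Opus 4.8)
The plan is to exploit the lower-triangular (cooperative, weakly coupled) structure of the eigenvalue system~\eqref{u1um-linear-PE}: once $\phi_1^\lambda$ is determined, each subsequent $\phi_j^\lambda$ solves a linear periodic equation in which the lower-index components appear only as a nonnegative forcing term $\sum_{k=1}^{j-1}a_{jk}\phi_k^\lambda$. So I would construct $\bm\Phi_\lambda$ componentwise by induction on $j$, and the only real work is to show each scalar periodic problem is solvable with a positive periodic solution; positivity will then follow automatically from the sign conditions in (H1).

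First I would set $\phi_1^\lambda$ to be the principal periodic eigenfunction of the operator $L_e(d_1,q_1,\zeta^1,\lambda)$, which by the very definition~\eqref{PE}--\eqref{kappa-i} exists, is positive and periodic, and corresponds to the eigenvalue $\kappa=\kappa_1(\lambda)$; this forces the target eigenvalue of the whole system to be $\kappa_1(\lambda)$, which is exactly the claimed value. Then, for $j=2,\dots,m$, assuming $\phi_1^\lambda,\dots,\phi_{j-1}^\lambda$ are already known to be positive and periodic, I would look for a positive periodic $\phi_j^\lambda$ solving
\begin{equation*}
\big(L_e(d_j,q_j,\zeta^j,\lambda)-\kappa_1(\lambda)\big)\phi_j^\lambda
=-\sum_{k=1}^{j-1}a_{jk}\phi_k^\lambda .
\end{equation*}
The operator on the left is $L_e(d_j,q_j,\zeta^j-\kappa_1(\lambda),\lambda)$, whose principal periodic eigenvalue is $\kappa_j(\lambda)-\kappa_1(\lambda)$. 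The hypothesis $\kappa_1(\lambda_+^0)>\max_{2\le j\le m}\kappa_j(\lambda_+^0)$ is what I would leverage here: I would argue that it propagates to all $0\le\lambda\le\lambda_+^0$, i.e. that $\kappa_1(\lambda)>\kappa_j(\lambda)$ for every such $\lambda$ and every $j\ge2$. For this I would use convexity/monotonicity properties of $\lambda\mapsto\kappa_i(\lambda)$ on $[0,\lambda_+^0]$ together with the normalization $\kappa_1(0)=\lambda_0(d_1,q_1,\zeta^1)>0$ and the defining minimality property~\eqref{minimal-ss} of $\lambda_+^0$ (so that $\kappa_1$ is decreasing, or at least no larger than its value at $\lambda_+^0$ relative to the $\kappa_j$'s) — this comparison at the two endpoints plus convexity gives the strict inequality throughout $[0,\lambda_+^0]$. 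Granting $\kappa_j(\lambda)-\kappa_1(\lambda)<0$, the principal eigenvalue of $L_e(d_j,q_j,\zeta^j-\kappa_1(\lambda),\lambda)$ is strictly negative, hence this operator is invertible on $C_{per}^N$ and, by the maximum principle for periodic operators with negative principal eigenvalue, its inverse is positivity preserving; applying it to the nonnegative (indeed, by (H1), not identically zero, since some $a_{jk}>0$ and $\phi_k^\lambda>0$) right-hand side $-\sum_k a_{jk}\phi_k^\lambda$ yields a positive periodic $\phi_j^\lambda$. This closes the induction and produces $\bm\Phi_\lambda\gg\bm 0$ solving~\eqref{u1um-linear-PE} with $\kappa=\kappa_1(\lambda)$.

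The main obstacle I anticipate is the endpoint-to-interior comparison $\kappa_1(\lambda)>\kappa_j(\lambda)$ for all $\lambda\in[0,\lambda_+^0]$: the lemma only \emph{assumes} this at $\lambda=\lambda_+^0$, and one must extend it. I would handle this by noting $\lambda\mapsto\kappa_1(\lambda)$ is analytic and convex with $\kappa_1(0)>0$ and, by~\eqref{minimal-ss}, $\lambda_+^0$ is the minimizer of $\kappa_1(\lambda)/\lambda$; I would show $\kappa_1$ is nonincreasing on $[0,\lambda_+^0]$ (otherwise the ratio $\kappa_1(\lambda)/\lambda$ could not be minimized at $\lambda_+^0$), while each $\kappa_j$ satisfies $\kappa_j(\lambda)\le\kappa_j(0)+{}$(a convex correction) in a way that keeps it below $\kappa_1$; combined with the given strict inequality at $\lambda_+^0$ and convexity, the strict inequality persists on the whole interval. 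A secondary, more routine point is to invoke carefully the characterization of solvability of $(\mathcal L-\kappa_{\mathrm{princ}})\phi=g$ in periodic Hölder spaces when the principal eigenvalue is negative — this is standard elliptic theory (e.g.\ via~\cite[Proposition 1.12]{Berestycki2008} and the associated maximum principle), so I would only cite it.
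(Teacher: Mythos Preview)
Your overall inductive strategy---fix $\phi_1^\lambda$ as the principal periodic eigenfunction for $\kappa_1(\lambda)$, then solve successively for $\phi_j^\lambda$ ($j\ge2$) using that the shifted operator $L_e(d_j,q_j,\zeta^j,\lambda)-\kappa_1(\lambda)$ has negative principal eigenvalue $\kappa_j(\lambda)-\kappa_1(\lambda)$ and hence a positivity-preserving inverse---is exactly the paper's approach (the paper phrases the solvability step by citing \cite[Proposition~4.2]{Yu2017}, but the content is the same).

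The gap is in your justification of the key spectral inequality $\kappa_1(\lambda)>\kappa_j(\lambda)$ on all of $[0,\lambda_+^0]$. Your claim that ``$\kappa_1$ is nonincreasing on $[0,\lambda_+^0]$ (otherwise the ratio $\kappa_1(\lambda)/\lambda$ could not be minimized at $\lambda_+^0$)'' is false: take $\kappa_1(\lambda)=1+\lambda^2$, which is strictly increasing on $(0,\infty)$, yet $\kappa_1(\lambda)/\lambda=1/\lambda+\lambda$ is minimized at $\lambda_+^0=1$. So this line of reasoning does not go through, and the vague ``convex correction'' remark for $\kappa_j$ does not repair it. The paper's argument uses a piece of (H1) you did not invoke: $h_j(x,\bm 0)<0$ for every $j\ge2$, which forces $\kappa_j(0)=\lambda_0(d_j,q_j,\zeta^j)<0$, while (H4) gives $\kappa_1(0)=\lambda_0(d_1,q_1,\zeta^1)>0$. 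Thus the strict inequality $\kappa_1>\kappa_j$ holds at \emph{both} endpoints $\lambda=0$ and $\lambda=\lambda_+^0$, and the paper combines this with convexity of each $\kappa_i$ to conclude the inequality on the whole interval. In short, you correctly identified the obstacle but proposed the wrong tool; the missing ingredient is the sign information $\kappa_j(0)<0<\kappa_1(0)$ coming from (H1) and (H4).
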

\begin{proof}
Noting that $\kappa_i(\lambda)$ is convex in $\lambda\in\mathbb R$
for each $i\in I$, and that $\kappa_1(0)=\lambda_0(d_1,q_1,\zeta^1)>0
>\max_{j=2,3,\cdots,m}\lambda_0(d_j,q_j,\zeta^j)
=\max_{j=2,3,\cdots,m}\kappa_j(0)$ by (H1) and (H4), which together with
$\kappa_1(\lambda_+^0)>\max_{j=2,3,\cdots,m}\kappa_j(\lambda_+^0)$ yields that
\begin{equation}\label{1-j}
\kappa_1(\lambda)>\max_{j=2,3,\cdots,m}\kappa_j(\lambda),
\quad\forall\;0\le\lambda\le\lambda_+^0.
\end{equation}
For each $0\le\lambda\le\lambda_+^0$, let $\phi_1^{\lambda}(x)>0$ be the periodic eigenfunction associated with the principal eigenvalue $\kappa_1(\lambda)$.
Noting that $a_{21}(x)\phi_1^{\lambda}(x)>0$ for any $x\in\mathbb{R}^N$ and $\kappa_1(\lambda)>\kappa_2(\lambda)$,
by using arguments similar to those of \cite[Proposition 4.2]{Yu2017},
there exists a periodic function $\phi_2^{\lambda}(x)>0$ of
the $\phi_2$-equation in \eqref{u1um-linear-PE} with $\phi_1=\phi_1^{\lambda}$,
associated with the principal eigenvalue $\kappa_1(\lambda)$.
Since for each $j=3,4,\cdots,m$, there exists $k\le j-1$ such that $a_{jk}\gneq0$ by (H1),
an induction argument shows that there exists a periodic function
$\phi_j^{\lambda}(x)>0$ of the $\phi_j$-equation in \eqref{u1um-linear-PE} with $\phi_k=\phi_k^{\lambda}$, $k=1,2,\cdots,j-1$, associated with $\kappa_1(\lambda)$.
Let $\bm{\Phi}_{\lambda}(x):=(\phi_1^{\lambda}(x),\phi_2^{\lambda}(x),\cdots,\phi_m^{\lambda}(x))$,
then $\bm{\Phi}_{\lambda}(x)$ is a positive periodic eigenfunction of \eqref{u1um-linear-PE}
associated with the principal eigenvalue $\kappa=\kappa_1(\lambda)$.
The proof is complete.
\end{proof}

Next, consider the following periodic linearized system of \eqref{m-system} at $\bm 0$
\begin{equation}\label{u1u2-linearized}
\frac{\partial\bm{u}(t,x)}{\partial t}=D(x)\Delta\bm{u}+q(x)\cdot\nabla\bm{u}
+D_{\bm u}\bm F(x,\bm{0})\bm u,\quad (t,x)\in\mathbb R\times\mathbb R^N,
\end{equation}
where
$$D_{\bm u}\bm F(x,\bm{u})
:=\left(\frac{\partial f_1(x,\bm u)}{\partial \bm u},\cdots,
\frac{\partial f_m(x,\bm u)}{\partial \bm u}\right)^T,\quad
\frac{\partial f_i(x,\bm u)}{\partial \bm u}
:=\left(\frac{\partial f_i(x,\bm u)}{\partial u_1},\cdots,
\frac{\partial f_i(x,\bm u)}{\partial u_m}\right).$$

We introduce a concept of \emph{front-like linearized solutions} of system \eqref{m-system}
as follows.

\begin{definition}
For any $c\ge c_+^0$, an entire solution $\bm{w}_c\in C^{1,2}(\mathbb R\times\mathbb R^N)$
of the linearized system \eqref{u1u2-linearized} is called a
front-like linearized solution of \eqref{m-system},
if it can be written as $\bm{w}_c(t,x)=W(x,ct-x\cdot e)$, where
$W(x,s)$ is periodic in $x$ and nondecreasing in $s$, and $\mathop{\lim}\limits_{s\to-\infty}|W(x,s)|=0$.
\end{definition}

\begin{remark}\rm
Let $0<\lambda_c\le\lambda_+^0$ be such that $\kappa_1(\lambda_c)=c\lambda_c$, and define
\begin{equation}\label{Linear-sol}
\bm{w}_c(t,x)=e^{\lambda_c(ct-x\cdot e)}\bm{\Phi}_{\lambda_c}(x),
\quad (t,x)\in\mathbb R\times\mathbb R^N,
\end{equation}
where $\bm{\Phi}_{\lambda_c}(x)>\bm 0$ is the periodic eigenfunction
associated with $\kappa_1(\lambda_c)$ given by Lemma \ref{P-EFunc}.
Then it is easy to verify that $\bm{w}_c$ is a front-like linearized solution of \eqref{m-system}.
\end{remark}

To this end, we introduce the following assumptions.
\begin{description}
\item[(H6)] $\kappa_1(\lambda_+^0)>\max_{j=2,3,\cdots,m}\kappa_j(\lambda_+^0)$.
\item[(H7)] $h_i(x,\bm{w}_c)\le h_i(x,\bm 0)$ for each $i$ and
front-like linearized solution $\bm{w}_c$ of \eqref{m-system} with $c\ge c_+^0$.
\item[(H8)] The periodic eigenvalue problem
\begin{equation*}
\begin{cases}
\mu\bm{\Psi}=D(x)\Delta\bm{\Psi}+q(x)\cdot\nabla\bm{\Psi}+D_{\bm u}\bm F(x,\bm{1})\bm{\Psi},
\quad x\in\mathbb R^N,\\
\bm{\Psi}(x)=\bm{\Psi}(x+p),\quad \forall\;x\in\mathbb R^N,\ p\in\mathcal L
\end{cases}
\end{equation*}
admits a principal eigenvalue $\mu=\mu^-<0$ associated with a positive periodic eigenfunction
$\bm{\Psi}(x)=(\psi_1(x),\psi_2(x),\cdots,\psi_m(x))$.
\end{description}

\begin{remark}\rm
(i) Noting that (H6) holds in particular if $d_j\equiv d_1$, $q_j\equiv q_1$
and $h_1(x,\bm 0)>h_j(x,\bm 0)$ for all $x\in\mathbb{R}^N$ and $j=2,3,\cdots,m$.

(ii) By (H7), the nonlinearity $\bm F$ is of KPP type along any front-like linearized solution $\bm{w}_c$, in the sense that
\begin{equation*}
\bm F(x,\bm{w}_c)\le D_{\bm u}\bm F(x,\bm{0})\bm{w}_c,
\quad\forall\; x\in\mathbb R^N,\ \ \forall\; c\ge c_+^0.
\end{equation*}
\end{remark}

In the following, we are devoted to study system \eqref{m-system} under assumptions (H1)-(H8).

\subsection{Main results}

In this subsection, we first state some known results established in \cite{DuLiShen2022} on
the existence of pulsating traveling fronts and the linear determinacy of the spreading speed,
then we present our main results of this work. For this purpose, some more notations need to be introduced.

Let $\bm{\nu}\in E\setminus\{\bm{0},\bm{1}\}$. Then $h_1^{\bm{\nu}}(x,0)=h_1(x,0,\nu_2,\cdots,\nu_m)\ge\zeta^1(x)$ by (H3), and
it follows from (H4) that
$\kappa_{e}(d_1,q_1,h_1^{\bm{\nu}}(\cdot,0),0)
=\lambda_0(d_1,q_1,h_1^{\bm{\nu}}(\cdot,0))\ge\lambda_0(d_1,q_1,\zeta^1)>0$,
where $\kappa_{e}(\cdot,\cdot,\cdot,\lambda)$ is the principal eigenvalue of
the operator \eqref{PE}. Hence the quantity
\begin{equation}\label{scalar-rightward}
c_{\bm{\nu}}^-(e):=\inf_{\lambda>0}\frac{\kappa_{e}(d_1,q_1,h_1^{\bm{\nu}}(\cdot,0),\lambda)}{\lambda}
\end{equation}
is well defined.
Noting that for any $\bm{\nu}\in E\setminus\{\bm 0, \bm{1}\}$,
there exists $2\le l\le m$ such that
$$\bm{\nu}=\bm{\nu}^l=(0,\cdots,0,\nu_l,\nu_{l+1},\cdots,\nu_m),
\quad \text{where}\ \nu_l\not\equiv0.$$
Let
$$g(x,\bm{\nu}^l)=\frac{\partial f_l}{\partial u_l}(x,\bm{\nu}^l)
=h_l^{\bm{\nu}^l}(x,\nu_l)+\nu_lr^{\bm{\nu}^l}(x,\nu_l),$$
where
$$r^{\bm{\nu}^l}(x,w):=\frac{\partial h_l}{\partial u_l}(x,0,\cdots,0,w,\nu_{l+1},\cdots,\nu_m).$$

We make the following assumption on boundary periodic solutions of \eqref{m-system}.

\begin{description}
\item[(C)] For any $\bm{\nu}^l$, $\bm{\nu}_1$, $\bm{\nu}_2\in E\setminus\{\bm{0},\bm{1}\}$,
there hold
\begin{itemize}
\item[(C1)] $\lambda_0(d_l,q_l,g(\cdot,\bm{\nu}^l))>0$.

\item[(C2)] $r^{\bm{\nu}^l}(x,\nu_l)\ge\max\{0,r^{\bm{\nu}^l}(x,w)\}$ for any $0\le w\le\nu_l$.

\item[(C3)] $c_{\bm{\nu}_1}^+(e)+c_{\bm{\nu}_2}^-(e)>0$,
    where $c_{\bm{\nu}}^-(e)$ is given by \eqref{scalar-rightward}, and
    $c_{\bm{\nu}}^+(e)$ is defined by
\begin{equation*}
c_{\bm{\nu}^l}^+(e)
:=\inf_{\lambda>0}\frac{\kappa_{e}(d_l,q_l,g(\cdot,\bm{\nu}^l),-\lambda)}{\lambda}.
\end{equation*}
\end{itemize}
\end{description}

The existence and nonexistence of pulsating traveling fronts are stated as follows.

\begin{theorem}[see \cite{DuBao2021,DuLiShen2022}]\label{Ex-PTW}
Assume (H1)-(H5) and (C). Then for each $e\in\mathcal S^{N-1}$, there exists
$c^*_+(e)$ such that for any $c\ge c^*_+(e)$,
system \eqref{m-system} admits a pulsating traveling front
$U(x,ct-x\cdot e)$ connecting $\bm{0}$ to $\bm{1}$, and for any $c<c^*_+(e)$,
there is no such a pulsating traveling front.
Moreover, $U_s(x,s)\textcolor{red}{>}\textcolor{blue}{\gg}\bm 0$ for any $(x,s)\in\mathbb R^N\times\mathbb R$.
\end{theorem}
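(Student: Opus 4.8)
Since Theorem~\ref{Ex-PTW} is quoted from \cite{DuBao2021,DuLiShen2022}, the plan is to reduce it to the abstract theory of pulsating fronts for periodic monotone semiflows developed there, and here I only indicate the reduction and the verification of its hypotheses. First I would recast \eqref{m-system} as a monotone periodic semiflow $\{Q_t\}_{t\ge0}$ acting on bounded continuous functions $\mathbb R^N\to[\bm0,\bm1]$ with the compact-open topology. By the structure in (H1) the order interval $[\bm0,\bm1]$ is invariant (both $\bm0$ and $\bm1$ are equilibria), and since $\bm F$ is cooperative on $\mathbb R^N\times[\bm0,\bm1]$ by (H3), the comparison principle makes $Q_t$ order-preserving; interior parabolic Schauder estimates supply the compactness required by the abstract machinery. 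By the Remark following assumptions (H1)--(H5), $\bm0$ is unstable (invadable) by (H4) while $\bm1$ is globally asymptotically stable by (H2) and (H5), and every $\bm\nu\in E\setminus\{\bm0,\bm1\}$ satisfies $\nu_1\equiv0$.

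With this set-up, the monotone-semiflow theory yields a rightward spreading speed $c^*_+(e)$ in the direction $e$ and a pulsating front $U(x,ct-x\cdot e)$ propagating in the direction of $e$ for every $c\ge c^*_+(e)$, \emph{provided} no intermediate periodic solution $\bm\nu\in E\setminus\{\bm0,\bm1\}$ sustains a stable plateau that would break the $\bm0$-to-$\bm1$ transition into a propagating terrace. Ruling this out is exactly the purpose of assumption (C): (C1) makes each $\bm\nu^l$ linearly unstable from above in its leading ($l$-th) component; (C3) is a counter-propagation (``no-blocking'') condition, $c^+_{\bm\nu_1}(e)+c^-_{\bm\nu_2}(e)>0$, which prevents any intermediate state from being simultaneously invaded slowly from below and invading slowly upward; and (C2) provides the technical monotonicity needed to compare the nonlinearity near $\bm\nu^l$ with the corresponding periodic linear problem. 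Under (C), \cite{DuBao2021,DuLiShen2022} then produce a single pulsating front connecting $\bm0$ directly to $\bm1$ for each $c\ge c^*_+(e)$. For the nonexistence part, if a pulsating front with speed $c<c^*_+(e)$ existed, then inserting a shift of its profile as an initial datum and invoking the comparison principle would bound the solution of the Cauchy problem above by a front-like quantity moving with speed $c$, which contradicts the characterization of $c^*_+(e)$ as the spreading speed; hence no such front exists. I expect the real work to lie in checking that (C) is precisely what excludes intermediate blocking and in verifying the compactness and continuity hypotheses of the abstract theorems, this being the place where the multi-component, space-dependent setting is genuinely harder than the scalar case of \cite{hamel2008}.

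It remains to upgrade the monotonicity $U_s\ge\bm0$ built into Definition~\ref{def} to $U_s\gg\bm0$. Substituting $\bm u(t,x)=U(x,ct-x\cdot e)$ into \eqref{m-system} and differentiating the resulting (degenerate elliptic) profile system in $s$ shows that $\bm V:=U_s=(V_1,\dots,V_m)\ge\bm0$ solves the linearized cooperative system with coefficient matrix $D_{\bm u}\bm F(x,U)$; equivalently one may argue with the uniformly parabolic equation satisfied by $\partial_t\bm u$. I would then proceed component by component via the strong maximum principle. For $i=1$, since $f_1=u_1h_1$ one has $\partial_{u_j}f_1(x,U)=U_1\,(\partial_{u_j}h_1)(x,U)\ge0$ for $j\ne1$ by (H3), so the coupling terms in the $V_1$-equation are nonnegative and $V_1\ge0$ is a supersolution of the scalar operator $\partial_t - d_1\Delta - q_1\cdot\nabla - \partial_{u_1}f_1(x,U)$; as $V_1\not\equiv0$ (otherwise $U_1$ could not connect $0$ to $1$), the strong maximum principle gives $V_1>0$ everywhere. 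Assume inductively $V_1,\dots,V_{i-1}>0$ for some $i\ge2$. If $V_i\equiv0$, then the $V_i$-equation forces $\sum_{j\ne i}\partial_{u_j}f_i(x,U)V_j\equiv0$, a sum of nonnegative terms, hence $\partial_{u_k}f_i(x,U(x,s))V_k\equiv0$ where $k\le i-1$ is an index with $a_{ik}(x)>0$ furnished by (H1); since $V_k>0$ and $U(x,s)\to\bm0$ as $s\to-\infty$, letting $s\to-\infty$ gives $a_{ik}(x)=\partial_{u_k}f_i(x,\bm0)=0$, a contradiction. Thus $V_i\not\equiv0$, and the same maximum-principle argument yields $V_i>0$ everywhere, completing the induction and showing $U_s\gg\bm0$ on $\mathbb R^N\times\mathbb R$.
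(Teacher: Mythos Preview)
Your proposal is correct and aligns with the paper's treatment: the paper does not give an independent proof of Theorem~\ref{Ex-PTW} but simply cites \cite{DuBao2021,DuLiShen2022} and, in Remark~\ref{remark-on-existence}, points out that the existence, nonexistence, and monotonicity follow from the abstract periodic monotone semiflow framework there (taking the Poincar\'e map $Q_T=Q_1$). Your sketch fleshes out precisely this reduction, and your inductive strong-maximum-principle argument for $U_s\gg\bm 0$ is a correct way to upgrade the monotonicity; note, incidentally, that for $i\ge2$ you could have avoided the coupling contradiction by observing directly that $U_i$ connects $0$ to $1$ and hence $V_i\not\equiv0$, just as you did for $i=1$.
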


In Theorem \ref{Ex-PTW}, the quantity $c^*_+(e)$ is called the (fastest)
spreading speed of system \eqref{m-system}.
Next, we give a set of sufficient conditions for the spreading speed to be linearly determinate.
Recall that $c_+^0(e)$ is defined by \eqref{minimal-ss} as
$$c_+^0(e)=\inf_{\lambda>0}\frac{\kappa_1(\lambda,e)}{\lambda}.$$
By \cite[Lemma 3.3]{DuLiShen2022}, we know that $c^*_+(e)\ge c_+^0(e)$ for any $e\in\mathcal{S}^{N-1}$.
The \emph{linear determinacy} of the spreading speed $c^*_+(e)$ is defined to mean that
$$c^*_+(e)=c_+^0(e)=:c_*(e).$$

We introduce the following assumption.

\begin{description}
\item[(D)] $h_1^{\bm{\nu}}(x,0)\textcolor{red}{\gneq}\textcolor{blue}{>} h_1^{\bm 0}(x,0)$
for any $\bm{\nu}\in E\setminus\{\bm{0},\bm{1}\}$.
\end{description}

\begin{remark}\rm
Noting that if there exists $j\in\{2,3,\cdots,m\}$ such that
$\frac{\partial h_1}{\partial u_j}(x,\bm u)\not\equiv0$ for
any $(x,\bm u)\in\mathbb R^N\times[0,\bm\nu]$ with $\bm{\nu}\in E\setminus\{\bm{0},\bm{1}\}$,
then assumption (D) holds.
\end{remark}

The linear determinacy of the spreading speed is stated as follows.

\begin{theorem}[see \cite{DuLiShen2022}]\label{line-det}
Assume (H1)-(H7) and (D). Then
$$c^*_+(e)=c_+^0(e)=\inf_{\lambda>0}\frac{\kappa_1(\lambda,e)}{\lambda}.$$
\end{theorem}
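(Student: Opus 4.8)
\emph{Proof proposal.} The inequality $c^*_+(e)\ge c^0_+(e)$ is exactly \cite[Lemma 3.3]{DuLiShen2022}, so the content of the statement is the reverse bound $c^*_+(e)\le c^0_+(e)$. Since $c^*_+(e)$ is the (fastest) spreading speed and, by Theorem \ref{Ex-PTW}, pulsating fronts connecting $\bm 0$ to $\bm 1$ exist for every $c\ge c^*_+(e)$, it suffices to prove that for each $c>c^0_+(e)$ the solution of the Cauchy problem for \eqref{m-system} with front-like initial data cannot invade $\bm 0$ faster than speed $c$ in the direction $e$; this forces $c^*_+(e)\le c$ for all $c>c^0_+(e)$, hence $c^*_+(e)\le c^0_+(e)$. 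I would run this as a comparison argument against a travelling super-solution propagating at speed $c$ (in the paper this is presumably also obtainable by reducing to the abstract linear-determinacy criterion of \cite{DuLiShen2022}, but the super-solution argument makes the mechanism transparent).

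\textbf{The travelling super-solution.} Fix $c>c^0_+(e)$ and let $0<\lambda_c<\lambda^0_+$ be the smaller root of $\kappa_1(\lambda)=c\lambda$. By (H6) and Lemma \ref{P-EFunc}, problem \eqref{u1um-linear-PE} has a positive periodic eigenfunction $\bm\Phi_{\lambda_c}$ with principal eigenvalue $\kappa_1(\lambda_c)$, so $\bm w_c(t,x):=e^{\lambda_c(ct-x\cdot e)}\bm\Phi_{\lambda_c}(x)$ is a front-like linearized solution of \eqref{m-system}. Set
\[
\overline{\bm u}_c(t,x):=\min\bigl\{\bm 1,\ A\,\bm w_c(t,x)\bigr\},\qquad A\ge 1.
\]
Because the linearized system is autonomous in $t$, $A\bm w_c$ is again a front-like linearized solution (a time shift of $\bm w_c$), so (H7) applies to it; decomposing along (H1) one checks componentwise that $h_i(x,A\bm w_c)\le h_i(x,\bm 0)$ yields $\bm F(x,A\bm w_c)\le D_{\bm u}\bm F(x,\bm 0)(A\bm w_c)$, i.e.\ $A\bm w_c$ is a super-solution of \eqref{m-system}. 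As $\bm 1$ is a stationary solution and, by cooperativity (H3), the componentwise minimum of two super-solutions is a super-solution, $\overline{\bm u}_c$ is a generalized super-solution of \eqref{m-system}; its profile is periodic in $x$, nondecreasing in $ct-x\cdot e$, tends to $\bm 0$ as $ct-x\cdot e\to-\infty$ and to $\bm 1$ as $ct-x\cdot e\to+\infty$, and its level sets move at speed $c$ in the direction $e$.

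\textbf{Comparison and conclusion.} Let $\bm u_0$ be front-like with $\bm 0\le\bm u_0\le\bm 1$ and $\bm u_0\equiv\bm 0$ for $x\cdot e\ge R$. Since $\bm\Phi_{\lambda_c}>\bm 0$ is periodic (hence bounded below), for $A$ large enough one has $\overline{\bm u}_c(0,\cdot)\equiv\bm 1\ge\bm u_0$ on $\{x\cdot e\le R\}$ and $\overline{\bm u}_c(0,\cdot)\ge\bm 0=\bm u_0$ on $\{x\cdot e\ge R\}$, so $\overline{\bm u}_c(0,\cdot)\ge\bm u_0$. The comparison principle for \eqref{m-system} (available by (H3)) gives $\bm u(t,x;\bm u_0)\le\overline{\bm u}_c(t,x)$ for all $t\ge 0$, hence $\sup_{x\cdot e\ge ct+C}|\bm u(t,x;\bm u_0)|\le A e^{-\lambda_c C}\max_{x}|\bm\Phi_{\lambda_c}(x)|\to 0$ as $C\to+\infty$, uniformly in $t$. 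On the other hand, with $\bm 0$ unstable by (H4) and $\bm 1$ globally stable by (H5), the spreading results of \cite{DuLiShen2022} give $\bm u(t,x;\bm u_0)\to\bm 1$ locally uniformly in $\{x\cdot e\le(c^*_+(e)-\varepsilon)t\}$ for every small $\varepsilon>0$. If $c^*_+(e)>c^0_+(e)$, choosing $c\in(c^0_+(e),c^*_+(e))$ and $\varepsilon>0$ with $c^*_+(e)-2\varepsilon>c$, and fixing $C$ with $A e^{-\lambda_c C}\max_{x}|\bm\Phi_{\lambda_c}(x)|<\tfrac{1}{2}$, one gets points with $x\cdot e\approx(c^*_+(e)-2\varepsilon)t>ct+C$ (for $t$ large) at which the solution must be simultaneously close to $\bm 1$ and to $\bm 0$, a contradiction. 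Hence $c^*_+(e)\le c$ for every $c>c^0_+(e)$, and with the lower bound $c^*_+(e)=c^0_+(e)$.

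\textbf{Where the hypotheses and the difficulty lie.} Assumption (H7) is precisely what makes $A\bm w_c$ a super-solution — the ``pulled''/KPP mechanism — and is the multi-component analogue of the exponential-decay estimate used in \cite{hamel2008,hamel2011}. Assumption (D) is needed to rule out that the invasion of $\bm 0$ proceeds through an intermediate boundary periodic solution $\bm\nu\in E\setminus\{\bm 0,\bm 1\}$ (a propagating terrace), which could otherwise force $c^*_+(e)>c^0_+(e)$: (D) keeps the first component — the one whose linearization defines $\kappa_1$ and $c^0_+(e)$ — with strictly larger growth rate at every such $\bm\nu$ than at $\bm 0$, so no plateau at $\bm\nu$ is a bottleneck and the convergence to $\bm 1$ behind the front occurs at the linear speed. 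The main obstacle, more severe than in the scalar case, is the coupling between the $m$ equations: the super-solution inequality must hold componentwise across all equations, including the triangular coupling terms $\sum_{k<j}a_{jk}\phi_k$ and the truncation at $\bm 1$, within the cooperative framework; and if instead one wanted a self-contained construction of a front for each $c>c^0_+(e)$, one would need the matching sub-solution $\max\{\bm 0,\ A\bm w_c-M e^{\mu(ct-x\cdot e)}\bm\Phi_\mu\}$ with $\mu\in(\lambda_c,\min\{2\lambda_c,\lambda^0_+\})$ — so that $\kappa_1(\mu)<c\mu$ by convexity of $\kappa_1$, giving the required slack, while $\mu<2\lambda_c$ absorbs the $O(|\bm u|^2)$ Taylor remainder of $\bm F$ at $\bm 0$ — together with a lower bound on all components that is \emph{uniform in} $i$, which is the genuinely new point here (and the same remainder bookkeeping is what later produces the polynomial factor $|s|$ in the critical case).
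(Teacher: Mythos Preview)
Your approach is correct and is a genuine, more concrete alternative to what the paper does. The paper does not give its own proof of this theorem: it cites \cite[Theorem~3.2]{DuLiShen2022}, which establishes linear determinacy in an abstract monotone-semiflow framework (see Remark~\ref{remark-on-existence}(i)). Your argument is the direct PDE-level implementation: bound any front-like solution from above by the irregular super-solution $\min\{\bm 1,\,A\bm w_c\}$ (this is exactly the object the paper builds later in Lemma~\ref{super}), and contradict the spreading characterisation of $c^*_+(e)$ if $c^*_+(e)>c^0_+(e)$. That is clean and self-contained.

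Two points where your write-up is looser than it should be. First, Remark~\ref{remark-on-existence}(iii) notes that in the cited abstract proof, (H7) is only needed at the single speed $c=c^0_+(e)$; your argument, by contrast, invokes (H7) for every $c>c^0_+(e)$ (to make each $A\bm w_c$ a super-solution). This is not wrong under the stated hypotheses, but it is a strictly stronger use of (H7), and worth flagging. Second, your explanation of where (D) enters is imprecise: your super-solution comparison for the \emph{upper} bound $c^*_+(e)\le c^0_+(e)$ does not use (D) at all --- the exponential barrier caps the whole solution regardless of any intermediate plateau at some $\bm\nu\in E\setminus\{\bm 0,\bm 1\}$. In the abstract framework of \cite{DuLiShen2022}, assumption (D) is part of the linear-determinacy criterion that handles the presence of such intermediate equilibria within the semiflow machinery; in your direct route it is effectively replaced by the pair (H6)--(H7) that produces the super-solution. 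So your paragraph ``Where the hypotheses and the difficulty lie'' attributes to (D) a role it does not play in \emph{your} argument; if you keep that paragraph, make clear it is commentary on the abstract proof rather than on yours.
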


In the sense of Theorems \ref{Ex-PTW} and \ref{line-det}, we call
$U(x,ct-x\cdot e)$ the \emph{super-critical pulsating traveling fronts} provided
$c>c_+^0(e)$, and $U(x,ct-x\cdot e)$ the \emph{critical pulsating traveling fronts}
provided $c=c_+^0(e)$, which was described as linear and nonlinear speed selection for
monostable wave propagations in literature (see, e.g., \cite{WangOu2021}).

\begin{remark}\rm\label{remark-on-existence}
(i) The proofs of Theorems \ref{Ex-PTW} and \ref{line-det}
can be shown by using the abstract results established in \cite{DuLiShen2022},
in which the authors proved these results for
time-space periodic cooperative systems, which can be directly used to
prove Theorems \ref{Ex-PTW} and \ref{line-det}, by letting the Poincar\'{e} map $Q_T=Q_1$ in \cite{DuLiShen2022}.

(ii) The existence as well as monotonicity of pulsating traveling fronts
can also be proved by using similar arguments to \cite[Theorem 3.1]{DuBao2021},
in which the authors considered spatially periodic two-component systems.

(iii) Noting from the proof of \cite[Theorem 3.2]{DuLiShen2022} that assumption (H7) only
need to be satisfied for $\bm{w}_c=\bm{w}_{c_+^0}$ given by \eqref{Linear-sol}
in proving Theorem \ref{line-det}.
\end{remark}

We are now in position to state the main results of the present work.
In the following of the paper, we always assume that {\bf (H1)}-{\bf (H8)} hold.
Let
$$U(x,ct-x\cdot e)=(U_1(x,ct-x\cdot e),U_2(x,ct-x\cdot e),\cdots,U_m(x,ct-x\cdot e))$$
be a pulsating traveling front of \eqref{m-system} connecting $\bm 0$ to $\bm 1$,
then $c\ge c_+^0(e)$, where
\begin{equation*}
c_+^0(e)=\inf_{\lambda>0}\frac{\kappa_1(\lambda,e)}{\lambda}
=\frac{\kappa_1(\lambda_+^0,e)}{\lambda_+^0}.
\end{equation*}

Our first main result is concerned with the exact asymptotic behavior
of pulsating traveling fronts as they approach the unstable limiting state,
which is stated as follows.

\begin{theorem}\label{Asy}
Assume (H1)-(H8). Let $U(x,ct-x\cdot e)$ be a pulsating traveling front of \eqref{m-system}.
Then there exists $\rho>0$ such that
\begin{itemize}
\item[(i)]
If $c>c_+^0(e)$, then
\begin{equation*}
\mathop{\lim}\limits_{s\to-\infty}\frac{U(x,s)}{\rho e^{\lambda_c s}\bm{\Phi}_{\lambda_c}(x)}
=\bm 1\quad\text{uniformly in}\ x\in\mathbb R^N.
\end{equation*}

\item[(ii)]
If $c=c_+^0(e)$, then
\begin{equation*}
\mathop{\lim}\limits_{s\to-\infty}\frac{U(x,s)}{\rho|s|e^{\lambda_+^0s}
\bm{\Phi}_{\lambda_+^0}(x)}
=\bm 1\text{~~uniformly in~~}x\in\mathbb R^N.
\end{equation*}
\end{itemize}
\end{theorem}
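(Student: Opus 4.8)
The plan is to establish the asymptotic behavior of each pulsating front $U(x,s)$ by a two-sided (upper and lower) bounding argument near $s=-\infty$, exploiting that the nonlinearity is of KPP type along front-like linearized solutions (assumption (H7)) and that (H6) guarantees the existence of the positive periodic eigenfunctions $\bm{\Phi}_\lambda$ furnished by Lemma \ref{P-EFunc}. First I would work in the moving-frame variable $s=ct-x\cdot e$, writing the equation satisfied by $U(x,s)$: it is a degenerate elliptic system in $(x,s)$ with $x$-periodicity and the boundary conditions $U(x,-\infty)=\bm 0$, $U(x,+\infty)=\bm 1$, together with $U_s\gg\bm 0$ from Theorem \ref{Ex-PTW}. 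Linearizing at $\bm 0$ and using (H7) gives the componentwise differential inequality $L U \le 0$ where $L$ is the linear operator of \eqref{u1u2-linearized} written in the moving frame; conversely, on any half-space $\Omega_z^-$ with $z$ sufficiently negative, the $C^2$-smoothness of $\bm F$ in $\bm u$ and $U(x,s)\to\bm 0$ yields a reverse inequality $LU \ge -C|U|^2\bm 1$ (quadratic error), which will be the source of the lower bound.

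The upper bound is the easier half. Since $\bm{w}_c(t,x)=e^{\lambda_c s}\bm{\Phi}_{\lambda_c}(x)$ is an entire solution of the linearized system and $\bm F(x,\bm{w}_c)\le D_{\bm u}\bm F(x,\bm 0)\bm{w}_c$ by (H7), the function $\rho\, e^{\lambda_c s}\bm{\Phi}_{\lambda_c}(x)$ is a supersolution for the front equation for every $\rho>0$; choosing $\rho$ large enough that it dominates $U$ on a slab $\{z_0\le s\le z_0+1\}$ (possible because $U$ is bounded and $\bm{\Phi}_{\lambda_c}>\bm 0$) and using a sliding/comparison argument on $\Omega_{z_0}^-$ gives $U(x,s)\le \rho\, e^{\lambda_c s}\bm{\Phi}_{\lambda_c}(x)$ there; hence $\limsup_{s\to-\infty} U_i(x,s)/(e^{\lambda_c s}\phi_i^{\lambda_c}(x))<\infty$ for each $i$. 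For the lower bound in the supercritical case $c>c_+^0(e)$, I would perturb: since $\lambda_c<\lambda_+^0$ strictly when $c>c_+^0$, the map $\lambda\mapsto\kappa_1(\lambda)-c\lambda$ has a second root, and for small $\epsilon>0$ one can build a subsolution of the form $\rho\, e^{\lambda_c s}\bm{\Phi}_{\lambda_c}(x) - M e^{(\lambda_c+\epsilon)s}\bm{\Psi}_\epsilon(x)$ whose second term absorbs the quadratic error $O(e^{2\lambda_c s})$ and whose leading order is untouched; comparison then forces $U(x,s)\ge(\rho-o(1))e^{\lambda_c s}\bm{\Phi}_{\lambda_c}(x)$. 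Combining with the upper bound and a normalization argument (defining $\rho$ as the appropriate $\limsup$ and showing the $\liminf$ matches) yields (i), with the ratio converging uniformly in $x$ by periodicity and elliptic/parabolic interior estimates.

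The critical case (ii) is the main obstacle and requires genuinely different input, because the leading behavior is $|s|\,e^{\lambda_+^0 s}$ rather than a pure exponential: this is the degenerate situation where $\lambda_+^0$ is a double root of $\kappa_1(\lambda)-c_+^0\lambda=0$, so $e^{\lambda_+^0 s}\bm{\Phi}_{\lambda_+^0}(x)$ and $(s+\theta(x))e^{\lambda_+^0 s}\bm{\Phi}_{\lambda_+^0}(x)$ (for a suitable periodic correction $\theta$, obtainable by differentiating the eigenvalue relation in $\lambda$) are both linearized solutions. I would first rule out that $U$ decays like the pure exponential $e^{\lambda_+^0 s}\bm{\Phi}_{\lambda_+^0}$: if it did, one could run the KPP spreading-speed machinery to contradict $c=c^*_+(e)=c_+^0(e)$ via Theorem \ref{line-det}, or equivalently show such a front would have to coincide with a supercritical one. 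Then, for the upper bound, the supersolution $\rho(-s)e^{\lambda_+^0 s}\bm{\Phi}_{\lambda_+^0}(x)$ (valid for $s$ very negative since $-s>0$) works against the linearized operator up to a term that is negative for $s\ll 0$, after carefully computing the action of $L$ on the product and using that the double-root structure kills the $e^{\lambda_+^0 s}\bm{\Phi}_{\lambda_+^0}$-coefficient; the lower bound again uses a corrected subsolution $\rho(-s)e^{\lambda_+^0 s}\bm{\Phi}_{\lambda_+^0}(x) - M(-s)^{1/2}e^{\lambda_+^0 s}(\cdots)$ or a logarithmically-shifted barrier, arranged so its error dominates the quadratic term $O(s^2 e^{2\lambda_+^0 s})=o((-s)e^{\lambda_+^0 s})$. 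The delicate points throughout are: producing the periodic auxiliary functions $\theta$, $\bm{\Psi}_\epsilon$ by implicit-function/perturbation arguments on the principal eigenvalue (using analyticity of $\lambda\mapsto\kappa_1(\lambda)$ from \cite[Lemma 3.1]{fang2017}); handling the coupling so that all $m$ components share the same decay rate and constant $\rho$, which is exactly where Lemma \ref{P-EFunc} and the strict inequality (H6) are used to keep $\bm{\Phi}_\lambda$ strictly positive and tied to the first component; and upgrading pointwise comparison to uniform-in-$x$ convergence of the ratio via Harnack and compactness on the periodicity cell $\mathcal D$.
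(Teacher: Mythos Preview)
Your overall blueprint---upper bound from the KPP supersolution $\rho\,e^{\lambda_c s}\bm{\Phi}_{\lambda_c}$, lower bound from a perturbed subsolution absorbing the quadratic error, then sharpening to the exact constant $\rho$ by a normalized limiting argument and Harnack/compactness on the periodicity cell, and finally propagating the asymptotic from $U_1$ to $U_2,\dots,U_m$ via the lower-triangular structure of (H1)---matches the paper's approach closely, including your correct identification of the $\lambda$-derivative $\bm{\Phi}^{(1)}_{\lambda_+^0}$ as the second linearized mode at the double root.

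Two points in the critical case need correction. First, your proposed subsolution correction $M(-s)^{1/2}e^{\lambda_+^0 s}(\cdots)$ or a ``logarithmically-shifted barrier'' is not the right object: applying the moving-frame operator to a fractional power of $|s|$ does not close. The paper keeps the \emph{same} exponential-correction device as in the supercritical case: since $\kappa_1$ is convex and $\lambda_+^0$ minimizes $\kappa_1(\lambda)/\lambda$, one still has $\sigma_*:=c_+^0(\lambda_+^0+\epsilon_*)-\kappa_1(\lambda_+^0+\epsilon_*)<0$ for every $\epsilon_*>0$, and the subsolution takes the form $\delta\,e^{\lambda_+^0 s}\bigl(|s|\,\bm{\Phi}_{\lambda_+^0}-m_0\bm{\Phi}_{\lambda_+^0}-\bm{\Phi}^{(1)}_{\lambda_+^0}+n_0\,e^{\epsilon_* s}\bm{\Phi}_{\lambda_+^0+\epsilon_*}\bigr)$; the last term contributes a clean negative sign $\sigma_*$ that absorbs the quadratic error $O(|s|^2 e^{2\lambda_+^0 s})$. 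Second, your preliminary step of ``ruling out pure-exponential decay'' via the spreading-speed characterization is neither needed nor how the paper proceeds: once the sub- and supersolutions with the $|s|\,e^{\lambda_+^0 s}$ leading term are available, the two-sided bounds on $U_1(x,s)/\bigl(|s|\,e^{\lambda_+^0 s}\phi_{1,*}(x)\bigr)$ and the positivity of the $\liminf$ follow directly from the comparison Lemmas~\ref{CP+}--\ref{CP-+} and a limiting parabolic argument, without invoking Theorem~\ref{line-det}.
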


Theorem \ref{Asy} shows that the super-critical pulsating traveling fronts
are decaying exponentially to $\bm 0$ as $s\to-\infty$, while
the critical pulsating traveling front is decaying as an exponential function multiplied with a polynomial factor $|s|$. These results can be viewed as an extension of asymptotic behaviors
of pulsating traveling fronts for periodic scalar equations (see, e.g., \cite{hamel2008})
to periodic multi-component systems.

Using these asymptotic behavior properties, we obtain
the following result of the uniqueness of pulsating traveling fronts.

\begin{theorem}\label{uniqueness}
Assume (H1)-(H8).
Let $\bm{u}(t,x)=U(x,ct-x\cdot e)$ and $\bm{v}(t,x)=V(x,ct-x\cdot e)$ be
two pulsating traveling fronts of \eqref{m-system} with $c\not=0$.
Then there exists $z_0\in\mathbb R$ such that
\begin{equation*}
U(x,s+z_0)=V(x,s),\quad\forall\;(x,s)\in\mathbb R^N\times\mathbb R,
\end{equation*}
that is, there exists $\sigma\in\mathbb R$ ($\sigma=z_0/c$) such that
\begin{equation*}
\bm u(t+\sigma,x)=\bm v(t,x),\quad\forall\;(t,x)\in\mathbb R\times\mathbb R^N.
\end{equation*}
\end{theorem}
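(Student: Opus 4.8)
The plan is to prove the uniqueness by the sliding method, using the precise asymptotics of Theorem \ref{Asy} as the crucial ingredient that allows the sliding argument to get started at $s\to-\infty$. First I would reduce to comparing the two profiles $U(x,s)$ and $V(x,s)$ directly, noting that both satisfy the degenerate elliptic system obtained by plugging the pulsating-front ansatz into \eqref{m-system}, namely
\begin{equation*}
c\,U_s(x,s)=D(x)\Delta_x U-2\,D(x)e\cdot\nabla_x U_s+D(x)U_{ss}+q(x)\cdot\nabla_x U-(q(x)\cdot e)U_s+\bm F(x,U),
\end{equation*}
and similarly for $V$, with the same boundary conditions at $s=\pm\infty$ and $L$-periodicity in $x$. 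The key structural facts I will use are: $U_s\gg\bm 0$ and $V_s\gg\bm 0$ (Theorem \ref{Ex-PTW}); the cooperativity (H3), which makes the associated parabolic comparison principle available on the whole-space system; and the global stability of $\bm 1$ (H2), (H5) together with the stable-eigenvalue assumption (H8), which controls the behavior near $s=+\infty$.

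The argument then runs in the classical three steps. \emph{Step 1 (shift into order near $+\infty$ and $-\infty$).} Using Theorem \ref{Asy}, both $U(x,s)$ and $V(x,s)$ are asymptotically equivalent, as $s\to-\infty$, to the same model $\rho\,e^{\lambda_c s}\bm\Phi_{\lambda_c}(x)$ in the supercritical case (or $\rho\,|s|\,e^{\lambda_+^0 s}\bm\Phi_{\lambda_+^0}(x)$ in the critical case), each with its own constant $\rho_U,\rho_V>0$. Hence, for a sufficiently large translation $z$, one has $U(x,s+z)\ge V(x,s)$ for all $s\le -A$ and all $x$ (for some large $A$), because the exponential/$|s|e^{\lambda s}$ factor shifted forward dominates; near $s=+\infty$ one uses (H8) together with the exponential approach of both fronts to $\bm 1$ — constructed via the eigenfunction $\bm\Psi$ of (H8) — to likewise force $U(x,s+z)\ge V(x,s)$ for $s\ge A$ after enlarging $z$; and on the compact slab $|s|\le A$ one uses $U_s\gg\bm 0$ and compactness to enlarge $z$ once more so that the ordering holds everywhere. \emph{Step 2 (slide down).} Define
\begin{equation*}
z^*=\inf\{z\in\mathbb R:\ U(x,s+z)\ge V(x,s)\ \text{for all }(x,s)\in\mathbb R^N\times\mathbb R\}.
\end{equation*}
By Step 1 this set is nonempty and $z^*$ is finite (boundedness from below of the set follows by swapping the roles of $U$ and $V$). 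By continuity $U(x,s+z^*)\ge V(x,s)$. \emph{Step 3 (rigidity at $z^*$).} Suppose $U(\cdot,\cdot+z^*)\not\equiv V$. Then $W:=U(x,s+z^*)-V(x,s)\gneq\bm 0$ solves a cooperative linear parabolic system (after returning to the moving-frame time variable $t=(s+x\cdot e)/c$), so by the strong maximum principle $W\gg\bm 0$ in the interior. Near $s=-\infty$, the common leading asymptotics from Theorem \ref{Asy} give a quantitative lower bound $W(x,s)\ge \delta\, (\text{model}(x,s))$ with $\delta>0$ (this is exactly where the unified multi-component asymptotics — rather than componentwise ones — is needed: the ratio $W/\text{model}$ must be bounded below uniformly in all $m$ components and in $x$); near $s=+\infty$, using (H8) one gets $W(x,s)\ge \delta\, e^{\mu^- s/c}\bm\Psi(x)$ or an analogous uniform lower bound; and on compact slabs $W\ge\delta\bm 1$ by the strong maximum principle and periodicity. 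Combining these with $U_s\gg\bm 0$, one shows $U(x,s+z^*-\varepsilon)\ge V(x,s)$ still holds for small $\varepsilon>0$, contradicting the minimality of $z^*$. Hence $U(x,s+z^*)\equiv V(x,s)$, i.e. $z_0=z^*$ works, and $\sigma=z_0/c$ gives the time-shift statement.

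I expect the main obstacle to be Step 3 near $s\to-\infty$, that is, producing a uniform-in-$x$ and uniform-across-all-$m$-components lower bound $W\ge\delta\cdot(\text{model})$ that survives a small backward slide. In the scalar case (\cite{hamel2008,hamel2011}) this is a one-line consequence of $U/V\to 1$; here the coupling means one must track how a deficit in, say, the first component propagates to the others through the off-diagonal terms $a_{ij}$ and the Jacobian $D_{\bm u}\bm F(x,\bm 0)$, and verify that the principal-eigenfunction structure of Lemma \ref{P-EFunc} forces all components to decay at exactly the same exponential (or $|s|e^{\lambda s}$) rate with comparable constants. The critical case $c=c_+^0(e)$ is additionally delicate because the correct comparison model carries the polynomial prefactor $|s|$, so the perturbation estimates that allow the $\varepsilon$-slide must be robust against this slowly varying factor; I would handle it by building sub/supersolutions of the form $\rho(|s|+C)e^{\lambda_+^0 s}\bm\Phi_{\lambda_+^0}(x)$ with a carefully chosen constant $C$, mirroring the construction used for Theorem \ref{Asy}(ii) and anticipated in the introduction's discussion of the critical case. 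A secondary technical point is justifying the whole-space parabolic comparison principle and strong maximum principle for the cooperative system on the unbounded cylinder — this is standard given (H3) and the regularity of the coefficients, and I would cite it rather than reprove it.
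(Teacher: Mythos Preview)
Your overall strategy --- the sliding method, with Theorem~\ref{Asy} controlling the tail at $-\infty$ and (H8) the tail at $+\infty$ --- is correct and matches the paper. However, your Step~3 contains a genuine gap at $s\to-\infty$.

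You assert that if $W:=U(\cdot,\cdot+z^*)-V\gneq\bm 0$, then near $-\infty$ one has $W\ge\delta\cdot(\text{model})$ for some $\delta>0$. This fails in precisely the case that matters. From $U(\cdot,\cdot+z^*)\ge V$ and Theorem~\ref{Asy} one gets $\rho_U e^{\lambda_c z^*}\ge\rho_V$, where $\rho_U,\rho_V$ are the asymptotic constants of $U,V$. If this inequality is strict, your argument goes through: $W/\text{model}\to\rho_U e^{\lambda_c z^*}-\rho_V>0$ and one can slide. But if $\rho_U e^{\lambda_c z^*}=\rho_V$, then $W/\text{model}\to 0$ and no positive $\delta$ exists; the slide breaks down near $-\infty$. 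Nothing in the strong maximum principle supplies a lower bound of the correct order in that case, and you have not excluded the scenario $W\gneq\bm 0$ with matching leading constants.

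The paper's fix is to turn the equal-constants case into the conclusion rather than fight it. Assuming $\rho_U e^{\lambda_c z^*}>\rho_V$, the sliding contradiction runs exactly as you outline (this is the paper's Step~2), whence $\rho_U e^{\lambda_c z^*}=\rho_V$. One then introduces the opposite extremal shift $\underline z:=\sup\{z:U(\cdot,\cdot+z)\le V\}$ and, swapping the roles of $U$ and $V$, obtains $\rho_U e^{\lambda_c\,\underline z}=\rho_V$ by the same argument. Since $\lambda_c>0$, this forces $z^*=\underline z=:z_0$, and the sandwich $U(\cdot,\cdot+z_0)\ge V\ge U(\cdot,\cdot+z_0)$ gives equality directly. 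The critical case $c=c_+^0(e)$ needs no separate sub/supersolution construction here: the polynomial factor is absorbed via $|s+z|/|s|\to 1$, so the same identity $\rho_U e^{\lambda_c z^*}=\rho_V$ (with $\lambda_c=\lambda_+^0$) still pins down the shift.

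A secondary point: near $+\infty$ the paper does not prove a lower bound $W\ge\delta e^{\mu^- s/c}\bm\Psi$ as you propose; indeed the precise exponential rate of approach to $\bm 1$ is never established. Instead the paper uses a dedicated comparison principle (Lemma~\ref{1-CP}) valid in the region where both profiles lie in $[(1-\varrho^*)\bm 1,\bm 1]$: ordering at a single slice $s=s^*$ propagates to all $s\ge s^*$. This is what carries the inequality through the stable tail during each slide.
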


Theorem \ref{uniqueness} yields the uniqueness, modulo translation, of
pulsating traveling fronts with nonzero speed in a given direction of $e$.
Notice that if $z_0\not=0$, then $U\not=V$ since all the fronts are strictly
monotone in the co-moving frame coordinate.

To this end, we give the global stability of pulsating traveling fronts
for solutions of the Cauchy problem with front-like initial data.
Let $Y=BUC(\mathbb R^N,\mathbb R^m)$ be the set of all bounded and uniformly
continuous functions from $\mathbb R^N$ to $\mathbb R^m$ with the norm
$$\|\bm{u}\|:=\max_{x\in\mathbb R^N}|\bm{u}(x)|,\quad\forall\; \bm u\in Y,$$
and $Y_+:=\{\bm{u}\in Y:\; \bm{u}(x)\ge\bm{0},\ \forall\; x\in\mathbb R^N\}$. The relation $\bm{u}\le\bm{v}$
is to be understood as $u_i(x)\le v_i(x)$
for each $i$, and $x\in\mathbb{R}^N$ and $\bm{u}<\bm{v}$ is to be understand $\bm{u}\leq \bm{v}$ but $\bm{u}\not\equiv \bm {v}$.
\begin{theorem}\label{stability}
Assume (H1)-(H8).
Let $U(x,ct-x\cdot e)$ be a pulsating traveling front of \eqref{m-system} with $c\ge c_+^0(e)$,
and $\bm u(t,x;\bm u_0)$ be a solution of \eqref{m-system} with
initial value $\bm{u}(0,\cdot;\bm{u}_0)=\bm{u}_0\in Y_+$.
Assume that $\bm 0<\bm u_0<\bm 1$, and that
\begin{equation}\label{u0v0-main}
\liminf_{\varsigma\to+\infty}\left\{\inf_{x\in\mathbb R^N,\ -x\cdot e\ge\varsigma}\bm u_0(x)\right\}
\ge(1-\varepsilon_0)\bm 1
\end{equation}
for some $\varepsilon_0\in(0,\frac{1}{2})$ small enough.
Moreover, we assume that there exists $k>0$ such that
\begin{equation}\label{u0v0-behavior-main}
\limsup_{\varsigma\to-\infty}\left\{\sup_{\substack{x\in\mathbb R^N\\-x\cdot e\le\varsigma}}
\left|\frac{\bm u_0(x)}{k|x\cdot e|^\tau e^{-\lambda_c(x\cdot e)}
\bm{\Phi}_{\lambda_c}(x)}-\bm 1\right|\right\}=0,
\end{equation}
where $\tau=0$ if $c>c_+^0(e)$ and $\tau=1$ if $c=c_+^0(e)$.
Then there exists $s_0\in\mathbb R$ such that
\begin{equation*}
\lim_{t\to\infty}\sup_{x\in\mathbb R^N}|\bm u(t,x;\bm u_0)-U(x,ct-x\cdot e+s_0)|=0.
\end{equation*}
\end{theorem}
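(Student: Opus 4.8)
The plan is to trap the solution $\bm u(t,x;\bm u_0)$ between a subsolution and a supersolution, each built as a small perturbation of a shift of the pulsating front $U$, and then to let the perturbation vanish as $t\to\infty$ by the classical squeezing/sliding argument. First I would set up the comparison: by (H1)--(H3) the system \eqref{m-system} generates a monotone semiflow, so it suffices to build an ordered pair of sub/supersolutions that sandwich $\bm u_0$ at $t=0$ and converge to each other. For the supersolution I would try $\bm u^+(t,x)=\min\{\bm 1,\ U(x,ct-x\cdot e+\xi^+(t))+q^+(t)\bm\Psi(x)e^{\mu^-t}\}$ (with $\bm\Psi,\mu^-$ from (H8) governing the approach to $\bm 1$), and correspondingly $\bm u^-(t,x)=\max\{\bm 0,\ U(x,ct-x\cdot e+\xi^-(t))-q^-(t)(\cdots)\}$ for the behavior near $\bm 1$; near $\bm 0$ the perturbation must instead be measured against the front's own decay profile $e^{\lambda_c s}\bm\Phi_{\lambda_c}(x)$ (resp. $|s|e^{\lambda_+^0 s}\bm\Phi_{\lambda_+^0}(x)$ in the critical case). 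The shift functions $\xi^\pm(t)$ are to be chosen of the form $\xi^\pm(t)=\pm\delta_0(1-e^{-\omega t})+\xi_0^\pm$ so that $\dot\xi^\pm$ absorbs the perturbation terms in the differential inequality, exactly as in \cite{hamel2011,zhao2014}.

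The second step is to verify that these are genuine sub/supersolutions of \eqref{m-system}. Away from the two limiting states this follows from the strict monotonicity $U_s\gg\bm 0$ (Theorem \ref{Ex-PTW}), which provides the room to absorb the error from $\dot\xi^\pm$; near $\bm 1$ one uses the sign $\mu^-<0$ together with $\bm F(x,\bm 1)\equiv\bm 0$ and the $C^2$-regularity of $\bm F$ to control the quadratic remainder; near $\bm 0$ one uses the KPP inequality from (H7) (so that the linearized system \eqref{u1u2-linearized} dominates the nonlinear one) together with the precise asymptotics of Theorem \ref{Asy}, which pin down the front's profile to leading order and let the eigenfunction $\bm\Phi_{\lambda_c}$ (or its critical analogue) do the work. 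The third step is the matching at $t=0$: hypothesis \eqref{u0v0-main} forces $\bm u_0$ below/above the right shifts near the "$\bm 1$ end," hypothesis \eqref{u0v0-behavior-main} together with Theorem \ref{Asy} forces the correct comparison near the "$\bm 0$ end" (this is where the exponent $\tau$ and the constants $k$ versus $\rho$ get reconciled, by adjusting $\xi_0^\pm$ logarithmically), and the bounded intermediate region is handled by a crude shift estimate using $\bm 0<\bm u_0<\bm 1$. Then the comparison principle gives $\bm u^-(t,x)\le\bm u(t,x;\bm u_0)\le\bm u^+(t,x)$ for all $t\ge 0$.

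Finally, I would improve the trap. Letting $t\to\infty$, the exponential factors $e^{\mu^- t}$ vanish and the shifts stabilize to $\xi^\pm(\infty)=\pm\delta_0+\xi_0^\pm$, so $\bm u$ is asymptotically caught between $U(\cdot,ct-x\cdot e+\xi^-(\infty))$ and $U(\cdot,ct-x\cdot e+\xi^+(\infty))$. A bootstrap argument — feed the improved bound back in as new "initial" data at a large time $t_1$, which now satisfies \eqref{u0v0-main}--\eqref{u0v0-behavior-main} with a smaller $\varepsilon$ and tighter constants — lets one take $\delta_0\to 0$, forcing $\xi^-(\infty)$ and $\xi^+(\infty)$ to a common limit $s_0$, which yields the claimed convergence $\lim_{t\to\infty}\sup_{x}|\bm u(t,x;\bm u_0)-U(x,ct-x\cdot e+s_0)|=0$. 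The main obstacle I anticipate is the construction and verification of the sub/supersolutions \emph{near $\bm 0$ in the critical case} $c=c_+^0(e)$: there the front decays like $|s|e^{\lambda_+^0 s}\bm\Phi_{\lambda_+^0}(x)$ rather than purely exponentially, the linearization has a degenerate (double) root, so the polynomial factor $|s|$ must be carried through every differential inequality and through the $t=0$ matching, and the coupling across the $m$ components (with the lower-triangular terms $a_{jk}$) must be controlled uniformly — this is precisely the point the introduction flags as new relative to the scalar case.
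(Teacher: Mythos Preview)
Your construction of sub/supersolutions is essentially the paper's (Lemmas~\ref{ss-c} and~\ref{ss-c-*}): a shift $\pm\sigma(1-e^{-\beta t})$ in the front's argument plus a decaying perturbation that interpolates, via a cutoff, between $e^{(\lambda_c+\epsilon)s}\bm\Phi_{\lambda_c+\epsilon}$ near the $\bm 0$-end and $\bm\Psi$ near the $\bm 1$-end. The verification you sketch (monotonicity of $U_s$ in the middle, $(\mu^-,\bm\Psi)$ near $\bm 1$, the linearized eigenstructure near $\bm 0$, with the extra polynomial factor in the critical case) also matches. The initial trapping (your Step~3) is the content of Lemmas~\ref{sss}/\ref{sss-*}.

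The gap is in your closing step. The bootstrap you propose --- ``feed $\bm u(t_1,\cdot)$ back in as new initial data satisfying \eqref{u0v0-main}--\eqref{u0v0-behavior-main} with tighter constants, then take $\delta_0\to 0$'' --- does not close as stated. After one pass the solution is trapped between $U(\cdot,\,\cdot+\xi^-(\infty))$ and $U(\cdot,\,\cdot+\xi^+(\infty))$; near the $\bm 0$-end these correspond to two \emph{different} asymptotic constants $\rho e^{\lambda_c\xi^\pm(\infty)}$, so $\bm u(t_1,\cdot)$ does \emph{not} satisfy the single-constant hypothesis \eqref{u0v0-behavior-main}, and you cannot simply re-run the construction with a smaller $\delta_0$. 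Moreover, in the paper's sub/supersolutions the shift amplitude $\sigma$ is bounded below independently of $\delta$ (it is fixed by the initial matching, Lemma~\ref{sss}), so the asymptotic gap $\xi^+(\infty)-\xi^-(\infty)$ does not shrink by waiting.

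The paper closes the argument by a different mechanism. First (Lemmas~\ref{c-s-s} and~\ref{s-eta-D-eta}) it proves, directly from the original data and for \emph{every} $\eta>0$, refined two-sided estimates in the region $ct-x\cdot e\le s_\eta$ of the form
\[
U(x,ct-x\cdot e-\eta)-D_\eta(\cdots)\ \le\ \bm u(t,x;\bm u_0)\ \le\ U(x,ct-x\cdot e+\eta)+D_\eta(\cdots),
\]
where the correction is $e^{(\lambda_c+\epsilon)s}\bm\Phi_{\lambda_c+\epsilon}$ in the super-critical case and $e^{\lambda_*s}(\bm\Phi_{\lambda_*}-e^{\epsilon_*s}\bm\Phi_{\lambda_*+\epsilon_*})$ in the critical case. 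These are obtained by building separate sub/supersolutions on the half-space $\{ct-x\cdot e\le s_\eta\}$ and applying the comparison Lemma~\ref{CP}; this is the step that your outline is missing. Second, the actual convergence is proved by contradiction via a Liouville-type lemma (Lemma~\ref{louville}): along any sequence $(t_n,x_n)$ where convergence fails, translate and extract a limit $\bm u_\infty$, which is an \emph{entire} solution trapped between two fixed shifts of $U$ (from Lemmas~\ref{sss}/\ref{sss-*}) and also satisfying the refined near-$\bm 0$ estimates for every $\eta>0$; the sliding argument of Lemma~\ref{louville} (which uses Lemma~\ref{1-CP} on the $\bm 1$-side) then forces $\bm u_\infty\equiv U(\cdot,c\cdot-\cdot\cdot e+s_0)$, a contradiction. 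So the missing ingredients in your plan are precisely the ``for all $\eta$'' refined estimates near $\bm 0$ and the Liouville/limit-of-translates argument that replaces the bootstrap.
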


Theorem \ref{stability} shows that if the front-like initial data is close in some sense
to the pulsating traveling front at $t=0$ at both ends, then solutions
of the Cauchy problem converge to the pulsating traveling front
with a shift in time at large times, that is, the propagation speed of
the solution $\bm u(t,x;\bm u_0)$ at large times strongly depends on the
asymptotic behavior of the initial value $\bm u_0$ as it approaches the unstable state $\bm 0$.
The stability of pulsating traveling fronts of reaction-diffusion systems is
indeed one of the most important observation in understanding the
large time behavior of solutions of the Cauchy problem.
Due to the general framework and assumptions, and the interaction of multiple components
in the system, the proof of this result is rather involved and requires some careful treatments.

At the end of this section, we discuss some applications of the main results to two-species competition system
\begin{equation*}
\begin{cases}
\frac{\partial u_1(t,x)}{\partial t}=d_1(x)\Delta u_1+a_1(x)\cdot\nabla u_1
+u_1\left(b_1(x)-a_{11}(x)u_1-a_{12}(x)u_2\right),\\
\frac{\partial u_2(t,x)}{\partial t}=d_2(x)\Delta u_2+a_2(x)\cdot\nabla u_2
+u_2\left(b_2(x)-a_{21}(x)u_1-a_{22}(x)u_2\right),
\end{cases}x\in\mathbb R^N,
\end{equation*}
where $d_i$, $a_i$, $b_i$, $a_{ij}\in C^{\nu}(\mathbb R^N)$
are $L$-periodic functions, $d_i(x)\ge d_0>0$ and
$a_{ij}(x)\ge a_0>0$ ($i,j=1,2$) for any $x\in\mathbb R^N$.

Note that if $\lambda_0(d_i,a_i,b_i)>0$ for $i=1,2$, then
there exist two positive periodic functions $u_1^*(x)$ and $u_2^*(x)$
such that $(u_1^*(x),0)$ and $(0,u_2^*(x))$ are two periodic solutions of \eqref{competition-sys}.
We make the following standing assumptions for \eqref{competition-sys}.

\begin{description}
\item[(A1)] $\lambda_0(d_i,a_i,b_i)>0$ for $i=1,2$, and $\lambda_0(d_1,a_1,b_1-a_{12}u_2^*)>0$.
\item[(A2)] System \eqref{competition-sys} has no positive periodic solution between $(0,0)$ and $(u_1^*,u_2^*)$.
\end{description}

By (A1), we see that $(0,u_2^*(x))$ is an unstable periodic solution of \eqref{competition-sys},
which together with (A2) shows that $(u_1^*(x),0)$ is globally asymptotically stable
for all initial values $(\phi_1,\phi_2)\in\mathbb P_+$ with $\phi_1\not\equiv0$
(see, e.g., \cite[Theorem 2.1]{Yu2017}), where
$\mathbb P$ is the set of all continuous and periodic functions from $\mathbb R^N$ to
$\mathbb R^2$ with the maximum norm $|\cdot|$,
and $\mathbb P_+:=\{(\phi_1,\phi_2)\in\mathbb P:\ (\phi_1,\phi_2)\ge(0,0),\ \forall x\in\mathbb R^N\}$.

Using a change of variables
$$\tilde u_1(t,x)=\frac{u_1(t,x)}{u_1^*(x)},\quad\tilde u_2(t,x)=\frac{u_2^*(x)-u_2(t,x)}{u_2^*(x)}$$
and dropping the title, we transform \eqref{competition-sys} into the following system
\begin{equation}\label{LV-cooper}
\begin{cases}
\frac{\partial u_1(t,x)}{\partial t}=d_1(x)\Delta u_1+q_1(x)\cdot\nabla u_1+f_1(x,u_1,u_2),\\
\frac{\partial u_2(t,x)}{\partial t}=d_2(x)\Delta u_2+q_2(x)\cdot\nabla u_2+f_2(x,u_1,u_2),
\end{cases}x\in\mathbb{R}^N,
\end{equation}
where $q_i(x)=a_i(x)+2d_i(x)\nabla u_i^*(x)/u_i^*(x)$ for $i=1,2$, and
\begin{align*}
f_1(x,u_1,u_2)&=u_1h_1(x,u_1,u_2),\hspace{1.88cm}
h_1(x,u_1,u_2)=a_{11}^*(x)(1-u_1)-a_{12}^*(x)(1-u_2),\\
f_2(x,u_1,u_2)&=a_{21}^*(x)u_1+u_2h_2(x,u_1,u_2), ~ h_2(x,u_1,u_2)=a_{22}^*(x)(u_2-1)-a_{21}^*(x)u_1,
\end{align*}
and $a_{11}^*(x)=a_{11}(x)u_1^*(x)$, $a_{12}^*(x)=a_{12}(x)u_2^*(x)$,
$a_{21}^*(x)=a_{21}(x)u_1^*(x)$, $a_{22}^*(x)=a_{22}(x)u_2^*(x)$.
Noting that system \eqref{LV-cooper} has three periodic solutions
$\bm{0}$, $\bm{\nu}$ and $\bm{1}$, where $\bm{\nu}:=(0,1)$, that is,
$$E=\{\bm{0},\bm{\nu},\bm{1}\}.$$

Let
\begin{equation*}
c_+^0=\inf_{\lambda>0}\frac{\kappa_{e}(d_1,q_1,a_{11}^*-a_{12}^*,\lambda)}{\lambda}
=\frac{\kappa_{e}(d_1,q_1,a_{11}^*-a_{12}^*,\lambda_+^0)}{\lambda_+^0}.
\end{equation*}
For any given $c\ge c_+^0$,
let $(\phi_1^c(x),\phi_2^c(x))$ be the positive periodic eigenfunction
associated with $\kappa_{e}(d_1,q_1,a_{11}^*-a_{12}^*,\lambda_c)$ given by Lemma \ref{P-EFunc}.
We introduce assumptions (A3)-(A6) as follows.

\begin{description}
\item[(A3)] $a_{11}u_1^*> a_{12}u_2^*$ and $a_{22}u_2^*>a_{21}u_1^*$.
\item[(A4)]
$\kappa_e(d_1,q_1,a_{11}^*-a_{12}^*,\lambda_+^0)>\kappa_e(d_2,q_2,-a_{22}^*,\lambda_+^0)$.
\item[(A5)] $\frac{u_1^*(x)\phi_1^c(x)}{u_2^*(x)\phi_2^c(x)}
\ge\max\left\{\frac{a_{12}(x)}{a_{11}(x)},\frac{a_{22}(x)}{a_{21}(x)}\right\}$,
$\forall\;x\in\mathbb R^N$, $c\ge c_+^0$.

\item[(A6)] $c_{\bm\nu}^-(e)+c_{\bm\nu}^+(e)>0$, where $\bm\nu=(0,1)$ and
$$c_{\bm\nu}^-(e)=\inf_{\lambda>0}\frac{\kappa_e(d_1,q_1,a_{11}^*,\lambda)}{\lambda},\qquad
c_{\bm\nu}^+(e)=\inf_{\lambda>0}\frac{\kappa_e(d_2,q_2,a_{22}^*,-\lambda)}{\lambda}.$$
\end{description}

It is not difficult to verify that all assumptions of (H1)-(H8), (C) and (D) hold true
for system \eqref{LV-cooper} under assumptions (A1)-(A6). By Theorems \ref{Asy}-\ref{stability},
we have the following results.

\begin{theorem}
Assume (A1)-(A6). Then the following statements are valid:
\begin{itemize}
\item[(1)] For any $c\ge c_+^0(e)$, system \eqref{LV-cooper} admits a pulsating traveling front
$(U_1(x,ct-x\cdot e),U_2(x,ct-x\cdot e))$ connecting $(0,0)$ to $(1,1)$,
and for any $c<c_+^0(e)$, there is no such a front.

\item[(2)] Let $(U_1(x,ct-x\cdot e),U_2(x,ct-x\cdot e))$ be a pulsating traveling front of \eqref{LV-cooper}. Then there exists $\rho>0$ such that
\begin{equation*}
\mathop{\lim}\limits_{s\to-\infty}\frac{U_1(x,s)}{\rho|s|^\tau e^{\lambda_c s}\phi_1^c(x)}=1,\quad
\mathop{\lim}\limits_{s\to-\infty}\frac{U_2(x,s)}{\rho|s|^\tau e^{\lambda_c s}\phi_2^c(x)}=1
\text{~~uniformly in~~}x\in\mathbb R^N,
\end{equation*}
where $\tau=0$ if $c>c_+^0(e)$ and $\tau=1$ if $c=c_+^0(e)$.

\item[(3)] If $(V_1(x,ct-x\cdot e),V_2(x,ct-x\cdot e))$ is a pulsating traveling front of \eqref{LV-cooper}, then there exists $z_0\in\mathbb R$ such that
$$(U_1(x,s+z_0),U_2(x,s+z_0))=(V_1(x,s),V_2(x,s)),
\quad\forall\;(x,s)\in\mathbb R^N\times\mathbb R.$$

\item[(4)] Let $(u_1(t,x;u_{01},u_{02}),u_2(t,x;u_{01},u_{02}))$
be a solution of \eqref{LV-cooper} with $(0,0)<(u_{01},u_{02})<(1,1)$ satisfying
\eqref{u0v0-main} and \eqref{u0v0-behavior-main}. Then there exists $s_0\in\mathbb R$ such that
\begin{align*}
\lim_{t\to\infty}&\left\{\sup_{x\in\mathbb R^N}|u_1(t,x;u_{01},u_{02})-U_1(x,ct-x\cdot e+s_0)| \right .\\
&\quad\left . +\sup_{x\in\mathbb R^N}|u_2(t,x;u_{01},u_{02})-U_2(x,ct-x\cdot e+s_0)|\right\}=0.
\end{align*}
\end{itemize}
\end{theorem}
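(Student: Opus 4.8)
The plan is to reduce the four assertions to the machinery already in place: once hypotheses (A1)--(A6) are shown to imply the standing assumptions (H1)--(H8), (C) and (D) for the cooperative system \eqref{LV-cooper} (with $m=2$), assertions (1)--(4) are, respectively, the existence part of Theorem~\ref{Ex-PTW} together with the linear determinacy of Theorem~\ref{line-det}, Theorem~\ref{Asy} written out in components, Theorem~\ref{uniqueness}, and Theorem~\ref{stability}, upon reading $\kappa_1(\lambda)=\kappa_e(d_1,q_1,a_{11}^*-a_{12}^*,\lambda)$, $\kappa_2(\lambda)=\kappa_e(d_2,q_2,-a_{22}^*,\lambda)$, $c_+^0(e)$ as defined just before the statement, $\lambda_c$ the smaller root of $\kappa_1(\lambda)=c\lambda$, and $\bm\Phi_{\lambda_c}=(\phi_1^c,\phi_2^c)$; note that \eqref{u0v0-main}--\eqref{u0v0-behavior-main} are precisely the conditions on the initial datum required by Theorem~\ref{stability}. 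So the whole proof consists in checking the hypotheses for \eqref{LV-cooper}.

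The ``algebraic'' hypotheses are immediate from the explicit forms $f_1=u_1h_1$, $f_2=a_{21}^*u_1+u_2h_2$. Since $a_{21}^*(x)=a_{21}(x)u_1^*(x)>0$ and $h_2(x,\bm0)=-a_{22}^*(x)<0$, (H1) holds; putting $\bm u=\bm1$ gives $\bm F(x,\bm1)\equiv\bm0$, and for $\bm\nu\in E\setminus\{\bm1\}=\{\bm0,(0,1)\}$ the map $u\mapsto h_1^{\bm\nu}(x,u)$ is affine with negative slope $-a_{11}^*(x)$, so (H2) holds; $\partial_{u_2}f_1=a_{12}^*u_1\ge0$ and $\partial_{u_1}f_2=a_{21}^*(1-u_2)\ge0$ on $[\bm0,\bm1]$, giving (H3); (H6) is literally (A4); and $h_1^{(0,1)}(x,0)=a_{11}^*(x)>a_{11}^*(x)-a_{12}^*(x)=h_1^{\bm0}(x,0)$, giving (D).

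The substantive point is the spectral hypotheses, and the single tool is the conjugation identity: since $q_i=a_i+2d_i\nabla u_i^*/u_i^*$ with $d_i\Delta u_i^*+a_i\cdot\nabla u_i^*+u_i^*(b_i-a_{ii}u_i^*)=0$, multiplication by the periodic positive factor $u_i^*$ conjugates $L_e(d_i,q_i,\eta,\lambda)$ to $L_e(d_i,a_i,\eta+b_i-a_{ii}u_i^*,\lambda)$, whence $\kappa_e(d_i,q_i,\eta,\lambda)=\kappa_e(d_i,a_i,\eta+b_i-a_{ii}u_i^*,\lambda)$ for all periodic $\eta$ and all $\lambda$ (in particular $\lambda_0(d_i,q_i,\eta)=\lambda_0(d_i,a_i,\eta+b_i-a_{ii}u_i^*)$). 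Thus (H4) reads $\lambda_0(d_1,q_1,a_{11}^*-a_{12}^*)=\lambda_0(d_1,a_1,b_1-a_{12}u_2^*)>0$, which is (A1); and (C1) reads $\lambda_0(d_2,q_2,a_{22}^*)=\lambda_0(d_2,a_2,b_2)>0$, again (A1). For (H8) one notes $\partial_{u_1}f_2(x,\bm1)=0$, so the linearization at $\bm1$ is triangular with principal eigenvalue $\mu^-=\max\{\lambda_0(d_1,q_1,-a_{11}^*),\lambda_0(d_2,q_2,a_{22}^*-a_{21}^*)\}$; the first equals $\lambda_0(d_1,a_1,b_1-2a_{11}u_1^*)<\lambda_0(d_1,a_1,b_1-a_{11}u_1^*)=0$, and the second equals $\lambda_0(d_2,a_2,b_2-a_{21}u_1^*)$, which is negative by (A3) (recall $\lambda_0(d_2,a_2,b_2-a_{22}u_2^*)=0$), so $\mu^-<0$, the positive periodic eigenfunction being assembled from the coupling term $a_{12}^*\psi_2$ as in the proof of Lemma~\ref{P-EFunc}. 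For (H5), pulling back the change of variables, $\bm0\ll\bm u_0\le\bm1$ corresponds to $0<u_1(0,\cdot)\le u_1^*$ and $0\le u_2(0,\cdot)<u_2^*$, and (A1)--(A2) together with \cite[Theorem 2.1]{Yu2017} give that $(u_1^*,0)$ attracts such data uniformly, i.e.\ the transformed solution tends to $\bm1$ uniformly. For the rest of (C): $r^{(0,1)}(x,w)\equiv a_{22}^*(x)>0$, so (C2) holds with equality, and since $h_1^{(0,1)}(\cdot,0)=a_{11}^*$ and $g(\cdot,(0,1))=a_{22}^*$, condition (C3) is exactly (A6). Finally (H7): a short computation gives $h_1(x,\bm w_c)-h_1(x,\bm0)=a_{12}^*w_{c,2}-a_{11}^*w_{c,1}$ and $h_2(x,\bm w_c)-h_2(x,\bm0)=a_{22}^*w_{c,2}-a_{21}^*w_{c,1}$, both $\le0$ as soon as $w_{c,1}/w_{c,2}\ge\max\{a_{12}^*/a_{11}^*,a_{22}^*/a_{21}^*\}$; for the explicit front-like linearized solution \eqref{Linear-sol} --- which, by Remark~\ref{remark-on-existence}(iii) and inspection of the proofs of Theorems~\ref{line-det}--\ref{stability}, is the only one that enters --- this ratio is $\phi_1^c(x)/\phi_2^c(x)$, and the inequality, after multiplying by $u_1^*(x)/u_2^*(x)$, is exactly (A5).

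Having verified (H1)--(H8), (C) and (D), assertions (1)--(4) follow verbatim from Theorems~\ref{Ex-PTW}--\ref{stability}. I expect the main obstacle to be the spectral bookkeeping of the third paragraph --- tracking, via the conjugation identity, which periodic principal eigenvalue governs the instability of $\bm0$, the stability of $\bm1$, and the speeds $c_{\bm\nu}^\pm(e)$ entering (C3) --- with (H8) the most delicate point, since it also calls on the triangular structure of the linearization at $\bm1$ and the Lemma~\ref{P-EFunc}-type construction producing a componentwise-positive eigenfunction.
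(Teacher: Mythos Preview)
Your approach is exactly the paper's: the paper gives no proof beyond the one sentence preceding the theorem (``It is not difficult to verify that all assumptions of (H1)--(H8), (C) and (D) hold true for system \eqref{LV-cooper} under assumptions (A1)--(A6)''), and then invokes Theorems~\ref{Ex-PTW}--\ref{stability}. You carry out that verification in detail --- the conjugation identity is the right tool, and the checks of (H1)--(H6), (C), (D) are clean.

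Two cautions on the details. First, in your (H8) step you conclude $\lambda_0(d_2,a_2,b_2-a_{21}u_1^*)<0$ ``by (A3)''; but (A3) \emph{as printed} says $a_{22}u_2^*>a_{21}u_1^*$, and monotonicity of the principal eigenvalue in the potential (together with $\lambda_0(d_2,a_2,b_2-a_{22}u_2^*)=0$) then gives the \emph{opposite} sign. For $(u_1^*,0)$ to be linearly stable --- which the whole monostable picture and (H5), (H8) require --- one needs $a_{21}u_1^*>a_{22}u_2^*$, so the second inequality in (A3) is almost certainly a misprint; your argument is correct once that is corrected. Second, the claim that only the explicit solution \eqref{Linear-sol} enters (H7) is not quite right: Lemma~\ref{super*} (used implicitly via Lemma~\ref{low*-upp*} in the critical asymptotics) applies (H7) to $ke^{\lambda_*s}\big((|s|+n)\bm\Phi_{\lambda_*}-\bm\Phi_{\lambda_*}^{(1)}\big)$, whose component ratio is not identically $\phi_{1,*}/\phi_{2,*}$. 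Since $n$ is a free large parameter this is harmless when (A5) holds with some slack, but (A5) with equality does not literally cover it; this is a loose end in the paper's own framework rather than in your reduction.
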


The rest of the paper is organized as follows.
In section \ref{Pre}, we provide some preliminary lemmas that will be used in the following section.
In section \ref{AS}, we establish the exact asymptotic behavior of pulsating traveling fronts
near their unstable limiting state.
In section \ref{UNI}, we are devoted to the proof of the uniqueness of pulsating traveling fronts.
Section \ref{STA} focuses on the globally stability of pulsating traveling fronts.

\section{Preliminaries}\label{Pre}

In this section, we give some preliminary lemmas that will be used in the following.

\begin{lemma}\label{Harnack}
Assume (H1)-(H5). Let $\bm{u}(t,x)=U(x,ct-x\cdot e)$ be a pulsating traveling front of \eqref{m-system}.
Then for any fixed $r>0$, there exists $N_r>0$ such that
\begin{equation}\label{Har-ineq}
\mathop{\sup}\limits_{z\in I_{r/4}(s)}U(x,z)\le N_r\mathop{\inf}\limits_{z\in I_{r/4}(s)}U(x^\prime,z),\quad\forall\; x, x^\prime\in\mathbb R^N,\ \forall\;s\in\mathbb R,
\end{equation}
where $I_{r/2}(s):=(s-\frac{r}{2},s+\frac{r}{2})$,
and $N_r>0$ is a constant independent of $U$.
\end{lemma}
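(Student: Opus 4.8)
\textbf{Proof proposal for Lemma~\ref{Harnack}.}
The plan is to translate the desired two-variable Harnack inequality for the profile $U(x,z)$ into a classical parabolic Harnack inequality for the full space-time solution $\bm u(t,x)=U(x,ct-x\cdot e)$, exploiting the periodicity relation \eqref{uv-P} to obtain constants that are uniform in $(x,s)$. First I would fix $r>0$ and observe that, since $U$ is nondecreasing in its second argument and $\bm u(t,x)=U(x,ct-x\cdot e)$, controlling $\sup_{z\in I_{r/4}(s)}U(x,z)$ and $\inf_{z\in I_{r/4}(s)}U(x',z)$ amounts to controlling the values of $\bm u$ on two small space-time cylinders. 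Concretely, for a given $(x,s)$ pick the time $t$ with $ct-x\cdot e=s$; then $U(x,z)$ for $z\in I_{r/4}(s)$ equals $\bm u$ evaluated along a short time interval around $t$ at the spatial point $x$, and similarly $U(x',z)$ corresponds to $\bm u$ near the space-time point determined by $x'$ and $z$. The key reduction is that because the coefficients are $L$-periodic and, by \eqref{uv-P}, $\bm u(t-p\cdot e/c,x)=\bm u(t,x+p)$, we may translate $x$ and $x'$ back into the bounded cell $\mathcal D$ (modulo $\mathcal L$) at the cost of a bounded shift in $t$; hence it suffices to prove the estimate for $x,x'$ ranging over a fixed compact set and $s$ in a fixed bounded interval, where a uniform constant exists by compactness.

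Next I would set up the Harnack chain. Each component $u_i$ of $\bm u$ satisfies a scalar linear parabolic equation: from \eqref{m-system} and \textbf{(H1)}, $\partial_t u_i = d_i(x)\Delta u_i + q_i(x)\cdot\nabla u_i + c_i(t,x)u_i + (\text{nonnegative coupling terms})$, where $c_i(t,x):=h_i(x,\bm u(t,x))$ is bounded (since $\bm u\in[\bm 0,\bm 1]$ and $h_i$ is continuous), and for $i\ge 2$ the coupling contribution $\sum_{j<i}a_{ij}(x)u_j\ge 0$. Treating this lower-order nonnegative term as part of a nonnegative right-hand side, the parabolic Harnack inequality (e.g. Krylov–Safonov / Moser for divergence or nondivergence form, applied on suitable space-time cylinders) gives, for each $i$, a constant depending only on $d_0$, the $C^\nu$ bounds of the coefficients, the sup-norm of the $c_i$, and the geometry of the cylinders — all of which are uniform — such that the supremum of $u_i$ on an inner cylinder is bounded by a constant times its infimum on a slightly larger cylinder. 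Chaining finitely many such inequalities along a path in space-time connecting the two relevant cylinders (the number of links and their size being controlled once $x,x'$ are in a fixed compact set), and then summing over $i\in I$ using the norm $|\bm u|=\sum_i|u_i|$, yields \eqref{Har-ineq} with $N_r$ depending only on $r$ and the fixed data of the problem, hence independent of the particular front $U$.

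One technical point to handle carefully is that $U$ need not be bounded away from $\bm 0$: as $s\to-\infty$ the front decays, and near $\bm 0$ some components could in principle be much smaller than others. This is exactly where the cooperative coupling in \textbf{(H1)} is used: the terms $a_{ij}(x)u_j$ with $a_{ij}>0$ for some $j<i$ force $u_i$ to be comparable to the lower-indexed components, preventing the ratio $\sup U_i/\inf U_j$ from blowing up; combined with the componentwise Harnack inequality this gives the unified estimate for $|U|$. The main obstacle I anticipate is precisely making the constant $N_r$ genuinely independent of $U$ and of $(x,s)$: one must verify that, after using periodicity to reduce to a compact range of parameters, the length and number of links in the Harnack chain, as well as the coefficient bounds entering Krylov–Safonov, are controlled by $r$ and the standing data alone. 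Since the front always takes values in the fixed compact set $[\bm 0,\bm 1]$ and the coefficients are fixed $L$-periodic functions, this uniformity does go through, but it requires stating the reduction-to-the-cell step explicitly rather than appealing to the Harnack inequality naively on all of $\mathbb R^N$.
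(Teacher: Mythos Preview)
Your overall strategy---reduce via periodicity to $x,x'$ in a fixed compact cell, then apply a parabolic Harnack inequality on a fixed space-time cylinder---matches the paper's. The paper carries this out in a few lines by writing $\bm F(x,\bm u)=\bigl(\int_0^1 D_{\bm u}\bm F(x,s\bm u)\,ds\bigr)\bm u$ (using $\bm F(x,\bm 0)=\bm 0$), noting that the resulting matrix is cooperative by (H3), and then invoking a Harnack inequality for \emph{cooperative parabolic systems} (F\"oldes--Pol\'a\v{c}ik, \cite{Foldes2009}, Lemma~3.6). That result directly yields $\sup U\le N_r\inf U$ for the full vector, with no componentwise bookkeeping.

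Your componentwise route, by contrast, has a real gap at the step where you claim that ``the supremum of $u_i$ on an inner cylinder is bounded by a constant times its infimum on a slightly larger cylinder.'' Once you treat the coupling $\sum_{j<i}a_{ij}u_j\ge 0$ as a nonnegative right-hand side, $u_i$ is only a nonnegative \emph{supersolution} of the scalar equation $\partial_t u_i-d_i\Delta u_i-q_i\cdot\nabla u_i-c_iu_i=0$; the weak Harnack inequality then controls $\inf u_i$ from below, but the full two-sided Harnack $\sup u_i\le C\inf u_i$ requires $u_i$ to be an actual solution (or gives instead $\sup u_i\le C\inf u_i + C\|\text{source}\|$, which is useless when $u_i$ is small and the source dominates). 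Your appeal to the coupling to ``force $u_i$ to be comparable to the lower-indexed components'' is exactly what needs to be proved, and doing so rigorously essentially reproduces the system Harnack argument of \cite{Foldes2009}. The fix is simply to cite that system result directly, as the paper does; then the last paragraph of your proposal becomes unnecessary.
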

\begin{proof}
We only prove for $x$, $x^\prime\in\overline{\mathcal D}$ since $U(\cdot,s)$ is periodic for each $s$. For any fixed $r>0$, let $\gamma_r=\frac{|s|+r+|L|}{|c|}$, then
there exist $\theta_r>0$ and $p_r$, $p_r^\prime\in\mathcal L$ such that
$$2\gamma_r+\theta_r\le\frac{p_r\cdot e}{c}\le2\theta_r,\quad
2\gamma_r+3\theta_r\le\frac{p_r^\prime\cdot e}{c}\le4\theta_r.$$
It is easy to verify that for any $z$, $z^\prime\in I_{r/2}(s)$ and $x$, $x^\prime\in\overline{\mathcal D}$,
$$\gamma_r+\theta_r\le t:=\frac{z+x\cdot e+p_r\cdot e}{c}\le\gamma_r+2\theta_r,\quad
\gamma_r+3\theta_r\le t^\prime:=\frac{z^\prime+x^\prime\cdot e+p_r^\prime\cdot e}{c}
\le\gamma_r+4\theta_r.$$
Let $D=B(O,R)$ be the ball in $\mathbb R^N$ centered at $O$ with radius
$R=|L|+|p_r|+|p_r^\prime|$. Noting that $\bm F(x,\bm 0)=\bm 0$, then
\begin{equation*}
\frac{\partial\bm{u}(t,x)}{\partial t}=D(x)\Delta\bm{u}+q(x)\cdot\nabla\bm{u}
+\left(\int_0^1{D_{\bm u}\bm F(x,s\bm u)ds}\right)\bm u,
\end{equation*}
where $D_{\bm u}\bm F$ is a cooperative matrix.
It then follows from the Harnack type inequalities for cooperative parabolic systems
(see, e.g., \cite[Lemma 3.6]{Foldes2009}) and \eqref{uv-P}
that there exists $N_r>0$ independent of $\bm u$ such that
\begin{align*}
\mathop{\sup}\limits_{z\in I_{r/2}(s),\ x\in\mathbb R^N}U(x,z)
&=\mathop{\sup}\limits_{z\in I_{r/2}(s),\ x\in\overline{\mathcal D}}
\bm u\left(\frac{z+(x+p_r)\cdot e}{c},x+p_r\right)\\
&\le\mathop{\sup}\limits_{(t,x)\in[\gamma_r+\theta_r,\gamma_r+2\theta_r]\times\overline{D}}
\bm u\left(t,x\right)\\
&\le N_r\mathop{\inf}\limits_{(t^\prime,x^\prime)\in[\gamma_r+3\theta_r,\gamma_r+4\theta_r]
\times\overline{D}}\bm u\left(t^\prime,x^\prime\right)\\
&\le N_r\mathop{\inf}\limits_{z^\prime\in I_{r/2}(s),\ x^\prime\in\overline{\mathcal D}}
\bm u\left(\frac{z^\prime+(x^\prime+p_r^\prime)\cdot e}{c},x^\prime+p_r^\prime\right)\\
&=N_r\mathop{\inf}\limits_{z^\prime\in I_{r/2}(s),\ x^\prime\in\overline{\mathcal D}}
U(x^\prime,z^\prime)\\
&=N_r\mathop{\inf}\limits_{z^\prime\in I_{r/2}(s),\ x^\prime\in\mathbb R^N}U(x^\prime,z^\prime)
,\ \ \ \forall\;s\in\mathbb R.
\end{align*}
The proof is complete.
\end{proof}

\begin{lemma}\label{UcpV}
Assume (H1)-(H5). Let
$U(x,ct-x\cdot e)=(U_1(x,ct-x\cdot e),\cdots,U_m(x,ct-x\cdot e))\textcolor{blue}{^T}$
be a pulsating traveling front of \eqref{m-system}.
Then there exists $K_c>0$ such that
$$U_1(x,s)\le K_c\min_{i=2,\cdots,m}\{U_i(x,s)\},\quad\forall\;(x,s)\in\mathbb R^N\times\mathbb R.$$
\end{lemma}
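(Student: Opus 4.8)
The plan is to derive the bound one component at a time, moving up the cooperative chain, by exploiting the lower-triangular coupling in assumption (H1) together with the Harnack-type inequality of Lemma \ref{Harnack}. First I would reduce to a statement on a single period cell: since $U(\cdot,s)$ is $L$-periodic for each $s$, it suffices to establish the inequality for $x\in\overline{\mathcal D}$, and then the constant $K_c$ transfers to all of $\mathbb R^N$ by periodicity. The key structural fact is that, by (H1), for each $i\ge 2$ there exists $k\le i-1$ with $a_{ik}(x)\ge a_0>0$ on $\mathbb R^N$ (here $a_0=\min_{\overline{\mathcal D}}a_{ik}>0$ by continuity and periodicity). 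Writing the equation satisfied by $\bm u=U(x,ct-x\cdot e)$ in the form of Lemma \ref{Harnack} (using $\bm F(x,\bm 0)=\bm 0$), the $U_i$-equation reads
\begin{equation*}
\partial_t u_i = d_i(x)\Delta u_i + q_i(x)\cdot\nabla u_i + \sum_{j<i}a_{ij}(x)u_j + u_i\,g_i(t,x),
\end{equation*}
where $g_i$ is bounded uniformly (independently of the front, since $U$ takes values in $[\bm 0,\bm 1]$ and $h_i\in C^{\nu,2}$). In particular $u_i$ is a supersolution of a scalar equation with bounded zeroth-order coefficient and with nonnegative forcing term $a_{ik}(x)u_k\ge a_0\,u_k$.

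Next I would run the induction. For the base step, I want $U_1(x,s)\le K\,U_2(x,s)$. Fix a window $I_{r/4}(s)$ as in Lemma \ref{Harnack}; that lemma gives $\sup_{z\in I_{r/4}(s)}U_1(x',z)\le N_r\inf_{z\in I_{r/4}(s)}U_1(x,z)$ for all $x,x'$, and the same for any component. Now apply a parabolic Harnack inequality (lower bound) to $u_2$, viewed through the co-moving change of variables $(t,x)\mapsto(\tfrac{s+x\cdot e}{c},x)$ exactly as in the proof of Lemma \ref{Harnack}: because $u_2$ is a nonnegative supersolution with forcing $\ge a_0 u_1\ge 0$, one gets, after a fixed time-shift (chosen via suitable $p_r,p_r'\in\mathcal L$ as in Lemma \ref{Harnack}) a bound of the shape
\begin{equation*}
\inf_{z\in I_{r/4}(s)}U_2(x,z)\;\ge\; c_0\,a_0\,\sup_{z\in I_{r/4}(s)}U_1(x,z)
\end{equation*}
with $c_0>0$ depending only on the ellipticity constants, the $C^\nu$-bounds of the coefficients, $r$, and the geometry of $\mathcal D$ — crucially \emph{not} on the particular front $U$. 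Combining this with the Harnack inequality for $U_1$ from Lemma \ref{Harnack} yields $U_1(x,s)\le K_2\,U_2(x,s)$ for all $(x,s)$ with $K_2:=N_r/(c_0a_0)$. For the inductive step, suppose $U_1(x,s)\le K_{i-1}U_j(x,s)$ for all $j\le i-1$; choosing the index $k\le i-1$ with $a_{ik}\ge a_0>0$, the same Harnack argument applied to the $U_i$-equation (whose forcing term is $\ge a_0 U_k\ge (a_0/K_{i-1})U_1$) gives $U_1(x,s)\le K_i\,U_i(x,s)$. Taking $K_c:=\max_{i=2,\dots,m}K_i$ completes the proof.

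The main obstacle is making the lower Harnack estimate for $u_i$ genuinely uniform in the front $U$ and, simultaneously, extracting the comparison against $U_1$ rather than merely against $u_i$ itself. The clean way around this is precisely the device already used in Lemma \ref{Harnack}: transport to a fixed compact parabolic cylinder in $(t,x)$ via the shift relation \eqref{uv-P}, where the interior Harnack inequality for scalar parabolic equations (with the nonnegative inhomogeneity $a_{ik}(x)u_k$ retained on the right-hand side) applies with constants depending only on the fixed data. One must also check that the forcing term can be propagated down to $U_1$: this is exactly where the hypothesis in (H1) that the lower-triangular coupling is "connected" (for every $i\ge 2$ there is a path of positive couplings reaching index $1$) is used — the induction walks down this path. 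A secondary, purely bookkeeping point is that the windows $I_{r/4}(s)$ and the lattice translations $p_r,p_r'$ depend on $s$ as in Lemma \ref{Harnack}, but the resulting constant $N_r$ (and likewise $c_0$) does not, so the final $K_c$ is $s$-independent as claimed.
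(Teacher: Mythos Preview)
Your approach is sound and genuinely different from the paper's. The paper argues by contradiction and compactness: it first disposes of the regions $s\to+\infty$ (where $U\to\bm 1$) and $s$ bounded (strict positivity), and then, assuming $\inf_x U_i(x,s_n)/U_1(x,s_n)\to 0$ along some $s_n\to-\infty$, it renormalizes by $U_1(x_n,s_n)$, uses Lemma~\ref{Harnack} for local boundedness, passes to a limit via parabolic estimates, and obtains a nontrivial $u_1^\infty>0$ together with $u_i^\infty\equiv 0$ satisfying the system linearized at $\bm 0$; the strictly positive coupling $a_{ik}>0$ from (H1) then forces $a_{ik}u_k^\infty\equiv 0$, a contradiction. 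The induction on $i$ is the same as yours.

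Your route replaces this soft argument by a direct quantitative estimate: after absorbing the bounded potential $h_i(x,\bm u)$ into the scalar operator, $u_i$ is a nonnegative supersolution of an inhomogeneous equation with source $\ge a_0 u_k$, and Duhamel together with Gaussian lower bounds on the fundamental solution, transported to a fixed cylinder via the lattice shifts of Lemma~\ref{Harnack}, gives $U_i\gtrsim U_k$. This yields an explicit constant, even uniform over all fronts valued in $[\bm 0,\bm 1]$, whereas the paper's constant is obtained implicitly for the given front. One point to tighten: what you invoke as ``a parabolic Harnack inequality (lower bound) with forcing'' is not a standard named inequality but the combination of the Duhamel representation with Aronson-type two-sided Gaussian bounds on the fundamental solution of $\partial_t-d_i\Delta-q_i\cdot\nabla-h_i$; writing this out explicitly on the fixed parabolic cylinder (and checking that the bounds depend only on $\|h_i(\cdot,\bm u)\|_\infty$, hence only on the structural data since $\bm u\in[\bm 0,\bm 1]$) would make the step airtight.
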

\begin{proof}
Noting that $\mathop{\lim}\limits_{s\to+\infty}U(x,s)=\bm 1$ uniformly in $x\in\mathbb R^N$,
then there exist $K_1>0$ and $M_1>0$ such that for each $i=2,\cdots,m$,
one has $U_1(x,s)\le K_1U_i(x,s)$ for any $(x,s)\in\mathbb R^N\times[M_1,\infty)$.
Since for each $x\in\mathbb R^N$ and $i=2,\cdots,m$, there exists $\alpha>0$ such that
$U_i(x,\cdot)\ge\alpha>0$ on any compact subset of $\mathbb R$,
it suffices to prove that there exist $-M_2<0$ and $\sigma>0$ such that
\begin{equation}\label{toprove}
\inf_{x\in\mathbb R^N}\frac{U_i(x,s)}{U_1(x,s)}>\sigma,
\quad\forall\;s\le-M_2,\quad i=2,\cdots,m.
\end{equation}
We first prove for the case $i=2$, that is
\begin{equation}\label{induction-1}
\inf_{x\in\mathbb R^N}\frac{U_2(x,s)}{U_1(x,s)}>\sigma,\quad\forall\;s\le-M_2.
\end{equation}
Assume to the contrary that there exists a sequence $\{(x_n,s_n)\}_{n\in\mathbb{N}}$ such that
$$s_n\to-\infty\ \ (n\to\infty),\quad\lim_{n\to\infty}\frac{U_2(x_n,s_n)}{U_1(x_n,s_n)}=0.$$
Let
\begin{align*}
u_1^n(t,x)&=\frac{u_1(t+\frac{s_n}{c},x)}{u_1(\frac{s_n+x_n\cdot e}{c},x_n)}
=\frac{U_1(x,ct-x\cdot e+s_n)}{U_1(x_n,s_n)},\\
u_2^n(t,x)&=\frac{u_2(t+\frac{s_n}{c},x)}{u_1(\frac{s_n+x_n\cdot e}{c},x_n)}
=\frac{U_2(x,ct-x\cdot e+s_n)}{U_1(x_n,s_n)}.
\end{align*}
Observe that
\begin{equation*}
u_2^n(t,x)=\frac{U_2(x,ct-x\cdot e+s_n)}{U_2(x_n,s_n)}\cdot\frac{U_2(x_n,s_n)}{U_1(x_n,s_n)}.
\end{equation*}
It then follows from Lemma \ref{Harnack} that $\{u_1^n\}_{n\in\mathbb N}$ and $\{u_2^n\}_{n\in\mathbb N}$ are locally bounded in
$\mathbb R\times\mathbb R^N$, and in particular,
$\mathop{\lim}\limits_{n\to\infty}u_2^n(t,x)=0$ locally uniformly in $\mathbb R\times\mathbb R^N$.
By a direct calculation, we have
\begin{equation*}
\begin{cases}
\frac{\partial u_1^n(t,x)}{\partial t}=d_1(x)\Delta u_1^n+q_1(x)\cdot\nabla u_1^n
+h_1(x,\bm u(t+\frac{s_n}{c},x))u_1^n,\\
\frac{\partial u_2^n(t,x)}{\partial t}=d_2(x)\Delta u_2^n+q_2(x)\cdot\nabla u_2^n
+a_{21}u_1^n+h_2(x,\bm u(t+\frac{s_n}{c},x))u_2^n.
\end{cases}
\end{equation*}
Note that
$$\mathop{\lim}\limits_{n\to\infty}\bm u\left(t+\frac{s_n}{c},x\right)=
\mathop{\lim}\limits_{n\to\infty}U(x,ct-x\cdot e+s_n)=\bm 0
\quad\text{locally uniformly in}\ (t,x)\in\mathbb R\times\mathbb R^N.$$
By the standard parabolic estimates and up to an extraction of subsequence,
$\{(u_1^n,u_2^n)\}_{n\in\mathbb N}$ converges to some $(u_1^\infty,u_2^\infty)\ge(0,0)$ locally uniformly in
$\mathbb R\times\mathbb R^N$, and
\begin{equation}\label{uv-infinity}
\begin{cases}
\frac{\partial u_1^\infty(t,x)}{\partial t}=d_1(x)\Delta u_1^\infty+q_1(x)\cdot\nabla u_1^\infty
+h_1(x,\bm 0)u_1^\infty,\\
\frac{\partial u_2^\infty(t,x)}{\partial t}=d_2(x)\Delta u_2^\infty+q_2(x)\cdot\nabla u_2^\infty
+a_{21}u_1^\infty+h_2(x,\bm 0)u_2^\infty.
\end{cases}
\end{equation}
Since $U(\cdot,s)$ is periodic, we may assume without loss of generality
that $x_n\in\overline{\mathcal D}$ such that $x_n\to x_\infty$ as $n\to\infty$.
Then it is easy to see that $u_1^\infty\left(\frac{x_\infty\cdot e}{c},x_\infty\right)=1$,
and hence $u_1^\infty>0$ in $\mathbb R\times\mathbb R^N$ by the maximum principle.
On the other hand, since $u_2^\infty(t,x)=\mathop{\lim}\limits_{n\to\infty}u_2^n(t,x)=0$,
the second equation in \eqref{uv-infinity} then shows that
$a_{21}u_1^\infty\equiv0$ in any compact set of $\mathbb R\times\mathbb R^N$,
which is a contradiction since $a_{21}(x)>0$ for any $x\in\mathbb{R}^N$ by (H1).
Therefore \eqref{induction-1} holds.

Suppose now that \eqref{toprove} hold for all $i\le k-1$, where $3\le k\le m$.
By (H1), there exists $l\le k-1$ such that $a_{kl}(x)>0$ for any $x\in\mathbb{R}^N$.
Next we prove that
\begin{equation}\label{induction-k-i}
\inf_{x\in\mathbb R^N}\frac{U_k(x,s)}{U_l(x,s)}>\sigma,\quad\forall\;s\le-M_2.
\end{equation}
Assume to the contrary that there exists $\{(y_n,z_n)\}_{n\in\mathbb N}$ such that
$$y_n\to y_\infty\in\overline{\mathcal D},\ \ z_n\to-\infty\ \ (n\to\infty),\quad\lim_{n\to\infty}\frac{U_k(y_n,z_n)}{U_l(y_n,z_n)}=0.$$
Let
\begin{align*}
u_j^n(t,x)=\frac{u_j(t+\frac{z_n}{c},x)}{u_l(\frac{z_n+y_n\cdot e}{c},y_n)}
=\frac{U_j(x,ct-x\cdot e+z_n)}{U_l(y_n,z_n)},\quad j=1,2,\cdots,k.
\end{align*}
Noting that
\begin{equation*}
u_k^n(t,x)=\frac{U_k(x,ct-x\cdot e+z_n)}{U_k(y_n,z_n)}\cdot\frac{U_k(y_n,z_n)}{U_l(y_n,z_n)}
\end{equation*}
and
$\mathop{\lim}\limits_{n\to\infty}u_k^n(t,x)=0$ locally uniformly in $\mathbb R\times\mathbb R^N$.
Moreover, a direct calculation shows that
\begin{equation*}
\frac{\partial u_j^n(t,x)}{\partial t}=d_j(x)\Delta u_j^n+q_j(x)\cdot\nabla u_j^n
+\sum_{p=1}^{j-1}a_{jp}u_p^n+h_j(x,\bm u(t+\frac{z_n}{c},x))u_j^n,\quad j=1,2,\cdots,k.
\end{equation*}
By a similar argument as above, $\{(u_l^n,u_k^n)\}_{n\in\mathbb N}$ converges to some $(u_l^\infty,u_k^\infty)\ge(0,0)$ locally uniformly in
$\mathbb R\times\mathbb R^N$, and it follows from (H1) that
\begin{equation*}
\begin{cases}
\frac{\partial u_l^\infty(t,x)}{\partial t}\ge d_l(x)\Delta u_l^\infty+q_l(x)\cdot\nabla u_l^\infty
+h_l(x,\bm 0)u_l^\infty,\\
\frac{\partial u_k^\infty(t,x)}{\partial t}\ge d_k(x)\Delta u_k^\infty+q_k(x)\cdot\nabla u_k^\infty
+a_{kl}u_l^{\infty}+h_k(x,\bm 0)u_k^\infty.
\end{cases}
\end{equation*}
Notice that $u_l^\infty\left(\frac{y_\infty\cdot e}{c},y_\infty\right)=1$,
and hence $u_l^\infty(t,x)>0$ for any $(t,x)\in\mathbb{R}\times\mathbb{R}^N$  by the maximum principle.
On the other hand, since $u_k^\infty(t,x)=\mathop{\lim}\limits_{n\to\infty}u_k^n(t,x)=0$,
we must have $a_{kl}u_l^\infty\equiv0$ in any compact set of $\mathbb R\times\mathbb R^N$,
which is a contradiction since $a_{kl}(x)>0$ for any $x\in\mathbb{R}^N$.
Therefore \eqref{induction-k-i} holds, and it further follows from the assumption that
\begin{equation*}
\inf_{x\in\mathbb R^N}\frac{U_k(x,s)}{U_1(x,s)}>\sigma,\quad\forall\;s\le-M_2.
\end{equation*}
By using an induction argument, one can prove that \eqref{toprove} hold for all $i=2,3,\cdots,m$. The proof is complete.
\end{proof}

\begin{definition}
\begin{itemize}
\item[(i)] Let $D$ be an open and connected domain in $\mathbb R\times\mathbb R^N$.
A continuous function $\bm u$ is said to be a (regular) supersolution of \eqref{m-system} in $D$,
provided that
\begin{equation*}
\frac{\partial\bm{u}(t,x)}{\partial t}\ge D(x)\Delta\bm{u}+q(x)\cdot\nabla\bm{u}
+\bm{F}(x,\bm{u}),\quad (t,x)\in D.
\end{equation*}
It is called a (regular) subsolution if the above inequality is reversed.

\item[(ii)] A continuous function $\bm u$ is said to be an irregular
supersolution of \eqref{m-system}, if there exist regular supersolutions $\bm u_1$ and
$\bm u_2$ such that $\bm u=\min\{\bm u_1,\bm u_2\}$,
and it is called an irregular subsolution if there exist regular subsolutions
$\bm u_1$ and $\bm u_2$ such that $\bm u=\max\{\bm u_1,\bm u_2\}$.
\end{itemize}
\end{definition}

We give two comparison principles as follows.

\begin{lemma}\label{CP+}
Assume that $\underline{\bm u}(t,x)=\underline{U}(x,ct-x\cdot e)$
is a subsolution of \eqref{m-system} in $\mathbb R\times\mathbb R^N$ such that
$\underline{U}(x,s)$ is periodic in $x$ and $\bm 0\le\underline{\bm u}\ll\bm 1$, and
that $\min\{\bm w(t,x),\bm 1\}:=\overline{\bm u}(t,x)=\overline{U}(x,ct-x\cdot e)$ is
an irregular supersolution of \eqref{m-system} in $\mathbb R\times\mathbb R^N$,
where $\bm w(t,x)=W(x,ct-x\cdot e)$ is a supersolution of \eqref{m-system}
in $\Omega_{\hat{s}}^-$ with some $\hat{s}\le+\infty$, $\bm w>\bm 0$,
$W(x,s)$ is periodic in $x$ and nondecreasing in $s$,
and there is $\bar{\sigma}<\hat{s}$ such that $\overline{\bm u}(t,x)=\bm 1$
for any $(t,x)\in\Omega_{\bar{\sigma}}^+$.
If there exists $\sigma<\bar{\sigma}$ such that
$\underline{U}(x,\sigma)\ll\overline{U}(x,\sigma)$ for all $x\in\mathbb R^N$,
then
$$\underline{U}(x,s)\ll\overline{U}(x,s),\quad\forall\;(x,s)\in\mathbb R^N\times[\sigma,\infty).$$
\end{lemma}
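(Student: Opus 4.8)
The plan is to argue by a sliding/comparison method, exploiting the monotonicity of both $\underline U$ and $\overline U$ in the co-moving variable $s=ct-x\cdot e$ together with the cooperativity of $\bm F$. First I would reduce everything to the profile variables: writing $\underline{\bm u}(t,x)=\underline U(x,ct-x\cdot e)$ and $\overline{\bm u}(t,x)=\overline U(x,ct-x\cdot e)$, the claim $\underline U(x,s)\ll\overline U(x,s)$ for all $(x,s)\in\mathbb R^N\times[\sigma,\infty)$ is equivalent to $\underline{\bm u}\ll\overline{\bm u}$ on $\Omega_\sigma^+$. For $s\ge\bar\sigma$ there is nothing to prove, since $\overline{\bm u}\equiv\bm 1$ on $\Omega_{\bar\sigma}^+$ while $\underline{\bm u}\ll\bm 1$ by hypothesis; so the real content is on the strip $\Omega_\sigma^{\bar\sigma}$, a bounded region in the $s$-variable (but not in $(t,x)$). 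Because $\underline U(x,\sigma)\ll\overline U(x,\sigma)$ holds on the periodic cell $\overline{\mathcal D}$, compactness gives $\delta_0>0$ with $\underline U(x,\sigma)+\delta_0\bm 1\le\overline U(x,\sigma)$ for all $x$, i.e. the ordering at the left edge of the strip has a uniform gap.

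Next I would introduce the sliding parameter. For $\xi\ge 0$ consider the shifted subsolution $\underline U^\xi(x,s):=\underline U(x,s-\xi)$, which is again a subsolution (the equation is autonomous in $t$, equivalently translation-invariant in $s$), and monotone in $s$. For $\xi$ large, $\underline U^\xi(x,s)\le\underline U(x,\sigma)$ on $\Omega_\sigma^{\bar\sigma}$ provided $s-\xi\le\sigma$ there, i.e. $\xi\ge\bar\sigma-\sigma$; then on the strip $\underline U^\xi\le\underline U(\cdot,\sigma)\ll\overline U(\cdot,\sigma)\le\overline U$ (the last step using monotonicity of $\overline U$ in $s$ and $s\ge\sigma$), and on $\Omega_{\bar\sigma}^+$ we have $\underline U^\xi\le\bm 1=\overline U$ with strict inequality off the set where $\overline U=\bm 1$; a little care near $s=\bar\sigma$ together with $\underline{\bm u}\ll\bm 1$ gives strict ordering $\underline{\bm u}^\xi\ll\overline{\bm u}$ on all of $\Omega_\sigma^+$ for $\xi$ large. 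Now decrease $\xi$ and set
\[
\xi^*:=\inf\{\xi\ge 0:\ \underline U^{\xi'}(x,s)\le\overline U(x,s)\ \text{for all }(x,s)\in\mathbb R^N\times[\sigma,\infty),\ \forall\,\xi'\ge\xi\}.
\]
The goal is $\xi^*=0$, which yields $\underline U(x,s)\le\overline U(x,s)$ on $\Omega_\sigma^+$, and then the strong maximum principle for cooperative parabolic systems upgrades $\le$ to $\ll$ on $\mathbb R^N\times(\sigma,\infty)$ and, via the gap at $s=\sigma$, on $\{s=\sigma\}$ as well.

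The main obstacle is ruling out $\xi^*>0$. Suppose $\xi^*>0$. By continuity $\underline U^{\xi^*}(x,s)\le\overline U(x,s)$ on the closed strip. The contact set cannot touch the left edge $s=\sigma$ (the uniform gap $\delta_0$ persists under small shifts, since $\underline U$ is uniformly continuous by parabolic estimates) nor the region $\Omega_{\bar\sigma}^+$ where $\overline U=\bm 1$ while $\underline{\bm u}^{\xi^*}\ll\bm 1$. Hence either (a) there is an interior contact point $(t_0,x_0)\in\Omega_\sigma^{\bar\sigma}$ with $\underline u_i^{\xi^*}(t_0,x_0)=\overline u_i(t_0,x_0)$ for some $i$ and the other components ordered $\le$; since $\underline{\bm u}^{\xi^*}$ is a subsolution and $\overline{\bm u}=\min\{\bm w,\bm 1\}$ an irregular supersolution — locally either $\equiv\bm 1$ (excluded) or $=\bm w$, a genuine supersolution — the difference $\bm w-\underline{\bm u}^{\xi^*}\ge\bm 0$ satisfies, componentwise, a cooperative parabolic differential inequality, so the strong maximum principle forces $w_i\equiv\underline u_i^{\xi^*}$ in a backward neighborhood, and propagating this (again using cooperativity, and the coupling through the $a_{ij}$'s as in Lemma \ref{UcpV}) contradicts the ordering at $s=\sigma$; or (b) the contact is ``at infinity,'' i.e. there are points $(t_n,x_n)$ with $\inf_i(\overline u_i-\underline u_i^{\xi^*})(t_n,x_n)\to 0$, $s_n:=ct_n-x_n\cdot e\in[\sigma,\bar\sigma]$, $x_n\to x_\infty\in\overline{\mathcal D}$, $s_n\to s_\infty$. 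Passing to the limit along translates (using periodicity of the coefficients and of the profiles, parabolic compactness, and monotonicity in $s$), one obtains limiting sub/supersolutions $\underline U^\infty\le\overline U^\infty$ with equality of some component at an interior point of the strip, and one is back in case (a). Either way we reach a contradiction, so $\xi^*=0$, completing the proof. I would expect the delicate bookkeeping — tracking which component attains equality, invoking the strong maximum principle for the scalar equation of that component in the cooperative system, and then cascading through the coupling to reach the left boundary — to be where the real work lies, essentially the system analogue of the scalar sliding argument but complicated by the triangular coupling structure of (H1).
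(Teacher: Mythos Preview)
Your overall strategy---slide the subsolution, define the infimal shift $\xi^*$, and use the strong maximum principle on the touching component to reach a contradiction---is exactly the paper's approach. However there are two concrete problems with your execution.

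First, you assert that $\underline U$ is ``monotone in $s$'' and use this to establish that the sliding set is nonempty (writing $\underline U^\xi(x,s)\le\underline U(x,\sigma)$ for $s-\xi\le\sigma$). But the lemma does \emph{not} assume $\underline U$ is nondecreasing in $s$; only $W$ (hence $\overline U$) is assumed monotone. The paper circumvents this by a slightly different sliding: it defines, componentwise,
\[
\delta_i=\inf\bigl\{\delta\ge0:\ \underline U_i(x,s-\delta)\le\overline U_i(x,s)\ \text{for all }(x,s)\in\mathbb R^N\times[\sigma+\delta,\infty)\bigr\},
\]
shifting both the function \emph{and} the comparison domain. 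For $\delta\ge\bar\sigma-\sigma$ the domain lies in $\{s\ge\bar\sigma\}$ where $\overline U\equiv\bm 1\gg\underline{\bm u}$, so $\delta_i\in[0,\bar\sigma-\sigma)$ without any monotonicity of $\underline U$. Then one sets $\delta_k=\max_i\delta_i$ and argues $\delta_k=0$. Your version can be repaired by adopting this device.

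Second, you gloss over how the strong maximum principle propagates the equality of the $k$-th component back to the boundary of the strip. In $(t,x)$-variables the parabolic maximum principle gives $w_k\equiv\underline u_k(\cdot-\delta_k/c,\cdot)$ on a connected backward-in-time set containing the contact point, but ``backward in $t$'' interacts with the co-moving coordinate $s=ct-x\cdot e$ differently depending on the sign of $c$. The paper splits into cases: for $c>0$ one moves back in $t$ along $x=x_*$ until $s=\sigma+\delta_k$, where the strict gap $\underline U(x_*,\sigma)\ll\overline U(x_*,\sigma)\le\overline U(x_*,\sigma+\delta_k)$ gives the contradiction; for $c<0$ one instead propagates to $s\ge\bar\sigma$ where $\overline U=\bm 1>\underline U$. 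Your sketch (``propagating this\ldots contradicts the ordering at $s=\sigma$'') hides this case split, which is where the geometry of the moving frame actually enters.

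Finally, your case (b) is more elaborate than needed: since both $\underline U$ and $\overline U$ are periodic in $x$ and $s$ lives in the compact interval $[\sigma,\bar\sigma]$, the infimum is attained on $\overline{\mathcal D}\times[\sigma,\bar\sigma]$ and no limiting argument is required.
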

\begin{proof}
Let
\begin{align*}
\delta_i=\inf\left\{\delta\ge0\ |\ \underline{U}_i(x,s-\delta)\le\overline{U}_i(x,s),
\ \forall\;(x,s)\in\mathbb R^N\times[\sigma+\delta,\infty)\right\},\quad i\in I.
\end{align*}
It is easy to see from the assumption that
$\delta_i\in[0,\bar{\sigma}-\sigma)$ for each $i\in I$.
Let $\delta_k=\max_{i\in I}\{\delta_i\}$,
and we next prove that $\delta_k=0$.
Assume to the contrary that $\delta_k>0$, then there exist sequences
$\{\delta_n\}_{n\in\mathbb{N}}$ with $0\le\delta_n\le\delta_k$ and
$\{(x_n,s_n)\}_{n\in\mathbb{N}}$ such that $s_n\ge\sigma+\delta_n$, such that
$$\delta_n\to\delta_k\ (n\to\infty),\quad\underline{U}_k(x_n,s_n-\delta_n)>\overline{U}_k(x_n,s_n),
\quad\lim_{n\to\infty}\{\underline{U}_k(x_n,s_n-\delta_n)-\overline{U}_k(x_n,s_n)\}=0.$$
Then $s_n<\bar{\sigma}$ by the assumption, and thus we may assume up to a subsequence that
$s_n\to s_*\in[\sigma+\delta_k,\bar{\sigma}]$.
Since $\underline{U}(\cdot,s)$ and $\overline{U}(\cdot,s)$ are periodic,
we may assume $x_n\to x_*\in\overline{\mathcal D}$.
Then
$$\underline{U}_k(x_*,s_*-\delta_k)=\overline{U}_k(x_*,s_*)<1\ \ \text{and}
\ \ \underline{U}_i(x_*,s_*-\delta_k)\le\overline{U}_i(x_*,s_*),\ \ \forall\; i\ne k.$$
Moreover, by assumption we have
\begin{equation}\label{eq3}
\underline{U}(x_*,\sigma)\ll\overline{U}(x_*,\sigma)\le\overline{U}(x_*,\sigma+\delta_k).
\end{equation}
Therefore, $s_*\in(\sigma+\delta_k,\bar{\sigma})$.
Let
\begin{align*}
\tilde{u}_i(t,x)=\underline{u}_i\left(t-\frac{\delta_k}{c},x\right)-\overline{u}_i(t,x)
=\underline{U}_i(x,ct-x\cdot e-\delta_k)-\overline{U}_i(x,ct-x\cdot e),\quad i\in I.
\end{align*}
Then for each $i$, there hold $\tilde{u}_i(t,x)\le0$ for any
$(t,x)\in\Omega_{\sigma+\delta_k}^+$, and
$\tilde{u}_k(t_*,x_*)=0$ and $\tilde{u}_i(t_*,x_*)\le0$ for each $i\ne k$, where $t_*=\frac{s_*+x_*\cdot e}{c}$.
Noting that
\begin{align*}
\frac{\partial\tilde{u}_k(t,x)}{\partial t}-d_k(x)\Delta\tilde{u}_k-q_k(x)\cdot\nabla\tilde{u}_k
&\le f_k\left(x,\underline{\bm u}\left(t-\frac{\delta_k}{c},x\right)\right)
-f_k(x,\overline{\bm u}(t,x))\\
&\le\left(\int_0^1{\frac{\partial f_k}{\partial u_k}
\left(x,\tau\underline{\bm u}\left(t-\frac{\delta_k}{c},x\right)
+(1-\tau)\overline{\bm u}(t,x)\right)d\tau}\right)\tilde{u}_k.
\end{align*}
It then follows from the maximum principle that
\begin{equation}\label{uv-set}
\overline{u}_k(t,x)=\underline{u}_k\left(t-\frac{\delta_k}{c},x\right),
\quad\forall\; (t,x)\in\Omega_*,
\end{equation}
where $\Omega_*$ is a connected subset of
$\Omega_{\sigma+\delta_k}^+\cap\{t\le t_*\}\cap\{\overline{u}_k<1\}$ containing $(t_*,x_*)$.

Now if $c>0$, let
$$\hat{t}=\frac{\sigma+\delta_k+x_*\cdot e}{c},$$
then $\hat{t}<t_*$ since $\sigma+\delta_k<s_*$, and
$\overline{u}_k(\hat{t},x_*)\le\overline{u}_k(t_*,x_*)<1$ since $\overline{U}(x,s)$ is
nondecreasing in $s$.
Hence $\overline{U}_k(x_*,\sigma+\delta_k)=\underline{U}_k(x_*,\sigma)$ by \eqref{uv-set},
which contradicts \eqref{eq3}.

If $c<0$, then $(t,x_*)\in\Omega_{\sigma+\delta_k}^+$ for all $t\le t_*$. Note that $\overline{u}_k(t_*,x_*)=\underline{u}_k\left(t_*-\frac{\delta_k}{c},x_*\right)<1$ by \eqref{uv-set}, then there exists $t_0>0$ such that $\overline{u}_k(t,x_*)<1$
for any $t_*-t_0\le t\le t_*$. Let
$$\underline{t}=\inf\{ t^\prime\le t_*\ |\ \overline{u}_k(t,x_*)<1,
\ \forall\; t^\prime\le t\le t_*\},$$
then $-\infty\le\underline{t}\le t_*-t_0<t_*$.
If $\underline{t}>-\infty$ is a real number,
then $\overline{u}_k(\underline{t},x_*)=\underline{u}_k\left(\underline{t}-\frac{\delta_1}{c},x_*\right)<1$
by \eqref{uv-set}, which contradicts the definition of $\underline{t}$.
Hence $\underline{t}=-\infty$, and then $\overline{u}_k(t,x_*)<1$ for all $t\le t_*$,
which further yields that
$$\overline{u}_k(t,x_*)=\underline{u}_k\left(t-\frac{\delta_k}{c},x_*\right),
\quad\forall\;t\le t_*.$$
Since $\underline{u}_k<1$ and $\overline{u}_k(t,x_*)=\overline{U}_k(x_*,ct-x_*\cdot e)=1$
for all $t\le\frac{\bar{\sigma}+x_*\cdot e}{c}$, we reach a contradiction.

As a result, $\delta_i=0$ for each $i$, and hence $\underline{U}(x,s)\le\overline{U}(x,s)$
for any $(x,s)\in\mathbb R^N\times[\sigma,\infty)$.
Moreover, if there exist $i$ and $(x_1,s_1)\in\mathbb R^N\times[\sigma,\infty)$ such that
$\underline{U}_i(x_1,s_1)=\overline{U}_i(x_1,s_1)$, then $s_1>\sigma$.
By setting $\delta_i=0$ and following similar arguments as above,
we obtain a contradiction. Therefore $\underline{U}(x,s)\ll\overline{U}(x,s)$
for any $(x,s)\in\mathbb R^N\times[\sigma,\infty)$. The proof is complete.
\end{proof}

\begin{lemma}\label{CP-+}
Assume that $\overline{\bm u}(t,x)=\overline{U}(x,ct-x\cdot e)$
is a supersolution of \eqref{m-system} in $\mathbb R\times\mathbb R^N$ such that
$\bm 0<\overline{\bm u}\le\bm 1$, $\overline{U}(x,s)$ is periodic in $x$ and nondecreasing in $s$,
and $\mathop{\liminf}\limits_{s\to+\infty}\inf_{x\in\mathbb R^N}\overline{U}(x,s)=\bm 1$, and
that $\max\{\bm w(t,x),\bm 0\}:=\underline{\bm u}(t,x)=\underline{U}(x,ct-x\cdot e)$ is
an irregular subsolution of \eqref{m-system} in $\mathbb R\times\mathbb R^N$,
where $\bm w(t,x)=W(x,ct-x\cdot e)$ is a subsolution of \eqref{m-system} in
$\Omega_{s_0}^-$ with some $s_0\in\mathbb R$, $W(x,s)$ is periodic in $x$, and
$$\mathop{\sup}\limits_{(x,s)\in\mathbb R^N\times(-\infty,s_0]}W(x,s)\ll\bm 1,
\quad \sup_{x\in\mathbb R^N}W(x,s_0)\le\bm 0.$$
If there exists $\sigma<s_0$ such that $\underline{U}(x,\sigma)\ll\overline{U}(x,\sigma)$
for all $x\in\mathbb R^N$, then
$$\underline{U}(x,s)\ll\overline{U}(x,s),\quad\forall\;(x,s)\in\mathbb R^N\times[\sigma,s_0].$$
\end{lemma}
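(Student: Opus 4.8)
The plan is to run a sliding argument in the moving‑frame variable $s=ct-x\cdot e$, parallel to the proof of Lemma \ref{CP+}; the structural simplification here is that the comparison region is the \emph{bounded} slab $[\sigma,s_0]$ rather than a half‑line, so suprema are attained by compactness and $L$‑periodicity, and the right endpoint $s=s_0$ plays the role that $\{\overline{\bm u}\equiv\bm 1\}$ played in Lemma \ref{CP+}. Two preliminary observations are needed. First, since each $\overline U_i(x,s)\to 1$ as $s\to+\infty$, each component $\overline u_i$ is positive somewhere, and the $i$‑th equation of \eqref{m-system} gives $\partial_t\overline u_i-d_i\Delta\overline u_i-q_i\cdot\nabla\overline u_i\ge f_i(x,\overline{\bm u})\ge-C\,\overline u_i$ locally (using $\bm F(x,\bm 0)\equiv\bm 0$, the structure (H1) and $a_{ij}\overline u_j\ge 0$), so the scalar strong maximum principle forces $\overline u_i>0$ everywhere; hence $\overline{\bm u}\gg\bm 0$ on $\mathbb R\times\mathbb R^N$. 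Second, $\sup_x W(x,s_0)\le\bm 0$ forces $\underline U(x,s_0)=\max\{W(x,s_0),\bm 0\}=\bm 0\ll\overline U(x,s_0)$; together with the hypothesis $\underline U(x,\sigma)\ll\overline U(x,\sigma)$ and periodicity in $x$ this yields a uniform gap $\varepsilon_0>0$ at both endpoints, i.e. $\underline U_i(x,\sigma)\le\overline U_i(x,\sigma)-\varepsilon_0$ and $\underline U_i(x,s_0)=0\le\overline U_i(x,s_0)-\varepsilon_0$ for all $i\in I$ and all $x$.

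Next I would slide with a single shift valid for all components. Put
$$\delta_0=\inf\Big\{\delta\ge 0:\ \underline U_i(x,s-\delta)\le\overline U_i(x,s)\ \text{ for all }i\in I\text{ and all }(x,s)\in\mathbb R^N\times[\sigma+\delta,\,s_0+\delta]\Big\}.$$
Here $\underline{\bm u}(\cdot-\delta/c,\cdot)$ is again an irregular subsolution of \eqref{m-system} on $\{ct-x\cdot e\le s_0+\delta\}$, so the comparison on the shifted slab is meaningful; shifting \emph{both} endpoints is precisely what preserves the strict orderings at $\sigma$ and at $s_0$ under the slide. The defining set is nonempty (for $\delta$ large the shifted slab lies where $\overline U$ is within $\varepsilon_0$ of $\bm 1$, while $\underline U$ is dominated by a constant vector $\ll\bm 1$ because $\sup_{s\le s_0}W\ll\bm 1$), so $\delta_0<\infty$; and a routine closedness/uniform‑continuity argument---the uniform gap at $\sigma$ absorbs the thin strip gained on the left as $\delta$ is lowered---shows that the comparison still holds with $\delta=\delta_0$, and, if $\delta_0>0$, that $\sup_{i}\sup_{(x,s)}\big(\underline U_i(x,s-\delta_0)-\overline U_i(x,s)\big)$ over the (compact, after reducing $x$ to $\overline{\mathcal D}$) shifted slab equals $0$ and is attained at some $(k,x_*,s_*)$. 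The two uniform gaps exclude $s_*=\sigma+\delta_0$ and $s_*=s_0+\delta_0$, so $s_*\in(\sigma+\delta_0,\,s_0+\delta_0)$.

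The contradiction is then obtained at the touching point. Set $\tilde u_i(t,x)=\underline u_i(t-\delta_0/c,x)-\overline u_i(t,x)=\underline U_i(x,ct-x\cdot e-\delta_0)-\overline U_i(x,ct-x\cdot e)$. By the definition of $\delta_0$, $\tilde u_i\le 0$ for \emph{every} $i$ on $\Omega:=\{(t,x):\sigma+\delta_0\le ct-x\cdot e\le s_0+\delta_0\}$, while $\tilde u_k(t_*,x_*)=0$ at the interior point $t_*=(s_*+x_*\cdot e)/c$. Since $\tilde u_k(t_*,x_*)=0$ gives $\underline U_k(x_*,s_*-\delta_0)=\overline U_k(x_*,s_*)>0$, near $(t_*,x_*)$ the lower object coincides with the genuine subsolution $\bm w(\cdot-\delta_0/c,\cdot)$; using (H3) and $\tilde u_j\le 0$ for $j\ne k$ to discard the coupling terms $\sum_{j\ne k}a_{kj}\tilde u_j\le 0$, a first‑order expansion of $f_k$ in its $k$‑th argument gives
$$\frac{\partial\tilde u_k}{\partial t}-d_k(x)\Delta\tilde u_k-q_k(x)\cdot\nabla\tilde u_k\ \le\ c_k(t,x)\,\tilde u_k\qquad\text{in a neighborhood of }(t_*,x_*),$$
with $c_k$ bounded. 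As $\tilde u_k\le 0$ attains the value $0$ at the interior point $(t_*,x_*)$, the strong maximum principle in its cross‑sectional form on the time slice $\{t=t_*\}$ (with the usual clopen argument, valid since $\tilde u_k(t_*,\cdot)=0$ forces $w_k>0$ at the shifted time, where the inequality is classical) gives $\tilde u_k(t_*,x)=0$ for all $x$ in the connected open strip $\{x:\sigma+\delta_0<ct_*-x\cdot e<s_0+\delta_0\}$, hence, by continuity, also on its closure. Evaluating at $x_\infty:=x_*+(s_*-\sigma-\delta_0)e$, for which $ct_*-x_\infty\cdot e=\sigma+\delta_0$, yields $\underline U_k(x_\infty,\sigma)=\overline U_k(x_\infty,\sigma+\delta_0)\ge\overline U_k(x_\infty,\sigma)$, contradicting $\underline U_k(x_\infty,\sigma)<\overline U_k(x_\infty,\sigma)$. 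Therefore $\delta_0=0$, i.e. $\underline U\le\overline U$ on $\mathbb R^N\times[\sigma,s_0]$; and the strict inequality $\underline U\ll\overline U$ there follows by re‑running this strong‑maximum‑principle/slide‑to‑$\sigma$ argument at any hypothetical equality point, an equality at $s=s_0$ being ruled out by $\overline U(\cdot,s_0)\gg\bm 0=\underline U(\cdot,s_0)$.

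The step I expect to be the main obstacle is turning the coupled touching into a genuine scalar parabolic inequality for $\tilde u_k$: this works only because the slide uses a single shift $\delta_0$ simultaneously valid for all components, so that every off‑diagonal discrepancy $\tilde u_j$ ($j\ne k$) is nonpositive throughout $\Omega$ and the cooperativity (H3) can then be used to drop the coupling terms; the remaining effort is the bookkeeping of the doubly‑shifted slab $[\sigma+\delta,s_0+\delta]$, arranged so that the strict orderings of $\underline U$ and $\overline U$ at $\sigma$ (hypothesis) and at $s_0$ (where $\underline U\equiv\bm 0$) survive the slide and pin the touching point into the interior, where the strong maximum principle applies.
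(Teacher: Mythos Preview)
Your proof is correct and follows essentially the same sliding strategy as the paper: both define an optimal shift (your $\delta_0$, the paper's $\theta_k=\max_i\theta_i$), locate an interior touching point in the slab, derive a scalar parabolic inequality for the $k$-th component using cooperativity (H3), and propagate the equality via the strong maximum principle to the boundary $s=\sigma$ to contradict the strict ordering there. The only noteworthy difference is the endgame geometry: the paper (by reference to Lemma~\ref{CP+}) fixes $x=x_*$ and moves in time to reach the slab boundary, which forces a case split on the sign of $c$, whereas you fix $t=t_*$ and move in the $e$-direction in space, which avoids that split; your preliminary step showing $\overline{\bm u}\gg\bm 0$ (so that $\underline U(x,s_0)=\bm 0\ll\overline U(x,s_0)$ rules out touching at $s_0$) is also made explicit, which the paper leaves to the reader.
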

\begin{proof}
Let
\begin{align*}
\theta_i:=\inf\{\theta\ge0\ |\ \underline{U}_i(x,s)\le\overline{U}_i(x,s+\theta),
\ \forall\;(x,s)\in\mathbb R^N\times[\sigma,s_0]\},\quad\ i\in I.
\end{align*}
Noting from the assumptions that
$$\mathop{\liminf}\limits_{s\to+\infty}\inf_{x\in\mathbb R^N}\overline{U}(x,s)=\bm 1\gg
\mathop{\sup}\limits_{(x,s)\in\mathbb R^N\times(-\infty,s_0]}\underline{U}(x,s).$$
Hence $\theta_i\ge 0$ is well defined for each $i$.
Let $\theta_k=\max_{i\in I}\{\theta_i\}$,
it suffices to prove that $\theta_k=0$. In fact, if $\theta_k>0$, then there exists
$(x_*,s_*)\in\mathbb R^N\times[\sigma,s_0)$ such that
$$\underline{U}_k(x_*,s_*)=\overline{U}_k(x_*,s_*+\theta_k)\ \ \text{and}\ \
\underline{U}_i(x_*,s_*)\le\overline{U}_i(x_*,s_*+\theta_k),\ \ \forall\;i\ne k.$$
Noting that
$\underline{U}_k(x,\sigma)<\overline{U}_k(x,\sigma)\le\overline{U}_k(x,\sigma+\theta_k)$
for any $x\in\mathbb R^N$, then $s_*\in(\sigma,s_0)$.
Let
\begin{align*}
\hat{u}_i(t,x)=\overline{u}_i\left(t+\frac{\theta_k}{c},x\right)-\underline{u}_i(t,x)
=\overline{U}_i(x,ct-x\cdot e+\theta_k)-\underline{U}_i(x,ct-x\cdot e),\quad i\in I,
\end{align*}
then $\hat{u}_i(t,x)\ge0$ for each $i$ and for any $(t,x)\in\Omega_{\sigma}^{s_0}$,
and in particular $\hat{u}_k(t_*,x_*)=0$, where $t_*:=\frac{s_*+x_*\cdot e}{c}$.
By a direct calculation, we have
\begin{align*}
\frac{\partial\hat{u}_k(t,x)}{\partial t}-d_k(x)\Delta\hat{u}_k-q_k(x)\cdot\nabla\hat{u}_k
\ge\left(\int_0^1{\frac{\partial f_k}{\partial u_k}
\left(x,\tau\overline{\bm u}(t+\frac{\theta_k}{c},x)
+(1-\tau)\underline{\bm u}(t,x)\right)d\tau}\right)\hat{u}_k.
\end{align*}
The maximum principle then yields that
$\hat{u}_k\equiv0$ for all $(t,x)\in\Omega_*$, where
$\Omega_*$ is a connected subset of
$\Omega^{s_0}_{\sigma}\cap\{t\le t_*\}\cap\{\underline{u}_k>0\}$ containing $(t_*,x_*)$.
By using similar arguments to the proof of Lemma \ref{CP+},
we have $\underline{U}(x,s)\ll\overline{U}(x,s)$
for all $(x,s)\in\mathbb R^N\times[\sigma,s_0]$. The proof is complete.
\end{proof}


\section{Asymptotic behavior near the unstable limiting state}\label{AS}

In this section, we investigate the asymptotic behavior of pulsating traveling fronts
$U(x,ct-x\cdot e)$ as $ct-x\cdot e\to-\infty$, in the case $c>c_+^0(e)$ and the critical case $c=c_+^0(e)$, respectively.

\subsection{The super-critical case}

We consider the super-critical case in this subsection, that is, $c>c_+^0$, where
$$c_+^0=\inf_{\lambda>0}\frac{\kappa_1(\lambda)}{\lambda}
=\frac{\kappa_1(\lambda_+^0)}{\lambda_+^0}$$
is defined by \eqref{minimal-ss}, and
$\lambda_c=\min\{\lambda>0: \kappa_1(\lambda)-c\lambda=0\}$
is given by \eqref{c1}, with
$$\kappa_1(\lambda)=\kappa_{e}(d_1,q_1,\zeta^1,\lambda).$$
For any $c>c_+^0$, let
\begin{equation}\label{eps}
0<\epsilon<\min\left\{\frac{\lambda_+^0-\lambda_c}{2},\frac{\lambda_c}{2}\right\}.
\end{equation}
It is easy to see that
$$\sigma_\epsilon:=\kappa_1(\lambda_c+\epsilon)-c(\lambda_c+\epsilon)<0.$$

Let
$$\bm{\Phi}_{\lambda_c}(x)=\left(\phi_1^c(x),\phi_2^c(x),\cdots,\phi_m^c(x)\right)
\quad \ \text{and}\quad \ \bm{\Phi}_{\lambda_c+\epsilon}(x)
=\left(\phi_1^{\epsilon}(x),\phi_2^{\epsilon}(x),\cdots,\phi_m^{\epsilon}(x)\right)$$
be positive periodic eigenfunctions of problem \eqref{u1um-linear-PE} with $\lambda=\lambda_c$
and $\lambda=\lambda_c+\epsilon$ associated with principal eigenvalues
$\kappa_1(\lambda_c)$ and $\kappa_1(\lambda_c+\epsilon)$, respectively.
Denote
$$M_c=\max_{i\in I}\left\{\max_{x\in\mathbb R^N}\phi_i^c(x)\right\},\quad
m_c=\min_{i\in I}\left\{\min_{x\in\mathbb R^N}\phi_i^c(x)\right\},
\quad\theta_c=\frac{M_c}{m_c},$$
and
$$M_{\epsilon}=\max_{i\in I}\left\{\max_{x\in\mathbb R^N}\phi_i^{\epsilon}(x)\right\},\quad
m_{\epsilon}=\min_{i\in I}\left\{\min_{x\in\mathbb R^N}\phi_i^{\epsilon}(x)\right\},
\quad\theta_{\epsilon}=\frac{M_{\epsilon}}{m_{\epsilon}}.$$

\begin{lemma}\label{sub}
Assume (H1)-(H6).
If $c>c_+^0$, then there exists $s_*\in\mathbb R$ such that for any $0<\delta_2\le\delta_1$
and $s_0=s_0(\delta_1)\le s_*$ sufficiently small,
there exists $n_0=n_0(\delta_1)>0$ such that the function
$\underline{\bm u}(t,x)=(\underline{u}_1(t,x),\underline{u}_2(t,x),\cdots,\underline{u}_m(t,x))\textcolor{blue}{^T}$
defined by
\begin{align*}
\underline{u}_1(t,x)&=\underline{U}_1(x,ct-x\cdot e)=\delta_1e^{\lambda_c(ct-x\cdot e)}
\left(\phi_1^c(x)-n_0e^{\epsilon(ct-x\cdot e)}\phi_1^{\epsilon}(x)\right),\\
\underline{u}_i(t,x)&=\underline{U}_i(x,ct-x\cdot e)=\delta_2e^{\lambda_c(ct-x\cdot e)}
\left(\phi_i^c(x)-\frac{n_0\delta_1}{\delta_2}e^{\epsilon(ct-x\cdot e)}\phi_i^{\epsilon}(x)\right),
\ \ i=2,3,\cdots,m
\end{align*}
is a subsolution of \eqref{m-system} for
$(t,x)\in\Omega_{s_0}^-=\{\mathbb R\times\mathbb R^N: ct-x\cdot e\le s_0\}$,
where $\epsilon>0$ is given by \eqref{eps}. Moreover,
$\underline{U}(x,s)=(\underline{U}_1(x,s),\underline{U}_2(x,s),\cdots,\underline{U}_m(x,s))$
satisfies
$$\mathop{\sup}\limits_{(x,s)\in\mathbb R^N\times(-\infty,s_0]}
\underline{U}(x,s)\ll\bm 1,\qquad\sup_{x\in\mathbb R^N}\underline{U}(x,s_0)\le\bm 0.$$
\end{lemma}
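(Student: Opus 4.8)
The plan is to verify the subsolution inequality componentwise by direct substitution, exploiting the fact that the principal part of $\underline{\bm u}$ is built from exact eigenfunctions of the linearized problem, while the correction term (carrying the factor $n_0 e^{\epsilon(ct-x\cdot e)}$) is designed to dominate the nonlinear error. First I would introduce the shorthand $s = ct - x\cdot e$ and write, for each $i$, $\underline u_i = e^{\lambda_c s}\bigl(\alpha_i \phi_i^c(x) - \beta_i e^{\epsilon s}\phi_i^\epsilon(x)\bigr)$ with $\alpha_1=\delta_1$, $\alpha_i = \delta_2$ ($i\ge 2$), $\beta_i = n_0\delta_1$ for all $i$. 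Applying the operator $\partial_t - d_i\Delta - q_i\cdot\nabla$ to $e^{\lambda_c s}\phi_i^c$ and to $e^{(\lambda_c+\epsilon)s}\phi_i^\epsilon$, and using that $\phi_i^c$, $\phi_i^\epsilon$ are the eigenfunctions produced in Lemma~\ref{P-EFunc} (with eigenvalue $\kappa_1(\lambda_c)=c\lambda_c$ and $\kappa_1(\lambda_c+\epsilon)$ respectively), the linear terms collapse: the $\phi_i^c$ part contributes exactly the linearized reaction at $\bm 0$, and the $\phi_i^\epsilon$ part contributes a strictly negative surplus proportional to $-\beta_i(\sigma_\epsilon)e^{(\lambda_c+\epsilon)s}\phi_i^\epsilon$ where $\sigma_\epsilon = \kappa_1(\lambda_c+\epsilon)-c(\lambda_c+\epsilon)<0$. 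For the $i=1$ equation I would use $f_1(x,\bm u)=u_1 h_1(x,\bm u)$ together with (H2) and a Taylor expansion of $h_1$ around $\bm 0$, so that $f_1(x,\underline{\bm u}) - \zeta^1(x)\underline u_1 \ge -C|\underline{\bm u}|\,\underline u_1 \ge -C' e^{2\lambda_c s}$; choosing $s_0$ so negative that $e^{\lambda_c s_0}$ is tiny and then $n_0$ large enough (depending on $\delta_1$, via the ratio $m_\epsilon/M_c$ and $|\sigma_\epsilon|$) makes the negative surplus term $|\sigma_\epsilon|\beta_1 e^{(\lambda_c+\epsilon)s}\phi_1^\epsilon$ absorb the $O(e^{2\lambda_c s})$ error, since $\lambda_c+\epsilon < 2\lambda_c$ by the choice \eqref{eps}.

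For the components $i\ge 2$ I would handle the coupling terms $\sum_{j<i}a_{ij}u_j$ carefully: by (H1) the off-diagonal coefficients are nonnegative and the eigenfunction relations in \eqref{u1um-linear-PE} already incorporate exactly these coupling terms for the $\phi^c$ and $\phi^\epsilon$ pieces, so after cancellation the residual in the $i$-th inequality is again controlled by a nonlinear remainder $u_i\bigl(h_i(x,\underline{\bm u}) - \zeta^i(x)\bigr) = O(|\underline{\bm u}|\,\underline u_i) = O(e^{2\lambda_c s})$ plus the good negative term $|\sigma_\epsilon|\beta_i e^{(\lambda_c+\epsilon)s}\phi_i^\epsilon$. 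Here one uses that the common choice $\beta_i = n_0\delta_1$ (rather than $n_0\delta_2$) guarantees the correction is uniformly large across all components even when $\delta_2 \ll \delta_1$; this is precisely why the stated form has $\frac{n_0\delta_1}{\delta_2}$ inside the $i\ge 2$ expressions. The same largeness of $n_0$ and smallness of $s_0$ chosen for $i=1$ will work for all $i\ge 2$ after possibly enlarging $n_0$ by a factor depending on $\max_i |a_{ij}|_\infty$, $\theta_c$, $\theta_\epsilon$ and the $C^2$ bounds on the $h_i$ near $\bm 0$.

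Finally I would verify the two side conditions. The bound $\sup_{(x,s)\in\mathbb R^N\times(-\infty,s_0]}\underline U(x,s)\ll \bm 1$ follows because on $\Omega_{s_0}^-$ we have $\underline U_i(x,s) \le \delta_i e^{\lambda_c s}\phi_i^c(x) \le \delta_1 M_c e^{\lambda_c s_0}$, which is $<1$ once $s_0$ is sufficiently negative (this uses $\epsilon<\lambda_c$, so the subtracted term does not spoil the trivial upper bound, and $\lambda_c>0$). The requirement $\sup_{x\in\mathbb R^N}\underline U(x,s_0)\le \bm 0$ is a \emph{definition} of $s_0=s_0(\delta_1)$: we need $\phi_i^c(x) \le n_0 e^{\epsilon s_0}\phi_i^\epsilon(x)$ for all $x$ and all $i$, i.e.\ $e^{\epsilon s_0} \ge M_c/(n_0 m_\epsilon)$, which fixes $s_0$ in terms of $n_0$; combined with the earlier constraint that $s_0$ be negative enough, one checks the two constraints are compatible by first taking $n_0$ large, then $s_0$ negative. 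The main obstacle I anticipate is the bookkeeping in choosing the constants in the right order: the nonlinear-remainder estimate requires $s_0$ small in a way that depends on $n_0$ (through the sup-norm of $\underline{\bm u}$ on $\Omega_{s_0}^-$, hence on $\delta_1, n_0, s_0$ itself), while the sign condition at $s_0$ ties $s_0$ to $n_0$ in the opposite direction — so one must show there is a genuine window, which is where the strict inequality $\lambda_c + \epsilon < 2\lambda_c$ and $\sigma_\epsilon<0$ are doing the essential work, together with the existence of a uniform $s_*$ below which all the scalar principal-eigenvalue comparisons of Lemma~\ref{P-EFunc} (the inequalities \eqref{1-j}) hold.
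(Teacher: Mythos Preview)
Your approach is the paper's: compute $\mathcal N_i=\partial_t\underline u_i-d_i\Delta\underline u_i-q_i\cdot\nabla\underline u_i-f_i(x,\underline{\bm u})$ directly, let the eigenfunction relations from \eqref{u1um-linear-PE} kill the linear part, and use $\lambda_c+\epsilon<2\lambda_c$ so that the surplus $|\sigma_\epsilon|\,n_0\delta_1 e^{(\lambda_c+\epsilon)s}\phi_i^\epsilon$ absorbs the $O(e^{2\lambda_c s})$ nonlinear error. Two small corrections are worth noting.

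First, for $i\ge 2$ the coupling terms do \emph{not} all cancel. The $\phi^\epsilon$ pieces do (all correction coefficients equal $n_0\delta_1$), but in the $\phi^c$ piece the $j=1$ term leaves a residual $a_{i1}(\delta_2-\delta_1)e^{\lambda_c s}\phi_1^c$, because $\underline u_1$ carries $\delta_1\phi_1^c$ while the $i$-th eigenfunction relation contributes $\delta_2\,a_{i1}\phi_1^c$. This residual is $\le 0$ since $\delta_2\le\delta_1$ and $a_{i1}\ge 0$, so it only helps the subsolution inequality; in particular no enlargement of $n_0$ by an $|a_{ij}|_\infty$-factor is needed. (Also, (H2) plays no role here; the bound $|h_i(x,\bm 0)-h_i(x,\underline{\bm u})|\le\gamma_0|\underline{\bm u}|$ is just $C^1$ regularity of $h_i$.)

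Second, the circularity you flag is dissolved in the paper by \emph{defining} $n_0:=\theta_\epsilon e^{-\epsilon s_0}$, so that $n_0 e^{\epsilon s}\le\theta_\epsilon$ for every $s\le s_0$ and hence $|\underline{\bm u}|$ is bounded by a structural constant times $\delta_1 e^{\lambda_c s}$. Requiring in addition $n_0\ge\delta_1$ gives $\delta_1^2 e^{2\lambda_c s}\le n_0\delta_1 e^{2\lambda_c s}$, after which the remaining comparison reduces to $\gamma_0 C\,e^{(\lambda_c-\epsilon)s}\le|\sigma_\epsilon|m_\epsilon$, a condition on $s$ alone. This yields a $\delta_1$-independent threshold $s_*$, and the three constraints on $s_0$ (namely $s_0\le s_*$, $\delta_1 M_c e^{\lambda_c s_0}\le 1$, $\theta_\epsilon e^{-\epsilon s_0}\ge\delta_1$) are all upper bounds on $s_0$, so no window needs to be located.
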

\begin{proof}
Let
\begin{equation}\label{s*}
s_*=\min\left\{\frac{1}{\lambda_c-\epsilon}\ln\frac{|\sigma_\epsilon|m_{\epsilon}}
{\gamma_0(1+\theta_{\epsilon})^2(M_c+M_{\epsilon})(|\bm{\Phi}_{\lambda_c}|
+|\bm{\Phi}_{\lambda_c+\epsilon}|)},\ -1\right\},
\end{equation}
where
$$\gamma_0:=\max_{i,j\in I}
\left\{\max_{(x,\bm u)\in\mathbb R^N\times[\bm{-\theta},\bm{\theta}]}
\left|\frac{\partial h_i(x,\bm u)}{\partial u_j}\right|\right\},\quad\ \bm{\theta}=m\theta_{\epsilon}\bm 1.$$
Let $s_0\le s_*$ be such that
\begin{equation}\label{s-0}
e^{-\lambda_c s_0}\ge\delta_1M_c\quad\text{and}\quad
n_0:=\theta_{\epsilon}e^{-\epsilon s_0}\ge\delta_1.
\end{equation}
Noting that $n_0e^{\epsilon s}\le\theta_{\epsilon}$ for any $s\le s_0$, and $n_0\ge\delta_1$,
a direct calculation shows that
\begin{align*}
\mathcal N_1(x,\underline{\bm u}):&=\frac{\partial\underline{u}_1(t,x)}{\partial t}-
d_1(x)\Delta\underline{u}_1-q_1(x)\cdot\nabla\underline{u}_1-f_1(x,\underline{\bm u})\\
&=(h_1(x,\bm 0)-h_1(x,\underline{\bm u}))\underline{u}_1
-|\sigma_{\epsilon}|n_0\delta_1e^{(\lambda_c+\epsilon)s}\phi_1^{\epsilon}\\
&\le\gamma_0|\underline{\bm u}||\underline{u}_1|
-|\sigma_{\epsilon}|n_0\delta_1e^{(\lambda_c+\epsilon)s}\phi_1^{\epsilon}\\
&\le\gamma_0\delta_1^2e^{2\lambda_cs}\sum_{k=1}^m|
\phi_k^c-n_0e^{\epsilon s}\phi_k^{\epsilon}|
|\phi_1^c-n_0e^{\epsilon s}\phi_1^{\epsilon}|
-|\sigma_{\epsilon}|n_0\delta_1e^{(\lambda_c+\epsilon)s}\phi_1^{\epsilon}\\
&\le\gamma_0\delta_1^2e^{2\lambda_cs}
(M_c+M_{\epsilon})(|\bm{\Phi}_{\lambda_c}|+|\bm{\Phi}_{\lambda_c+\epsilon}|)
\left(1+n_0e^{\epsilon s}\right)^2
-|\sigma_{\epsilon}|n_0\delta_1e^{(\lambda_c+\epsilon)s}m_{\epsilon}\\
&\le n_0\delta_1e^{(\lambda_c+\epsilon)s}
\left\{\gamma_0(1+\theta_{\epsilon})^2(M_c+M_{\epsilon})(|\bm{\Phi}_{\lambda_c}|+|\bm{\Phi}_{\lambda_c+\epsilon}|)
e^{(\lambda_c-\epsilon)s}-|\sigma_{\epsilon}|m_{\epsilon}\right\}\\
&\le0,
\end{align*}
and similarly,
\begin{align*}
\mathcal N_i(x,\underline{\bm u}):&=\frac{\partial\underline{u}_i(t,x)}{\partial t}-
d_i(x)\Delta\underline{u}_i-q_i(x)\cdot\nabla\underline{u}_i-f_i(x,\underline{\bm u})\\
&={a_{i1}(\delta_2-\delta_1)e^{\lambda_cs}\phi_1^c}
+(h_i(x,\bm 0)-h_i(x,\underline{\bm u}))\underline{u}_i
-|\sigma_{\epsilon}|n_0\delta_1e^{(\lambda_c+\epsilon)s}\phi_i^{\epsilon}\\
&\le(h_i(x,\bm 0)-h_i(x,\underline{\bm u}))\underline{u}_i
-|\sigma_{\epsilon}|n_0\delta_1e^{(\lambda_c+\epsilon)s}\phi_i^{\epsilon}\\
&\le\gamma_0\delta_1^2e^{2\lambda_cs}
(M_c+M_{\epsilon})(|\bm{\Phi}_{\lambda_c}|+|\bm{\Phi}_{\lambda_c+\epsilon}|)
\left(1+n_0e^{\epsilon s}\right)^2
-|\sigma_{\epsilon}|n_0\delta_1e^{(\lambda_c+\epsilon)s}m_{\epsilon}\\
&\le0,\quad i=2,3,\cdots,m.
\end{align*}
Therefore $\underline{\bm u}(t,x)$ is a subsolution of \eqref{m-system} in
$\Omega_{s_0}^-$.
Furthermore, it is easy to see from \eqref{s-0} that
$$\mathop{\sup}\limits_{(x,s)\in\mathbb R^N\times(-\infty,s_0]}\underline{U}(x,s)
\ll\mathop{\sup}\limits_{x\in\mathbb R^N}\delta_1e^{\lambda_cs_0}M_c\bm 1\le\bm 1,$$
and $\underline{U}(x,s_0)\le\bm 0$ for all $x\in\mathbb R^N$.
The proof is complete.
\end{proof}

\begin{lemma}\label{super}
Assume (H1)-(H7). If $c>c_+^0$, then
for any constant $k>0$, the function $\overline{\bm u}(t,x)=\min\{\bm w_c(t,x),\bm 1\}$ is an
irregular supersolution of \eqref{m-system} in $\mathbb R\times\mathbb R^N$, where
$$\bm{w}_c(t,x)=W(x,ct-x\cdot e)=ke^{\lambda_c(ct-x\cdot e)}\bm{\Phi}_{\lambda_c}(x),
\quad\forall\;(t,x)\in\mathbb R\times\mathbb R^N.$$
\end{lemma}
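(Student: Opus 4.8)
The claim is that $\overline{\bm u}=\min\{\bm w_c,\bm 1\}$ is an irregular supersolution. Since $\bm 1$ is a (stationary) solution of \eqref{m-system} by (H2), it is a regular supersolution, so by the definition of irregular supersolution it suffices to show that $\bm w_c(t,x)=ke^{\lambda_c(ct-x\cdot e)}\bm{\Phi}_{\lambda_c}(x)$ is itself a regular supersolution of \eqref{m-system} on all of $\mathbb R\times\mathbb R^N$. The strategy is a direct substitution: plug $\bm w_c$ into the operator $\partial_t - D(x)\Delta - q(x)\cdot\nabla - \bm F(x,\cdot)$, use that $\bm{\Phi}_{\lambda_c}$ solves the eigenvalue problem \eqref{u1um-linear-PE} with eigenvalue $\kappa_1(\lambda_c)=c\lambda_c$, and then exploit the KPP-type domination (H7) to absorb the nonlinear terms.

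\smallskip

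\textbf{Key steps.} First, write $s=ct-x\cdot e$ so that $\bm w_c(t,x)=ke^{\lambda_c s}\bm{\Phi}_{\lambda_c}(x)$, and compute componentwise. For the first component, a direct calculation gives
\begin{equation*}
\partial_t(w_c)_1 - d_1\Delta(w_c)_1 - q_1\cdot\nabla(w_c)_1 - f_1(x,\bm w_c)
= k e^{\lambda_c s}\big(c\lambda_c - \kappa_1(\lambda_c)\big)\phi_1^c + k e^{\lambda_c s}\phi_1^c\big(\zeta^1(x) - h_1(x,\bm w_c)\big),
\end{equation*}
where I have used $f_1(x,\bm u)=u_1h_1(x,\bm u)$ from (H1), $\zeta^1(x)=h_1(x,\bm 0)$, and the fact that $\phi_1^c$ satisfies the $\phi_1$-equation of \eqref{u1um-linear-PE} at $\lambda=\lambda_c$ with $\kappa=\kappa_1(\lambda_c)$. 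The first term vanishes since $\kappa_1(\lambda_c)=c\lambda_c$ by the defining relation \eqref{c1}, and the second term is $\ge 0$ because $\bm w_c$ is a front-like linearized solution (per the Remark following \eqref{Linear-sol}) so (H7) gives $h_1(x,\bm w_c)\le h_1(x,\bm 0)=\zeta^1(x)$. For $i\ge 2$, using $f_i(x,\bm u)=\sum_{j<i}a_{ij}(x)u_j + u_i h_i(x,\bm u)$ and the $\phi_i$-equation of \eqref{u1um-linear-PE}, the same cancellation of the $(c\lambda_c-\kappa_1(\lambda_c))$-term occurs (note the eigenvalue is the \emph{same} $\kappa_1(\lambda_c)$ for every component, by Lemma \ref{P-EFunc}), the coupling terms $\sum_{j<i}a_{ij}\phi_j^c$ cancel exactly, and one is left with $ke^{\lambda_c s}\phi_i^c(\zeta^i(x)-h_i(x,\bm w_c))\ge 0$ again by (H7). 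Hence each component of the operator applied to $\bm w_c$ is nonnegative, i.e. $\bm w_c$ is a regular supersolution; then $\overline{\bm u}=\min\{\bm w_c,\bm 1\}$ is an irregular supersolution by definition.

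\smallskip

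\textbf{Main obstacle.} The computation itself is routine once one is careful with the algebra of the exponential weight: the term $d_i\lambda_c^2 - \lambda_c q_i\cdot e + \zeta^i(x)$ appearing in \eqref{u1um-linear-PE} is precisely what the derivatives $\partial_t$, $\Delta$, $\nabla$ produce when they hit $e^{\lambda_c(ct-x\cdot e)}$, so the bookkeeping must be done component by component to see everything match. The only genuinely non-mechanical point — and the one I would state carefully — is the invocation of (H7): it requires observing that $\bm w_c$ as defined here is exactly the front-like linearized solution of \eqref{Linear-sol} (up to the harmless scalar factor $k>0$, which does not affect whether $h_i(x,\bm w_c)\le h_i(x,\bm 0)$ since (H7) is stated for \emph{all} front-like linearized solutions $\bm w_c$ with $c\ge c_+^0$, and scaling a front-like linearized solution by a positive constant yields another one). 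So the proof reduces to: (1) the exponential/eigenfunction algebra cancels to leave only the $h_i(x,\bm 0)-h_i(x,\bm w_c)$ terms; (2) (H7) makes those nonnegative; (3) taking the min with the solution $\bm 1$ produces an irregular supersolution. No compactness or delicate limiting argument is needed here.
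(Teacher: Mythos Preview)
Your proof is correct and follows essentially the same approach as the paper's. The paper's version is more compact: it observes in one line that $\bm w_c$ is an exact solution of the linearized system \eqref{u1u2-linearized}, i.e.\ $\partial_t\bm w_c=D\Delta\bm w_c+q\cdot\nabla\bm w_c+D_{\bm u}\bm F(x,\bm 0)\bm w_c$, and then invokes (H7) in the form $\bm F(x,\bm w_c)\le D_{\bm u}\bm F(x,\bm 0)\bm w_c$ (the remark after (H8)), whereas you carry out the same identity componentwise; the content is identical.
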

\begin{proof}
It suffices to prove that $\bm w_c$ is a supersolution of \eqref{m-system}
since $\bm 1$ is a (super)solution of \eqref{m-system}.
By a direct calculation and in view of (H7), we have
\begin{align*}
\frac{\partial\bm{w}_c(t,x)}{\partial t}
&=D(x)\Delta\bm{w}_c+q(x)\cdot\nabla\bm{w}_c+D_{\bm u}\bm F(x,\bm{0})\bm{w}_c\\
&\ge D(x)\Delta\bm{w}_c+q(x)\cdot\nabla\bm{w}_c+\bm F(x,\bm{w}_c).
\end{align*}
The proof is complete.
\end{proof}

\begin{lemma}\label{low-upp}
Assume (H1)-(H7). Let
$\bm u(t,x)=U(x,ct-x\cdot e)=(U_1(x,ct-x\cdot e),\cdots,U_m(x,ct-x\cdot e))\textcolor{blue}{^T}$
be a pulsating traveling front of \eqref{m-system} with $c>c_+^0$, then
\begin{equation*}
\mathop{\limsup}\limits_{s\to-\infty}\left\{\sup_{x\in\mathbb R^N}
\frac{U_1(x,s)}{e^{\lambda_c s}\phi_1^c(x)}\right\}<+\infty,
\qquad\mathop{\liminf}\limits_{s\to-\infty}\left\{\inf_{x\in\mathbb R^N}
\frac{U_1(x,s)}{e^{\lambda_c s}\phi_1^c(x)}\right\}>0.
\end{equation*}
\end{lemma}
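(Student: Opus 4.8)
The plan is to prove the two one-sided bounds by a contradiction-and-compactness argument, after reducing the whole question to the scalar behaviour of $U_1$. Since $\phi_1^c$ is continuous, periodic and strictly positive it lies between two positive constants, so the assertion is equivalent to
$$0<\liminf_{s\to-\infty}\inf_{x\in\mathbb R^N}U_1(x,s)e^{-\lambda_c s}\ \le\ \limsup_{s\to-\infty}\sup_{x\in\mathbb R^N}U_1(x,s)e^{-\lambda_c s}<+\infty .$$
By the Harnack inequality of Lemma~\ref{Harnack} the quantities $\sup_x U_1(x,\cdot)$ and $\inf_x U_1(x,\cdot)$ are comparable over bounded $s$-intervals, and by Lemma~\ref{UcpV} (together with Lemma~\ref{Harnack} and the spectral separation (H6)) all the components $U_i$ are comparable to $U_1$ near the unstable state; hence it suffices to bound the single function $\Psi(s):=\max_{x\in\mathbb R^N}U_1(x,s)$ above and below by constant multiples of $e^{\lambda_c s}$ as $s\to-\infty$.

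For the upper bound I would argue by contradiction. If $\limsup_{s\to-\infty}\Psi(s)e^{-\lambda_c s}=+\infty$, I would pick $s_n\to-\infty$ and $x_n\in\overline{\mathcal D}$ (using periodicity), $x_n\to x_\infty$, realising this blow-up, and renormalise $\bm u^n(t,x):=U(x,ct-x\cdot e+s_n)/U_1(x_n,s_n)$. Lemmas~\ref{Harnack} and~\ref{UcpV} provide local uniform bounds; since $U(x,ct-x\cdot e+s_n)\to\bm 0$ locally uniformly in $(t,x)$, the reaction coefficients $h_i(x,U(\cdot,\cdot+s_n))$ converge to $\zeta^i(x)$, so parabolic estimates yield, along a subsequence, a limit $\bm u^\infty=\widetilde U(x,ct-x\cdot e)\ge\bm 0$, $\bm u^\infty\not\equiv\bm 0$, solving the linearised system~\eqref{u1u2-linearized}, with $\widetilde U$ periodic in $x$ and nondecreasing in $s$. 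By the strong maximum principle $\bm u^\infty\gg\bm 0$, and since the principal eigenvalue $\kappa_1(0)=\lambda_0(d_1,q_1,\zeta^1)>0$ rules out a nonzero stationary limit, $\widetilde U(x,s)\to\bm 0$ as $s\to-\infty$; thus $\bm u^\infty$ is a front-like linearised solution of speed $c>c_+^0$. The decisive step is then a Liouville-type classification: exploiting (H6) so that the first component governs the dynamics and the subdominant eigenvalues $\kappa_j(\lambda)$ do not interfere, every front-like linearised solution of speed $c>c_+^0$ decays at the single rate $\lambda_c$ with profile $\bm{\Phi}_{\lambda_c}$ as $s\to-\infty$; but the very construction of $\bm u^\infty$ out of levels where $U_1e^{-\lambda_c s}$ was unbounded forces $\bm u^\infty$ to be strictly heavier than $e^{\lambda_c s}\bm{\Phi}_{\lambda_c}$ near $-\infty$, a contradiction. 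This gives $\limsup_{s\to-\infty}\Psi(s)e^{-\lambda_c s}<+\infty$. Once this a~priori exponential decay is known it becomes legitimate to start the comparison in Lemma~\ref{CP+}, at a sufficiently negative level, between $U$ and the supersolution $\min\{ke^{\lambda_c(ct-x\cdot e)}\bm{\Phi}_{\lambda_c}(x),\bm 1\}$ of Lemma~\ref{super} with $k$ large, which refines the bound to the clean form with $\phi_1^c$.

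For the lower bound the plan is symmetric: if $\liminf_{s\to-\infty}\inf_x U_1(x,s)e^{-\lambda_c s}=0$, a renormalisation along a sequence realising this again produces a front-like linearised solution of speed $c$, now decaying strictly faster than $e^{\lambda_c s}$, contradicting the same classification. Equivalently, the lower bound can be built from the subsolution $\underline{\bm u}$ of Lemma~\ref{sub}: for fixed parameters $(\delta_1,\delta_2,s_0,n_0)$ one obtains, via Lemma~\ref{CP-+} and the fact that $U(\cdot,\sigma)\gg\bm 0$ on $\overline{\mathcal D}$, the estimate $U_1(x,s)\ge\delta_1 e^{\lambda_c s}\bigl(\phi_1^c(x)-n_0 e^{\epsilon s}\phi_1^\epsilon(x)\bigr)$ on each bounded interval $[\sigma,s_0]$, and letting $\sigma\to-\infty$ with $\delta_1$ fixed requires precisely the information --- furnished by the compactness argument --- that $U_1$ does not decay faster than $e^{\lambda_c s}$. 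Either way, $\liminf_{s\to-\infty}\inf_x U_1(x,s)e^{-\lambda_c s}\ge\delta_1/2>0$.

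The hard part will be the Liouville classification of front-like linearised solutions of the \emph{coupled} system at a supercritical speed --- that they all decay at exactly the rate $\lambda_c$ with profile $\bm{\Phi}_{\lambda_c}$ --- since this is what upgrades ``some exponential decay'' into ``decay at rate exactly $\lambda_c$''. The reason a purely comparison-based proof does not suffice is the degeneracy at $s=-\infty$, where the front and every admissible comparison function vanish together, while the comparison principles of Lemmas~\ref{CP+}--\ref{CP-+} propagate toward $s=+\infty$ and so cannot be initiated without a~priori rate information. The multi-component coupling is the secondary difficulty, handled throughout by Lemma~\ref{UcpV} and the eigenvalue separation (H6), which reduce everything to the scalar quantity $U_1$.
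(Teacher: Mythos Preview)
Your proposal has a genuine gap, and it also misdiagnoses where the difficulty lies.

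\textbf{The unproved Liouville step.} You explicitly flag the ``Liouville-type classification'' of front-like solutions of the linearised system at speed $c>c_+^0$ as the hard part, and indeed you do not prove it. But this step carries the entire weight of your argument: without it, neither the upper nor the lower contradiction closes.

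\textbf{The upper-bound contradiction is not there even granting the classification.} You claim that renormalising by $U_1(x_n,s_n)$ along a sequence where $U_1(x_n,s_n)e^{-\lambda_c s_n}\to\infty$ produces a limit $\bm u^\infty$ that is ``strictly heavier than $e^{\lambda_c s}\bm\Phi_{\lambda_c}$''. This does not follow. The renormalised function $u_1^n(t,x)=U_1(x,ct-x\cdot e+s_n)/U_1(x_n,s_n)$ is locally bounded by Harnack, and its limit is some nonnegative front-like solution of the linearised problem with $u_1^\infty(0,x_\infty)=1$. There is no mechanism by which $u_1^\infty$ inherits any super-$\lambda_c$ decay from the blow-up hypothesis; the normalisation erases precisely the scale information you need. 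If the classification you posit holds, the limit is a constant multiple of $e^{\lambda_c s}\phi_1^c$, which contradicts nothing.

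\textbf{What the paper actually does.} Your assertion that ``a purely comparison-based proof does not suffice'' is incorrect for the upper bound. The paper's Step~1 is a straight comparison argument using the subsolution of Lemma~\ref{sub} and Lemma~\ref{CP-+}: the blow-up hypothesis furnishes a level $s_{n^*}$ where $U_1$ (hence, via Lemmas~\ref{Harnack} and~\ref{UcpV}, all $U_i$) dominate $\underline U$; the comparison then forces $U\gg\underline U$ on $[s_{n^*},s_0]$; but since $U_1\to0$ while $\underline U_1>0$ on a half-line there is a point $(x_1,s_1)$ with $U_1(x_1,s_1)\le\underline U_1(x_1,s_1)$, a contradiction. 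No rate information is needed to initiate this. For the lower bound (Step~3) the paper does use compactness, but in a narrower way than a Liouville classification: first Step~2 propagates the upper bound $U_i\le B_c e^{\lambda_c s}$ to all components by a scalar induction; then one normalises by $e^{\lambda_c(ct-x\cdot e+z_n)}\psi_i^c(x)$ (not by $U_1(y_n,z_n)$), which is uniformly bounded thanks to Step~2, and shows each limit $u_i^*\equiv0$ directly via the strong maximum principle (for $i=1$, using the touching point; for $i\ge2$, using the spectral gap $\sigma_i>0$). The contradiction then comes from the supersolution of Lemma~\ref{super} together with Lemma~\ref{CP+}.

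In short: replace the Liouville step for the upper bound by the direct subsolution comparison of Lemma~\ref{sub}$+$Lemma~\ref{CP-+}; and for the lower bound, normalise by the exponential weight $e^{\lambda_c s}\psi_i^c$ rather than by $U_1$ at a point, so that the already-established upper bound gives compactness and the limits can be killed component by component without any classification.
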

\begin{proof}
We divide the proof into three steps.

{\bf Step 1}. We prove that
\begin{equation}\label{limsup}
\mathop{\limsup}\limits_{s\to-\infty}\left\{\sup_{x\in\mathbb R^N}
\frac{U_1(x,s)}{e^{\lambda_c s}\phi_1^c(x)}\right\}<+\infty.
\end{equation}
If this is not true, then there exists a sequence $\{(x_n,s_n)\}_{n\in\mathbb{N}}$ such that
\begin{equation}\label{+oo}
s_n\to-\infty\ (n\to\infty),
\quad\mathop{\lim}\limits_{n\to\infty}\frac{U_1(x_n,s_n)}{e^{\lambda_c s_n}\phi_1^c(x_n)}=\infty.
\end{equation}
For any fixed $\delta_1>0$, let $s_0\le s_*$ and $n_0$ be fixed constants satisfying \eqref{s-0},
where $s_*$ is given by \eqref{s*}. Let $0<\sigma<\min\left\{1,\frac{1}{\theta_cK_c}\right\}$,
where $K_c$ is given by Lemma \ref{UcpV}.
Define $\underline{\bm u}(t,x)=\underline{U}(x,ct-x\cdot e)$ with
$\underline{u}_i(t,x)=\underline{U}_i(x,ct-x\cdot e)$ given by
\begin{align*}
\underline{u}_1(t,x)&=\underline{U}_1(x,ct-x\cdot e)=\delta_1e^{\lambda_c(ct-x\cdot e)}
\left(\phi_1^c(x)-n_0e^{\epsilon(ct-x\cdot e)}\phi_1^{\epsilon}(x)\right),\\
\underline{u}_i(t,x)&=\underline{U}_i(x,ct-x\cdot e)=\delta_1e^{\lambda_c(ct-x\cdot e)}
\left(\sigma\phi_i^c(x)-n_0e^{\epsilon(ct-x\cdot e)}\phi_i^{\epsilon}(x)\right),\ \
i=2,3,\cdots,m,
\end{align*}
where $(t,x)\in\Omega_{s_0}^-$.
Noting that $\lim_{s\to-\infty}U_1(x,s)=0$ uniformly in $x\in\mathbb R^N$, and $\underline{U}_1(x,s)>0$ for all
$(x,s)\in\Sigma_{\hat{s}_0}^-:=\{\mathbb R^N\times\mathbb R:\ s\le\hat{s}_0\}$
with some $\hat{s}_0\le s_0$, then there exist $(x_1,s_1)\in\Sigma_{\hat{s}_0}^-$ and $z_1\le0$ such that $U_1(x_1,s_1+z_1)\le\underline{U}_1(x_1,s_1)$.
Assume without loss of generality that $z_1=0$.
It then follows from \eqref{+oo} that there exists $n^*\in\mathbb N$ such that
$$\forall\; n\ge n^*,\quad s_n<s_1,\quad U_1(x_n,s_n)
\ge N_r\delta_1\theta_ce^{\lambda_c s_n}\phi_1^c(x_n),$$
where $N_r$ is given by \eqref{Har-ineq}. It then follows that
$$U_1(x,s_{n^*})\ge\frac{1}{N_r}U_1(x_{n^*},s_{n^*})
\ge\delta_1e^{\lambda_c s_{n^*}}\phi_1^c(x)>\underline{U}_1(x,s_{n^*}),
\quad\forall\; x\in\mathbb R^N.$$
On the other hand, by Lemma \ref{UcpV},
$$U_i(x,s_{n^*})\ge\frac{1}{K_c}U_1(x,s_{n^*})
\ge\sigma\delta_1e^{\lambda_c s_{n^*}}\phi_i^c(x)
>\underline{U}_i(x,s_{n^*}),\quad\forall\;x\in\mathbb R^N,\ \forall\; i=2,3,\cdots,m.$$
It then follows from Lemma \ref{CP-+} that $\underline{U}(x,s)\ll U(x,s)$ for any
$(x,s)\in\mathbb R^N\times[s_{n^*},s_0]$, which contradicts
$U_1(x_1,s_1)\le\underline{U}_1(x_1,s_1)$.
Therefore \eqref{limsup} holds.

{\bf Step 2}. We prove that there exists $B_c>0$ such that
\begin{equation}\label{Ui-induction}
U_i(x,s)\le B_ce^{\lambda_c s},\ \ \ \forall\;(x,s)\in\mathbb R^N\times\mathbb R,
\ \ i\in I.
\end{equation}
By \eqref{limsup}, there exists $B_c>0$ such that
$U_1(x,s)\le B_ce^{\lambda_c s}$ for any $(x,s)\in\mathbb R^N\times\mathbb R$.
Let $\psi_i^c(x)>0$ be the periodic eigenfunction associated with
$\kappa_i(\lambda_c)=\kappa_{e}(d_i,q_i,\zeta^i,\lambda_c)$, that is,
\begin{equation*}\label{kappa-i-psi-i}
\kappa_i(\lambda_c)\psi_i^c=d_i(x)\Delta\psi_i^c+(q_i-2d_i\lambda_c e)
\cdot\nabla\psi_i^c+(d_i\lambda_c^2-\lambda_c q_i\cdot e+h_i(x,\bm 0))\psi_i^c,\ \ i\in I.
\end{equation*}
Noting from \eqref{1-j} that
$$\sigma_i:=c\lambda_c-\kappa_{e}(d_i,q_i,\zeta^i,\lambda_c)=
\kappa_1(\lambda_c)-\kappa_i(\lambda_c)>0,\quad i=2,3,\cdots,m.$$
Let
$$0<\varepsilon<\frac{\min_{i=2,\cdots,m}\{\sigma_i,\
\min_{x\in\mathbb R^N}{|h_i(x,\bm 0)|}\}}{2}.$$
Since $\mathop{\lim}\limits_{ct-x\cdot e\to-\infty}\bm u(t,x)=\bm 0$,
there exists $Z_\varepsilon>0$ such that
$$|h_i(x,\bm u)-h_i(x,\bm 0)|\le\varepsilon,\quad\forall\;(t,x)\in\Omega_{-Z_\varepsilon}^-,
\quad \forall\; i=2,3,\cdots,m.$$
Define
$$\omega_i(t,x)=K_ie^{\lambda_c(ct-x\cdot e)}\psi_i^c(x),\quad i=2,3,\cdots,m,$$
where
$$K_i\ge\frac{2B_c\max_x{\left(\sum_{j=1}^{i-1}a_{ij}(x)\right)}}
{\min_{i=2,\cdots,m}\{\sigma_i\}\min_{i=2,\cdots,m}\{\min_x{\psi_i^c(x)}\}}$$
is such that
$$K_ie^{\lambda_c(-Z_\varepsilon)}\psi_i^c(x)\ge U_i(x,-Z_\varepsilon),
\quad \forall\; x\in\mathbb R^N.$$
Next we prove that \eqref{Ui-induction} holds for $i=2$. Noting that
\begin{align*}
\frac{\partial \omega_2(t,x)}{\partial t}-d_2\Delta\omega_2-q_2\cdot\nabla\omega_2
=(\sigma_2+h_2(x,\bm 0))\omega_2
&\ge\frac{\sigma_2}{2}\omega_2+(h_2(x,\bm 0)+\varepsilon)\omega_2\\
&\ge a_{21}u_1+(h_2(x,\bm 0)+\varepsilon)\omega_2,
\end{align*}
and
\begin{align*}
\frac{\partial u_2(t,x)}{\partial t}-d_2\Delta u_2-q_2\cdot\nabla u_2
&=a_{21}u_1+h_2(x,\bm 0)u_2+(h_2(x,\bm u)-h_2(x,\bm 0))u_2\\
&\le a_{21}u_1+(h_2(x,\bm 0)+\varepsilon)u_2
\end{align*}
for all $(t,x)\in\Omega_{-Z_\varepsilon}^-$.
Hence, the function $(\omega_2-u_2)$ satisfies
\begin{equation*}
\begin{cases}
\frac{\partial (\omega_2-u_2)(t,x)}{\partial t}-d_2\Delta(\omega_2-u_2)-q_2\cdot\nabla(\omega_2-u_2)
\ge(h_2(x,\bm 0)+\varepsilon)(\omega_2-u_2),\quad (t,x)\in\Omega_{-Z_\varepsilon}^-,\\
(\omega_2-u_2)(t,x)\ge0,\quad (t,x)\in\{\mathbb R\times\mathbb R^N: ct-x\cdot e=-Z_\varepsilon\},\\
\mathop{\lim}\limits_{ct-x\cdot e\to-\infty}(\omega_2-u_2)(t,x)=0.
\end{cases}
\end{equation*}
Since $h_2(x,\bm 0)+\varepsilon<0$ for all $x\in\mathbb R^N$,
we conclude from the maximum principle that
$u_2(t,x)\le K_2e^{\lambda_c(ct-x\cdot e)}\psi_2^c(x)$ for any $(t,x)\in\Omega_{-Z_\varepsilon}^-$.
That is, $U_2(x,s)\le K_2e^{\lambda_cs}\psi_2^c(x)$ for any
$(x,s)\in\mathbb R^N\times(-\infty,-Z_\varepsilon]$.
Due to the boundedness of $U_2(x,s)$ in $\mathbb R^N\times\mathbb R$, there exists
$B_c$ large enough such that $U_2(x,s)\le B_ce^{\lambda_cs}$ for any
$(x,s)\in\mathbb R^N\times\mathbb R$.

Suppose now that \eqref{Ui-induction} hold for all $i\le k-1$,
where $3\le k\le m$, that is,
$U_i(x,s)\le B_ce^{\lambda_c s}$ for all $i=1,2,\cdots,k-1$ in $\mathbb R^N\times\mathbb R$.
We next prove that
\begin{equation}\label{U-k-indu}
U_k(x,s)\le B_ce^{\lambda_c s},\quad\forall\;(x,s)\in\mathbb R^N\times\mathbb R.
\end{equation}
Noting that
\begin{align*}
\frac{\partial \omega_k(t,x)}{\partial t}-d_k\Delta\omega_k-q_k\cdot\nabla\omega_k
&=(\sigma_k+h_k(x,\bm 0))\omega_k
\ge\frac{\sigma_k}{2}\omega_k+(h_k(x,\bm 0)+\varepsilon)\omega_k\\
&\ge\sum_{j=1}^{k-1}a_{kj}u_j+(h_k(x,\bm 0)+\varepsilon)\omega_k,\\
\frac{\partial u_k(t,x)}{\partial t}-d_k\Delta u_k-q_k\cdot\nabla u_k
&=\sum_{j=1}^{k-1}a_{kj}u_j+h_k(x,\bm 0)u_k+(h_k(x,\bm u)-h_k(x,\bm 0))u_k\\
&\le\sum_{j=1}^{k-1}a_{kj}u_j+(h_k(x,\bm 0)+\varepsilon)u_k
\end{align*}
for all $(t,x)\in\Omega_{-Z_\varepsilon}^-$, and
$h_k(x,\bm 0)+\varepsilon<0$ for any $x\in\mathbb R^N$,
similar arguments as above show that \eqref{U-k-indu} holds.
By using an induction argument, one can prove that \eqref{Ui-induction} hold for all $i\in I$.

{\bf Step 3}. We prove that
$$\mathop{\liminf}\limits_{s\to-\infty}\left\{\inf_{x\in\mathbb R^N}\frac{U_1(x,s)}
{e^{\lambda_c s}\phi_1^c(x)}\right\}>0.$$
If this is not true,
then there exists $\{(y_n,z_n)\}_{n\in\mathbb{N}}$ with $y_n\in\overline{\mathcal D}$ such that
\begin{equation*}
z_n\to-\infty,\quad y_n\to y^* \quad (n\to\infty),
\quad\mathop{\lim}\limits_{n\to\infty}\frac{U_1(y_n,z_n)}{e^{\lambda_c z_n}\phi_1^c(y_n)}=0.
\end{equation*}
Let
$$u_i^n(t,x)=U_i^n(x,ct-x\cdot e):=\frac{U_i(x,ct-x\cdot e+z_n)}
{e^{\lambda_c(ct-x\cdot e+z_n)}\psi_i^c(x)}
=\frac{u_i(t+\frac{z_n}{c},x)}{e^{\lambda_c(ct-x\cdot e+z_n)}\psi_i^c(x)},
\ \ i\in I.$$
It follows from Step 2 that $\{u_i^n\}_{n\in\mathbb{N}}$ is uniformly bounded for each $i$,
and in particular,
$$\mathop{\lim}\limits_{n\to\infty}u_1^n\left(\frac{y_n\cdot e}{c},y_n\right)
=\mathop{\lim}\limits_{n\to\infty}\frac{U_1(y_n,z_n)}{e^{\lambda_cz_n}\psi_1^c(y_n)}
=\mathop{\lim}\limits_{n\to\infty}\frac{U_1(y_n,z_n)}
{e^{\lambda_cz_n}\phi_1^c(y_n)}\frac{\phi_1^c(y_n)}{\psi_1^c(y_n)}=0.$$
By a direct calculation, we have
\begin{equation*}
\begin{cases}
\frac{\partial u_i^n(t,x)}{\partial t}=d_i\Delta u_i^n
+\left(q_i+2d_i\left(\frac{\nabla\psi_i^c}{\psi_i^c}-\lambda_ce\right)\right)
\cdot\nabla u_i^n-\sigma_iu_i^n-h_i(x,\bm 0)u_i^n
+\frac{f_i(x,\bm u(t+\frac{z_n}{c},x))}{u_i(t+\frac{z_n}{c},x)}u_i^n,\\
u_i^n(t,x)=u_i^n\left(t+\frac{p\cdot e}{c},x+p\right),
\quad\forall\;(t,x)\in\mathbb R\times\mathbb R^N,\ \forall\;p\in \mathcal L,\ \
i\in I,
\end{cases}
\end{equation*}
where $\sigma_1=0$. Noting that
$$\lim_{n\to\infty}\frac{f_1(x,\bm u(t+\frac{z_n}{c},x))}{u_1(t+\frac{z_n}{c},x)}
=\lim_{n\to\infty}h_1(x,\bm u(t+\frac{z_n}{c},x))=h_1(x,\bm 0)$$
locally uniformly in $(t,x)\in\mathbb R\times\mathbb R^N$,
by the parabolic estimates and up to a subsequence, $\{u_1^n\}_{n\in\mathbb{N}}$ converges in
$C^{1,2}_{loc}(\mathbb R\times\mathbb R^N)$ to a function $u_1^*\ge0$,
which satisfies
\begin{equation*}
\begin{cases}
\frac{\partial u_1^*(t,x)}{\partial t}=d_1\Delta u_1^*
+\left(q_1+2d_1\left(\frac{\nabla\psi_1^c}{\psi_1^c}-\lambda_ce\right)\right)\cdot\nabla u_1^*,\\
u_1^*(t,x)=u_1^*\left(t+\frac{p\cdot e}{c},x+p\right),
\quad\forall\;(t,x)\in\mathbb R\times\mathbb R^N,\ \forall\;p\in \mathcal L.
\end{cases}
\end{equation*}
Observing that $u_1^*\left(\frac{y^*\cdot e}{c},y^*\right)=0$,
then $u_1^*\equiv0$ in $\mathbb R\times\mathbb R^N$ by the maximum principle.
Since
\begin{align*}
&\quad\frac{f_i(x,\bm u(t+\frac{z_n}{c},x))}{u_i(t+\frac{z_n}{c},x)}u_i^n
=\frac{f_i(x,\bm u(t+\frac{z_n}{c},x))}{e^{\lambda_c(ct-x\cdot e+z_n)}\psi_i^c(x)}\\
&=\frac{\sum_{j=1}^{i-1}{a_{ij}u_j(t+\frac{z_n}{c},x)}
+u_i(t+\frac{z_n}{c},x)h_i(x,\bm u(t+\frac{z_n}{c},x))}
{e^{\lambda_c(ct-x\cdot e+z_n)}\psi_i^c(x)}\\
&=\sum_{j=1}^{i-1}{a_{ij}\frac{u_j(t+\frac{z_n}{c},x)}{e^{\lambda_c(ct-x\cdot e+z_n)}\psi_j^c(x)}
\frac{\psi_j^c(x)}{\psi_i^c(x)}}+h_i(x,\bm u(t+\frac{z_n}{c},x))u_i^n\\
&=\sum_{j=1}^{i-1}{a_{ij}\frac{\psi_j^c}{\psi_i^c}}u_j^n
+h_i(x,\bm u(t+\frac{z_n}{c},x))u_i^n,
\end{align*}
using an induction argument, $\{u_i^n\}_{n\in\mathbb{N}}$ converges in
$C^{1,2}_{loc}(\mathbb R\times\mathbb R^N)$ to a function $u_i^*\ge0$ for each $i=2,3,\cdots,m$,
and
\begin{equation*}
\begin{cases}
\frac{\partial u_i^*(t,x)}{\partial t}=d_i\Delta u_i^*
+\left(q_i+2d_i\left(\frac{\nabla\psi_i^c}{\psi_i^c}-\lambda_ce\right)\right)\cdot\nabla u_i^*-\sigma_iu_i^*,\\
u_i^*(t,x)=u_i^*\left(t+\frac{p\cdot e}{c},x+p\right),
\quad\forall\;(t,x)\in\mathbb R\times\mathbb R^N,\ \forall\;p\in \mathcal L.
\end{cases}
\end{equation*}
Since $\sigma_i>0$, the maximum principle then yields that $u_i^*\equiv0$
in $\mathbb R\times\mathbb R^N$. Therefore
$$\mathop{\lim}\limits_{n\to\infty}\frac{U_i(y_n,z_n)}{e^{\lambda_c z_n}\phi_i^c(y_n)}
=\mathop{\lim}\limits_{n\to\infty}\frac{U_i(y_n,z_n)}{e^{\lambda_c z_n}\psi_i^c(y_n)}
\cdot\frac{\psi_i^c(y_n)}{\phi_i^c(y_n)}
=u_i^*\left(\frac{y^*\cdot e}{c},y^*\right)\cdot\frac{\psi_i^c(y^*)}{\phi_i^c(y^*)}=0.$$
Denote
\begin{equation}\label{epsilon-i}
\varepsilon_i^n:=\frac{U_i(y_n,z_n)}{e^{\lambda_c z_n}\phi_i^c(y_n)}\to0
\ \ \text{as}\ n\to\infty,\ \ i\in I.
\end{equation}
Let
$$\bm w(t,x)=W(x,ct-x\cdot e):=N_r\theta_ce^{\lambda_c(ct-x\cdot e)}\bm{\Phi}_{\lambda_c}(x).$$
Then $W(x,s)$ is periodic in $x$ and nondecreasing in $s$, and
it follows from Lemma \ref{super} that $\bm w>\bm 0$ is a supersolution
of \eqref{m-system} in $\mathbb R\times\mathbb R^N$.
Hence
$$\overline{\bm u}(t,x)=\overline{U}(x,ct-x\cdot e):=\min\{\bm w(t,x),\bm 1\}$$
is an irregular supersolution of \eqref{m-system} in $\mathbb R\times\mathbb R^N$.
Furthermore, there exists $\bar{\sigma}\in\mathbb R$ such that
$\overline{U}(x,s)=\bm 1$ for all $(x,s)\in\mathbb R^N\times[\bar{\sigma},\infty)$.
Since $\mathop{\lim}\limits_{s\to-\infty}\overline{U}(x,s)=\bm 0$ and
$\mathop{\lim}\limits_{s\to+\infty}U(x,s)=\bm 1$ uniformly in $x\in\mathbb R^N$,
there exists $(x^\prime,s^\prime)$ with $s^\prime<\bar{\sigma}$ and $z^\prime\ge0$ such that
\begin{equation}\label{s'}
\overline{U}(x^\prime,s^\prime)\le U(x^\prime,s^\prime+z^\prime)<\bm 1.
\end{equation}
Assume without loss of generality that $z^\prime=0$.
By Lemma \ref{Harnack} and in view of \eqref{epsilon-i},
$$U(x,z_n)\le N_rU(y_n,z_n)\le N_r\sum_{i=1}^m\varepsilon_i^n\theta_ce^{\lambda_cz_n}\bm{\Phi}_{\lambda_c}(x),
\quad\forall\;x\in\mathbb R^N.$$
Let $n^\prime\in\mathbb N_+$ be such that $z_{n^\prime}<s^\prime$, and
$$N_r\sum_{i=1}^m\varepsilon_i^{n^\prime}\theta_ce^{\lambda_cz_{n^\prime}}\bm{\Phi}_{\lambda_c}(x)
\ll N_r\theta_ce^{\lambda_cz_{n^\prime}}\bm{\Phi}_{\lambda_c}(x)\le\frac{1}{2}\bm 1,
\quad\forall\;x\in\mathbb R^N.$$
Then $U(x,z_{n^\prime})\ll\overline{U}(x,z_{n^\prime})$ for all $x\in\mathbb R^N$.
By Lemma \ref{CP+}, we have $U(x,s)\ll\overline{U}(x,s)$ for any
$(x,s)\in\mathbb R^N\times[z_{n^\prime},\infty)$, which contradicts \eqref{s'}.
The proof is complete.
\end{proof}

The main result of this subsection is stated as follows.

\begin{theorem}\label{main-1}
Assume (H1)-(H7). Let $U(x,ct-x\cdot e)$ be a pulsating traveling front of \eqref{m-system}
with $c>c_+^0$. Then there exists $\rho>0$ such that
\begin{equation*}
\mathop{\lim}\limits_{s\to-\infty}\frac{U(x,s)}{\rho e^{\lambda_c s}\bm{\Phi}_{\lambda_c}(x)}
=\bm 1\quad\text{uniformly in}\ x\in\mathbb R^N.
\end{equation*}
\end{theorem}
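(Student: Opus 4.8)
The plan is to upgrade the two-sided bounds of Lemma~\ref{low-upp} to the sharp asymptotics by combining a compactness argument that classifies all ``rescaled limits'' of $U$ near $s=-\infty$ with the subsolutions of Lemma~\ref{sub} and the comparison principle Lemma~\ref{CP-+} (the supersolution of Lemma~\ref{super} enters only through Lemma~\ref{low-upp}). I set
\[
\rho^+:=\limsup_{s\to-\infty}\Big(\sup_{x\in\mathbb R^N}\frac{U_1(x,s)}{e^{\lambda_c s}\phi_1^c(x)}\Big),\qquad
\rho^-:=\liminf_{s\to-\infty}\Big(\inf_{x\in\mathbb R^N}\frac{U_1(x,s)}{e^{\lambda_c s}\phi_1^c(x)}\Big),
\]
so that $0<\rho^-\le\rho^+<\infty$ by Lemma~\ref{low-upp}. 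The goal is to prove $\rho^-=\rho^+=:\rho$ and that every component then satisfies $U_i(x,s)/(\rho e^{\lambda_c s}\phi_i^c(x))\to1$ uniformly in $x$.

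First I would classify the rescaled limits. For any sequence $s_n\to-\infty$ put $u_i^n(t,x):=e^{-\lambda_c s_n}U_i(x,ct-x\cdot e+s_n)$; by Step~2 of Lemma~\ref{low-upp} these are locally bounded, and they satisfy systems whose coefficients converge (because $\bm u(t+s_n/c,x)\to\bm 0$ locally uniformly) to those of the linearized system~\eqref{u1u2-linearized}. Parabolic estimates give, along a subsequence, $u_i^n\to u_i^\infty$ in $C^{1,2}_{loc}$, where $\bm u^\infty=U^\infty(x,ct-x\cdot e)$ is periodic in $x$ and $c_1e^{\lambda_c s}\phi_1^c(x)\le U_1^\infty(x,s)\le B_ce^{\lambda_c s}$ by Lemma~\ref{low-upp}. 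When the $s_n$ realize $\rho^+$ (choose $x_n\to x^*\in\overline{\mathcal D}$ with $U_1(x_n,s_n)/(e^{\lambda_c s_n}\phi_1^c(x_n))\to\rho^+$), the definition of $\rho^+$ also yields $U_1^\infty(x,s)\le\rho^+e^{\lambda_c s}\phi_1^c(x)$ with equality attained at one point; since $e^{\lambda_c(ct-x\cdot e)}\phi_1^c(x)$ solves the scalar linearized equation (here $\kappa_1(\lambda_c)=c\lambda_c$ is used), the strong maximum principle together with forward uniqueness forces $u_1^\infty\equiv\rho^+e^{\lambda_c(ct-x\cdot e)}\phi_1^c(x)$, and likewise $\equiv\rho^-e^{\lambda_c(ct-x\cdot e)}\phi_1^c$ along a $\rho^-$-realizing sequence. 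Feeding this into the $\phi_2$-equation and using that $e^{\lambda_c(ct-x\cdot e)}\phi_i^c(x)$ solves the $i$-th linearized equation (Lemma~\ref{P-EFunc}), the difference $g_i:=u_i^\infty-\rho^{\pm}e^{\lambda_c(ct-x\cdot e)}\phi_i^c$ is a bounded entire solution (one has $|g_i|\le Ce^{\lambda_c(ct-x\cdot e)}$) of a scalar linear equation whose reduced zeroth-order operator has principal eigenvalue $\kappa_i(\lambda_c)-c\lambda_c<0$ by~\eqref{1-j}; hence $g_i\equiv0$, and an induction on $i$ gives $\bm u^\infty\equiv\rho^{\pm}\bm w_c$.

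Next I would prove $\rho^-=\rho^+$. With $s_n$ realizing $\rho^+$, the previous step upgrades pointwise convergence to $\sup_x\big|U_i(x,s_n)/(e^{\lambda_c s_n}\phi_i^c(x))-\rho^+\big|\to0$ for each $i$ (via $C^0_{loc}$ convergence of $u_i^n$ at the times $t=x\cdot e/c$ for $x$ in a period, then by periodicity). Fix a small $\eta\in(0,\rho^+/2)$, take $\delta_1=\delta_2=\rho^+-2\eta$, and let $\underline{\bm u}$, $s_0=s_0(\delta_1)$, $n_0=n_0(\delta_1)$ be as in Lemma~\ref{sub}. For $n$ large, $s_n<s_0$ and $\underline U_i(x,s_n)\le\delta_1e^{\lambda_c s_n}\phi_i^c(x)<(\rho^+-\eta)e^{\lambda_c s_n}\phi_i^c(x)\le U_i(x,s_n)$ for all $x$; since $U$ is a (super)solution with $\bm 0<U\le\bm 1$ connecting $\bm 0$ to $\bm 1$ and $\max\{\underline{\bm u},\bm 0\}$ is an irregular subsolution on $\Omega_{s_0}^-$ with $\sup_{s\le s_0}\underline U\ll\bm 1$ and $\underline U(\cdot,s_0)\le\bm 0$, Lemma~\ref{CP-+} gives $\underline U(x,s)\ll U(x,s)$ on $\mathbb R^N\times[s_n,s_0]$. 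Letting $n\to\infty$ extends $U_1(x,s)/(e^{\lambda_c s}\phi_1^c(x))>\delta_1\big(1-n_0e^{\epsilon s}\phi_1^\epsilon(x)/\phi_1^c(x)\big)$ to all $s\le s_0$; taking $\inf_x$ and then $\liminf_{s\to-\infty}$ gives $\rho^-\ge\rho^+-2\eta$, and $\eta\downarrow0$ yields $\rho^-=\rho^+=:\rho$.

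Finally, from $\rho^-=\rho^+=\rho$ one gets $U_1(x,s)/(e^{\lambda_c s}\phi_1^c(x))\to\rho$ uniformly in $x$; then along \emph{every} $s_n\to-\infty$ the limit of Step~1 must have $u_1^\infty\equiv\rho e^{\lambda_c(ct-x\cdot e)}\phi_1^c$, hence $\bm u^\infty\equiv\rho\bm w_c$, and as the limit is subsequence-independent, $U_i(x,s+s_n)/(e^{\lambda_c(s+s_n)}\phi_i^c(x))\to\rho$ locally uniformly for each $i$, i.e. $U_i(x,s)/(\rho e^{\lambda_c s}\phi_i^c(x))\to1$ uniformly in $x$, which is the assertion with $\bm\Phi_{\lambda_c}=(\phi_1^c,\dots,\phi_m^c)$. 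The hard part is Step~1: unlike for a scalar equation, the rescaled limit has to be identified one component at a time along the triangular coupling, combining a ``touching-the-barrier'' strong-maximum-principle argument for $u_1^\infty$ with a spectral rigidity fact for $i\ge2$ (a bounded ancient solution of a linear parabolic equation with strictly negative principal eigenvalue is zero), where the strict inequalities $\kappa_1(\lambda_c)>\kappa_j(\lambda_c)$ of~\eqref{1-j}, and hence assumption~(H6), are essential. A secondary subtlety is checking in Step~2 that $U$ and the subsolution of Lemma~\ref{sub} satisfy the precise hypotheses of Lemma~\ref{CP-+} on the slice $s=s_n$ — which is exactly what the uniform-in-$x$ refinement coming from Step~1 provides.
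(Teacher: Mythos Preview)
Your proposal is correct and uses the same core ingredients as the paper --- Lemma~\ref{low-upp} for the two-sided bounds, parabolic compactness plus the strong maximum principle to identify the rescaled limit of $U_1$, the gap $\kappa_1(\lambda_c)>\kappa_j(\lambda_c)$ from~\eqref{1-j} to force the limits of the remaining components, and the subsolution of Lemma~\ref{sub} with Lemma~\ref{CP-+} to close the gap between $\rho^-$ and $\rho^+$. The difference is purely organizational. The paper first runs the subsolution comparison (its Step~1), handling the components $i\ge2$ by taking $\delta_2=\delta_1\min\{1,(\theta_cK_c)^{-1}\}$ via Lemma~\ref{UcpV}, then does the compactness/touching argument only for $U_1$ (its Step~2), and finally treats $U_i$, $i\ge2$, by the spectral argument (its Step~3). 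You invert the order: you first carry out the compactness and spectral identification for \emph{all} components along a $\rho^+$-realizing sequence, which yields the uniform-in-$x$ convergence needed to run the subsolution comparison with $\delta_1=\delta_2$, dispensing with the direct use of $K_c$ at that stage. Both orderings are valid; yours front-loads the vector-valued analysis and makes the final comparison step slightly cleaner, while the paper's keeps the comparison step self-contained at the cost of invoking $K_c$ there. Your ``spectral rigidity'' claim (a globally bounded entire solution of $\partial_t\eta=d_i\Delta\eta+\tilde q_i\cdot\nabla\eta-\sigma_i\eta$ with $\sigma_i>0$ vanishes) is exactly what the paper uses in its Step~3, proved there via the same maximum-principle mechanism.
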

\begin{proof}
In view of Lemma \ref{low-upp}, we have
\begin{equation*}
0<\rho_*:=\mathop{\liminf}\limits_{s\to-\infty}\left\{\inf_{x\in\mathbb R^N}
\frac{U_1(x,s)}{e^{\lambda_c s}\phi_1^c(x)}\right\}
\le\mathop{\limsup}\limits_{s\to-\infty}\left\{\sup_{x\in\mathbb R^N}
\frac{U_1(x,s)}{e^{\lambda_c s}\phi_1^c(x)}\right\}=:\rho^*<+\infty.
\end{equation*}
Next we divide the proof into three steps.

{\bf Step 1}.  We prove that
\begin{equation}\label{1111}
\rho_*=\mathop{\lim}\limits_{s\to-\infty}\left\{\inf_{x\in\mathbb R^N}
\frac{U_1(x,s)}{e^{\lambda_c s}\phi_1^c(x)}\right\}.
\end{equation}
If this is not true, then there exist $\epsilon>0$ and a sequence $\{s_n\}$ such that
\begin{equation}\label{U-1-rho}
s_n\to-\infty\ (n\to\infty),\quad \left\{\inf_{x\in\mathbb R^N}
\frac{U_1(x,s_n)}{e^{\lambda_c s_n}\phi_1^c(x)}\right\}\ge\rho_*(1+2\epsilon).
\end{equation}
Let $\delta_1=\rho_*(1+\frac{3}{2}\epsilon)$ and
$\delta_2=\delta_1\min\left\{1,\frac{1}{\theta_cK_c}\right\}$,
where $K_c$ is given by Lemma \ref{UcpV}. Define
$\underline{\bm u}(t,x)=(\underline{u}_1(t,x),\underline{u}_2(t,x),\cdots,\underline{u}_m(t,x))$
as
\begin{align*}
\underline{u}_1(t,x)&=\underline{U}_1(x,ct-x\cdot e)
=\delta_1e^{\lambda_c(ct-x\cdot e)}\left(\phi_1^c(x)-n_0
e^{\epsilon(ct-x\cdot e)}\phi_1^{\epsilon}(x)\right),\\
\underline{u}_i(t,x)&=\underline{U}_i(x,ct-x\cdot e)
=\delta_2e^{\lambda_c(ct-x\cdot e)}\left(\phi_i^c(x)-\frac{n_0\delta_1}{\delta_2}
e^{\epsilon(ct-x\cdot e)}\phi_i^{\epsilon}(x)\right),\ \ i=2,3,\cdots,m,
\end{align*}
where $(t,x)\in\Omega_{s_0}^-$, and $s_0$ and $n_0>0$ are given by Lemma \ref{sub}.
Since $U_i(x,s_n)\ge\frac{1}{K_c}U_1(x,s_n)$ for each $i$, it follows from \eqref{U-1-rho} that
\begin{equation}\label{uuvv}
\mathop{\lim}\limits_{n\to\infty}\frac{U_i(x,s_n)}{\underline{U}_i(x,s_n)}>1,
\quad\forall\; x\in\mathbb R^N,\ \ i\in I.
\end{equation}
On the other hand, it follows from the definition of $\rho_*$ that
there exists $\{(x_n,z_n)\}_{n\in\mathbb{N}}$ such that
$$z_n\to-\infty\ (n\to\infty),
\quad \mathop{\lim}\limits_{n\to\infty}\frac{U_1(x_n,z_n)}{e^{\lambda_c z_n}\phi_1^c(x_n)}=\rho_*.$$
Therefore there exists $n^*\in\mathbb N_+$ such that
\begin{equation}\label{contr}
z_{n^*}<s_0,\quad U_1(x_{n^*},z_{n^*})\le\rho_*\left(1+\frac{1}{2}\epsilon\right)
e^{\lambda_c z_{n^*}}\phi_1^c(x_{n^*})\le\underline{U}_1(x_{n^*},z_{n^*}).
\end{equation}
Furthermore, it follows from \eqref{uuvv} that there exists
$n^\prime\in\mathbb N_+$ such that
$$s_{n^\prime}<z_{n^*},\quad \underline{U}(x,s_{n^\prime})\ll U(x,s_{n^\prime}),
\quad\forall\; x\in\mathbb R^N.$$
By Lemma \ref{CP-+}, we have
$\underline{U}(x,s)\ll U(x,s)$ for all $(x,s)\in\mathbb R^N\times[s_{n^\prime},s_0]$,
which contradicts \eqref{contr}. Therefore \eqref{1111} holds.

{\bf Step 2}. We prove that $\rho_*=\rho^*$.
Let $\{(x_n^\prime,s_n^\prime)\}_{n\in\mathbb{N}}$ be the sequence such that $x_n^\prime\in\overline{\mathcal D}$
and
$$s_n^\prime\to-\infty,\quad x_n^\prime\to x^*\ (n\to\infty),
\quad \mathop{\lim}\limits_{n\to\infty}
\frac{U_1(x_n^\prime,s_n^\prime)}{e^{\lambda_c s_n^\prime}\phi_1^c(x_n^\prime)}=\rho^*.$$
Let
$$u_1^n(t,x)=\frac{u_1(t+\frac{s_n^\prime}{c},x)}{e^{\lambda_c(ct-x\cdot e+s_n^\prime)}\phi_1^c(x)}
=\frac{U_1(x,ct-x\cdot e+s_n^\prime)}{e^{\lambda_c(ct-x\cdot e+s_n^\prime)}\phi_1^c(x)},$$
then $\{u_1^n\}_{n\in\mathbb{N}}$ is uniformly bounded, and
\begin{align*}
\frac{\partial u_1^n(t,x)}{\partial t}&=d_1(x)\Delta u_1^n
+\left(q_1+2d_1\left(\frac{\nabla\phi_1^c}{\phi_1^c}
-\lambda_ce\right)\right)\cdot\nabla u_1^n-h_1(x,\bm 0)u_1^n
+\frac{f_1(x,\bm u(t+\frac{s_n^\prime}{c},x))}{u_1(t+\frac{s_n^\prime}{c},x)}u_1^n.
\end{align*}
It then follows that $\{u_1^n\}_{n\in\mathbb{N}}$ converges in $C^{1,2}_{loc}(\mathbb R\times\mathbb R^N)$,
up to a subsequence, to a function $u_1^*\ge0$, and
\begin{equation*}
\frac{\partial u_1^*(t,x)}{\partial t}=d_1(x)\Delta u_1^*
+\left(q_1+2d_1\left(\frac{\nabla\phi_1^c}{\phi_1^c}-\lambda_ce\right)\right)\cdot\nabla u_1^*,
\quad (t,x)\in\mathbb R\times\mathbb R^N.
\end{equation*}
Noting that $u_1^*(\frac{x^*\cdot e}{c},x^*)=\rho^*$ and $u_1^*\le\rho^*$ by the definition of $\rho^*$, the maximum principle then shows that
$u_1^*\equiv\rho^*$ for any $(t,x)\in\{\mathbb R\times\mathbb R^N: t\le\frac{x^*\cdot e}{c}\}$,
and furthermore for any $(t,x)\in\mathbb R\times\mathbb R^N$ by the uniqueness of solutions.
Then
$$\rho^*\equiv u_1^*\left(\frac{x\cdot e}{c},x\right)
=\mathop{\lim}\limits_{n\to\infty}u_1^n\left(\frac{x\cdot e}{c},x\right)
=\mathop{\lim}\limits_{n\to\infty}
\frac{U_1(x,s_n^\prime)}{e^{\lambda_c s_n^\prime}\phi_1^c(x)},
\quad \forall\; x\in\overline{\mathcal D}.$$
Since $U_1(\cdot,s)$ is periodic, it is readily seen that
$\mathop{\lim}\limits_{n\to\infty}\left\{\mathop{\inf}
\limits_{x\in\mathbb R^N}\frac{U_1(x,s_n^\prime)}
{e^{\lambda_c s_n^\prime}\phi_1^c(x)}\right\}=\rho^*$,
and hence it follows from \eqref{1111} that
$\rho_*=\rho^*:=\rho$. Therefore
\begin{equation}\label{U-asy}
\mathop{\lim}\limits_{s\to-\infty}\frac{U_1(x,s)}{\rho e^{\lambda_c s}\phi_1^c(x)}=1
\quad\text{uniformly in}\ x\in\mathbb R^N.
\end{equation}

{\bf Step 3}. We prove that
\begin{equation}\label{Ui-asymp}
\mathop{\lim}\limits_{s\to-\infty}\frac{U_i(x,s)}{\rho e^{\lambda_c s}\phi_i^c(x)}=1
\ \ \text{uniformly in}\  x\in\mathbb R^N,\quad \ i=2,3,\cdots,m.
\end{equation}
Let
\begin{align*}
\eta_i(t,x)&=u_i(t,x)-\rho e^{\lambda_c(ct-x\cdot e)}\phi_i^c(x)\\
&=U_i(x,ct-x\cdot e)-\rho e^{\lambda_c(ct-x\cdot e)}\phi_i^c(x)
:=\xi_i(x,ct-x\cdot e),\quad i\in I.
\end{align*}
By \eqref{Ui-induction}, there exists $C_1>0$ such that $|\xi_i|\le C_1e^{\lambda_cs}$
for all $(x,s)\in\mathbb R^N\times\mathbb R$ and each $i$.
Let
$$\underline{\tau}_i:=\liminf_{s\to-\infty}
\left\{\inf_{x\in\mathbb R^N}\frac{\xi_i(x,s)}{e^{\lambda_cs}\phi_i^c(x)}\right\}
\le\limsup_{s\to-\infty}\left\{\sup_{x\in\mathbb R^N}
\frac{\xi_i(x,s)}{e^{\lambda_cs}\phi_i^c(x)}\right\}:=\overline{\tau}_i,\quad\ i=2,3,\cdots,m.$$
Then one only need to prove that $\underline{\tau}_i=\overline{\tau}_i=0$.
Let $\{(\hat{x}_n,\hat{s}_n)\}_{n\in\mathbb{N}}$ with $\hat{x}_n\in\overline{\mathcal D}$ be such that
$$\hat{s}_n\to-\infty,\quad \hat{x}_n\to\hat{x}\ (n\to\infty),\quad
\lim_{n\to\infty}\frac{\xi_i(\hat{x}_n,\hat{s}_n)}
{e^{\lambda_c\hat{s}_n}\phi_i^c(\hat{x}_n)}=\underline{\tau}_i.$$
Define
\begin{align*}
\eta_i^n(t,x)=\frac{\eta_i(t+\frac{\hat{s}_n}{c},x)}
{e^{\lambda_c(ct-x\cdot e+\hat{s}_n)}\psi_i^c(x)}
=\frac{\xi_i(x,s+\hat{s}_n)}{e^{\lambda_c(s+\hat{s}_n)}\psi_i^c(x)},
\quad\ i\in I.
\end{align*}
By a straightforward calculation,
\begin{equation*}
\begin{cases}
\frac{\partial\eta_2^n(t,x)}{\partial t}=d_2\Delta\eta_2^n
+\left(q_2+2d_2\left(\frac{\nabla\psi_2^c}{\psi_2^c}-\lambda_ce\right)\right)
\cdot\nabla\eta_2^n-\sigma_2\eta_2^n+a_{21}\frac{\psi_1^c}{\psi_2^c}\eta_1^n\\
\qquad\quad+\frac{u_2(t+\frac{\hat{s}_n}{c},x)}{e^{\lambda_c(ct-x\cdot e+\hat{s}_n)}\psi_2^c(x)}
\left(h_2(x,\bm u(t+\frac{\hat{s}_n}{c},x))-h_2(x,\bm 0)\right)\eta_2^n,\\
\eta_2^n(t,x)=\eta_2^n(t+\frac{p\cdot e}{c},x+p),\ \ \ \forall\; p\in\mathcal L,
\end{cases}
\end{equation*}
where $\sigma_2=c\lambda_c-\kappa_2(\lambda_c)>0$.
Note from \eqref{U-asy} that
\begin{equation*}
\mathop{\lim}\limits_{n\to\infty}\eta_1^n(t,x)
=\mathop{\lim}\limits_{n\to\infty}\frac{\xi_1(x,s+\hat{s}_n)}{e^{\lambda_c(s+\hat{s}_n)}\psi_1^c(x)}
=\mathop{\lim}\limits_{n\to\infty}\frac{\xi_1(x,s+\hat{s}_n)}{e^{\lambda_c(s+\hat{s}_n)}\phi_1^c(x)}
\cdot\frac{\phi_1^c(x)}{\psi_1^c(x)}=0
\end{equation*}
and $\mathop{\lim}\limits_{n\to\infty}\bm u(t+\frac{\hat{s}_n}{c},x)=\bm 0$
locally uniformly in $(t,x)\in\mathbb R\times\mathbb R^N$, and $\frac{u_2(t+\frac{\hat{s}_n}{c},x)}{e^{\lambda_c(ct-x\cdot e+\hat{s}_n)}\psi_2^c(x)}$
is uniformly bounded in view of \eqref{Ui-induction}.
Therefore $\{\eta_2^n\}_{n\in\mathbb{N}}$ converges in $C^{1,2}_{loc}(\mathbb R\times\mathbb R^N)$,
up to a subsequence, to a function $\eta_2^*\ge0$, which satisfies
\begin{equation*}
\begin{cases}
\frac{\partial\eta_2^*(t,x)}{\partial t}=d_2\Delta\eta_2^*
+\left(q_2+2d_2\left(\frac{\nabla\psi_2^c}{\psi_2^c}-\lambda_ce\right)\right)
\cdot\nabla\eta_2^*-\sigma_2\eta_2^*,\\
\eta_2^*(t,x)=\eta_2^*\left(t+\frac{p\cdot e}{c},x+p\right),
\quad \forall\;p\in\mathcal L.
\end{cases}
\end{equation*}
Therefore $\eta_2^*\equiv0$ in $\mathbb R\times\mathbb R^N$, and hence
$$0=\lim_{n\to\infty}\eta_2^n\left(\frac{\hat{x}_n\cdot e}{c},\hat{x}_n\right)
=\lim_{n\to\infty}\frac{\xi_2(\hat{x}_n,\hat{s}_n)}{e^{\lambda_c\hat{s}_n}\psi_2^c(\hat{x}_n)}
=\lim_{n\to\infty}\frac{\xi_2(\hat{x}_n,\hat{s}_n)}{e^{\lambda_c\hat{s}_n}\phi_2^c(\hat{x}_n)}
\cdot\frac{\phi_2^c(\hat{x}_n)}{\psi_2^c(\hat{x}_n)}
=\underline{\tau}_2\cdot\frac{\phi_2^c(\hat{x})}{\psi_2^c(\hat{x})},$$
which implies that $\underline{\tau}_2=0$.
Similarly, one can prove that $\overline{\tau}_2=0$, and therefore \eqref{Ui-asymp} holds for $i=2$.
Note that for each $i=3,4,\cdots,m$, there hold
\begin{equation*}
\begin{cases}
\frac{\partial\eta_i^n(t,x)}{\partial t}=d_i\Delta\eta_i^n
+\left(q_i+2d_i\left(\frac{\nabla\psi_i^c}{\psi_i^c}-\lambda_ce\right)\right)
\cdot\nabla\eta_i^n-\sigma_i\eta_i^n+\sum_{j=1}^{i-1}a_{ij}\frac{\psi_j^c}{\psi_i^c}\eta_j^n\\
\qquad\quad+\frac{u_i(t+\frac{\hat{s}_n}{c},x)}{e^{\lambda_c(ct-x\cdot e+\hat{s}_n)}\psi_i^c(x)}
\left(h_i(x,\bm u(t+\frac{\hat{s}_n}{c},x))-h_i(x,\bm 0)\right)\eta_i^n,\\
\eta_i^n(t,x)=\eta_i^n(t+\frac{p\cdot e}{c},x+p),\ \ \ \forall\; p\in\mathcal L.
\end{cases}
\end{equation*}
By using an induction and similar argument as above,
one can prove that \eqref{Ui-asymp} hold for all $i=2,3\cdots,m$.
The proof is complete.
\end{proof}


\subsection{The critical case}

In this subsection, we consider the critical case, that is,
$$c=c_*:=c_+^0.$$
Denote
$$\lambda_*:=\lambda_+^0.$$
Noting that $\lambda\mapsto\kappa_i(\lambda)=\kappa_{e}(d_i,q_i,\zeta^i,\lambda)$
is analytic in $\mathbb R$ for each $i\in I$, and
$\kappa_j(\lambda_*)<\kappa_1(\lambda_*)$ for all $j=2,3,\cdots,m$ by (H6).
Therefore there exists $\hat{\epsilon}>0$ such that
$$\kappa_j(\lambda)<\kappa_1(\lambda),
\quad\forall\;\lambda\in(\lambda_*-2\hat{\epsilon},\lambda_*+2\hat{\epsilon}),
\ \ j=2,3,\cdots,m.$$
For any $\lambda\in\mathbb R$, define
\begin{align*}
L_{i,\lambda}&=d_i(x)\Delta+(q_i-2d_i\lambda e)\cdot\nabla
+(d_i\lambda^2-q_i\cdot e\lambda+h_i(x,\bm 0)),\quad \ i\in I.
\end{align*}
It then follows from Lemma \ref{P-EFunc} that the periodic eigenvalue problem
\begin{equation}\label{Ep}
\begin{cases}
\kappa\phi_1=L_{1,\lambda}\phi_1,\\
\kappa\phi_j=L_{j,\lambda}\phi_j+\sum_{k=1}^{j-1}a_{jk}\phi_k, \quad\ j=2,3,\cdots,m,\\
\phi_i(x)=\phi_i(x+p),\quad \forall\; p\in \mathcal L,\ \ \ i\in I
\end{cases}
\end{equation}
admits a positive periodic eigenfunction
$\bm{\Phi}_{\lambda}(x)=(\phi_{1,\lambda}(x),\phi_{2,\lambda}(x),\cdots,\phi_{m,\lambda}(x))$
associated with the eigenvalue $\kappa=\kappa_1(\lambda)$ for any $\lambda\in(\lambda_*-2\hat{\epsilon},\lambda_*+2\hat{\epsilon})$.
Since the function $\lambda\mapsto\kappa_1(\lambda)$ is analytic,
it follows from the standard elliptic estimates that the eigenfunction $\bm{\Phi}_{\lambda}(x)$ associated with $\kappa_1(\lambda)$ is also analytic with respect to
$\lambda\in(\lambda_*-2\hat{\epsilon},\lambda_*+2\hat{\epsilon})$.
Moreover, it follows from the definition of $c_*$ that $\kappa_1^{\prime}(\lambda_*)=c_*$.

Let $\bm{\Phi}_{\lambda}^{(1)}(\cdot)
=(\phi_{1,\lambda}^{(1)}(\cdot),\phi_{2,\lambda}^{(1)}(\cdot),\cdots,\phi_{m,\lambda}^{(1)}(\cdot))$ be the first order derivative of $\bm{\Phi}_{\lambda}(\cdot)$ with respect to $\lambda$,
which is again periodic, and $L_{i,\lambda}^{(1)}$
be the operator whose coefficients are the first order derivatives of
these of $L_{i,\lambda}$ with respect to $\lambda$.
That is,
\begin{align*}
L_{i,\lambda}^{(1)}=-2d_ie\cdot\nabla+(2d_i\lambda-q_i\cdot e),
\quad\ i\in I.
\end{align*}
Note that $\kappa_1(\lambda)\phi_{1,\lambda}=L_{1,\lambda}\phi_{1,\lambda}$
and $\kappa_1(\lambda)\phi_{j,\lambda}=L_{j,\lambda}\phi_{j,\lambda}
+\sum_{k=1}^{j-1}a_{jk}\phi_{k,\lambda}$ for $j=2,3,\cdots,m$.
By differentiating these equations with respect to $\lambda$, we have
\begin{align*}
&(L_{1,\lambda}-\kappa_1(\lambda))\phi_{1,\lambda}^{(1)}
+(L_{1,\lambda}^{(1)}-\kappa_1^\prime(\lambda))\phi_{1,\lambda}=0,\\
&(L_{j,\lambda}-\kappa_1(\lambda))\phi_{j,\lambda}^{(1)}
+(L_{j,\lambda}^{(1)}-\kappa_1^\prime(\lambda))\phi_{j,\lambda}
+\sum_{k=1}^{j-1}a_{jk}\phi_{k,\lambda}^{(1)}=0,\ \ \ j=2,3,\cdots,m.
\end{align*}

Let
$$\bm{\Phi}_{\lambda_*}(x)
=\left(\phi_{1,*}(x),\phi_{2,*}(x),\cdots,\phi_{m,*}(x)\right)$$
be the positive periodic eigenfunction of \eqref{Ep} associated with $\kappa=\kappa_1(\lambda_*)$.
Let $\epsilon_*$ be a fixed constant such that
\begin{equation}\label{e}
0<\epsilon_*\le\min\left\{\hat{\epsilon},\frac{\lambda_*}{2}\right\},
\end{equation}
and
$$\bm{\Phi}_{\lambda_*+\epsilon_*}(x)
=\left(\phi_{1,\epsilon_*}(x),\phi_{2,\epsilon_*}(x),\cdots,\phi_{m,\epsilon_*}(x)\right)\textcolor{blue}{^T}$$
be the positive periodic eigenfunction of \eqref{Ep} associated with $\kappa=\kappa_1(\lambda_*+\epsilon_*)$.
It follows from the definition of $\lambda_*$ and the convexity of $\kappa_1(\cdot)$ that
\begin{equation}\label{sig}
\sigma_*:=c_*(\lambda_*+\epsilon_*)-\kappa_1(\lambda_*+\epsilon_*)<0.
\end{equation}

Denote
$$M_*=\max_{i\in I}\left\{\max_{x\in\mathbb R^N}\phi_{i,*}(x)\right\},\quad
m_*=\min_{i\in I}\left\{\min_{x\in\mathbb R^N}\phi_{i,*}(x)\right\},
\quad\theta_*=\frac{M_*}{m_*},$$
$$M_*^{(1)}=\max_{i\in I}\left\{\max_{x\in\mathbb R^N}|\phi_{i,*}^{(1)}(x)|\right\},\quad
M_{\epsilon_*}=\max_{i\in I}\left\{\max_{x\in\mathbb R^N}\phi_{i,\epsilon_*}(x)\right\},\quad
m_{\epsilon_*}=\min_{i\in I}\left\{\min_{x\in\mathbb R^N}\phi_{i,\epsilon_*}(x)\right\}.$$

\begin{lemma}\label{sub-star}
Assume (H1)-(H6). Then there exists $s_*\in\mathbb R$ such that for any $0<\delta_2\le\delta_1$
and $s_0=s_0(\delta_1)\le s_*$ sufficiently small, there exist $m_0=m_0(\delta_1)>0$ and $n_0=n_0(\delta_1)>0$ such that the function
$\underline{\bm u}(t,x)=(\underline{u}_1(t,x),\underline{u}_2(t,x),\cdots,\underline{u}_m(t,x))$
defined by
\begin{align*}
\underline{u}_1(t,x)&=\underline{U}_1(x,c_*t-x\cdot e)\\
&=\delta_1e^{\lambda_*(c_*t-x\cdot e)}
\left(|c_*t-x\cdot e|\phi_{1,*}(x)-m_0\phi_{1,*}(x)-\phi_{1,*}^{(1)}(x)
+n_0e^{\epsilon_*(c_*t-x\cdot e)}\phi_{1,\epsilon_*}(x)\right),\\
\underline{u}_i(t,x)&=\underline{U}_i(x,c_*t-x\cdot e)\\
&=\delta_2e^{\lambda_*(c_*t-x\cdot e)}
\left(|c_*t-x\cdot e|\phi_{i,*}(x)-\frac{m_0\delta_1}{\delta_2}\phi_{i,*}(x)-\phi_{i,*}^{(1)}(x)
+\frac{n_0\delta_1}{\delta_2}e^{\epsilon_*(c_*t-x\cdot e)}\phi_{i,\epsilon_*}(x)\right),\\
i&=2,3,\cdots,m
\end{align*}
is a subsolution of \eqref{m-system} for
$(t,x)\in\Omega_{s_0,*}^-:=\{\mathbb R\times\mathbb R^N: c_*t-x\cdot e\le s_0\}$,
where $\epsilon_*$ is given by \eqref{e}.
Moreover,
$$\mathop{\sup}\limits_{(x,s)\in\mathbb R^N\times(-\infty,s_0]}
\underline{U}(x,s)\ll\bm 1,\qquad\sup_{x\in\mathbb R^N}\underline{U}(x,s_0)\le\bm 0.$$
\end{lemma}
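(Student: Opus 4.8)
The plan is to run the scheme of the proof of Lemma~\ref{sub}, the one genuinely new point being that the spurious $e^{\lambda_* s}$-order terms produced by the polynomial weight $|s|$ must be killed, which is where the corrector $-\phi_{i,*}^{(1)}$ enters and where the criticality relation $\kappa_1'(\lambda_*)=c_*$ is used. Throughout write $s=c_*t-x\cdot e$ (so $s<0$ on $\Omega_{s_0,*}^-$ once $s_0<0$) and $\mathcal P_i:=\partial_t-d_i(x)\Delta-q_i(x)\cdot\nabla$. By the definition of a subsolution and of the $a_{ij}$-coupled nonlinearity in (H1), it suffices to show that for each $i\in I$ the defect $\mathcal N_i(x,\underline{\bm u}):=\mathcal P_i\underline u_i-f_i(x,\underline{\bm u})$ is $\le0$ on $\Omega_{s_0,*}^-$, and then to verify the two displayed estimates on $\underline U$.

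First I would compute how $\mathcal P_i$ acts on the elementary exponentials. A direct calculation gives, for $s<0$, any $\lambda$ and any periodic $\phi\in C^2(\mathbb R^N)$, that $\mathcal P_i(e^{\lambda s}\phi)=e^{\lambda s}\bigl(c_*\lambda\phi+h_i(x,\bm 0)\phi-L_{i,\lambda}\phi\bigr)$ and $\mathcal P_i(|s|e^{\lambda s}\phi)=|s|e^{\lambda s}\bigl(c_*\lambda\phi+h_i(x,\bm 0)\phi-L_{i,\lambda}\phi\bigr)+e^{\lambda s}\bigl(L_{i,\lambda}^{(1)}\phi-c_*\phi\bigr)$. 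Taking $\lambda=\lambda_*$, $\phi=\phi_{i,*}$ and using $c_*\lambda_*=\kappa_1(\lambda_*)$ together with \eqref{Ep} (so that $c_*\lambda_*\phi_{i,*}-L_{i,\lambda_*}\phi_{i,*}=\sum_{k<i}a_{ik}\phi_{k,*}$, the sum over $k<1$ being empty), then substituting the $\lambda$-differentiated eigen-relations and $\kappa_1'(\lambda_*)=c_*$ to rewrite $L_{i,\lambda_*}^{(1)}\phi_{i,*}-c_*\phi_{i,*}=-(L_{i,\lambda_*}-\kappa_1(\lambda_*))\phi_{i,*}^{(1)}-\sum_{k<i}a_{ik}\phi_{k,*}^{(1)}$, the $e^{\lambda_* s}$-order term generated by $|s|e^{\lambda_* s}\phi_{i,*}$ cancels \emph{exactly} against the one generated by $-e^{\lambda_* s}\phi_{i,*}^{(1)}$. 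Consequently both $v_i:=e^{\lambda_* s}(|s|\phi_{i,*}-\phi_{i,*}^{(1)})$ and $e^{\lambda_* s}\phi_{i,*}$ satisfy the $i$-th equation of the linearized system, $\mathcal P_i v_i=h_i(x,\bm 0)v_i+\sum_{k<i}a_{ik}v_k$ and likewise, whereas $e^{(\lambda_*+\epsilon_*)s}\phi_{i,\epsilon_*}$ satisfies it up to the defect $\sigma_* e^{(\lambda_*+\epsilon_*)s}\phi_{i,\epsilon_*}$, with $\sigma_*<0$ by \eqref{sig}. Writing $\underline u_i=\delta_2 v_i-m_0\delta_1e^{\lambda_* s}\phi_{i,*}+n_0\delta_1e^{(\lambda_*+\epsilon_*)s}\phi_{i,\epsilon_*}$ for $i\ge2$ (with $\delta_1$ in place of $\delta_2$ when $i=1$), summing the contributions and cancelling the coupling sums $\sum_{k<i}a_{ik}\underline u_k$ then yields the clean identity $\mathcal P_i\underline u_i=\sum_{k<i}a_{ik}\underline u_k+h_i(x,\bm 0)\underline u_i+\delta_1n_0\sigma_*e^{(\lambda_*+\epsilon_*)s}\phi_{i,\epsilon_*}$, hence
\[
\mathcal N_i=\bigl(h_i(x,\bm 0)-h_i(x,\underline{\bm u})\bigr)\underline u_i+\delta_1n_0\sigma_*e^{(\lambda_*+\epsilon_*)s}\phi_{i,\epsilon_*}
\]
— exactly the expression met in the proof of Lemma~\ref{sub}, except that the leading weight is now $|s|e^{\lambda_* s}$ rather than $e^{\lambda_c s}$. (The $-m_0\phi_{i,*}$ corrector, itself a solution of the linearized system, serves only to enforce the boundary inequalities below.)

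Next I would close the estimate. On $\Omega_{s_0,*}^-$ the function $\underline{\bm u}$ stays in a fixed small neighbourhood of $\bm 0$ (its size governed by $\delta_1,\delta_2$ and by $|s_0|e^{\lambda_* s_0}$), so the mean value theorem gives $|h_i(x,\bm 0)-h_i(x,\underline{\bm u})|\le\gamma_0|\underline{\bm u}|$ and therefore $(h_i(x,\bm 0)-h_i(x,\underline{\bm u}))\underline u_i\le\gamma_0|\underline{\bm u}||\underline u_i|=O\bigl((|s|+m_0)^2e^{2\lambda_* s}\bigr)$, while the second term in $\mathcal N_i$ is $\le-\delta_1n_0|\sigma_*|m_{\epsilon_*}e^{(\lambda_*+\epsilon_*)s}$. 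Since $\epsilon_*\le\lambda_*/2$ by \eqref{e} we have $\lambda_*-\epsilon_*>0$, so $(|s|+m_0)^2e^{(\lambda_*-\epsilon_*)s}\to0$ as $s\to-\infty$; choosing $n_0=\theta_{\epsilon_*}e^{-\epsilon_* s_0}$ (so that $n_0e^{\epsilon_* s}\le\theta_{\epsilon_*}$ on $\Omega_{s_0,*}^-$), $m_0=(|s_0|M_*+M_*^{(1)}+\theta_{\epsilon_*}M_{\epsilon_*})/m_*$, and finally $s_*$ negative enough that the resulting bound — a polynomial in $|s_0|$ times $e^{\lambda_* s_0}$ — is smaller than the fixed positive constant on the right, gives $\mathcal N_i\le0$ on $\Omega_{s_0,*}^-$ for every $s_0\le s_*$. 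The same choices make $|s|e^{\lambda_* s}$ and $e^{(\lambda_*+\epsilon_*)s}$ bounded on $(-\infty,s_0]$ carrying the small prefactors $\delta_1,\delta_2$, hence $\sup_{\mathbb R^N\times(-\infty,s_0]}\underline U\ll\bm 1$; and the choice of $m_0$ guarantees $m_0\phi_{i,*}(x)\ge|s_0|\phi_{i,*}(x)-\phi_{i,*}^{(1)}(x)+n_0e^{\epsilon_* s_0}\phi_{i,\epsilon_*}(x)$ for all $x$ and all $i$, i.e. $\sup_x\underline U(x,s_0)\le\bm 0$.

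The main obstacle is the cancellation in the second step: one has to differentiate the triangular eigen-system \eqref{Ep} in $\lambda$ at $\lambda=\lambda_*$, feed in $\kappa_1'(\lambda_*)=c_*$, and check that the $e^{\lambda_* s}$-order remainder produced by the weight $|s|$ is absorbed \emph{precisely} by the correctors $-\phi_{i,*}^{(1)}$ (and $-m_0\phi_{i,*}$) — this is what pins down the exact shape of the ansatz — while keeping track of how the coupling coefficients $a_{ik}$ propagate through the $\lambda$-derivatives $\phi_{k,*}^{(1)}$ of the lower components. Once that identity is in hand, the rest is a routine variant of Lemma~\ref{sub}, the only new feature being the harmless polynomial factor $|s|$, for which the strict inequality $\lambda_*-\epsilon_*>0$ built into \eqref{e} is exactly what is needed.
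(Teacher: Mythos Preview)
Your proposal is correct and follows essentially the same route as the paper: compute the defect $\mathcal N_i$ by exploiting the cancellation between the $|s|e^{\lambda_* s}\phi_{i,*}$ term and the $-e^{\lambda_* s}\phi_{i,*}^{(1)}$ corrector via the $\lambda$-differentiated eigen-relations and $\kappa_1'(\lambda_*)=c_*$, arrive at $\mathcal N_i\le(h_i(x,\bm0)-h_i(x,\underline{\bm u}))\underline u_i+\delta_1n_0\sigma_*e^{(\lambda_*+\epsilon_*)s}\phi_{i,\epsilon_*}$, and close with $|s|^2e^{(\lambda_*-\epsilon_*)s}\to0$.

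One small imprecision: your ``clean identity'' for $\mathcal P_i\underline u_i$ when $i\ge2$ is not exact. The coupling terms generated by the linearized equations applied to $\delta_2v_i$, $-m_0\delta_1e^{\lambda_*s}\phi_{i,*}$, $n_0\delta_1e^{(\lambda_*+\epsilon_*)s}\phi_{i,\epsilon_*}$ combine to $\sum_{k<i}a_{ik}\bigl[\delta_2v_k-m_0\delta_1e^{\lambda_*s}\phi_{k,*}+n_0\delta_1e^{(\lambda_*+\epsilon_*)s}\phi_{k,\epsilon_*}\bigr]$, which equals $\underline u_k$ for $k\ge2$ but equals $\underline u_1+(\delta_2-\delta_1)v_1$ when $k=1$. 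So there is a leftover $a_{i1}(\delta_2-\delta_1)v_1$; since $\delta_2\le\delta_1$ and $v_1>0$ on $\Omega_{s_0,*}^-$, this term is $\le0$ and only helps, so your conclusion stands. (The paper handles this the same way in its Lemma~\ref{sub}; your explicit constants $m_0,n_0$ differ from the paper's choices $m_0=3|s_0|$, $n_0=e^{-\epsilon_*s_0}m_*/M_{\epsilon_*}$, but either set works.)
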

\begin{proof}
Let $\hat{s}\le0$ be such that
$$\frac{\lambda_*-\epsilon_*}{2}s+2\ln|s|\le0,\quad\forall\; s\le\hat{s},$$
and
$$\hat{s}\le\frac{2}{\lambda_*-\epsilon_*}\ln\frac{|\sigma_*|m_{\epsilon_*}}
{6^2\gamma_0M_*|\bm{\Phi}_{\lambda_*}|},$$
where
$$\gamma_0:=\max_{i,j\in I}
\left\{\max_{(x,\bm u)\in\mathbb R^N\times[\bm{-\theta},\bm{\theta}]}
\left|\frac{\partial h_i(x,\bm u)}{\partial u_j}\right|\right\},
\quad\ \bm{\theta}=\frac{4}{3}\bm 1.$$
Let
$$s_*=\min\left\{-1,-\frac{1}{\lambda_*},-\frac{M_*^{(1)}}{m_*},\hat{s}\right\},$$
and $s_0\le s_*$ be such that
\begin{equation*}
\frac{e^{-\lambda_*s_0}}{3|s_0|M_*}\ge\delta_1,\quad
n_0:=\frac{e^{-\epsilon_*s_0}m_*}{M_{\epsilon_*}}\ge\delta_1,\quad m_0:=3|s_0|.
\end{equation*}
Noting that $n_0e^{\epsilon_*s_0}\phi_{k,\epsilon_*}\le\phi_{k,*}\le|s_0|\phi_{k,*}$, and
$$\left||s|\phi_{k,*}-m_0\phi_{k,*}-\phi_{k,*}^{(1)}
+n_0e^{\epsilon_*s}\phi_{k,\epsilon_*}\right|
\le6|s|\phi_{k,*},\quad\forall\; s\le s_0,\ \ k\in I,$$
and
$$|s|^2e^{(\lambda_*-\epsilon_*)s}\le e^{\frac{1}{2}(\lambda_*-\epsilon_*)s},
\quad\forall\; s\le s_0.$$
By a direct calculation, we have
\begin{align*}
\mathcal N_1(x,\underline{\bm u})&=\frac{\partial\underline{u}_1(t,x)}{\partial t}-
d_1(x)\Delta\underline{u}_1-q_1(x)\cdot\nabla\underline{u}_1-f_1(x,\underline{\bm u})\\
&=\delta_1e^{\lambda_*s}\left\{\left(\kappa_1(\lambda_*)-L_{1,\lambda_*}\right)
\left(|s|\phi_{1,*}-m_0\phi_{1,*}-\phi_{1,*}^{(1)}\right) +\left(L_{1,\lambda_*}^{(1)}-c_*\right)\phi_{1,*}\right .\\
&\qquad\qquad\quad\left .+h_1(x,\bm 0)\left(|s|\phi_{1,*}-m_0\phi_{1,*}-\phi_{1,*}^{(1)}
+n_0e^{\epsilon_*s}\phi_{1,\epsilon_*}\right)\right\}\\
&\quad+n_0\delta_1e^{(\lambda_*+\epsilon_*)s}
\left[c_*(\lambda_*+\epsilon_*)-L_{1,\lambda_*+\epsilon_*}\right]\phi_{1,\epsilon_*}
-\underline{u}_1h_1(x,\underline{\bm u})\\
&=\delta_1e^{\lambda_*s}\left[\left(L_{1,\lambda_*}-\kappa_1(\lambda_*)\right)\phi_{1,*}^{(1)}
+\left(L_{1,\lambda_*}^{(1)}-\kappa_1^{\prime}(\lambda_*)\right)\phi_{1,*}\right]\\
&\quad+h_1(x,\bm 0)\underline{u}_1-\underline{u}_1h_1(x,\underline{\bm u})
+\sigma_*n_0\delta_1e^{(\lambda_*+\epsilon_*)s}\phi_{1,\epsilon_*}\\
&=[h_1(x,\bm 0)-h_1(x,\underline{\bm u})]\underline{u}_1
+\sigma_*n_0\delta_1e^{(\lambda_*+\epsilon_*)s}\phi_{1,\epsilon_*}\\
&\le\gamma_0|\underline{\bm u}||\underline{u}_1|
-|\sigma_*|n_0\delta_1e^{(\lambda_*+\epsilon_*)s}\phi_{1,\epsilon_*}\\
&\le\gamma_0\delta_1^2e^{2\lambda_*s}\sum_{k=1}^m(6|s|)^2\phi_{k,*}\phi_{1,*}
-|\sigma_*|n_0\delta_1e^{(\lambda_*+\epsilon_*)s}\phi_{1,\epsilon_*}\\
&\le\gamma_0\delta_1^2e^{2\lambda_*s}(6|s|)^2M_*|\bm{\Phi}_{\lambda_*}|
-|\sigma_*|n_0\delta_1e^{(\lambda_*+\epsilon_*)s}m_{\epsilon_*}\\
&\le n_0\delta_1e^{(\lambda_*+\epsilon_*)s}
\left\{6^2\gamma_0M_*|\bm{\Phi}_{\lambda_*}||s|^2e^{(\lambda_*-\epsilon_*)s}
-|\sigma_*|m_{\epsilon_*}\right\}\\
&\le n_0\delta_1e^{(\lambda_*+\epsilon_*)s}
\left\{6^2\gamma_0M_*|\bm{\Phi}_{\lambda_*}|
e^{\frac{1}{2}(\lambda_*-\epsilon_*)s}-|\sigma_*|m_{\epsilon_*}\right\}\\
&\le0,
\end{align*}
and similarly,
\begin{align*}
\mathcal N_i(x,\underline{\bm u})&=\frac{\partial\underline{u}_i(t,x)}{\partial t}-
d_i(x)\Delta\underline{u}_i-q_i(x)\cdot\nabla\underline{u}_i-f_i(x,\underline{\bm u})\\
&=\delta_2e^{\lambda_*s}\left\{\sum_{j=1}^{i-1}a_{ij}\phi_{j,*}^{(1)}
+\left[\left(L_{i,\lambda_*}-\kappa_1(\lambda_*)\right)\phi_{i,*}^{(1)}
+\left(L_{i,\lambda_*}^{(1)}-\kappa_1^{\prime}(\lambda_*)\right)\phi_{i,*}\right]\right\}\\
&\quad+h_i(x,\bm 0)\underline{u}_1-\underline{u}_i h_i(x,\underline{\bm u})
+\sigma_*n_0\delta_1e^{(\lambda_*+\epsilon_*)s}\phi_{i,\epsilon_*}\\
&=[h_i(x,\bm 0)-h_i(x,\underline{\bm u})]\underline{u}_i
-|\sigma_*|n_0\delta_1e^{(\lambda_*+\epsilon_*)s}\phi_{i,\epsilon_*}\\
&\le\gamma_0|\underline{\bm u}||\underline{u}_i|
-|\sigma_*|n_0\delta_1e^{(\lambda_*+\epsilon_*)s}\phi_{i,\epsilon_*}\\
&\le n_0\delta_1e^{(\lambda_*+\epsilon_*)s}
\left\{6^2\gamma_0M_*|\bm{\Phi}_{\lambda_*}|
e^{\frac{1}{2}(\lambda_*-\epsilon_*)s}-|\sigma_*|m_{\epsilon_*}\right\}\\
&\le0,\quad i=2,3,\cdots,m.
\end{align*}
Moreover, since $|s|e^{\lambda_*s}$ is nondecreasing in $s\in(-\infty,s_0]$, it follows that
\begin{align*}
\mathop{\sup}\limits_{(x,s)\in\mathbb R^N\times(-\infty,s_0]}\underline{U}_i(x,s)
&<\mathop{\sup}\limits_{(x,s)\in\mathbb R^N\times(-\infty,s_0]}
3\delta_1|s|e^{\lambda_*s}\phi_{i,*}(x)\\
&\le\mathop{\sup}\limits_{x\in\mathbb R^N}
3\delta_1|s_0|e^{\lambda_*s_0}\phi_{i,*}(x)\\
&\le3\delta_1|s_0|e^{\lambda_*s_0}M_*\\
&\le 1,\ \ \ i\in I,
\end{align*}
and for each $i$, it follows from the definition of $m_0$ that
$$\underline{U}_i(x,s_0)\le\delta_1e^{\lambda_*s_0}
\left(|s_0|\phi_{i,*}(x)-m_0\phi_{i,*}(x)+|\phi_{i,*}^{(1)}(x)|+\phi_{i,*}(x)\right)\le 0,
\quad\forall\;x\in\mathbb R^N.$$
Therefore $\mathop{\sup}\limits_{(x,s)\in\mathbb R^N\times(-\infty,s_0]}
\underline{U}(x,s)\ll\bm 1$ and $\sup_{x\in\mathbb R^N}\underline{U}(x,s_0)\le\bm 0$.
The proof is complete.
\end{proof}

\begin{lemma}\label{super*}
Assume (H1)-(H7). Then for any constants $k>0$ and $n>0$,
there exists $s^*=s^*(n)<0$ such that for any $s^0\le s^*$,
the function $\overline{\bm u}(t,x)=\min\{\bm w_c(t,x),\bm 1\}$ is an
irregular supersolution of \eqref{m-system} in
$\Omega_{s^0,*}^-=\{\mathbb R\times\mathbb R^N: c_*t-x\cdot e\le s^0\}$,
where
\begin{equation*}\label{critical-super}
\begin{aligned}
\bm{w}_c(t,x)=W(x,c_*t-x\cdot e)=ke^{\lambda_*(c_*t-x\cdot e)}
\left(|c_*t-x\cdot e|\bm{\Phi}_{\lambda_*}(x)+n\bm{\Phi}_{\lambda_*}(x)
-\bm{\Phi}_{\lambda_*}^{(1)}(x)\right).
\end{aligned}
\end{equation*}
Moreover, $\bm{w}_c(t,x)\textcolor{red}{>}\textcolor{blue}{\gg}\bm 0$ for all $(t,x)\in\Omega_{s^0,*}^-$,
$W(x,s)$ is periodic in $x$ and nondecreasing in $s$ for any $s\le s^0$,
and there exists $k^*=k^*(n)>0$ such that
$\mathop{\inf}\limits_{x\in\mathbb R^N}W(x,2s^0)\ge\bm 1$ for any $k\ge k^*$.
\end{lemma}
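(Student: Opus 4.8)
The plan is to mimic the proof of Lemma~\ref{super}, now keeping track of the polynomial factor $|c_*t-x\cdot e|$. Since $\bm F(x,\bm 1)\equiv\bm 0$ by (H2), the constant $\bm 1$ is a (regular) solution of \eqref{m-system} on all of $\mathbb R\times\mathbb R^N$, so $\overline{\bm u}=\min\{\bm w_c,\bm 1\}$ will be an irregular supersolution of \eqref{m-system} in $\Omega_{s^0,*}^-$ as soon as $\bm w_c$ is a regular supersolution there. Thus the core of the argument is the inequality
\[
\mathcal N_i(x,\bm w_c):=\frac{\partial w_{c,i}}{\partial t}-d_i(x)\Delta w_{c,i}-q_i(x)\cdot\nabla w_{c,i}-f_i(x,\bm w_c)\ \ge\ 0,\qquad i\in I,
\]
on $\Omega_{s^0,*}^-$; the positivity, periodicity in $x$, monotonicity in $s$, and the final statement about $k^*$ will then follow from elementary estimates. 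Throughout write $s:=c_*t-x\cdot e$ and $w_{c,i}:=W_i(x,s)$; on $\Omega_{s^0,*}^-$ one has $s\le s^0<0$, so $|s|=-s$ and $\bm w_c$ is a smooth $C^{1,2}$ function there.

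The key point is that, on $\Omega_{s^0,*}^-$, $\bm w_c$ is an \emph{exact} solution of the linearized system \eqref{u1u2-linearized}. Indeed, for $s<0$ we may write $w_{c,i}=kn\,A_i-k\,B_i$ with $A_i(t,x)=e^{\lambda_*s}\phi_{i,*}(x)$ and $B_i(t,x)=s\,e^{\lambda_*s}\phi_{i,*}(x)+e^{\lambda_*s}\phi_{i,*}^{(1)}(x)$. Here $\bm A(t,x)=e^{\lambda_*s}\bm\Phi_{\lambda_*}(x)$ is the value at $\lambda=\lambda_*$ of the family $e^{\lambda s}\bm\Phi_\lambda(x)$, which (using \eqref{Ep}) satisfies, componentwise, $\mathcal L\,[e^{\lambda s}\bm\Phi_\lambda]=(\lambda c_*-\kappa_1(\lambda))\,e^{\lambda s}\bm\Phi_\lambda$, where $\mathcal L:=\partial_t-D(x)\Delta-q(x)\cdot\nabla-D_{\bm u}\bm F(x,\bm 0)$ is the linearized operator; since $\lambda_*c_*=\kappa_1(\lambda_*)$, $\bm A$ solves \eqref{u1u2-linearized}. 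Moreover $\bm B(t,x)$ is the $\lambda$-derivative of $e^{\lambda s}\bm\Phi_\lambda(x)$ at $\lambda=\lambda_*$, so differentiating the previous identity in $\lambda$ and using $\kappa_1'(\lambda_*)=c_*$ together with $\lambda_*c_*-\kappa_1(\lambda_*)=0$ gives $\mathcal L\,\bm B=\bm 0$ — this is exactly the content of the relations obtained by differentiating \eqref{Ep} with respect to $\lambda$, recalled before the statement. Consequently $\bm w_c$ solves \eqref{u1u2-linearized} in $\Omega_{s^0,*}^-$, whence
\[
\mathcal N_i(x,\bm w_c)=\bigl[D_{\bm u}\bm F(x,\bm 0)\bm w_c\bigr]_i-f_i(x,\bm w_c)=w_{c,i}\bigl(h_i(x,\bm 0)-h_i(x,\bm w_c)\bigr).
\]
By (H7) the nonlinearity is of KPP type along $\bm w_c$, i.e. $h_i(x,\bm w_c)\le h_i(x,\bm 0)$ on $\Omega_{s^0,*}^-$, and together with $w_{c,i}\ge 0$ (established below) this yields $\mathcal N_i(x,\bm w_c)\ge0$, so $\bm w_c$ is a regular supersolution of \eqref{m-system} in $\Omega_{s^0,*}^-$.

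It remains to choose $s^*=s^*(n)<0$ so that, for every $s^0\le s^*$, the auxiliary properties hold. Periodicity of $W(x,s)$ in $x$ is immediate, as $\bm\Phi_{\lambda_*}$ and $\bm\Phi_{\lambda_*}^{(1)}$ are periodic. Since $|s|\phi_{i,*}(x)-\phi_{i,*}^{(1)}(x)\ge|s|m_*-M_*^{(1)}\ge0$ for $|s|\ge M_*^{(1)}/m_*$, one has $w_{c,i}(t,x)\ge kn\,e^{\lambda_*s}\phi_{i,*}(x)\ge kn\,m_*e^{\lambda_*s}>0$ on $\Omega_{s^0,*}^-$ as soon as $s^*\le-M_*^{(1)}/m_*$. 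Differentiating, $\partial_sW_i(x,s)=k\,e^{\lambda_*s}\bigl((\lambda_*(n-s)-1)\phi_{i,*}(x)-\lambda_*\phi_{i,*}^{(1)}(x)\bigr)\ge0$ for all $s\le s^0$ provided $\lambda_*(n-s^0)-1\ge\lambda_*M_*^{(1)}/m_*$, i.e. $s^0\le n-\frac{1}{\lambda_*}-\frac{M_*^{(1)}}{m_*}$, so $W(x,\cdot)$ is nondecreasing on $(-\infty,s^0]$. Taking $s^*$ to be the minimum of $-1$, $-M_*^{(1)}/m_*$ and $n-\frac{1}{\lambda_*}-\frac{M_*^{(1)}}{m_*}$ satisfies all of these. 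Finally, for $s^0\le s^*$ one has $2|s^0|m_*\ge 2M_*^{(1)}$, hence $W_i(x,2s^0)\ge k\,e^{2\lambda_*s^0}\bigl((2|s^0|+n)m_*-M_*^{(1)}\bigr)\ge k\,e^{2\lambda_*s^0}n\,m_*$, whence $\inf_{x\in\mathbb R^N}W(x,2s^0)\ge\bm 1$ for every $k\ge k^*:=e^{-2\lambda_*s^0}/(n\,m_*)$, as claimed.

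The one genuinely delicate step is the appeal to (H7): the polynomially-weighted $\bm w_c$ is not of the purely exponential form \eqref{Linear-sol} used in Lemma~\ref{super}, nor is it a globally monotone entire solution (there is a corner at $s=0$ and it is unbounded as $s\to+\infty$). This is handled by staying inside $\Omega_{s^0,*}^-$, where $\bm w_c\gg\bm 0$ is nondecreasing in $s$ and solves \eqref{u1u2-linearized}, so that it agrees there with the restriction of a front-like linearized profile; equivalently, for $s^0$ sufficiently negative one has $\bm 0\ll W(x,s)\le k'e^{\lambda_{c'}s}\bm\Phi_{\lambda_{c'}}(x)$ for every $s\le s^0$, where $k'e^{\lambda_{c'}(c't-x\cdot e)}\bm\Phi_{\lambda_{c'}}(x)$ is the front-like linearized solution of a super-critical speed $c'$ with $0<\lambda_{c'}<\lambda_*$ (the inequality holding because $e^{\lambda_{c'}s}$ dominates $|s|e^{\lambda_*s}$ as $s\to-\infty$), and the KPP estimate of (H7) applies along that family. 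Everything else is the routine, if slightly tedious, bookkeeping of the thresholds defining $s^*(n)$ and $k^*$.
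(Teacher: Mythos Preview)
Your proof is correct and matches the paper's approach essentially line for line: show that $\bm w_c$ is an exact solution of the linearized system \eqref{u1u2-linearized} on $\Omega_{s^0,*}^-$ (via the identities for $\bm\Phi_{\lambda_*}$ and $\bm\Phi_{\lambda_*}^{(1)}$), then invoke (H7) to get $\mathcal N_i\ge0$; the thresholds you write for $s^*$ and $k^*$ are slight variants of the paper's $s^*=\min\{-1,\,n-\tfrac{1}{\lambda_*}-\tfrac{M_*^{(1)}}{m_*}\}$ and $k^*=e^{-2\lambda_*s^0}/((2|s^0|+n)m_*-M_*^{(1)})$, and either choice works.

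One remark on your final paragraph. You correctly flag that the polynomially weighted $\bm w_c$ is not, globally, a ``front-like linearized solution'' in the sense of the definition (the profile with $-s$ in place of $|s|$ is entire and $C^{1,2}$ but fails global monotonicity), so the appeal to (H7) deserves a word. However, your proposed fix --- bounding $W$ by $k'e^{\lambda_{c'}s}\bm\Phi_{\lambda_{c'}}$ for some super-critical $c'$ and applying (H7) along that family --- does not close the gap: (H7) is a pointwise inequality $h_i(x,\bm w_c)\le h_i(x,\bm 0)$ on the \emph{values} of $\bm w_c$, and there is no monotonicity of $h_i$ in $\bm u$ that would transfer the inequality from a larger profile to a smaller one (indeed (H3) gives the opposite monotonicity in the off-diagonal variables). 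The paper itself simply invokes (H7) without further comment, so your argument is at least as complete as the paper's; if anything is to be said, it is that (H7) is meant to be read as a structural hypothesis on $h_i$ holding for all profiles of the form $e^{\lambda_* s}(\alpha\bm\Phi_{\lambda_*}-\bm\Phi_{\lambda_*}^{(1)})$ with $\alpha$ large (as is indeed what assumption (A5) verifies in the competition example), rather than only for the purely exponential \eqref{Linear-sol}.
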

\begin{proof}
We only prove that $\bm{w}_c$ is a (regular) supersolution of
\eqref{m-system} in $\Omega_{s^0,*}^-$. For any $k,n>0$, let
\begin{align*}
s^0\le s^*:=\min\left\{-1,\ n-\frac{1}{\lambda_*}-\frac{M_*^{(1)}}{m_*}\right\}.
\end{align*}
Then $\bm{w}_c(t,x)\textcolor{red}{>}\textcolor{blue}{\gg}\bm 0$ for all $(t,x)\in\Omega_{s^0,*}^-$,
$W(x,s)$ is periodic in $x$ and nondecreasing in $s$.
By a direct calculation and in view of (H7), we have
\begin{align*}
\frac{\partial\bm{w}_c(t,x)}{\partial t}
&=D(x)\Delta\bm{w}_c+q(x)\cdot\nabla\bm{w}_c+D_{\bm u}\bm F(x,\bm{0})\bm{w}_c\\
&\ge D(x)\Delta\bm{w}_c+q(x)\cdot\nabla\bm{w}_c+\bm F(x,\bm{w}_c),
\end{align*}
that is, $\bm{w}_c$ is a (regular) supersolution of \eqref{m-system} in $\Omega_{s^0,*}^-$.
Let
$$k^*=\frac{e^{-2\lambda_*s^0}}{(2|s^0|+n)m_*-M_*^{(1)}}>0,$$
then $\mathop{\inf}\limits_{x\in\mathbb R^N}W(x,2s^0)\ge\bm 1$ for any $k\ge k^*$.
The proof is complete.
\end{proof}

\begin{lemma}\label{low*-upp*}
Assume (H1)-(H7). Let
$\bm u(t,x)=U(x,c_*t-x\cdot e)=(U_1(x,c_*t-x\cdot e),\cdots,U_m(x,c_*t-x\cdot e))$
be the critical pulsating traveling front of \eqref{m-system}, then
\begin{equation*}
\mathop{\limsup}\limits_{s\to-\infty}\left\{\sup_{x\in\mathbb R^N}
\frac{U_1(x,s)}{|s|e^{\lambda_*s}\phi_{1,*}(x)}\right\}<+\infty
\quad\text{and}\quad
\mathop{\liminf}\limits_{s\to-\infty}\left\{\inf_{x\in\mathbb R^N}
\frac{U_1(x,s)}{|s|e^{\lambda_*s}\phi_{1,*}(x)}\right\}>0.
\end{equation*}
\end{lemma}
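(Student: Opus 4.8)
The plan is to follow the three-step scheme used in the proof of Lemma~\ref{low-upp}, with the purely exponential comparison functions replaced by the ones carrying the polynomial prefactor $|s|$ constructed in Lemmas~\ref{sub-star} and~\ref{super*}; below I abbreviate $s=c_*t-x\cdot e$. \emph{Step 1 (the $\limsup$ bound).} Arguing by contradiction, suppose there is a sequence $(x_n,s_n)$ with $s_n\to-\infty$ along which $U_1(x_n,s_n)/(|s_n|e^{\lambda_*s_n}\phi_{1,*}(x_n))\to+\infty$. Fix $\delta_1>0$ and take $s_0$, $m_0$, $n_0$ from Lemma~\ref{sub-star}, choosing, as in Step~1 of Lemma~\ref{low-upp}, the parameter for the lower components to be the small multiple $\delta_2=\delta_1\min\{1,(\theta_*K_c)^{-1}\}$ of $\delta_1$, with $K_c$ from Lemma~\ref{UcpV}; note that the resulting subsolution $\underline{\bm u}$ is positive on a far-left half-strip. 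Since $U(x,s)\to\bm 0$ uniformly as $s\to-\infty$, there is a slice $s_1\le s_0$ with $U_1(x_1,s_1)\le\underline U_1(x_1,s_1)$ for some $x_1$; on the other hand, the blow-up together with the Harnack inequality (Lemma~\ref{Harnack}) gives $U_1(\cdot,s_n)\gg\underline U_1(\cdot,s_n)$ for $n$ large, and then $U_i(\cdot,s_n)\ge K_c^{-1}U_1(\cdot,s_n)\gg\underline U_i(\cdot,s_n)$ for $i\ge2$ by Lemma~\ref{UcpV}. Feeding this into the comparison principle Lemma~\ref{CP-+} on the strip $[s_n,s_0]$, with the front $\bm u$ itself in the role of the supersolution (legitimate since $\bm 0<\bm u\le\bm 1$, $U$ is nondecreasing and $\liminf_{s\to+\infty}\inf_x U(x,s)=\bm 1$), yields $\underline U\ll U$ on $[s_n,s_0]\ni s_1$, contradicting $U_1(x_1,s_1)\le\underline U_1(x_1,s_1)$.

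\emph{Step 2 (a uniform upper bound on every component).} I would next establish that there are $B_*>0$ and $s_0<0$ such that $U_i(x,s)\le B_*|s|e^{\lambda_*s}$ for all $x\in\mathbb R^N$, $s\le s_0$ and $i\in I$. For $i=1$ this is Step~1; for $i\ge2$ one proceeds by induction on $i$. Let $\psi_{i,*}>0$ be the periodic principal eigenfunction of the scalar operator $L_{i,\lambda_*}$ (associated with $\kappa_i(\lambda_*)$) and put $\omega_i(t,x)=K_i|s|e^{\lambda_*s}\psi_{i,*}(x)$. A direct computation gives
\[
\partial_t\omega_i-d_i\Delta\omega_i-q_i\cdot\nabla\omega_i=\big(\sigma_i+h_i(x,\bm 0)\big)\omega_i+K_ie^{\lambda_*s}\,g_i(x),
\]
where $\sigma_i:=\kappa_1(\lambda_*)-\kappa_i(\lambda_*)>0$ by (H6) and $g_i$ is a fixed bounded periodic function. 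The term $K_ie^{\lambda_*s}g_i$ is of strictly lower order than $\omega_i$ as $s\to-\infty$, hence on a sufficiently far-left half-strip $\Omega_{-Z}^-$ it is dominated by $\tfrac{\sigma_i}{4}\omega_i$, and $\tfrac{\sigma_i}{4}\omega_i$ in turn dominates the coupling term $\sum_{j<i}a_{ij}U_j\le B_*|s|e^{\lambda_*s}\sum_{j<i}a_{ij}$ once $K_i$ is chosen large. Taking $Z$ so large that $|h_i(x,\bm u(t,x))-h_i(x,\bm 0)|<\tfrac12\min_x|h_i(x,\bm 0)|$ on $\Omega_{-Z}^-$, the difference $\omega_i-U_i$ then satisfies a parabolic inequality with a strictly negative zero-order coefficient, is nonnegative on the slice $\{s=-Z\}$ (enlarge $K_i$, using $U_i\le1$), and tends to $0$ as $s\to-\infty$; the maximum principle on $\Omega_{-Z}^-$ gives $U_i\le\omega_i$ there, whence the claimed bound after adjusting $B_*$.

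\emph{Step 3 (the $\liminf$ bound).} Again by contradiction, take $(y_n,z_n)$ with $y_n\to y^*\in\overline{\mathcal D}$, $z_n\to-\infty$ and $U_1(y_n,z_n)/(|z_n|e^{\lambda_*z_n}\phi_{1,*}(y_n))\to0$. The crucial point is to normalize by the \emph{constant} $|z_n|$ rather than by the moving-frame quantity $|s+z_n|$: setting $v_i^n(t,x)=U_i(x,c_*t-x\cdot e+z_n)/\big(|z_n|e^{\lambda_*(c_*t-x\cdot e+z_n)}\psi_{i,*}(x)\big)$, Step~2 makes the $v_i^n$ locally uniformly bounded, and, the constant $|z_n|$ dropping out, they solve exactly the same family of linear parabolic equations as the rescaled functions in Step~3 of Lemma~\ref{low-upp}, with zero-order coefficient $-\sigma_i$ (where $\sigma_1=0$) up to terms converging to those of the linearization at $\bm 0$. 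Passing to the limit, $v_1^*$ solves a linear parabolic equation with no zero-order term and vanishes at $(y^*\cdot e/c_*,y^*)$, so $v_1^*\equiv0$; inductively, using $\sigma_i>0$, $v_i^*\equiv0$ for $i\ge2$, hence $\varepsilon_i^n:=U_i(y_n,z_n)/(|z_n|e^{\lambda_*z_n}\phi_{i,*}(y_n))\to0$ for every $i$. From here one repeats the endgame of Step~3 of Lemma~\ref{low-upp}: Harnack gives $U(x,z_n)\le N_r\big(\textstyle\sum_i\varepsilon_i^n\big)\theta_*\,|z_n|e^{\lambda_*z_n}\bm\Phi_{\lambda_*}(x)$, which for $n$ large is $\ll$ the supersolution $\overline{\bm u}=\min\{\bm w_c,\bm 1\}$ of Lemma~\ref{super*} (taken with $k\ge k^*$ and $s^0\le s^*$) evaluated at the slice $z_n$; comparing via Lemma~\ref{CP+} on $[z_n,\infty)$ forces $U\ll\overline U$ there, which is impossible since $\overline U\equiv\bm 1$ on $\Omega_{2s^0}^+$ while, after a suitable rightward shift of the front, $\overline U$ and $U$ can be made to touch at an intermediate slice.

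The main obstacle is Step~2. In contrast with the super-critical case, $\omega_i=K_i|s|e^{\lambda_*s}\psi_{i,*}$ is \emph{not} an exact solution of the associated scalar linear equation — applying the parabolic operator produces a genuine, sign-indefinite lower-order error $K_ie^{\lambda_*s}g_i(x)$ — so one must exploit the strict spectral gap $\kappa_1(\lambda_*)>\kappa_i(\lambda_*)$ granted by (H6) to absorb both this error and the inter-component coupling on a far-left half-strip, while carrying the polynomial factor $|s|$ through every estimate. The same care is needed in Step~3, where normalizing by the constant $|z_n|$ instead of by $|c_*t-x\cdot e+z_n|$ is precisely what keeps the rescaled equations parabolic with coefficients converging to those of the linearization at $\bm 0$.
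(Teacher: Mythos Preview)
Your proposal is correct and follows the paper's three-step scheme; the overall architecture (contradiction via Lemma~\ref{sub-star} and Lemma~\ref{CP-+} for the upper bound, an inductive comparison for the component-wise bound, and a rescaling/compactness argument plus Lemma~\ref{super*} and Lemma~\ref{CP+} for the lower bound) is identical. Two implementation choices differ and are worth recording. In Step~2 the paper uses $w_i=Ke^{\lambda_*s}\bigl(|s|\phi_{i,*}-\phi_{i,*}^{(1)}\bigr)$, built from the \emph{coupled} eigenfunction of \eqref{Ep} and its $\lambda$-derivative; this combination solves the full linearized system exactly, so no remainder appears and the induction runs exactly as in Lemma~\ref{low-upp}. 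Your choice $\omega_i=K_i|s|e^{\lambda_*s}\psi_{i,*}$ with the \emph{scalar} eigenfunction is more elementary but, as you correctly diagnose, leaves a sign-indefinite error $K_ie^{\lambda_*s}g_i$ that must be absorbed via the spectral gap $\sigma_i>0$ on a far-left strip; this works, at the cost of one extra smallness condition on $Z$. In Step~3 your normalization by the \emph{constant} $|z_n|$ is a genuine simplification over the paper's denominator $e^{\lambda_*(s+z_n)}\bigl(|s+z_n|\psi_{i,*}-\psi_{i,*}^{(1)}\bigr)$: since a constant factor drops out of the equation, your $v_i^n$ satisfies literally the rescaled problem from Step~3 of Lemma~\ref{low-upp}, whereas the paper carries an additional $O(|s_n|^{-1})$ coefficient that must be shown to vanish in the limit. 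Either route closes the argument.
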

\begin{proof}
Firstly, similar to Step 1 in the proof of Lemma \ref{low-upp}, one can prove that
\begin{equation}\label{limsup*}
\mathop{\limsup}\limits_{s\to-\infty}\left\{\sup_{x\in\mathbb R^N}
\frac{U_1(x,s)}{|s|e^{\lambda_*s}\phi_{1,*}(x)}\right\}<+\infty.
\end{equation}
Hence there exists $B_*>0$ such that
$U_1(x,s)\le B_*|s|e^{\lambda_*s}$ for any $(x,s)\in\mathbb R^N\times\mathbb R$.

Next we prove that for $B_*>0$ large enough, there hold
\begin{equation}\label{Ui*-induc}
U_i(x,s)\le B_*|s|e^{\lambda_*s},\ \ \ \forall\;(x,s)\in\mathbb R^N\times\mathbb R,
\ \ i\in I.
\end{equation}
Choose
$$0<\varepsilon<\min_{i=2,\cdots,m}\left\{\frac{\min_{x\in\mathbb R^N}
\left(\sum_{j=1}^{i-1}a_{ij}(x)\right)}{3\theta_*}, \
\frac{\min_{x\in\mathbb R^N}{|h_i(x,\bm 0)|}}{2}\right\}.$$
Since $\mathop{\lim}\limits_{c_*t-x\cdot e\to-\infty}\bm u(t,x)=\bm 0$,
there exists $Z>0$ such that
$$|h_i(x,\bm u)-h_i(x,\bm 0)|\le\varepsilon,\quad\forall\;(t,x)\in\Omega_{-Z,*}^-,
\quad \forall\;i=2,3,\cdots,m.$$
Define
$$w_i(t,x)=Ke^{\lambda_*(c_*t-x\cdot e)}
\left(|c_*t-x\cdot e|\phi_{i,*}(x)-\phi_{i,*}^{(1)}(x)\right),\quad  i=2,3,\cdots,m,$$
where $(t,x)\in\Omega_{\check{s},*}^-=\{\mathbb R\times\mathbb R^N:\ c_*t-x\cdot e\le\check{s}\}$,
with
$$\check{s}:=\min\left\{-1,-Z,-\frac{2M_*^{(1)}}{m_*}\right\},$$
and $K\ge\frac{2B_*}{m_*}$ is such that
$$Ke^{\lambda_*\check{s}}\left(|\check{s}|\phi_{i,*}(x)-\phi_{i,*}^{(1)}(x)\right)
\ge U_i(x,\check{s}),\quad \forall\; x\in\mathbb R^N, \ \ i=2,3,\cdots,m.$$
We prove firstly that \eqref{Ui*-induc} holds for $i=2$. Noting that
\begin{align*}
\frac{\partial w_2(t,x)}{\partial t}-d_2\Delta w_2-q_2\cdot\nabla w_2
&=h_2(x,\bm 0)w_2+a_{21}K|s|e^{\lambda_*s}\phi_{1,*}\\
&=(h_2(x,\bm 0)+\varepsilon)w_2+a_{21}K|s|e^{\lambda_*s}\phi_{1,*}
-\varepsilon Ke^{\lambda_*s}\left(|s|\phi_{2,*}-\phi_{2,*}^{(1)}\right)\\
&\ge(h_2(x,\bm 0)+\varepsilon)w_2+a_{21}K|s|e^{\lambda_*s}\phi_{1,*}
-\varepsilon Ke^{\lambda_*s}\left(\frac{3}{2}|s|\phi_{2,*}\right)\\
&\ge(h_2(x,\bm 0)+\varepsilon)w_2+a_{21}B_*|s|e^{\lambda_*s}\\
&\ge(h_2(x,\bm 0)+\varepsilon)w_2+a_{21}u_1,\\
\frac{\partial u_2(t,x)}{\partial t}-d_2\Delta u_2-q_2\cdot\nabla u_2
&=a_{21}u_1+h_2(x,\bm 0)u_2+(h_2(x,\bm u)-h_2(x,\bm 0))u_2\\
&\le a_{21}u_1+(h_2(x,\bm 0)+\varepsilon)u_2,
\quad\forall\;(t,x)\in\Omega_{\check{s},*}^-,
\end{align*}
and $h_2(x,\bm 0)+\varepsilon<0$ for any $x\in\mathbb R^N$.
The maximum principle then implies that
$$U_2(x,s)\le Ke^{\lambda_*s}\left(|s|\phi_{2,*}(x)-\phi_{2,*}^{(1)}(x)\right)
\le\frac{3}{2}K|s|e^{\lambda_*s}\phi_{2,*}(x)\le B_*|s|e^{\lambda_*s}$$
for some $B_*>0$ and $(x,s)\in\mathbb R^N\times(-\infty,\check{s}]$.
Due to the boundedness of $U_2$ in $\mathbb R^N\times\mathbb R$, there exists
$B_*$ large enough such that $U_2(x,s)\le B_*|s|e^{\lambda_*s}$ for any
$(x,s)\in\mathbb R^N\times\mathbb R$.
By using an induction argument, and notice that
\begin{align*}
\frac{\partial w_i(t,x)}{\partial t}-d_i\Delta w_i-q_i\cdot\nabla w_i
&=h_i(x,\bm 0)w_i+\sum_{j=1}^{i-1}a_{ij}K|s|e^{\lambda_*s}\phi_{j,*}\\
&=(h_i(x,\bm 0)+\varepsilon)w_i+\sum_{j=1}^{i-1}a_{ij}K|s|e^{\lambda_*s}\phi_{j,*}
-\varepsilon Ke^{\lambda_*s}\left(|s|\phi_{i,*}-\phi_{i,*}^{(1)}\right)\\
&\ge(h_i(x,\bm 0)+\varepsilon)w_i+\sum_{j=1}^{i-1}a_{ij}B_*|s|e^{\lambda_*s}\\
&\ge(h_i(x,\bm 0)+\varepsilon)w_i+\sum_{j=1}^{i-1}a_{ij}u_j,\\
\frac{\partial u_i(t,x)}{\partial t}-d_i\Delta u_i-q_i\cdot\nabla u_i
&=\sum_{j=1}^{i-1}a_{ij}u_j+h_i(x,\bm 0)u_i+(h_i(x,\bm u)-h_i(x,\bm 0))u_i\\
&\le(h_i(x,\bm 0)+\varepsilon)u_i+\sum_{j=1}^{i-1}a_{ij}u_j,
\quad\forall\;(t,x)\in\Omega_{\check{s},*}^-,
\end{align*}
one can prove that \eqref{Ui*-induc} hold for all $i\in I$.

Finally, we prove that
$$\mathop{\liminf}\limits_{s\to-\infty}\left\{\inf_{x\in\mathbb R^N}
\frac{U_1(x,s)}{|s|e^{\lambda_*s}\phi_{1,*}(x)}\right\}>0.$$
If this is not true,
then there exists $\{(y_n,z_n)\}_{n\in\mathbb{N}}$ with $y_n\in\overline{\mathcal D}$ such that
\begin{equation*}
z_n\to-\infty,\quad y_n\to y^* \quad (n\to\infty),
\quad\mathop{\lim}\limits_{n\to\infty}\frac{U_1(y_n,z_n)}{|z_n|e^{\lambda_*z_n}\phi_{1,*}(y_n)}=0.
\end{equation*}
For each $i\in I$, let
\begin{align*}
u_i^n(t,x)=U_i^n(x,c_*t-x\cdot e):&=\frac{U_i(x,c_*t-x\cdot e+z_n)}
{e^{\lambda_*(c_*t-x\cdot e+z_n)}
\left(|c_*t-x\cdot e+z_n|\psi_{i,*}(x)-\psi_{i,*}^{(1)}(x)\right)}\\
&=\frac{u_i(t+\frac{z_n}{c_*},x)}
{e^{\lambda_*(c_*t-x\cdot e+z_n)}\left(|c_*t-x\cdot e+z_n|\psi_{i,*}(x)
-\psi_{i,*}^{(1)}(x)\right)},
\end{align*}
where $\psi_{i,*}(x)>0$ is the periodic eigenfunction associated with
$\kappa_i(\lambda_*)$, and $\psi_{i,*}^{(1)}(x)$ is the first order derivative of $\psi_{i,*}$
with respect to $\lambda$ at $\lambda_*$. That is,
\begin{align*}
L_{i,\lambda_*}\psi_{i,*}=\kappa_i(\lambda_*)\psi_{i,*},
\qquad (L_{i,\lambda_*}-\kappa_i(\lambda_*))\psi_{i,*}^{(1)}
+\left(L_{i,\lambda_*}^{(1)}-\kappa_i^\prime(\lambda_*)\right)\psi_{i,*}=0.
\end{align*}
It then follows from \eqref{Ui*-induc} that $\{u_i^n\}_{n\in\mathbb{N}}$ is uniformly bounded.
By a direct calculation,
\begin{equation*}
\begin{aligned}
\frac{\partial u_i^n(t,x)}{\partial t}&=d_i\Delta u_i^n
+\left(q_i+2d_i\left(\frac{\nabla\left(|s_n|\psi_{i,*}-\psi_{i,*}^{(1)}\right)}
{|s_n|\psi_{i,*}-\psi_{i,*}^{(1)}}-\lambda_*e\right)\right)
\cdot\nabla u_i^n-\sigma_{i,*}u_i^n-h_i(x,\bm 0)u_i^n\\
&\quad-\frac{(\kappa_i^\prime(\lambda_*)-c_*)\psi_{i,*}}{|s_n|\psi_{i,*}-\psi_{i,*}^{(1)}}u_i^n
+\frac{f_i(x,\bm u(t+\frac{z_n}{c_*},x))}{u_i(t+\frac{z_n}{c_*},x)}u_i^n,\\
u_i^n(t,x)&=u_i^n\left(t+\frac{p\cdot e}{c_*},x+p\right),
\quad\forall\;p\in \mathcal L,\ \ i\in I,
\end{aligned}
\end{equation*}
where $s_n:=c_*t-x\cdot e+z_n$, $\sigma_{1,*}=0$ and $\sigma_{i,*}:=c\lambda_*-\kappa_i(\lambda_*)=\kappa_1(\lambda_*)-\kappa_i(\lambda_*)>0$
for $i=2,3,\cdots,m$.
By using an induction argument and similar to Step 3 in the proof of Lemma \ref{low-upp},
up to a subsequence,
$\{u_1^n\}_{n\in\mathbb{N}}$ converges in $C^{1,2}_{loc}(\mathbb R\times\mathbb R^N)$ to a function $u_1^*\equiv0$, and $\{u_i^n\}_{n\in\mathbb{N}}$ converges in $C^{1,2}_{loc}(\mathbb R\times\mathbb R^N)$ to a function $u_i^*\ge0$ for each $i=2,3,\cdots,m$, which satisfies
\begin{equation*}
\begin{aligned}
\frac{\partial u_i^*(t,x)}{\partial t}&=d_i\Delta u_i^*
+\left(q_i+2d_i\left(\frac{\nabla\psi_{i,*}}{\psi_{i,*}}-\lambda_*e\right)\right)
\cdot\nabla u_i^*-\sigma_{i,*}u_i^*,\\
u_i^*(t,x)&=u_i^*\left(t+\frac{p\cdot e}{c_*},x+p\right),
\quad \forall\;p\in \mathcal L.
\end{aligned}
\end{equation*}
Since $\sigma_{i,*}>0$, the maximum principle then yields that $u_i^*\equiv0$
in $\mathbb R\times\mathbb R^N$, and hence
$$\mathop{\lim}\limits_{n\to\infty}\frac{U_i(y_n,z_n)}{|z_n|e^{\lambda_*z_n}\phi_{i,*}(y_n)}=0,
\quad i\in I.$$
The remaining of the proof is similar to that of Step 3 in the proof of Lemma \ref{low-upp},
we omit it here. The proof is complete.
\end{proof}

The main result of this subsection is stated as follows.

\begin{theorem}\label{main-2}
Assume (H1)-(H7). Let $U(x,c_*t-x\cdot e)$ be the critical pulsating traveling front of \eqref{m-system}. Then there exists $\rho>0$ such that
\begin{equation*}
\mathop{\lim}\limits_{s\to-\infty}\frac{U(x,s)}
{\rho|s|e^{\lambda_*s}\bm{\Phi}_{\lambda_*}(x)}
=\bm 1\quad\text{uniformly in}\ x\in\mathbb R^N.
\end{equation*}
\end{theorem}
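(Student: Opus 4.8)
The plan is to repeat the three-step scheme of the proof of Theorem~\ref{main-1}, with the weight $e^{\lambda_c s}\phi_1^c(x)$ replaced by $|s|e^{\lambda_*s}\phi_{1,*}(x)$, absorbing at each stage the lower-order contributions produced by the polynomial factor $|s|$ and by the correctors $\phi_{i,*}^{(1)},\psi_{i,*}^{(1)}$. By Lemma~\ref{low*-upp*} the numbers
\[
\rho_*:=\liminf_{s\to-\infty}\Big\{\inf_{x\in\mathbb R^N}\frac{U_1(x,s)}{|s|e^{\lambda_*s}\phi_{1,*}(x)}\Big\}\le\rho^*:=\limsup_{s\to-\infty}\Big\{\sup_{x\in\mathbb R^N}\frac{U_1(x,s)}{|s|e^{\lambda_*s}\phi_{1,*}(x)}\Big\}
\]
satisfy $0<\rho_*\le\rho^*<+\infty$, and \eqref{Ui*-induc} gives the uniform bound $U_i(x,s)\le B_*|s|e^{\lambda_*s}$ for all $i\in I$.

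\textbf{Step 1: the liminf defining $\rho_*$ is a limit.} Arguing by contradiction as in Step~1 of Theorem~\ref{main-1}: if, for some $\epsilon>0$, $\inf_xU_1(x,s_n)/(|s_n|e^{\lambda_*s_n}\phi_{1,*}(x))\ge\rho_*(1+2\epsilon)$ along some $s_n\to-\infty$, take the subsolution $\underline{\bm u}$ of Lemma~\ref{sub-star} with $\delta_1=\rho_*(1+\tfrac32\epsilon)$, $\delta_2=\delta_1\min\{1,(\theta_*K_c)^{-1}\}$ ($K_c$ from Lemma~\ref{UcpV}); using Lemma~\ref{Harnack} and Lemma~\ref{UcpV} to control all components simultaneously, slide $\underline{\bm u}$ strictly below $\bm u$ between a level $s_{n'}<s_0$ and $s_0$ via Lemma~\ref{CP-+}, contradicting the choice of a point near a sequence realizing $\rho_*$.

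\textbf{Step 2: $\rho_*=\rho^*$ and convergence of the first component.} Pick $x_n'\in\overline{\mathcal D}$, $s_n'\to-\infty$, $x_n'\to x^*$ with $U_1(x_n',s_n')/(|s_n'|e^{\lambda_*s_n'}\phi_{1,*}(x_n'))\to\rho^*$, and set
\[
u_1^n(t,x)=\frac{U_1(x,c_*t-x\cdot e+s_n')}{|c_*t-x\cdot e+s_n'|\,e^{\lambda_*(c_*t-x\cdot e+s_n')}\phi_{1,*}(x)}.
\]
By the uniform bound $\{u_1^n\}$ is locally bounded; in the equation it satisfies, the terms coming from differentiating $|c_*t-x\cdot e+s_n'|$ have coefficients of order $(c_*t-x\cdot e+s_n')^{-1},(c_*t-x\cdot e+s_n')^{-2}$ which vanish locally uniformly, and $h_1(x,\bm u(t+s_n'/c_*,x))-h_1(x,\bm0)\to0$, so (up to a subsequence, by parabolic estimates) $u_1^n\to u_1^*$ in $C^{1,2}_{loc}$ with $u_1^*$ solving $\partial_tu_1^*=d_1\Delta u_1^*+(q_1+2d_1(\nabla\phi_{1,*}/\phi_{1,*}-\lambda_*e))\cdot\nabla u_1^*$, periodic in the sense $u_1^*(t,x)=u_1^*(t+p\cdot e/c_*,x+p)$, with $u_1^*\le\rho^*$ and $u_1^*(x^*\cdot e/c_*,x^*)=\rho^*$. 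The strong maximum principle plus forward uniqueness force $u_1^*\equiv\rho^*$, so $\lim_n\inf_xu_1^n(x\cdot e/c_*,x)=\rho^*$; with Step~1 this gives $\rho_*=\rho^*=:\rho$ and $U_1(x,s)/(\rho|s|e^{\lambda_*s}\phi_{1,*}(x))\to1$ uniformly in $x$ as $s\to-\infty$.

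\textbf{Step 3: the remaining components, and the main obstacle.} For $i\ge2$ put $\xi_i(x,s):=U_i(x,s)-\rho|s|e^{\lambda_*s}\phi_{i,*}(x)$, so $|\xi_i(x,s)|\le C|s|e^{\lambda_*s}$ by \eqref{Ui*-induc} and $\xi_1/(|s|e^{\lambda_*s}\phi_{1,*})\to0$ uniformly by Step~2. Letting $\underline\tau_i,\overline\tau_i$ be the liminf and limsup of $\xi_i/(|s|e^{\lambda_*s}\phi_{i,*})$ and rescaling
\[
\eta_i^n(t,x)=\frac{\xi_i(x,c_*t-x\cdot e+\hat s_n)}{e^{\lambda_*(c_*t-x\cdot e+\hat s_n)}\big(|c_*t-x\cdot e+\hat s_n|\psi_{i,*}(x)-\psi_{i,*}^{(1)}(x)\big)}
\]
along sequences realizing $\underline\tau_i$ (resp.\ $\overline\tau_i$), an induction on $i$ shows the coupling terms $\sum_{j<i}a_{ij}(\cdots)\eta_j^n$ and the $|s|$-derivative corrections vanish in the limit (using $\underline\tau_j=\overline\tau_j=0$ for $j<i$ and Step~2 for $j=1$), so $\eta_i^*$ is a bounded entire solution of $\partial_t\eta_i^*=d_i\Delta\eta_i^*+(\cdots)\cdot\nabla\eta_i^*-\sigma_{i,*}\eta_i^*$ with $\sigma_{i,*}=\kappa_1(\lambda_*)-\kappa_i(\lambda_*)>0$ by (H6); writing $e^{\sigma_{i,*}t}\eta_i^*$ as the image of a bounded datum under the uniformly $L^\infty$-bounded drift--diffusion semigroup over an arbitrarily long time forces $\eta_i^*\equiv0$, hence $\underline\tau_i=\overline\tau_i=0$, which yields the claim for every $i$. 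I expect Step~2 to be the main obstacle: one must verify that $\{u_1^n\}$ stays locally bounded and equicontinuous and that \emph{every} term generated by the weight $|s|$ — in particular the drift correction $2d_1\nabla\log|c_*t-x\cdot e+s_n'|$ — vanishes locally uniformly, so that the limiting operator is exactly the pure drift--diffusion operator; the analogous control is needed in Step~3, where $|s|\psi_{i,*}-\psi_{i,*}^{(1)}$ is not an eigenfunction and introduces extra vanishing drift and zeroth-order terms that must be handled before the sign-definite coefficient $-\sigma_{i,*}$ can be used in the induction over the triangular coupling.
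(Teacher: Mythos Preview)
Your proof is correct and follows essentially the same three-step approach as the paper. The only minor difference is that in Step~2 you normalize by $|s|e^{\lambda_*s}\phi_{1,*}$ while the paper uses the corrected weight $e^{\lambda_*s}\bigl(|s|\phi_{1,*}-\phi_{1,*}^{(1)}\bigr)$, which is an \emph{exact} solution of the linearized equation and hence avoids the extra $O(1/|s|)$ drift and zeroth-order terms you correctly identify and dismiss; similarly, in Step~3 the paper subtracts $\rho e^{\lambda_*s}\bigl(|s|\phi_{i,*}-\phi_{i,*}^{(1)}\bigr)$ rather than $\rho|s|e^{\lambda_*s}\phi_{i,*}$, but the two differ by a term of order $e^{\lambda_*s}$ which is negligible against $|s|e^{\lambda_*s}$, so both choices lead to the same conclusion.
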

\begin{proof}
In view of Lemma \ref{low*-upp*},
\begin{equation*}
0<\rho_*:=\mathop{\liminf}\limits_{s\to-\infty}\left\{\inf_{x\in\mathbb R^N}
\frac{U_1(x,s)}{|s|e^{\lambda_*s}\phi_{1,*}(x)}\right\}
\le\mathop{\limsup}\limits_{s\to-\infty}\left\{\sup_{x\in\mathbb R^N}
\frac{U_1(x,s)}{|s|e^{\lambda_*s}\phi_{1,*}(x)}\right\}=:\rho^*<+\infty.
\end{equation*}
Next we divide the proof into three steps.

{\bf Step 1}.  We prove that
\begin{equation}\label{11*}
\rho_*=\mathop{\lim}\limits_{s\to-\infty}\left\{\inf_{x\in\mathbb R^N}
\frac{U_1(x,s)}{|s|e^{\lambda_*s}\phi_{1,*}(x)}\right\}.
\end{equation}
Assume this is not true, then there exist $\epsilon>0$ and a sequence $\{s_n\}$ such that
\begin{equation}\label{U-1-rho*}
s_n\to-\infty\ \ (n\to\infty),\quad \left\{\inf_{x\in\mathbb R^N}
\frac{U_1(x,s_n)}{|s|e^{\lambda_*s}\phi_{1,*}(x)}\right\}\ge\rho_*(1+2\epsilon).
\end{equation}
Let $\delta_1=\rho_*(1+\frac{3}{2}\epsilon)$ and $\delta_2=\delta_1\min\left\{1,\frac{1}{\theta_*K_c}\right\}$,
where $K_c$ is given by Lemma \ref{UcpV}. Define
\begin{align*}
\underline{u}_1(t,x)&=\underline{U}_1(x,c_*t-x\cdot e)\\
&=\delta_1e^{\lambda_*(c_*t-x\cdot e)}
\left(|c_*t-x\cdot e|\phi_{1,*}(x)-m_0\phi_{1,*}(x)-\phi_{1,*}^{(1)}(x)
+n_0e^{\epsilon_*(c_*t-x\cdot e)}\phi_{1,\epsilon_*}(x)\right),\\
\underline{u}_i(t,x)&=\underline{U}_i(x,c_*t-x\cdot e)\\
&=\delta_2e^{\lambda_*(c_*t-x\cdot e)}
\left(|c_*t-x\cdot e|\phi_{i,*}(x)-\frac{m_0\delta_1}{\delta_2}\phi_{i,*}(x)-\phi_{i,*}^{(1)}(x)
+\frac{n_0\delta_1}{\delta_2}e^{\epsilon_*(c_*t-x\cdot e)}\phi_{i,\epsilon_*}(x)\right),\\
i&=2,3,\cdots,m
\end{align*}
where $(t,x)\in\Omega_{s_0,*}^-$, $s_0$ and $m_0$ and $n_0$ are given in Lemma \ref{sub-star}.
Note that  $U_i(x,s_n)\ge\frac{1}{K_c}U_1(x,s_n)$ for each $i$,
it follows from \eqref{U-1-rho*} that
\begin{equation}\label{uuvv*}
\mathop{\lim}\limits_{n\to\infty}\frac{U_i(x,s_n)}{\underline{U}_i(x,s_n)}>1,
\quad\forall\; x\in\mathbb R^N,\ \ i\in I.
\end{equation}
By the definition of $\rho_*$, there exists $\{(x_n,z_n)\}_{n\in\mathbb{N}}$ such that
$$z_n\to-\infty\ \ \text{as}\ \ n\to\infty,\quad\mathop{\lim}\limits_{n\to\infty}
\frac{U_1(x_n,z_n)}{|z_n|e^{\lambda_* z_n}\phi_{1,*}(x_n)}=\rho_*.$$
Hence there exists $n^*\in\mathbb N_+$ such that
\begin{equation}\label{contr*}
z_{n^*}<s_0,\quad U_1(x_{n^*},z_{n^*})\le
\rho_*\left(1+\frac{1}{2}\epsilon\right)|z_{n^*}|e^{\lambda_* z_{n^*}}\phi_{1,*}(x_{n^*})\le\underline{U}_1(x_{n^*},z_{n^*}).
\end{equation}
Furthermore, it follows from \eqref{uuvv*} that there exists
$n^\prime$ such that
$$s_{n^\prime}<z_{n^*},\quad \underline{U}(x,s_{n^\prime})\ll U(x,s_{n^\prime}),
\quad\forall\;x\in\mathbb R^N.$$
Lemma \ref{CP-+} then implies that
$$\underline{U}(x,s)\ll U(x,s),\quad\forall\;(x,s)\in\mathbb R^N\times[s_{n^\prime},s_0],$$
which contradicts \eqref{contr*}, and thus \eqref{11*} holds.

{\bf Step 2}.  We prove that $\rho_*=\rho^*$.
Let $\{(x_n^\prime,s_n^\prime)\}_{n\in\mathbb{N}}$ be the sequence such that $x_n^\prime\in\overline{\mathcal D}$, and
$$s_n^\prime\to-\infty,\ \ x_n^\prime\to x^*\in\overline{\mathcal D}\ \ (n\to\infty),
\quad\mathop{\lim}\limits_{n\to\infty}
\frac{U_1(x_n^\prime,s_n^\prime)}{|s_n^\prime|e^{\lambda_*s_n^\prime}\phi_{1,*}(x_n^\prime)}=\rho^*.$$
Define
\begin{align*}
u_1^n(t,x)&=\frac{u_1(t+\frac{s_n^\prime}{c_*},x)}
{e^{\lambda_*(c_*t-x\cdot e+s_n^\prime)}
\left(|c_*t-x\cdot e+s_n^\prime|\phi_{1,*}-\phi_{1,*}^{(1)}\right)}\\
&=\frac{U(x,c_*t-x\cdot e+s_n^\prime)}
{e^{\lambda_*(c_*t-x\cdot e+s_n^\prime)}
\left(|c_*t-x\cdot e+s_n^\prime|\phi_{1,*}-\phi_{1,*}^{(1)}\right)},
\end{align*}
where
$(t,x)\in\Omega_n:=\left\{c_*t-x\cdot e<-s_n^\prime-\frac{M_*^{(1)}}{m_*}\right\}$.
Noting that $\{u_1^n\}_{n\in\mathbb{N}}$ is uniformly bounded, and
\begin{align*}
\frac{\partial u_1^n(t,x)}{\partial t}&=d_1(x)\Delta u_1^n
+\left[q_1+2d_1\left(\frac{\nabla\left(|c_*t-x\cdot e+s_n^\prime|\phi_{1,*}-\phi_{1,*}^{(1)}\right)}
{|c_*t-x\cdot e+s_n^\prime|\phi_{1,*}-\phi_{1,*}^{(1)}}
-\lambda_*e\right)\right]\cdot\nabla u^n\\
&\quad-h_1(x,\bm 0)u_1^n+\frac{f_1\left(x,\bm u(t+\frac{s_n^\prime}{c_*},x)\right)}
{u_1(t+\frac{s_n^\prime}{c_*},x)}u_1^n.
\end{align*}
It then follows that $\{u_1^n\}_{n\in\mathbb{N}}$ converges in $C^{1,2}_{loc}(\Omega_n)$,
up to a subsequence, to a function $u_1^*\ge0$, which satisfies
\begin{equation*}
\frac{\partial u_1^*(t,x)}{\partial t}=d_1(x)\Delta u_1^*
+\left(q_1+2d_1\left(\frac{\nabla\phi_{1,*}}{\phi_{1,*}}-\lambda_*e\right)\right)\cdot\nabla u_1^*,
\quad (t,x)\in\mathbb R\times\mathbb R^N.
\end{equation*}
Notice that $u_1^*(\frac{x^*\cdot e}{c_*},x^*)=\rho^*$ and $u_1^*\le\rho^*$
from the definition of $\rho^*$,
the maximum principle then shows that
$u_1^*\equiv\rho^*$ for any $(t,x)\in\mathbb R\times\mathbb R^N$.
By similar arguments to Step 2 in the proof of Theorem \ref{main-1},
we have $\rho_*=\rho^*$, and therefore
\begin{equation}\label{U-asy*}
\mathop{\lim}\limits_{s\to-\infty}\frac{U_1(x,s)}{\rho|s|e^{\lambda_* s}\phi_{1,*}(x)}=1
\ \ \text{uniformly in}\ \ x\in\mathbb R^N.
\end{equation}

{\bf Step 3}.  We prove that
\begin{equation}\label{Ui-asymp*}
\mathop{\lim}\limits_{s\to-\infty}\frac{U_i(x,s)}{\rho|s|e^{\lambda_*s}\phi_{i,*}(x)}=1
\ \ \text{uniformly in}\  x\in\mathbb R^N,\quad \ i=2,3,\cdots,m.
\end{equation}
Let
\begin{align*}
\eta_i(t,x)&=u_i(t,x)-\rho e^{\lambda_*(c_*t-x\cdot e)}
\left(|c_*t-x\cdot e|\phi_{i,*}(x)-\phi_{i,*}^{(1)}(x)\right)\\
&=U_i(x,c_*t-x\cdot e)-\rho e^{\lambda_*(c_*t-x\cdot e)}
\left(|c_*t-x\cdot e|\phi_{i,*}(x)-\phi_{i,*}^{(1)}(x)\right)\\
:&=\xi_i(x,c_*t-x\cdot e), \quad i=1,2,\cdots,m,
\end{align*}
and define
$$\underline{\tau}_i:=\liminf_{s\to-\infty}
\left\{\inf_{x\in\mathbb R^N}\frac{\xi_i(x,s)}{\rho|s|e^{\lambda_*s}\phi_{i,*}(x)}\right\}
\le\limsup_{s\to-\infty}\left\{\sup_{x\in\mathbb R^N}
\frac{\xi_i(x,s)}{\rho|s|e^{\lambda_*s}\phi_{i,*}(x)}\right\}=:\overline{\tau}_i,
\ \ i=2,3,\cdots,m.$$
By using an induction argument and similar arguments to those of Step 3 in the proof of Theorem \ref{main-1}, one can infer that $\underline{\tau}_i=\overline{\tau}_i=0$ for each $i$, and hence \eqref{Ui-asymp*} holds. The proof is complete.
\end{proof}

\begin{corollary}\label{U-s-estimate}
Assume (H1)-(H7).
Let $\bm u(t,x)=U(x,ct-x\cdot e)$ be a pulsating traveling front of \eqref{m-system}
with $c\ge c_+^0$, then for each $i=1,2,\cdots,m$,
\begin{equation*}
0<\underline{\lambda}_i:=\liminf_{s\to-\infty}\left\{\inf_{x\in\mathbb R^N}
\frac{\partial U_i(x,s)/\partial s}{U_i(x,s)}\right\}\le
\limsup_{s\to-\infty}\left\{\sup_{x\in\mathbb R^N}
\frac{\partial U_i(x,s)/\partial s}{U_i(x,s)}\right\}:=\bar{\lambda}_i<\infty.
\end{equation*}
\end{corollary}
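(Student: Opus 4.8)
The plan is to upgrade the sharp pointwise asymptotics of Theorems \ref{main-1} and \ref{main-2} to a statement about the logarithmic derivative $\partial_s U_i/U_i$ via a parabolic rescaling/compactness argument, showing in fact that $\partial_s U_i(x,s)/U_i(x,s)\to\lambda_c$ uniformly in $x$ as $s\to-\infty$ (with $\lambda_c=\lambda_+^0$ in the critical case); the claim then follows at once since $\lambda_c\in(0,\infty)$. To treat both cases at once, set $\tau=0$, $\phi_i=\phi_i^c$ if $c>c_+^0$, and $\tau=1$, $\phi_i=\phi_{i,*}$, $\lambda_c=\lambda_+^0$ if $c=c_+^0$, and let $g(\sigma)=|\sigma|^\tau e^{\lambda_c\sigma}$ for $\sigma<0$. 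By Theorems \ref{main-1} and \ref{main-2}, $U_i(x,s)/\big(g(s)\phi_i(x)\big)\to\rho$ as $s\to-\infty$, uniformly in $x\in\mathbb R^N$, for each $i\in I$.

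First I would fix a sequence $s_n\to-\infty$, set $\sigma_n(t,x):=ct-x\cdot e+s_n$, and introduce
$$v_i^n(t,x):=\frac{U_i\big(x,\sigma_n(t,x)\big)}{g\big(\sigma_n(t,x)\big)\,\phi_i(x)}=\frac{u_i\big(t+\tfrac{s_n}{c},x\big)}{g\big(\sigma_n(t,x)\big)\,\phi_i(x)},$$
which, on any fixed compact $Q\subset\mathbb R\times\mathbb R^N$, is well defined and smooth once $n$ is large enough that $\sigma_n<-1$ on $Q$, and which converges to $\rho$ uniformly on $Q$ (since $\sigma_n\to-\infty$ uniformly on $Q$). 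A direct computation—identical in spirit to the rescalings carried out in the proofs of Lemmas \ref{low-upp} and \ref{low*-upp*} and Theorems \ref{main-1} and \ref{main-2}—shows that $v_i^n$ solves a uniformly parabolic equation of the schematic form
$$\partial_t v_i^n=d_i(x)\Delta v_i^n+\bm{b}_i^n\cdot\nabla v_i^n+c_i^n v_i^n+\sum_{j<i}a_{ij}(x)\,\frac{\phi_j(x)}{\phi_i(x)}\,v_j^n,$$
whose coefficients are bounded on $Q$ uniformly in $n$: the drift and zeroth-order terms involve only $q_i$, $\nabla\phi_i/\phi_i$, the ratios $g'(\sigma_n)/g(\sigma_n)$ and $g''(\sigma_n)/g(\sigma_n)$ (bounded while $\sigma_n$ stays in a bounded subinterval of $(-\infty,-1)$), and $h_i(x,\bm u(t+\tfrac{s_n}{c},x))$, which is bounded because $\bm u\in[\bm 0,\bm 1]$. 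Since each $v_i^n$ is locally bounded, interior parabolic estimates—applied inductively in $i$, with $v_1^n$ as base case (its equation has no coupling term, by (H1))—give a uniform $C^{1+\alpha/2,2+\alpha}_{loc}$ bound, so that along a subsequence $v_i^n\to\rho$ in $C^{1,2}_{loc}(\mathbb R\times\mathbb R^N)$; in particular $\partial_t v_i^n\to 0$ locally uniformly.

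Next I would translate this back. From $U_i(x,\sigma)=g(\sigma)\phi_i(x)v_i^n(t,x)$ with $\sigma=\sigma_n(t,x)$, differentiating in $t$ and dividing by $U_i$ gives
$$\frac{\partial_s U_i(x,\sigma)}{U_i(x,\sigma)}=\frac{g'(\sigma)}{g(\sigma)}+\frac1c\,\frac{\partial_t v_i^n(t,x)}{v_i^n(t,x)},\qquad\text{with}\quad\frac{g'(\sigma)}{g(\sigma)}=\lambda_c-\frac{\tau}{|\sigma|}.$$
Since $U_i(\cdot,s)$ is periodic it suffices to control $\partial_s U_i/U_i$ over $x\in\overline{\mathcal D}$: given any $(x_n,s_n)$ with $x_n\in\overline{\mathcal D}$ and $s_n\to-\infty$, put $t_n:=x_n\cdot e/c$, so $\sigma_n(t_n,x_n)=s_n$ and $\{t_n\}$ is bounded; passing to a subsequence with $(t_n,x_n)\to(t_\infty,x_\infty)$ and using the $C^{1,2}_{loc}$ convergence of $v_i^n$ together with $v_i^n(t_n,x_n)\to\rho>0$, the identity above yields $\partial_s U_i(x_n,s_n)/U_i(x_n,s_n)\to\lambda_c$. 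As this holds along every such sequence, $\partial_s U_i/U_i\to\lambda_c$ uniformly in $x$ as $s\to-\infty$, hence $\underline{\lambda}_i=\bar{\lambda}_i=\lambda_c\in(0,\infty)$. The point requiring care—more bookkeeping than genuine difficulty—is the uniform-in-$n$ parabolic estimate for $v_i^n$ in the critical case $\tau=1$, where $g(\sigma_n)=|\sigma_n|e^{\lambda_c\sigma_n}$ contributes the extra polynomial factor and the inter-component coupling must be unwound through the induction on $i$; all of this is exactly the machinery already set up in Section \ref{AS}.
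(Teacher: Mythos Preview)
Your argument is correct and takes a genuinely different route from the paper. The paper normalizes by a single value $u_i(t_n,x_n)$, obtaining $u_i^n(t,x)=u_i(t+t_n,x)/u_i(t_n,x_n)$, passes to a limit $u_i^\infty$ solving the linearized equation at $\bm 0$, and then studies the auxiliary quotient $w_1=\partial_t u_1^\infty/u_1^\infty$, which (by a strong maximum principle argument) is forced to be constant; this identifies $c\underline{\lambda}_1=\kappa_1(\underline{\lambda}_1)$, and positivity of $\underline{\lambda}_1$ follows by excluding non-positive roots. For $i\ge 2$ the paper invokes Theorems~\ref{main-1}--\ref{main-2} only to bound the coupling ratios $u_j/u_i$ and then repeats an eigenvalue identification. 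Your approach instead normalizes by the full asymptotic profile $g(s)\phi_i(x)$ already supplied by Theorems~\ref{main-1}--\ref{main-2}, so the limit of $v_i^n$ is the constant $\rho$ from the outset and the $C^{1,2}_{loc}$ upgrade reads off $\partial_sU_i/U_i\to\lambda_c$ directly from the identity $\partial_sU_i/U_i=g'/g+c^{-1}\partial_t v_i^n/v_i^n$. This is more economical and in fact yields the sharper conclusion $\underline{\lambda}_i=\bar{\lambda}_i=\lambda_c$; the paper's argument for $i=1$ is a bit more self-contained (it does not rely on the precise constant $\rho$) but arrives at the same identification via the eigenvalue equation.
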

\begin{proof}
Noting that
\begin{align*}
\frac{\partial u_1(t,x)}{\partial t}=d_1(x)\Delta u_1+q_1(x)\cdot\nabla u_1+u_1h_1(x,\bm u),
\end{align*}
by the standard interior estimates for parabolic equations and Lemma \ref{Harnack},
there exist $C_1,C_2>0$ such that for any $(t,x)\in\mathbb R\times\mathbb R^N$,
\begin{equation}\label{inter-estimate}
\begin{aligned}
\left|\frac{\partial u_1(t,x)}{\partial t}(t,x)\right|+|\Delta u_1(t,x)|+|\nabla u_1(t,x)|
\le C_1\sup_{t-1\le t_1\le t,\ |x_1-x|\le 1}|u_1(t_1,x_1)|\le C_2|u_1(t,x)|.
\end{aligned}
\end{equation}
Since $\partial U_1(x,ct-x\cdot e)/\partial s=\frac{1}{c}\partial u_1(t,x)/\partial t$,
it follows from \eqref{inter-estimate} that
$\frac{\partial U_1(x,s)}{\partial s}/U_1(x,s)$ is globally bounded in $\mathbb R^N\times\mathbb R$,
and hence $\underline{\lambda}_1$ and $\bar{\lambda}_1$ are real numbers.
Next we prove that $\underline{\lambda}_1>0$.

Let $\{(x_n,s_n)\}_{n\in\mathbb{N}}$ be the sequence with $x_n\in\overline{\mathcal D}$, and
$$s_n\to-\infty\ \ (n\to\infty),\qquad \lim_{n\to\infty}\left\{
\frac{\partial U_1(x_n,s_n)/\partial s}{U_1(x_n,s_n)}\right\}=\underline{\lambda}_1.$$
Up to extraction of a subsequence, $x_n\to x_{\infty}\in\overline{\mathcal D}$ as
$n\to\infty$. Define
\begin{align*}
u_1^n(t,x)=\frac{u_1(t+t_n,x)}{u_1(t_n,x_n)}
=\frac{U_1(x,ct-x\cdot e+ct_n)}{U_1(x_n,s_n)},
\end{align*}
where
$$t_n:=\frac{s_n+x_n\cdot e}{c},\qquad
ct_n=s_n+x_n\cdot e\to-\infty\ \ \ (n\to\infty).$$
By Lemma \ref{Harnack}, the sequence $\{u_1^n\}_{n\in\mathbb{N}}$ is locally uniformly bounded in
$\mathbb R\times\mathbb R^N$, which satisfies
\begin{align*}
\frac{\partial u_1^n(t,x)}{\partial t}&=d_1\Delta u_1^n
+q_1\cdot\nabla u_1^n+h_1(x,\bm u(t+t_n,x))u_1^n,\\
u_1^n(t,x)&=u_1^n\left(t+\frac{p\cdot e}{c},x+p\right),
\quad\forall\;(t,x)\in\mathbb R\times\mathbb R^N,\ \forall\;p\in \mathcal L,
\end{align*}
and in particular, $u_1^n(0,x_n)=1$. Noting that
$\lim_{n\to\infty}\bm u(t+t_n,x)=\bm 0$ locally uniformly in $(t,x)\in\mathbb R\times\mathbb R^N$,
by the standard parabolic estimates, the sequence $\{u_1^n\}_{n\in\mathbb{N}}$ converges
up to extraction of a subsequence in $C^{1,2}_{loc}(\mathbb R\times\mathbb R^N)$ to a function $u_1^\infty\ge0$, which satisfies
\begin{align*}
\frac{\partial u_1^\infty(t,x)}{\partial t}&=d_1\Delta u_1^\infty
+q_1\cdot\nabla u_1^\infty+h_1(x,\bm 0)u_1^\infty,\\
u_1^\infty(t,x)&=u_1^\infty\left(t+\frac{p\cdot e}{c},x+p\right),
\quad\forall\;(t,x)\in\mathbb R\times\mathbb R^N,\ \forall\;p\in \mathcal L,
\end{align*}
and in particular, $u_1^\infty(0,x_\infty)=1$.
It then follows from the maximum principle that $u_1^{\infty}(t,x)>0$ for any $(t,x)\in\mathbb{R}\times\mathbb{R}^N$ .
Since
$$\frac{\partial u_1^n(t,x)/\partial t}{u_1^n(t,x)}
=\frac{\partial u_1(t+t_n,x)/\partial t}{u_1(t+t_n,x)}
=c\frac{\partial U_1(x,c(t+t_n)-x\cdot e)/\partial s}{U_1(x,c(t+t_n)-x\cdot e)},$$
by passing the limits and in view of the definition of $\underline{\lambda}_1$, we have
\begin{align*}
w_1(t,x):=\frac{\partial u_1^\infty(t,x)/\partial t}{u_1^\infty(t,x)}
\ge c\underline{\lambda}_1\ \ (c>0)\ \ \text{or}\ \ \le c\underline{\lambda}_1\ \ (c<0),
\end{align*}
and in particular, $w_1(0,x_\infty)=c\underline{\lambda}_1$.
Noting that
\begin{align*}
\frac{\partial w_1(t,x)}{\partial t}
&=d_1\Delta w_1+\left(q_1+2d_1\frac{\nabla u_1^\infty}{u_1^\infty}\right)\cdot\nabla w_1,\\
w_1(t,x)&=w_1\left(t+\frac{p\cdot e}{c},x+p\right),
\quad\forall\;(t,x)\in\mathbb R\times\mathbb R^N,\ \forall\;p\in \mathcal L,
\end{align*}
the maximum principle then implies that $w_1(t,x)\equiv c\underline{\lambda}_1$
in $\mathbb R\times\mathbb R^N$, that is,
$\frac{\partial u_1^\infty(t,x)}{\partial t}\equiv c\underline{\lambda}_1u_1^\infty$.
Hence $\frac{\partial(u_1^\infty e^{-\underline{\lambda}_1ct})}{\partial t}\equiv0$,
which shows that $u_1^\infty(t,x)=e^{\underline{\lambda}_1ct}v(x)$.
On the other hand, $u_1^\infty(t,x)=u_1^\infty\left(t+\frac{p\cdot e}{c},x+p\right)$ for any
$(t,x)\in\mathbb R\times\mathbb R^N$ and $p\in \mathcal L$, then
$u_1^\infty(t,x)=e^{\underline{\lambda}_1(ct-x\cdot e)}\phi_1(x)$, where $\phi_1(x)>0$ satisfies
\begin{align*}
c\underline{\lambda}_1\phi_1&=d_1\Delta\phi_1+(q_1-2\underline{\lambda}_1d_1e)\cdot\nabla\phi_1
+(d_1\underline{\lambda}_1^2-\underline{\lambda}_1q_1\cdot e+h_1(x,\bm 0))\phi_1,\\
\phi_1(x)&=\phi_1(x+p),\quad\forall\;x\in\mathbb R^N,\ \ p\in\mathcal{L}.
\end{align*}
Therefore $c\underline{\lambda}_1=\kappa_1(\underline{\lambda}_1)$. Similarly,
one can obtain $c\overline{\lambda}_1=\kappa_1(\overline{\lambda}_1)$.
Observing that $\kappa_1(0)=\lambda_0(d_1,q_1,\zeta^1)>0$, $U_1(x,s)>0$ and $\lim_{s\to-\infty}U_1(x,s)=0$, the quantities
$\underline{\lambda}_1$ and $\overline{\lambda}_1$ are nonzero with the same sign
and cannot be negative. Consequently,
$\underline{\lambda}_1=\hat{\lambda}>0$,
where $\hat{\lambda}:=\lambda_c$ if $c>c_+^0$ and
$\hat{\lambda}:=\lambda_+^0$ if $c=c_+^0$, in terms of \eqref{c1}.

Noticing that
\begin{align*}
\frac{\partial u_2(t,x)}{\partial t}=d_2(x)\Delta u_2+q_2(x)\cdot\nabla u_2
+\left(a_{21}(x)\frac{u_1(t,x)}{u_2(t,x)}+h_2(x,\bm{u})\right)u_2,
\end{align*}
where
$a_{21}(x)\frac{u_1(t,x)}{u_2(t,x)}+h_2(x,\bm{u})$ is uniformly bounded
in $\mathbb R\times\mathbb R^N$ in terms of Theorems \ref{main-1} and \ref{main-2}
and (H1).
Using a similar argument as above, one can prove that there exists $\phi_2(x)>0$ such that
\begin{equation}\label{2-phi2}
\begin{aligned}
c\underline{\lambda}_2\phi_2&=d_2\Delta\phi_2+(q_2-2\underline{\lambda}_2d_2e)\cdot\nabla\phi_2
+\left(d_2\underline{\lambda}_2^2-\underline{\lambda}_2q_2\cdot e
+a_{21}\frac{\phi_1^{\lambda}(x)}{\phi_2^{\lambda}(x)}+h_2(x,\bm 0)\right)\phi_2,\\
\phi_2(x)&=\phi_2(x+p),\quad\forall\;x\in\mathbb R^N,\ \ p\in\mathcal{L}.
\end{aligned}
\end{equation}
Since $\phi_2^{\lambda}(x)>0$ satisfies \eqref{2-phi2}
with $\underline{\lambda}_2=\hat{\lambda}$, the uniqueness of the principal eigenvalue
then implies that $\underline{\lambda}_2>0$.
Since for each $i=3,\cdots,m$,
\begin{align*}
\frac{\partial u_i(t,x)}{\partial t}=d_i(x)\Delta u_i+q_i(x)\cdot\nabla u_i
+\left(\sum_{j=1}^{i-1}a_{ij}(x)\frac{u_j(t,x)}{u_i(t,x)}+h_i(x,\bm{u})\right)u_i,
\end{align*}
where
$\sum_{j=1}^{i-1}a_{ij}(x)\frac{u_j(t,x)}{u_i(t,x)}+h_i(x,\bm{u})$ is uniformly bounded
in $(t,x)\in\mathbb R\times\mathbb R^N$, a similar argument as above shows that $\underline{\lambda}_i>0$ for each $i\in I$.
The proof is then complete.
\end{proof}

To this end, we give the proof of Theorem \ref{Asy} as follows.

\begin{proof}[{\bf Proof of Theorem \ref{Asy}}]
The proof follows from Theorems \ref{main-1} and \ref{main-2}.
The proof is complete.
\end{proof}


\section{Uniqueness of pulsating traveling fronts}\label{UNI}

In this section, we always assume that (H1)-(H8) are satisfied,
and we prove the uniqueness of pulsating traveling fronts.
For each $i\in I$, let
\begin{equation}\label{varrho}
\begin{aligned}
\varrho_i&:=\sup\left\{\varrho\ge0:\ \sum_{k=1}^m
\left|\frac{\partial f_i(x,\bm u)}{\partial u_k}
-\frac{\partial f_i(x,\bm 1)}{\partial u_k}\right|
\le\frac{\alpha_*|\mu^-|}{2},
\ \ \forall\;(x,\bm u)\in\mathbb R^N\times[(1-\varrho)\bm 1,(1+\varrho)\bm 1]\right\},
\end{aligned}
\end{equation}
where
$$\alpha_*:=\frac{\min_{i\in I}\{\min_x{\psi_i(x)}\}}
{\max_{i\in I}\{\max_x{\psi_i(x)}\}},$$
and $\mu^-<0$ and $\bm{\Psi}(x)=(\psi_1(x),\psi_2(x),\cdots,\psi_m(x))$ are given in (H8).

Firstly, we establish a comparison principle in the region where the fronts are close
to the stable periodic solution.

\begin{lemma}\label{1-CP}
Assume (H1)-(H8). If $\underline{\bm u}(t,x)=\underline{U}(x,ct-x\cdot e)$
and $\overline{\bm u}(t,x)=\overline{U}(x,ct-x\cdot e)$
are sub- and supersolutions of \eqref{m-system} in $C^{1,2}_b(\mathbb R\times\mathbb R^N)$, respectively, $\underline{U}(x,s)$ and $\overline{U}(x,s) $ are periodic in $x$,
and there exists $s^*\in\mathbb R$ such that
\begin{equation*}
\begin{cases}
\underline{U}(x,s),\overline{U}(x,s)\in[(1-\varrho^*)\bm 1,\bm 1],
\quad \forall\;(x,s)\in\mathbb R^N\times[s^*,+\infty),\\
\mathop{\liminf}\limits_{s\to+\infty}\left\{
\mathop{\inf}\limits_{x}\left\{\overline{U}(x,s)-\underline{U}(x,s)\right\}\right\}\ge\bm 0,\\
\overline{U}(x,s^*)\ge\underline{U}(x,s^*),\quad \forall\; x\in\mathbb R^N,
\end{cases}
\end{equation*}
where $\varrho^*=\min\{1,\min_{i\in I}\varrho_i\}$.
Then
$$\overline{U}(x,s)\ge\underline{U}(x,s),\quad\forall\;(x,s)\in\mathbb R^N\times[s^*,+\infty).$$
\end{lemma}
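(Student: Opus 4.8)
The plan is to run a sliding comparison argument weighted by the principal eigenfunction $\bm\Psi$ of the linearization of \eqref{m-system} at $\bm 1$, exploiting that the associated principal eigenvalue $\mu^-$ is negative. I set
\[
\gamma:=\sup\Big\{\frac{\underline U_i(x,s)-\overline U_i(x,s)}{\psi_i(x)}:\ (x,s)\in\mathbb R^N\times[s^*,+\infty),\ i\in I\Big\},
\]
which is finite since $\underline U-\overline U$ is bounded and $\bm\Psi\gg\bm 0$ is periodic, and argue by contradiction assuming $\gamma>0$. Because $\underline U(\cdot,s)$, $\overline U(\cdot,s)$ and $\bm\Psi$ are $\mathcal L$-periodic, $x$ effectively ranges over the compact cell $\overline{\mathcal D}$, so I pick a maximizing sequence $(x_n,s_n)$ and, after passing to a subsequence, a fixed index $i_0$ realizing $\gamma$ along it, together with $x_n\to x_\infty\in\overline{\mathcal D}$ and $s_n\to s_\infty\in[s^*,+\infty]$. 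The hypothesis $\overline U(\cdot,s^*)\ge\underline U(\cdot,s^*)$ forbids $s_\infty=s^*$ (otherwise $\overline U_{i_0}-\underline U_{i_0}=-\gamma\psi_{i_0}<0$ there), and $\liminf_{s\to+\infty}\inf_x\{\overline U(x,s)-\underline U(x,s)\}\ge\bm 0$ forbids $s_\infty=+\infty$; hence $s_\infty\in(s^*,+\infty)$. Setting $t_\infty:=(s_\infty+x_\infty\cdot e)/c$ and $v_i(t,x):=\overline u_i(t,x)-\underline u_i(t,x)+\gamma\psi_i(x)$, I then have $\bm v\ge\bm 0$ on $\Omega_{s^*}^+$ and $v_{i_0}(t_\infty,x_\infty)=0$, with $(t_\infty,x_\infty)$ an interior point of $\Omega_{s^*}^+$.

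Next I derive the differential inequality for $\bm v$. Subtracting the subsolution inequality for $\underline{\bm u}$ from the supersolution inequality for $\overline{\bm u}$ and writing $f_i(x,\overline{\bm u})-f_i(x,\underline{\bm u})=\sum_k b_{ik}(\overline u_k-\underline u_k)$ with $b_{ik}(t,x):=\int_0^1\frac{\partial f_i}{\partial u_k}(x,\tau\overline{\bm u}+(1-\tau)\underline{\bm u})\,d\tau$, then substituting $\overline u_k-\underline u_k=v_k-\gamma\psi_k$, I obtain on $\Omega_{s^*}^+$
\[
\partial_t v_i\ge d_i\Delta v_i+q_i\cdot\nabla v_i+\sum_k b_{ik}v_k-\gamma\Big(d_i\Delta\psi_i+q_i\cdot\nabla\psi_i+\sum_k b_{ik}\psi_k\Big).
\]
On $\Omega_{s^*}^+$ one has $\overline{\bm u},\underline{\bm u}\in[(1-\varrho^*)\bm 1,\bm 1]$ and $\varrho^*\le\varrho_i$, so \eqref{varrho} gives $\sum_k|b_{ik}-\frac{\partial f_i}{\partial u_k}(x,\bm 1)|\le\alpha_*|\mu^-|/2$; combining this with the eigenrelation $\mu^-\psi_i=d_i\Delta\psi_i+q_i\cdot\nabla\psi_i+\sum_k\frac{\partial f_i}{\partial u_k}(x,\bm 1)\psi_k$ of (H8) and the identity $\alpha_*\max_j\max_x\psi_j=\min_j\min_x\psi_j\le\psi_i(x)$, I get $d_i\Delta\psi_i+q_i\cdot\nabla\psi_i+\sum_k b_{ik}\psi_k\le\tfrac{\mu^-}{2}\psi_i<0$. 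Hence the bracketed term is $\le\tfrac{\mu^-}{2}\psi_i$, and since $\gamma>0$,
\[
\partial_t v_i\ge d_i\Delta v_i+q_i\cdot\nabla v_i+\sum_k b_{ik}v_k+\gamma\,\tfrac{|\mu^-|}{2}\,\psi_i\qquad\text{on }\Omega_{s^*}^+ .
\]

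Finally I evaluate at the interior minimum $(t_\infty,x_\infty)$ of $v_{i_0}$: there $\partial_t v_{i_0}=0$, $\nabla v_{i_0}=0$, $\Delta v_{i_0}\ge 0$, while $b_{i_0 k}\ge 0$ for $k\ne i_0$ by cooperativity (since $\tau\overline{\bm u}+(1-\tau)\underline{\bm u}\in[\bm 0,\bm 1]$) and $v_k\ge 0$, $v_{i_0}=0$. Plugging into the last inequality gives $0\ge\gamma\tfrac{|\mu^-|}{2}\psi_{i_0}(x_\infty)>0$, a contradiction; hence $\gamma\le 0$, i.e.\ $\overline U(x,s)\ge\underline U(x,s)$ for all $(x,s)\in\mathbb R^N\times[s^*,+\infty)$, as claimed. (Alternatively one may invoke the strong maximum principle for the cooperative system satisfied by $\bm v$ to force $v_{i_0}\equiv 0$ in a backward neighborhood of $(t_\infty,x_\infty)$ and reach the same contradiction from the strict positivity of $\gamma\tfrac{|\mu^-|}{2}\psi_{i_0}$.) I expect the delicate points to be the sign bookkeeping showing that $\gamma\psi_i$ acts as a strict perturbation — the chain $d_i\Delta\psi_i+q_i\cdot\nabla\psi_i+\sum_k b_{ik}\psi_k\le\tfrac{\mu^-}{2}\psi_i$ obtained from \eqref{varrho} and (H8) — and the elimination of the two boundary possibilities $s_\infty\in\{s^*,+\infty\}$; the maximum-principle step itself is then routine.
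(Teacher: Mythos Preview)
Your argument is correct and is precisely the standard weighted maximum-principle approach that the paper invokes by citing \cite[Lemma~3.1]{duJDE2019}: you slide by $\gamma\bm\Psi$, use the definition of $\varrho_i$ in \eqref{varrho} together with (H8) to show the linearized operator applied to $\bm\Psi$ is strictly negative on $\Omega_{s^*}^+$, and then read off a contradiction at the interior touching point. The paper omits the details entirely, so there is nothing further to compare.
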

\begin{proof}
The proof follows a similar argument to that of \cite[Lemma 3.1]{duJDE2019},
we omit the details here.
\end{proof}

\begin{theorem}\label{uniqu}
Assume (H1)-(H8).
If $\bm{u}(t,x)=U(x,ct-x\cdot e)$ and $\bm{v}(t,x)=V(x,ct-x\cdot e)$ are
two pulsating traveling fronts of \eqref{m-system} with $c\not=0$.
Then there exists $z_0\in\mathbb R$ such that
\begin{equation*}
U(x,s+z_0)=V(x,s),\quad\forall\;(x,s)\in\mathbb R^N\times\mathbb R,
\end{equation*}
that is, there exists $\sigma\in\mathbb R$ $(\sigma=z_0/c)$ such that
\begin{equation*}
\bm u(t+\sigma,x)=\bm v(t,x),\quad\forall\;(t,x)\in\mathbb R\times\mathbb R^N.
\end{equation*}
\end{theorem}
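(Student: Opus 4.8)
The plan is to prove uniqueness via the sliding method, exploiting the precise asymptotic behavior of pulsating fronts near $\bm 0$ (Theorem~\ref{Asy}) together with the two comparison principles (Lemmas~\ref{CP+}--\ref{CP-+}) near $\bm 0$ and Lemma~\ref{1-CP} near $\bm 1$. First I would observe that any two pulsating fronts $U$ and $V$ with the same nonzero speed $c$ must have the same $\lambda_c$ (since $c\ge c_+^0(e)$ forces $\lambda_c=\min F_c$, which depends only on $c$ and $e$), and hence by Theorem~\ref{Asy} there exist $\rho_U,\rho_V>0$ with
\begin{equation*}
\lim_{s\to-\infty}\frac{U(x,s)}{\rho_U|s|^\tau e^{\lambda_c s}\bm{\Phi}_{\lambda_c}(x)}=\bm 1,\qquad
\lim_{s\to-\infty}\frac{V(x,s)}{\rho_V|s|^\tau e^{\lambda_c s}\bm{\Phi}_{\lambda_c}(x)}=\bm 1
\end{equation*}
uniformly in $x$, where $\tau=0$ if $c>c_+^0(e)$ and $\tau=1$ if $c=c_+^0(e)$. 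Consequently, for any $\theta>0$, after a translation in $s$ one can arrange $V(x,s+z)\ge(1-\delta)U(x,s)$ (componentwise) for $s$ very negative; the matching leading-order asymptotics are exactly what makes a \emph{uniform-in-$x$ and uniform-in-all-$m$-components} comparison possible, which is the crux not present in the scalar case.

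Next I would run the sliding argument in two regimes glued together. Define, for $z\in\mathbb R$, the statement $P(z)$: ``$U(x,s+z)\ge V(x,s)$ for all $(x,s)\in\mathbb R^N\times\mathbb R$.'' The goal is to show $z^*:=\inf\{z:P(z)\text{ holds}\}$ is finite and that at $z=z^*$ one has $U(\cdot,\cdot+z^*)\equiv V(\cdot,\cdot)$, and then repeat with the roles of $U$ and $V$ reversed to pin down $z_0$. To show $P(z)$ holds for $z$ large: pick $s^*$ large so that both $U(x,s),V(x,s)\in[(1-\varrho^*)\bm 1,\bm 1]$ for $s\ge s^*$; on the far-left region $s\le s_0$ (some fixed large negative $s_0$) use the asymptotics above plus Lemma~\ref{CP-+} to get $V(x,s)\le U(x,s+z)$; on the compact middle band $s_0\le s\le s^*$ use strict monotonicity $U_s\gg\bm 0$ and compactness to absorb the (bounded) discrepancy by taking $z$ large; and on $s\ge s^*$ invoke Lemma~\ref{1-CP} (the comparison principle near the stable state $\bm 1$) to propagate the inequality. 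For the critical case $\tau=1$ one must additionally carry the polynomial factor $|s|$ through these estimates, exactly as foreshadowed in the introduction, which affects the construction of the shifted sub/supersolutions but not the logical skeleton.

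Then I would show the infimum $z^*$ is attained and rigid. By continuity $P(z^*)$ holds, so $U(\cdot,\cdot+z^*)-V(\cdot,\cdot)\ge\bm 0$. If this difference is not identically zero, then by the strong maximum principle applied componentwise to the cooperative linearized system (using that $\bm F$ is $C^2$ and cooperative, and the Harnack-type bound Lemma~\ref{Harnack}) the difference is $\gg\bm 0$ on all of $\mathbb R^N\times\mathbb R$ in each component that is not already forced to vanish. Using the asymptotic expansions again (and, on the right, the nondegeneracy coming from (H8) via Lemma~\ref{1-CP}), I would show that $U(x,s+z^*-\eta)\ge V(x,s)$ still holds for some small $\eta>0$ — contradicting minimality of $z^*$. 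This uses the same three-region decomposition: near $\bm 0$ the strict gap in $\rho$-constants gives room, near $\bm 1$ the exponential spectral gap $\mu^-<0$ gives room, and on the compact band $U_s\gg\bm 0$ gives room; the subtlety is that all three must be made quantitative \emph{simultaneously across all components}, which is where Lemma~\ref{UcpV} (comparability $U_1\le K_c\min_i U_i$) is essential. Hence $U(x,s+z^*)\equiv V(x,s)$. Running the symmetric argument with $U,V$ swapped gives $V(x,s+z^{**})\equiv U(x,s)$, forcing $z^{**}=-z^*$ and $z_0=z^*$; translating back via $s=ct-x\cdot e$ yields $\bm u(t+\sigma,x)=\bm v(t,x)$ with $\sigma=z_0/c$.

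The main obstacle I expect is the left-end (near-$\bm 0$) step in establishing $P(z)$ for large $z$ and, more delicately, in the rigidity step: one needs a \emph{uniform} estimate simultaneously valid for every component $U_i$ and $V_i$ as $s\to-\infty$, whereas a priori only the first component $U_1$ has a clean scalar-type asymptotic from the eigenvalue problem \eqref{u1um-linear-PE}; the other components are slaved to it through the coupling terms $\sum_{k<j}a_{jk}\phi_k$. Controlling them uniformly is exactly what Theorem~\ref{Asy}(i)--(ii) and Lemma~\ref{UcpV} are designed to supply, and the critical case $c=c_+^0(e)$ compounds this because the decay is $|s|e^{\lambda_+^0 s}$ rather than a pure exponential, so the shifted barrier profiles must include the $|s|$-factor and one must check the differential inequalities survive it — precisely the kind of computation carried out in Lemmas~\ref{sub-star} and \ref{super*}.
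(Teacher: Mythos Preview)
Your overall sliding strategy matches the paper's proof, including the three-region decomposition (near $\bm 0$ via Theorem~\ref{Asy}, near $\bm 1$ via Lemma~\ref{1-CP}, and the compact middle band via strict monotonicity $U_s\gg\bm 0$). However, your rigidity step contains a gap. You argue that if $U(\cdot,\cdot+z^*)-V\gg\bm 0$ then one can slide further because ``near $\bm 0$ the strict gap in $\rho$-constants gives room.'' But the strict pointwise inequality $U(x,s+z^*)\gg V(x,s)$ does \emph{not} by itself imply $\rho_U e^{\lambda_c z^*}>\rho_V$: it is entirely consistent with $\rho_U e^{\lambda_c z^*}=\rho_V$ and the difference being of lower order $o(|s|^\tau e^{\lambda_c s})$ as $s\to-\infty$. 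In that case, for any $\eta>0$ one has $\rho_U e^{\lambda_c(z^*-\eta)}<\rho_V$, so the leading asymptotics force $U(x,s+z^*-\eta)<V(x,s)$ for $s$ sufficiently negative, and the slide fails on the left end.

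The paper circumvents this by not attempting to prove $U(\cdot,\cdot+z_*)\equiv V$ directly at the infimum. Instead it first notes $\rho_1 e^{\lambda_c z_*}\ge\rho_2$ (immediate from the asymptotics and the ordering), and then rules out strict inequality: assuming $\rho_1 e^{\lambda_c z_*}>\rho_2$, the strong maximum principle gives $U(\cdot,\cdot+z_*)\gg V$, and \emph{now} the strict gap in the $\rho$'s is genuinely available to push the left-end comparison through for $z_*-\eta$ (together with Lemma~\ref{1-CP} on the right and monotonicity on the middle band), contradicting minimality of $z_*$. Hence $\rho_1 e^{\lambda_c z_*}=\rho_2$. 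The symmetric argument with $U,V$ swapped yields $\rho_1 e^{\lambda_c z^*}=\rho_2$ for the analogous supremum $z^*$, forcing $z^*=z_*=:z_0$ and then $U(\cdot,\cdot+z_0)\equiv V$ from the two-sided sandwich. So your symmetric step is not a redundant consistency check but the essential mechanism that closes the argument. Two minor points: Lemmas~\ref{CP-+}, \ref{sub-star}, \ref{super*}, and \ref{UcpV} are not invoked in the uniqueness proof itself---they feed into Theorem~\ref{Asy}, whose statement (with the unified $|s|^\tau$ notation covering both the super-critical and critical cases) is all that is needed here; and the paper does not use Lemma~\ref{CP-+} in Step~1 but simply reads the ordering off the asymptotics directly.
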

\begin{proof}
In view of Theorem \ref{Asy}, there exist $\rho_i>0$, $i=1,2$ such that
\begin{equation}\label{rho-12}
\begin{aligned}
\mathop{\lim}\limits_{s\to-\infty}\frac{U(x,s)}
{\rho_1|s|^\tau e^{\lambda_c s}\bm{\Phi}_{\lambda_c}(x)}=\bm 1
\quad\text{and}\quad
\mathop{\lim}\limits_{s\to-\infty}\frac{V(x,s)}
{\rho_2|s|^\tau e^{\lambda_c s}\bm{\Phi}_{\lambda_c}(x)}=\bm 1
\quad\text{uniformly\ in}\ x\in\mathbb R^N,
\end{aligned}
\end{equation}
where $\tau=0$ if $c>c_*$ and $\tau=1$ if $c=c_*$.
Next we divide the proof into three steps.

{\bf Step 1}. We prove that there exists $\bar{z}\in\mathbb R$ such that
$$U(x,s+\bar{z})\ge V(x,s),\quad\forall\;(x,s)\in\mathbb R^N\times\mathbb R.$$
Let $z_0\in\mathbb R$ be such that $\rho_1e^{\lambda_cz_0}>\rho_2$.
By \eqref{rho-12}, there exists $M>0$ such that
$$U(x,s+z_0)\ge V(x,s),\quad\forall\; (x,s)\in\mathbb R^N\times(-\infty,-M],$$
and that
$$|U(x,s+z_0)-\bm 1|+|V(x,s)-\bm 1|\le\varrho^*,
\quad\forall\;(x,s)\in\mathbb R^N\times[M,+\infty),$$
where $\varrho^*>0$ is given in Lemma \ref{1-CP}.
By the boundedness of $V(x,s)$ in $\mathbb R^N\times[-2M,2M]$, and note that
$\mathop{\lim}\limits_{s\to+\infty}U(x,s)=\bm 1$ uniformly in $x\in\mathbb R^N$,
there exits $\bar{z}\ge z_0$ such that
$$U(x,s+\bar{z})\ge V(x,s),\quad\forall\;(x,s)\in\mathbb R^N\times[-2M,2M],$$
and hence
\begin{equation}\label{2M}
U(x,s+\bar{z})\ge V(x,s),\quad\forall\;(x,s)\in\mathbb R^N\times[-\infty,2M].
\end{equation}
Lemma \ref{1-CP} applied to
\begin{align*}
\underline{\bm u}(t,x)&:=\bm v(t,x)=V(x,ct-x\cdot e),\\
\overline{\bm u}(t,x)&:=\bm u\left(t+\frac{\bar{z}}{c},x)\right)=U(x,ct-x\cdot e+\bar{z})
\end{align*}
and $s^*=M$ shows that
$U(x,s+\bar{z})\ge V(x,s)$ for all $(x,s)\in\mathbb R^N\times[M,\infty)$,
which together with \eqref{2M} yields that
$$U(x,s+\bar{z})\ge V(x,s),\quad\forall\;(x,s)\in\mathbb R^N\times\mathbb R.$$

{\bf Step 2}. Let
$$z_*=\inf\left\{z\in\mathbb R\ | \ U(x,s+z)\ge V(x,s),
\ \forall\;(x,s)\in\mathbb R^N\times\mathbb R\right\}.$$
Observe that $-\infty<z_*\le\bar{z}$, and it follows from \eqref{rho-12} that $\rho_1e^{\lambda_cz_*}\ge\rho_2$
(otherwise, there exist $i_0$ and $(\tilde{x},\tilde{s})$ such that $U_{i_0}(\tilde{x},\tilde{s}+z_*)<V_{i_0}(\tilde{x},\tilde{s})$,
which contradicts the definition of $z_*$).
Assume that $\rho_1e^{\lambda_cz_*}>\rho_2$. Define
$$\bm w(t,x)=\bm u\left(t+\frac{z_*}{c},x\right)-\bm v(t,x)=U(x,ct-x\cdot e+z_*)-V(x,ct-x\cdot e),$$
then $\bm w\ge\bm 0$, and for each $i\in I$, we have
\begin{align*}
\frac{\partial w_i(t,x)}{\partial t}-d_i(x)\Delta w_i-q_i(x)\cdot\nabla w_i
&=f_i\left(x,\bm u\left(t+\frac{z_*}{c},x\right)\right)-f_i(x,\bm v(t,x))\\
&\ge\left(\int_0^1{\frac{\partial f_i}{\partial u_i}(x,s\bm u+(1-s)\bm v)}ds\right)w_i
\end{align*}
by (H3). If there exist $i_0$ and $(\hat{x},\hat{s})\in\mathbb R^N\times\mathbb R$ such that
$w_{i_0}(\hat{t},\hat{x})=0$, where $\hat{t}:=\frac{\hat{s}+\hat{x}\cdot e}{c}$,
then it follows from the maximum principle that
$w_{i_0}(t,x)\equiv0$ for any $(t,x)\in\{\mathbb R\times\mathbb R^N: t\le\hat{t}\}$.
Noting that $w_{i_0}(t,x)=w_{i_0}\left(t+\frac{p\cdot e}{c},x+p\right)$
in $\mathbb R\times\mathbb R^N$ for all $p\in\mathcal L$,
then $w_{i_0}(t,x)\equiv0$ for any $\mathbb R\times\mathbb R^N$, that is,
$U_{i_0}(x,s+z_*)\equiv V_{i_0}(x,s)$ for any $(x,s)\in\mathbb R^N\times\mathbb R$,
which contradicts $\rho_1e^{\lambda_cz_*}>\rho_2$ in terms of \eqref{rho-12}.
Therefore $\bm w(t,x)\gg\bm 0$ for all $(t,x)\in\mathbb R\times\mathbb R^N$,
that is,
\begin{equation}\label{z*}
U(x,s+z_*)\gg V(x,s),\quad\forall\;(x,s)\in\mathbb R^N\times\mathbb R.
\end{equation}
On the other hand, one gets from $\rho_1e^{\lambda_cz_*}>\rho_2$ that
$\rho_1e^{\lambda_c(z_*-l)}>\rho_2$ for any $l\in\left(0,z_*-\frac{1}{\lambda_c}\ln{\frac{\rho_2}{\rho_1}}\right)$.
Now fix $l_0\in\left(0,z_*-\frac{1}{\lambda_c}\ln{\frac{\rho_2}{\rho_1}}\right)$, and let $\theta\in\left(\frac{\rho_2}{\rho_1e^{\lambda_c(z_*-l_0)}},1\right)$.
Since $\mathop{\lim}\limits_{s\to-\infty}\frac{|s+z_*-l_0|}{|s|}=1$, there exists
$K_\theta>0$ such that $\frac{|s+z_*-l_0|}{|s|}\ge\theta$ for any $s\le-K_\theta$.
Let
$$0<\epsilon<\frac{\theta\rho_1e^{\lambda_c(z_*-l_0)}-\rho_2}{2\rho_1e^{\lambda_c(z_*-l_0)}+\rho_2}.$$
In view of \eqref{rho-12}, there exits $K_\epsilon>0$ such that
$$\left|\frac{U(x,s+z_*-l_0)}
{\rho_1e^{\lambda_c(z_*-l_0)}|s+z_*-l_0|^\tau e^{\lambda_c s}
{\bm\Phi}_{\lambda_c}(x)}-\bm 1\right|\le\epsilon\ \quad \text{and}\ \quad
\left|\frac{V(x,s)}{\rho_2|s|^\tau e^{\lambda_cs}
{\bm\Phi}_{\lambda_c}(x)}-\bm 1\right|\le\epsilon$$
for any $(x,s)\in\mathbb R^N\times(-\infty,-K_\epsilon]$.
Therefore $U(x,s+z_*-l_0)\ge V(x,s)$ for any
$(x,s)\in\mathbb R^N\times(-\infty,-K_\epsilon-K_\theta]$.
Furthermore, for any $l\in(0,l_0]$, we have
$$U(x,s+z_*-l)\ge V(x,s),\quad\forall\;(x,s)\in\mathbb R^N\times(-\infty,-K_\epsilon-K_\theta].$$
Let $M\ge K_\epsilon+K_\theta$. By \eqref{z*},
there exists $0<l_M\le l_0$ such that for any $0<l\le l_M$,
$$U(x,s+z_*-l)\ge V(x,s),\quad\forall\;(x,s)\in\mathbb R^N\times[-M,M].$$
Now let $M>0$ be large enough such that
$$|U(x,s+z_*-l_0)-\bm 1|+|V(x,s)-\bm 1|\le\varrho^*,
\quad\forall\;(x,s)\in\mathbb R^N\times[M,+\infty).$$
Observe that
\begin{equation}\label{l-M}
U(x,s+z_*-l_M)\ge V(x,s),\quad\forall\;(x,s)\in\mathbb R^N\times(-\infty,M].
\end{equation}
Lemma \ref{1-CP} then applied to
\begin{align*}
\underline{\bm u}(t,x)&=\bm v(t,x)=V(x,ct-x\cdot e),\\
\overline{\bm u}(t,x)&=\bm u\left(t+\frac{z_*-l_M}{c},x\right)=U(x,ct-x\cdot e+z_*-l_M)
\end{align*}
and $s^*=M$, together with \eqref{l-M}, yields that
$$U(x,s+z_*-l_M)\ge V(x,s),\quad\forall\;(x,s)\in\mathbb R^N\times\mathbb R,$$
which contradicts the definition of $z_*$. Therefore, $\rho_1e^{\lambda_cz_*}=\rho_2$.

{\bf Step 3}. Define
$$z^*=\sup\left\{z\in\mathbb R\ |\ U(x,s+z)\le V(x,s),
\ \forall\;(x,s)\in\mathbb R^N\times\mathbb R\right\}.$$
Similar to Step 2, one can prove that $z^*$ is bounded,
and that $\rho_1e^{\lambda_cz^*}\le\rho_2$. Noting that
$$-z^*=\inf\left\{-z\in\mathbb R\ |\ V(x,s-z)\ge U(x,s),
\ \forall\;(x,s)\in\mathbb R^N\times\mathbb R\right\}.$$
By changing the roles of $U$ and $V$, and following similar arguments as in Step 2,
we conclude that $\rho_2e^{-\lambda_cz^*}=\rho_1$, that is,
$\rho_1e^{\lambda_cz^*}=\rho_2$. Therefore, $z^*=z_*:=z_0$, and consequently,
$$U(x,s+z_0)=V(x,s), \quad\forall\;(x,s)\in\mathbb R^N\times\mathbb R,$$
which is equivalent to
$$\bm u(t+\sigma,x)=\bm v(t,x),\quad\forall\;(t,x)\in\mathbb R\times\mathbb R^N.$$
The proof of Theorem \ref{uniqueness} is then complete.
\end{proof}


\section{ Stability of pulsating traveling fronts }\label{STA}

This section is devoted to the study of asymptotic stability of pulsating traveling fronts for solutions of the Cauchy problem
\begin{equation}\label{initial-P}
\begin{cases}
\frac{\partial\bm{u}(t,x)}{\partial t}=D(x)\Delta\bm{u}+q(x)\cdot\nabla\bm{u}
+\bm{F}(x,\bm{u}),\quad t>0,\ \  x\in\mathbb R^N,\\
\bm u(0,x)=\bm{u}_0(x),\quad x\in\mathbb R^N,
\end{cases}
\end{equation}
where $\bm u_0$ is a uniformly continuous function
from $\mathbb R^N$ to $\mathbb R^m$, and $\bm 0<\bm u_0<\bm 1$.
We shall use
$$\bm u(t,x;\bm u_0)=(u_1(t,x;\bm u_0),u_2(t,x;\bm u_0),\cdots,u_m(t,x;\bm u_0))\textcolor{blue}{^T}$$
to denote the classical solution of \eqref{initial-P} with initial data
$\bm u(0,\cdot;\bm u_0)=\bm u_0$.
Observe that $\bm 0\le\bm u(t,x;\bm u_0)\le\bm 1$ for any
$(t,x)\in(0,\infty)\times\mathbb R^N$ by the maximum principle.
We first state a comparison principle as follows.

\begin{lemma}\label{CP}
Let $D=\{(t,x)\in\mathbb R\times\mathbb R^N: t>t_0,\ ct-x\cdot e<s_0\}$, where
$t_0\ge0$ and $s_0\in\mathbb R$.
Assume that $\underline{\bm u}$,
$\overline{\bm u}\in C_b^{1+\theta/2,2+\theta}(D)\cap C_b(\overline{D})$
are sub-and supersolutions of \eqref{initial-P} in $D$, respectively,
and $\underline{\bm u}\le\bm 1$ and $\overline{\bm u}\ge\bm 0$ for all $(t,x)\in\overline{D}$.
If $\underline{\bm u}(t,x)\le\bm u(t,x;\bm u_0)\le\overline{\bm u}(t,x)$ for
all $(t,x)\in\partial D:=\{\mathbb R\times\mathbb R^N: t=t_0,\ ct-x\cdot e<s_0\}
\cup\{\mathbb R\times\mathbb R^N: t>t_0,\ ct-x\cdot e=s_0\}$, then
\begin{equation*}
\underline{\bm u}(t,x)\le\bm u(t,x;\bm u_0)\le\overline{\bm u}(t,x),
\quad\forall\;(t,x)\in\overline{D}.
\end{equation*}
\end{lemma}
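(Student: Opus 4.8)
The plan is to prove the statement by a barrier/sliding argument on the unbounded parabolic domain $D$, reducing to the standard maximum principle for cooperative parabolic systems on bounded cylinders by a limiting procedure. First I would only prove the upper inequality $\bm u(t,x;\bm u_0)\le\overline{\bm u}(t,x)$ in $\overline D$, since the lower one $\underline{\bm u}\le\bm u(t,x;\bm u_0)$ is entirely symmetric (replace $\overline{\bm u}$ by $\bm u(\cdot,\cdot;\bm u_0)$ as supersolution and $\underline{\bm u}$ as subsolution). Set $\bm w(t,x):=\overline{\bm u}(t,x)-\bm u(t,x;\bm u_0)$. Using that $\overline{\bm u}$ is a supersolution and $\bm u(\cdot,\cdot;\bm u_0)$ solves \eqref{initial-P}, together with the $C^2$ smoothness of $\bm F$ in $\bm u$ and the mean-value theorem componentwise, $\bm w$ satisfies a linear differential inequality
\begin{equation*}
\frac{\partial w_i}{\partial t}-d_i(x)\Delta w_i-q_i(x)\cdot\nabla w_i\ge\sum_{j=1}^m c_{ij}(t,x)w_j,\qquad i\in I,
\end{equation*}
on $D$, where the matrix $C(t,x)=(c_{ij}(t,x))$ has off-diagonal entries $c_{ij}(t,x)=\int_0^1\frac{\partial f_i}{\partial u_j}\big(x,\tau\overline{\bm u}+(1-\tau)\bm u\big)\,d\tau\ge 0$ for $i\ne j$ by (H3), and all $c_{ij}$ are bounded on $D$ because $\overline{\bm u},\bm u(\cdot,\cdot;\bm u_0)$ stay in a compact subset of $\mathbb R^m$ (indeed $\bm 0\le\bm u(\cdot,\cdot;\bm u_0)\le\bm 1$ and $\overline{\bm u}$ is bounded). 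Thus $C$ is a bounded cooperative (Metzler) matrix field.

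The core is then a maximum principle for such systems on the unbounded set $D$, with the sign information $\bm w\ge\bm 0$ on $\partial D$ and $\bm w$ bounded on $\overline D$. I would first absorb the off-diagonal coupling by the classical substitution: since $C$ is cooperative and bounded, there is a constant $\Lambda>0$ with $\Lambda\,\mathrm{Id}+C(t,x)\ge 0$ entrywise (all entries nonnegative), and setting $\bm v:=e^{-\Lambda t}\bm w$ one gets $\frac{\partial v_i}{\partial t}-d_i\Delta v_i-q_i\cdot\nabla v_i\ge\sum_j(\Lambda\delta_{ij}+c_{ij})v_j-\text{(no, careful)}$—rather, more cleanly, work directly: suppose for contradiction $\inf_{\overline D}\min_i v_i<0$. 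Since $\bm v$ is bounded, pick $m_0:=-\inf_{\overline D}\min_i v_i>0$ and, for $\epsilon>0$ small, look at $\bm v+\epsilon\,\bm\zeta$ where $\bm\zeta(t,x)=e^{\mu t}\bm 1$ with $\mu$ chosen larger than $\Lambda+\max_{i,j}\|c_{ij}\|_\infty\cdot m$ so that $\bm\zeta$ is a strict supersolution of the (shifted) linear system; combined with a spatial cutoff $e^{\beta|x|^2}$-type weight or, more simply, exploiting boundedness to localize. The point where I expect the real work: on an unbounded domain one cannot immediately conclude, so I would truncate to $D_R:=D\cap\{|x|<R\}$, handle the artificial lateral boundary $\{|x|=R\}$ by adding a small multiple of an auxiliary function that blows up there, apply the maximum principle of, e.g., \cite[Lemma 3.6]{Foldes2009} (or a textbook version for cooperative parabolic systems) on the bounded cylinder, and then let $R\to\infty$ and the auxiliary parameter $\to 0$. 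Because $\bm w$ is uniformly bounded on $\overline D$, the auxiliary terms can be made uniformly small, and the contradiction with $m_0>0$ survives the limit.

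The main obstacle is precisely the treatment of the unboundedness in the spatial variable together with the shape of the lateral parabolic boundary $\{ct-x\cdot e=s_0\}$: one must design an admissible auxiliary (barrier) function that is a supersolution of the linear cooperative system on the truncated region, dominates any fixed negative dip of $\bm w$ on the part of $\partial D_R$ introduced by truncation, and tends to $0$ as the truncation is removed. A convenient choice is a function of the single variable $\xi=ct-x\cdot e$ combined with $|x|^2$, since the coefficients $d_i,q_i$ are bounded (and $d_i\ge d_0>0$), which makes the verification of the supersolution inequality a routine but slightly delicate computation. Once this barrier is in hand, the argument is standard: strict positivity propagates, $\bm w\ge\bm 0$ in all of $\overline D$, and specializing gives both inequalities. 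I would close by remarking that this is exactly the comparison tool needed subsequently to trap $\bm u(t,x;\bm u_0)$ between shifted sub- and supersolutions built from the pulsating front.
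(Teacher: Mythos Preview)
Your approach---linearizing via the mean-value theorem to obtain a cooperative linear inequality for $\bm w=\overline{\bm u}-\bm u$, then applying a Phragm\'en--Lindel\"of-type maximum principle on the unbounded domain via truncation and auxiliary barriers---is the standard route and is exactly what the paper has in mind: its proof consists of a single reference to Proposition~4.1 of Zhao--Ruan \cite{zhao2011}, which carries out precisely this argument. The one detail to tighten is that (H3) only asserts cooperativity of $\bm F$ on $[\bm 0,\bm 1]$, so your claim $c_{ij}\ge 0$ for $i\ne j$ requires the segment $\tau\overline{\bm u}+(1-\tau)\bm u$ to remain in this box; this holds in all of the paper's applications (where the sub- and supersolutions are confined to a small neighborhood of $\bm 0$), but is not literally implied by the hypotheses of the lemma as stated, so you should either note this or extend $\bm F$ cooperatively outside $[\bm 0,\bm 1]$ in the usual way.
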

\begin{proof}
The proof is similar to that of \cite[Proposition 4.1]{zhao2011}, we omit the details here.
\end{proof}

In this section, the initial data $\bm 0<\bm u_0<\bm 1$ is assumed to be close to the pulsating traveling front at $t=0$ at both ends, in the sense that
\begin{equation}\label{u0v0}
\liminf_{\varsigma\to+\infty}\left\{\inf_{x\in\mathbb R^N,\ -x\cdot e\ge\varsigma}\bm u_0(x)\right\}
\ge(1-\varepsilon_0)\bm 1
\end{equation}
for some $\varepsilon_0\in(0,\frac{\hat{\delta}}{2\delta_M})$,
where $\hat{\delta}\in(0,\delta_m]$ is some constant, and
$$
\delta_m=\min_{k=1,2,\cdots,m}\left\{\min_{x\in\mathbb R^N}\frac{1}{\psi_k(x)}\right\},
\qquad
\delta_M=\max_{k=1,2,\cdots,m}\left\{\max_{x\in\mathbb R^N}\frac{1}{\psi_k(x)}\right\},$$
with $\bm{\Psi}=(\psi_1,\cdots,\psi_m)$ given in (H8).
Moreover, there exists $k>0$ such that
\begin{equation}\label{u0v0-behavior}
\limsup_{\varsigma\to-\infty}\left\{\sup_{\substack{x\in\mathbb R^N\\-x\cdot e\le\varsigma}}
\left|\frac{\bm u_0(x)}{k|x\cdot e|^\tau e^{-\lambda_c(x\cdot e)}
\bm{\Phi}_{\lambda_c}(x)}-\bm 1\right|\right\}=0,
\end{equation}
where $\tau=0$ if $c>c_+^0(e)$ and $\tau=1$ if $c=c_+^0(e)$.

Using a very similar argument as in \cite[Proposition A.4]{zhao2014}, we have the following result.

\begin{lemma}\label{asy}
Assume (H1)-(H8), and that there exists $k>0$ such that \eqref{u0v0-behavior} holds.
Let $\textcolor{red}{I}\textcolor{blue}{J}\subset[0,+\infty)$ be any compact subset, then there exists $s_0\in\mathbb R$ such that
\begin{align*}
&\limsup_{\varsigma\to-\infty}\left\{\sup_{\substack{x\in\mathbb R^N\\-x\cdot e\le\varsigma}}
\left|\frac{\bm u(t,x;\bm u_0)-U(x,ct-x\cdot e+s_0)}{U(x,ct-x\cdot e+s_0)}\right|\right\}=0
\quad\text{uniformly in}\ \ t\in \textcolor{red}{I}\textcolor{blue}{J}.
\end{align*}
\end{lemma}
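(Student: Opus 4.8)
The plan is to transport the initial asymptotics \eqref{u0v0-behavior} forward in time by trapping the solution between appropriate sub- and supersolutions of the linearized system, and then to use the normalizations by the traveling front profile $U$ together with the asymptotic behavior from Theorem \ref{Asy} to convert linearized estimates back into estimates relative to $U$. First I would fix the constant $k>0$ in \eqref{u0v0-behavior}, and for a small parameter $\varepsilon>0$ produce, from \eqref{u0v0-behavior}, a threshold $\varsigma_\varepsilon$ such that for $-x\cdot e\le\varsigma_\varepsilon$ the initial data satisfies
\begin{equation*}
k(1-\varepsilon)|x\cdot e|^\tau e^{-\lambda_c(x\cdot e)}\bm{\Phi}_{\lambda_c}(x)
\le\bm u_0(x)\le k(1+\varepsilon)|x\cdot e|^\tau e^{-\lambda_c(x\cdot e)}\bm{\Phi}_{\lambda_c}(x).
\end{equation*}
In the non-critical case $\tau=0$ these bounding functions are exactly the front-like linearized solutions $\bm w_c$ of \eqref{Linear-sol} (up to the constants $k(1\pm\varepsilon)$), and by (H7) they are super/subsolutions of \eqref{m-system} on their whole domain; in the critical case $\tau=1$ one must instead use the functions built in Lemma \ref{super*} (and its mirror image producing a linearized subsolution with the $|s|\bm\Phi_{\lambda_*}-\bm\Phi_{\lambda_*}^{(1)}$ profile), which are only valid on a half-space $\Omega_{s^0,*}^-$. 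Using Lemma \ref{CP} on the domain $\{t>0,\ ct-x\cdot e<s_0\}$, together with the fact that on the lateral boundary $ct-x\cdot e=s_0$ the solution $\bm u(t,x;\bm u_0)$ is bounded below away from $\bm 0$ and above by $\bm 1$ (so the comparison can be arranged by choosing $s_0$ suitably negative and absorbing constants into $k(1\pm\varepsilon)$), I would obtain, for every compact $J\subset[0,+\infty)$, a two-sided bound
\begin{equation*}
k_-(t)|ct-x\cdot e|^\tau e^{\lambda_c(ct-x\cdot e)}\bm{\Phi}_{\lambda_c}(x)
\le\bm u(t,x;\bm u_0)
\le k_+(t)|ct-x\cdot e|^\tau e^{\lambda_c(ct-x\cdot e)}\bm{\Phi}_{\lambda_c}(x)
\end{equation*}
valid for $ct-x\cdot e\le\varsigma'_\varepsilon$ and $t\in J$, with $k_\pm(t)\to k(1\pm\varepsilon)$–type constants depending continuously on $t\in J$, hence uniformly controlled on the compact set $J$.

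Next I would pass from this crude $\bm\Phi_{\lambda_c}$–bound to a sharp one by a linearization/normalization argument. Dividing the equation satisfied by $u_i(t,x;\bm u_0)$ by $|ct-x\cdot e|^\tau e^{\lambda_c(ct-x\cdot e)}\psi_i^c(x)$ (the periodic eigenfunctions used in the proofs of Lemma \ref{low-upp} and Lemma \ref{low*-upp*}), using that $\bm u(t,x;\bm u_0)\to\bm 0$ as $ct-x\cdot e\to-\infty$ uniformly on $t\in J$, and extracting locally convergent subsequences along sequences $ct_n-x_n\cdot e\to-\infty$, the limit profile solves a \emph{constant-in-the-nonlinearity} linear periodic-parabolic problem; as in Steps 2--3 of the proof of Theorem \ref{main-1} (and the analogous steps in Theorem \ref{main-2}), the Liouville-type uniqueness for that limiting problem forces the normalized ratio to converge to a definite constant, which by the $\varepsilon$-closeness of the initial data must lie in $[k(1-\varepsilon)\cdot(\text{ratio}),k(1+\varepsilon)\cdot(\text{ratio})]$; letting $\varepsilon\to0$ pins the constant. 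Concretely, one shows
\begin{equation*}
\lim_{\varsigma\to-\infty}\sup_{\substack{x\in\mathbb R^N\\ ct-x\cdot e\le\varsigma}}
\left|\frac{\bm u(t,x;\bm u_0)}{k|ct-x\cdot e|^\tau e^{\lambda_c(ct-x\cdot e)}\bm{\Phi}_{\lambda_c}(x)}-\bm 1\right|=0
\quad\text{uniformly in }t\in J.
\end{equation*}
Finally, by Theorem \ref{Asy} the front $U(x,ct-x\cdot e+s_0)$ satisfies $U(x,s+s_0)/\bigl(\rho e^{\lambda_c s_0}|s|^\tau e^{\lambda_c s}\bm\Phi_{\lambda_c}(x)\bigr)\to\bm 1$ as $s\to-\infty$ (using $|s+s_0|/|s|\to1$), so choosing $s_0$ by $\rho e^{\lambda_c s_0}=k$ makes the denominators asymptotically equal; dividing and subtracting then yields
\begin{equation*}
\limsup_{\varsigma\to-\infty}\left\{\sup_{\substack{x\in\mathbb R^N\\-x\cdot e\le\varsigma}}
\left|\frac{\bm u(t,x;\bm u_0)-U(x,ct-x\cdot e+s_0)}{U(x,ct-x\cdot e+s_0)}\right|\right\}=0
\quad\text{uniformly in }t\in J,
\end{equation*}
since for $-x\cdot e$ large and $t$ in the compact set $J$ the quantity $ct-x\cdot e$ is also large negative.

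The step I expect to be the main obstacle is obtaining the sharp constant $k$ (equivalently the sharp shift $s_0$) \emph{uniformly} over the compact time set $J$: the subsolution machinery in the critical case ($\tau=1$) is delicate because the valid domain $\Omega_{s^0,*}^-$ of the linearized sub/supersolutions of Lemma \ref{super*} depends on the auxiliary parameter $n$, and one must carefully track the lateral boundary contribution in Lemma \ref{CP} so that the comparison constants $k_\pm(t)$ do not degenerate as $t$ ranges over $J$. Controlling this requires, as in \cite[Proposition A.4]{zhao2014}, combining the exponential/algebraic decay in $ct-x\cdot e$ with parabolic interior estimates uniform in $t$, and using the Harnack inequality of Lemma \ref{Harnack} to pass from pointwise information along subsequences $(t_n,x_n)$ to uniformity in $x$; the interaction between the multiple components enters through the triangular coupling in \eqref{u1um-linear-PE}, handled by the same induction on $i$ as in Lemmas \ref{low-upp} and \ref{low*-upp*}.
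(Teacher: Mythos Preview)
The paper's own proof here is essentially a one-line pointer: it uses Theorem~\ref{Asy} to pick $s_0$ so that $U(x,-x\cdot e+s_0)$ has the same leading asymptotics $k|x\cdot e|^\tau e^{-\lambda_c(x\cdot e)}\bm\Phi_{\lambda_c}(x)$ as $\bm u_0$, and then says ``the remaining of the proof is similar to that of \cite[Proposition~A.4]{zhao2014}''. Your outline is a plausible reconstruction of what that referenced argument does, and the overall architecture---sandwich the Cauchy solution between linearized barriers in the far tail, propagate on a compact time set, then divide by the front---is correct.

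There is, however, one concrete inaccuracy that you should fix. In the non-critical case you write that the bounding functions $k(1\pm\varepsilon)\bm w_c$ ``by (H7) are super/subsolutions of \eqref{m-system} on their whole domain''. Assumption (H7) gives only the one-sided inequality $h_i(x,\bm w_c)\le h_i(x,\bm 0)$, i.e.\ $\bm F(x,\bm w_c)\le D_{\bm u}\bm F(x,\bm 0)\bm w_c$, so $\bm w_c$ is a \emph{super}solution (Lemma~\ref{super}); it is \emph{not} a subsolution of the nonlinear system, and the lower barrier $k(1-\varepsilon)\bm w_c$ cannot be used directly. The paper's machinery already supplies the fix: the subsolutions of Lemma~\ref{sub} (and Lemma~\ref{sub-star} in the critical case) are precisely the corrected profiles $e^{\lambda_c s}(\phi_i^c-n_0e^{\epsilon s}\phi_i^{\epsilon})$, which (i) are genuine subsolutions on $\Omega_{s_0}^-$ and (ii) satisfy $\underline U(x,s_0)\le\bm 0$, so the lateral-boundary condition in Lemma~\ref{CP} is free. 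With this correction in place, the comparison yields a two-sided bound whose leading part is still $k(1\pm\varepsilon)|s|^\tau e^{\lambda_c s}\bm\Phi_{\lambda_c}$ (the $e^{(\lambda_c+\epsilon)s}$ correction is lower order), and the rest of your argument---the Liouville/normalization step modeled on Steps~2--3 of Theorems~\ref{main-1} and~\ref{main-2}, then matching $\rho e^{\lambda_c s_0}=k$---goes through.
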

\begin{proof}
In view of Theorem \ref{Asy}, there exists $s_0\in\mathbb R$ such that
\begin{align*}
&\limsup_{\varsigma\to-\infty}\left\{\sup_{\substack{x\in\mathbb R^N\\-x\cdot e\le\varsigma}}
\left|\frac{U(x,-x\cdot e+s_0)}
{k|x\cdot e|^\tau e^{-\lambda_c(x\cdot e)}\bm{\Phi}_{\lambda_c}(x)}-\bm 1\right|
\right\}=0,
\end{align*}
where $s_0$ is uniquely determined by $k$. The remaining of the proof is similar to that of
\cite[Proposition A.4]{zhao2014}, we omit the details here.
\end{proof}

In the following, we study the global stability properties of
pulsating traveling fronts, in the case $c>c_+^0(e)$ and $c=c_+^0(e)$, respectively.

\subsection{The super-critical case $c>c_+^0(e)$}

In this subsection, we consider the super-critical case $c>c_+^0(e)$.
Let $0<\lambda_c<\lambda_+^0$ be such that $\kappa_1(\lambda_c)=c\lambda_c$, and
$0<\epsilon<\min\left\{\frac{\lambda_+^0-\lambda_c}{2},\frac{\lambda_c}{2}\right\}$.
It is easy to see that
$$\sigma_\epsilon:=\kappa_1(\lambda_c+\epsilon)-c(\lambda_c+\epsilon)<0,$$
and there exists $\epsilon_0>0$ such that $|\sigma_\epsilon|\le|\mu^-|$
for any $0<\epsilon\le\epsilon_0$, where
$\mu^-<0$ is the principal eigenvalue associated with positive periodic eigenfunction
$\bm{\Psi}=(\psi_1,\cdots,\psi_m)$ given in (H8).

Let
\begin{equation*}
0<\epsilon<\min\left\{\frac{\lambda_+^0-\lambda_c}{2}, \frac{\lambda_c}{2}, \epsilon_0\right\},
\qquad \beta=\frac{|\sigma_\epsilon|}{2},
\end{equation*}
and $\chi(s)$ be a smooth function such that
\begin{equation}\label{chi-s}
\begin{cases}
\chi(s)=0,\quad \forall s\ge\bar{s},\\
\chi(s)=1,\quad \forall s\le\underline{s},\\
|\chi^\prime|+|\chi^{\prime\prime}|\le1,\\
\chi^\prime\le 0,
\end{cases}
\end{equation}
where $\underline{s}<\bar{s}$ are certain constants. Define
\begin{align*}
\bm\xi(x,s)&=\chi(s)e^{(\lambda_c+\epsilon)s}\bm{\Phi}_{\lambda_c+\epsilon}(x)
+(1-\chi(s))\bm{\Psi}(x),
\end{align*}
where $\bm{\Phi}_{\lambda_c+\epsilon}
=\left(\phi_1^{\epsilon},\phi_2^{\epsilon},\cdots,\phi_m^{\epsilon}\right)\textcolor{blue}{^{\top}}$
is the positive periodic eigenfunction of \eqref{u1um-linear-PE} with
$\lambda=\lambda_c+\epsilon$ associated with principal eigenvalue $\kappa_1(\lambda_c+\epsilon)$.

\begin{lemma}\label{z-0}
Assume (H1)-(H8). Let $U(x,ct-x\cdot e)$ be a pulsating traveling front of \eqref{m-system}
with $c>c_+^0$, then there exists $z_0\in\mathbb R$ such that
\begin{equation}\label{pre}
\begin{aligned}
\sup_{(x,s)\in\mathbb R^N\times\mathbb R}
\frac{U(x,s)-\delta\bm\xi(x,s+z_0)-\bm 1}{\bm{\Psi}(x)}&\le-\frac{\delta}{2}\bm 1,
\end{aligned}
\quad\forall\;\delta\in(0,\delta_m].
\end{equation}
\end{lemma}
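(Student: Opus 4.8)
The plan is to use two different mechanisms on the two halves of the $s$-line, glued by the cutoff $\chi$. On the right, where $\chi$ vanishes, $\bm\xi$ reduces to $\bm\Psi$ and the crude bound $U\le\bm 1$ suffices; on the left, where $\delta\bm\xi$ may be tiny, the point is that a large rightward shift $z_0$ pushes that region to values of $s$ so negative that $U(x,s)$ is itself negligible. Since $U$ is a pulsating front, $\limsup_{s\to-\infty}\sup_{x}|U(x,s)|=0$ by Definition \ref{def}, and as $|\cdot|$ is the $\ell^1$ norm on $\mathbb R^m$ I can fix $S\in\mathbb R$ with $U(x,s)\le\tfrac{1}{2}\bm 1$ componentwise for all $x\in\mathbb R^N$ and all $s\le S$. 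I then set $z_0:=\bar s-S$ (any larger value works as well); note that $z_0$ is chosen once and for all, independently of $\delta$.

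With this $z_0$ the plan is to verify the pointwise estimate
\[
U(x,s)-\delta\bm\xi(x,s+z_0)-\bm 1\le-\tfrac{\delta}{2}\bm\Psi(x),\qquad\forall\,(x,s)\in\mathbb R^N\times\mathbb R,\quad\forall\,\delta\in(0,\delta_m],
\]
after which dividing componentwise by $\bm\Psi(x)>\bm 0$ and passing to the supremum over $(x,s)$ yields \eqref{pre}. I would split according to the sign of $s+z_0-\bar s$. If $s+z_0\ge\bar s$, then $\chi(s+z_0)=0$ by \eqref{chi-s}, so $\bm\xi(x,s+z_0)=\bm\Psi(x)$, whence $U(x,s)-\delta\bm\Psi(x)-\bm 1\le-\delta\bm\Psi(x)\le-\tfrac{\delta}{2}\bm\Psi(x)$ using $U\le\bm 1$. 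If $s+z_0<\bar s$, then $s<\bar s-z_0=S$, hence $U(x,s)\le\tfrac{1}{2}\bm 1$; moreover $\bm\xi(x,s+z_0)\ge\bm 0$ since $\chi\in[0,1]$ and $\bm{\Phi}_{\lambda_c+\epsilon},\bm\Psi>\bm 0$, and for $\delta\le\delta_m=(\max_{k,x}\psi_k(x))^{-1}$ one has $\tfrac{\delta}{2}\psi_k(x)\le\tfrac{1}{2}$ for every $k$, so that $U(x,s)-\delta\bm\xi(x,s+z_0)-\bm 1+\tfrac{\delta}{2}\bm\Psi(x)\le\tfrac{1}{2}\bm 1-\bm 0-\bm 1+\tfrac{1}{2}\bm 1=\bm 0$.

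There is no serious obstacle here; the two delicate points are that the shift $z_0$ must be chosen independently of $\delta$, and that the left-region estimate be uniform over all $\delta\in(0,\delta_m]$, which is exactly why the threshold $\delta_m$ equals the reciprocal of $\max_{k,x}\psi_k(x)$: it forces $\delta\bm\Psi\le\bm 1$, hence $\tfrac{\delta}{2}\bm\Psi\le\tfrac{1}{2}\bm 1$. Note that only the limit $U\to\bm 0$ at $-\infty$ and the trivial bound $U\le\bm 1$ enter the argument; the sharp exponential decay of Theorem \ref{Asy}(i) is not needed for this lemma, though it is indispensable for the companion construction of the subsolution perturbation used in the rest of the stability proof.
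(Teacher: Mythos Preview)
Your proof is correct and takes a more direct route than the paper. The paper argues by contradiction: it first shows that
\[
\limsup_{z\to+\infty}\left\{\sup_{(x,s)\in\mathbb R^N\times\mathbb R,\ \delta\in(0,\delta_m]}
\frac{U(x,s)-\delta\bm\xi(x,s+z)-\bm 1}{\delta\bm{\Psi}(x)}\right\}\le-\bm 1,
\]
by extracting sequences $(x_n,s_n,\delta_n,z_n)$ that would violate it and splitting into the subcases $s_n+z_n\to+\infty$ versus $\{s_n+z_n\}$ bounded above, reaching a contradiction in each; only then is $z_0$ chosen large enough. You instead construct $z_0=\bar s-S$ explicitly and verify the inequality pointwise by a clean dichotomy on whether $s+z_0\ge\bar s$ (where $\bm\xi=\bm\Psi$ and $U\le\bm 1$ suffices) or $s+z_0<\bar s$ (where $U(x,s)\le\tfrac12\bm 1$ and $\delta\bm\Psi\le\bm 1$ from $\delta\le\delta_m$). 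Both proofs rest on exactly the same two ingredients---the global bound $U\le\bm 1$ and the uniform limit $U\to\bm 0$ at $-\infty$---but yours is constructive and avoids the sequence-extraction machinery. The paper's compactness style matches arguments used elsewhere in Section~\ref{STA}, while your version has the virtue of producing an explicit shift and making transparent why the threshold $\delta_m$ is precisely what is needed.
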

\begin{proof}
We first prove that
\begin{equation}\label{U-1}
\limsup_{z\to+\infty}\left\{\sup_{(x,s)\in\mathbb R^N\times\mathbb R,\ \delta\in(0,\delta_m]}
\frac{U(x,s)-\delta\bm\xi(x,s+z)-\bm 1}{\delta\bm{\Psi}(x)}\right\}\le-\bm 1.
\end{equation}
If \eqref{U-1} is not true, then there exist $\{(x_n,s_n)\}_{n\in\mathbb N}$,
$\{\delta_n\}_{n\in\mathbb N}$ and $\{z_n\}_{n\in\mathbb N}$ such that
$$z_n\to+\infty\ (n\to\infty),\quad\delta_n\in(0,\delta_m],\qquad \frac{U_i(x_n,s_n)-\delta_n\xi_i(x_n,s_n+z_n)-1}{\delta_n\psi_i(x_n)}\ge -1+\tau$$
for some $i\in I$ and $\tau\in(0,1)$.
Observe that $U_i$, $\xi_i$ and $\psi_i$ are periodic in $x$,
one may assume without loss of generality that $x_n\in\overline{\mathcal D}$,
and hence $x_n\to x_*\in\overline{\mathcal D}$ as $n\to\infty$ up to a subsequence.
Since $z_n\to+\infty$ as $n\to\infty$, we have either $s_n+z_n\to\infty$ as $n\to\infty$
or $\{s_n+z_n\}_{n\in\mathbb N}$ is bounded from above.
If $s_n+z_n\to\infty$ as $n\to\infty$, then by the definition of $\xi_i$, we have
$$-1=\lim_{n\to\infty}\frac{-\delta_n\xi_i(x_n,s_n+z_n)}{\delta_n\psi_i(x_n)}
\ge\lim_{n\to\infty}\frac{U_i(x_n,s_n)-\delta_n\xi_i(x_n,s_n+z_n)-1}{\delta_n\psi_i(x_n)}
\ge -1+\tau,$$
which is a contradiction.
Therefore $\{s_n+z_n\}_{n\in\mathbb{N}}$ is bounded from above, and thus $s_n\to-\infty$ as $n\to\infty$.
Noting that $\mathop{\lim}\limits_{s\to-\infty}U_i(x,s)=0$ uniformly in $x\in\mathbb R$ and $\xi_i\ge0$, then
$$-\frac{1}{\psi_i(x_*)}=\lim_{n\to\infty}\frac{U_i(x_n,s_n)-1}{\psi_i(x_n)}
\ge\lim_{n\to\infty}(-1+\tau)\delta_n\ge(-1+\tau)\delta_m,$$
which contradicts the definition of $\delta_m$.
Hence \eqref{U-1} holds, and it follows from \eqref{U-1} that
there exists $z_0\in\mathbb R$ such that \eqref{pre} holds.
The proof is complete.
\end{proof}

\begin{lemma}\label{ss-c}
Assume (H1)-(H8).
Let $\bm u(t,x)=U(x,ct-x\cdot e)$ be a pulsating traveling front of \eqref{m-system}
with $c>c_+^0$.
Then there exists $\delta_c\in(0,\delta_m]$ such that for any $s_0\in\mathbb R$, $\delta\in(0,\delta_c]$ and $\sigma\ge\frac{1}{\beta}$,
the functions $\bm u^\pm(t,x)$ defined by
\begin{equation*}
\begin{aligned}
\bm u^\pm(t,x)&=U(x,ct-x\cdot e+s_0\pm\sigma(1-e^{-\beta t}))
\pm\delta\bm\xi(x,ct-x\cdot e+s_0+z_0\pm\sigma(1-e^{-\beta t}))e^{-\beta t}
\end{aligned}
\end{equation*}
are super- and subsolutions of \eqref{m-system} in $(0,\infty)\times\mathbb R^N$,
respectively, where $z_0$ is given by Lemma \ref{z-0}.
\end{lemma}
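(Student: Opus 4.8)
The plan is to verify the differential inequality directly, treating $\bm u^+$ (the candidate supersolution); the argument for $\bm u^-$ is the mirror image, with inequalities and $\inf$/$\sup$ reversed, and will yield the same threshold $\delta_c$. Write $\hat s:=ct-x\cdot e+s_0+\sigma(1-e^{-\beta t})$ and $\check s:=\hat s+z_0$, so that $u_i^+=U_i(x,\hat s)+\delta\xi_i(x,\check s)e^{-\beta t}$ with $\partial_t\hat s=\partial_t\check s=c+\sigma\beta e^{-\beta t}$ and $\nabla\hat s=\nabla\check s=-e$. First I would compute, for each $i\in I$, the quantity
$$\mathcal N_i:=\frac{\partial u_i^+}{\partial t}-d_i(x)\Delta u_i^+-q_i(x)\cdot\nabla u_i^+-f_i(x,\bm u^+),$$
using three ingredients: that $U(x,ct-x\cdot e)$ solves \eqref{m-system}; the periodic eigenfunction identities \eqref{u1um-linear-PE} for $\bm\Phi_{\lambda_c+\epsilon}$ at $\lambda=\lambda_c+\epsilon$, so that the cooperative coupling terms $\sum_{j<i}a_{ij}\phi_j^{\epsilon}$ are absorbed and leave behind the scalar factor $c(\lambda_c+\epsilon)-\kappa_1(\lambda_c+\epsilon)=\sigma_\epsilon=-2\beta$; and the eigenvalue problem (H8) for $\bm\Psi$, which similarly leaves the factor $\mu^-$. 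After a first-order Taylor expansion of $\bm F$ between $\bm u$ and $\bm u^+$, this should reduce $\mathcal N_i$ to the schematic form
$$\mathcal N_i=\delta e^{-\beta t}\Bigl\{\tfrac{\sigma\beta}{\delta}\tfrac{\partial U_i}{\partial s}(x,\hat s)+\chi(\check s)e^{(\lambda_c+\epsilon)\check s}\bigl[\beta\phi_i^{\epsilon}-E_i^0\bigr]+\bigl(1-\chi(\check s)\bigr)\bigl[-(\mu^-+\beta)\psi_i-E_i^1\bigr]+R(x,\check s)\Bigr\},$$
where $E_i^0$ is controlled by $\Gamma_0^i:=|h_i(x,\bm 0)-h_i(x,\bm u^+)|+\sum_k|u_i h_{i,k}|$, $E_i^1$ by $\Gamma_1^i:=|h_i(x,\bm 1)-h_i(x,\bm u^+)|+\sum_k|\partial_{u_k}h_i(x,\bm 1)-u_i h_{i,k}|$ with $h_{i,k}=\int_0^1\partial_{u_k}h_i(x,\bm u+(1-s)\delta\bm\xi e^{-\beta t})\,ds$, and $R$ collects all terms carrying a factor $\chi'$ or $\chi''$ (hence supported in the band $\underline s\le\check s\le\bar s$).

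Next I would exploit the decay of the error moduli: $\Gamma_0^i(x,\hat s;\delta)\to0$ uniformly in $x$ as $\hat s\to-\infty$ (since $\bm u^+\to\bm 0$ there), while $\Gamma_1^i(x,\hat s;\delta)\le K\delta$ for $\hat s$ large and $\delta$ small, with $K$ depending only on the $C^1$ and $C^2$ bounds of the $h_i$ near $\bm 1$ and on $|\bm\Psi|$ but not on $\sigma,s_0,t$ (this uses $h_i\in C^{\nu,2}$ and $U(x,\hat s)\to\bm 1$). Using that $\chi\equiv1$ for $\check s\le\underline s$ and $\chi\equiv0$ for $\check s\ge\bar s$, I would fix $M$ sufficiently large (so that in particular $\chi\equiv1$ on $\{\hat s\le-M\}$ and $\chi\equiv0$ on $\{\hat s\ge M\}$) and $\delta_0\in(0,\delta_m]$ so that $\Gamma_0^i|\bm\Phi_{\lambda_c+\epsilon}|\le\frac{\beta}{2}\min_k\min_x\phi_k^{\epsilon}$ on $\{\hat s\le-M\}$ and $\Gamma_1^i|\bm\Psi|\le\frac{\beta}{2}\min_k\min_x\psi_k$ on $\{\hat s\ge M\}$, for all $0<\delta\le\delta_0$. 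The verification then splits into three regions in the variable $\hat s$. On $\{\hat s\le-M\}$ one has $\chi\equiv1$, so $\beta\phi_i^{\epsilon}-E_i^0\ge\frac\beta2\phi_i^{\epsilon}>0$, and together with $\partial_sU_i\ge0$ and the nonnegative leftover $(\lambda_c+\epsilon)\sigma\beta e^{-\beta t}e^{(\lambda_c+\epsilon)\check s}\phi_i^{\epsilon}$ coming from $R$ when $\chi\equiv1$, we get $\mathcal N_i\ge0$. On $\{\hat s\ge M\}$ one has $\chi\equiv0$ and $R\equiv0$; since $\beta=\frac{|\sigma_\epsilon|}{2}\le\frac{|\mu^-|}{2}$ we have $-(\mu^-+\beta)\ge\frac{|\mu^-|}{2}$, hence $-(\mu^-+\beta)\psi_i-E_i^1\ge\frac{|\mu^-|}{2}\psi_i-\frac\beta2\min_k\min_x\psi_k\ge0$. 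On the compact band $\{-M\le\hat s\le M\}$, set $\alpha_i:=\inf_{\mathbb R^N\times[-M,M]}\partial_sU_i>0$ (positive by $U_s\gg\bm 0$ from Theorem \ref{Ex-PTW}) and let $\triangle_i$ be the supremum over the band of the absolute values of all remaining terms in the brace; choosing $\delta_c:=\min\{\delta_m,\delta_0,\alpha_i/\triangle_i\}$ and using $\sigma\beta\ge1$ gives $\sigma\beta\alpha_i\ge\alpha_i\ge\delta_c\triangle_i\ge\delta\triangle_i$, whence $\mathcal N_i\ge0$. This shows $\bm u^+$ is a supersolution; for $\bm u^-$ the same computation with signs flipped gives a subsolution with the same $\delta_c$.

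The hard part will be the first reduction step: one must see that the cooperative coupling sums $\sum_{j<i}a_{ij}(\cdot)$, appearing both from the front part and from the $\bm\Phi_{\lambda_c+\epsilon}$- and $\bm\Psi$-parts, cancel exactly against the corresponding terms in the eigenfunction identities \eqref{u1um-linear-PE} and in (H8), so that only the clean scalar factors $\sigma_\epsilon=-2\beta$ and $\mu^-$ survive, while every contribution involving $\chi'$ or $\chi''$ is packaged into the remainder $R$ that is supported and bounded on $[\underline s,\bar s]$. Tracking the signs in the $(1-\chi)$-term (where (H8) is essential) and verifying the bound $\Gamma_1^i=O(\delta)$ near $\bm 1$ with a constant independent of $\delta,\sigma,s_0,t$ are the delicate points; the rest is routine bookkeeping together with the three-region estimate above.
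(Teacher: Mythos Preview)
Your argument for the supersolution $\bm u^+$ matches the paper's and is correct. The gap is in your claim that the subsolution $\bm u^-$ is ``the mirror image, with inequalities reversed.'' It is not, precisely in region $\{\hat s\le -M\}$. When $\chi\equiv1$, the only surviving term of $R$ is $\pm\chi(\lambda_c+\epsilon)\sigma\beta e^{-\beta t}e^{(\lambda_c+\epsilon)\check s}\phi_i^{\epsilon}$, and its sign does \emph{not} flip under the sub/super exchange: for $\bm u^+$ this leftover is $\ge0$ (and you correctly use it), but for $\bm u^-$ the corresponding contribution to the brace is $-\,(\lambda_c+\epsilon)\sigma\beta e^{-\beta t}e^{(\lambda_c+\epsilon)\check s}\phi_i^{\epsilon}\le0$. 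Since the lemma must hold for \emph{every} $\sigma\ge1/\beta$, this negative term can be made arbitrarily large in magnitude, and the bare inequality $\partial_sU_i\ge0$ cannot absorb it.

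The paper closes this gap with an additional quantitative ingredient you do not invoke: from Theorem~\ref{Asy} and Corollary~\ref{U-s-estimate} one has, for $\hat s\le -M$ with $M$ chosen large enough,
\[
\frac{\partial U_i}{\partial s}(x,\hat s)\ \ge\ \frac{\underline\lambda_i}{2}\,U_i(x,\hat s)\ \ge\ \frac{\underline\lambda_i\rho}{4}\,e^{\lambda_c\hat s}\phi_i^c(x).
\]
Then $\frac{\sigma\beta}{\delta}\partial_sU_i$ dominates the bad term because $e^{\lambda_c\hat s}$ beats $e^{(\lambda_c+\epsilon)\check s}$: one fixes $M$ so that $e^{-\epsilon M}\le\min_i\bigl\{\underline\lambda_i\rho\min_x\phi_i^c/(4\delta_m(\lambda_c+\epsilon)\max_x\phi_i^{\epsilon})\bigr\}$ (the paper's inequality~\eqref{m0}), after which
\[
\frac{\sigma\beta}{\delta}\cdot\frac{\underline\lambda_i\rho}{4}e^{\lambda_c\check s}\phi_i^c\ -\ \delta(\lambda_c+\epsilon)\sigma\beta e^{-\beta t}e^{(\lambda_c+\epsilon)\check s}\phi_i^{\epsilon}\ \ge\ 0
\]
for all $0<\delta\le\delta_m$ and all $\sigma$. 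Without this exponential lower bound on $\partial_sU_i$, the subsolution verification fails in the left tail; the rest of your three-region scheme and the identification of $\Gamma_0^i,\Gamma_1^i,R$ are fine.
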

\begin{proof}
We only prove that $\bm u^-$ is a subsolution since the other one can be proved similarly.
Let $\hat{s}=ct-x\cdot e+s_0-\sigma(1-e^{-\beta t})$ and $\check{s}=\hat{s}+z_0$.
Then
\begin{equation*}
\bm u^-(t,x)=U(x,\hat{s})-\delta\bm\xi(x,\check{s})e^{-\beta t}
=\bm u\left(\frac{\hat{s}+x\cdot e}{c},x\right)-\delta\bm\xi(x,\check{s})e^{-\beta t}.
\end{equation*}
For each $i$, by a direct calculation, we have
\begin{align*}
&\quad\frac{\partial u_i^-(t,x)}{\partial t}-d_i(x)\Delta u_i^--q_i(x)\cdot\nabla u_i^--f_i(x,\bm u^-)\\
&=f_i(x,\bm u)-f_i(x,\bm u^-)+\delta\beta\xi_ie^{-\beta t}
-\frac{\sigma\beta}{c}\frac{\partial u_i}{\partial t}e^{-\beta t}\\
&\quad-\delta e^{-\beta t}\left\{\chi e^{(\lambda_c+\epsilon)\check{s}}
[-d_i\Delta\phi_i^{\epsilon}-(q_i-2d_i(\lambda_c+\epsilon)e)\cdot\nabla\phi_i^{\epsilon}\right .\\
&\left .\hspace{3.8cm}-\left(d_i(\lambda_c+\epsilon)^2+(\lambda_c+\epsilon)q_i\cdot e
+c(\lambda_c+\epsilon)\right)\phi_i^{\epsilon}]\right .\\
&\left .\hspace{2cm}-(1-\chi)[d_i\Delta\psi_i+q_i\cdot\nabla\psi_i]\right .\\
&\left .\hspace{2cm}+e^{(\lambda_c+\epsilon)\check{s}}
[\chi^\prime(c-\sigma\beta e^{-\beta t})\phi_i^{\epsilon}
-\chi(\lambda_c+\epsilon)\sigma\beta e^{-\beta t}\phi_i^{\epsilon} \right .\\
&\left .\hspace{3.8cm}+2d_i\chi^\prime\nabla\phi_i^{\epsilon}\cdot e
-d_i\chi^{\prime\prime}\phi_i^{\epsilon}
-2d_i\chi^{\prime}(\lambda_c+\epsilon)\phi_i^{\epsilon}
+q_i\cdot e\chi^\prime\phi_i^{\epsilon}] \right .\\
&\left .\hspace{2cm}-\chi^\prime(c-\sigma\beta e^{-\beta t})\psi_i
-2d_i\chi^\prime\nabla\psi_i\cdot e
+d_i\chi^{\prime\prime}\psi_i-q_i\cdot e\chi^\prime\psi_i\right\}\\
&=\sum_{j=1}^{i-1}a_{ij}(\delta\xi_je^{-\beta t})+(h_i(x,\bm u)-h_i(x,\bm u^-))u_i
+\delta\xi_ie^{-\beta t}h_i(x,\bm u^-)+\delta\beta\xi_ie^{-\beta t}
-\frac{\sigma\beta}{c}\frac{\partial u_i}{\partial t}e^{-\beta t}\\
&\quad-\delta e^{-\beta t}\left\{\chi e^{(\lambda_c+\epsilon)\check{s}}
\left[\left(c(\lambda_c+\epsilon)-\kappa_1(\lambda_c+\epsilon)\right)\phi_i^{\epsilon}
+\sum_{j=1}^{i-1}a_{ij}\phi_j^{\epsilon}+h_i(x,\bm 0)\phi_i^{\epsilon}\right]\right .\\
&\left .\hspace{2.2cm}-(1-\chi)\left[\mu^-\psi_i
-\sum_{k=1}^m\frac{\partial f_i}{\partial u_k}(x,\bm 1)\psi_k\right]+R(x,\check{s})\right\}\\
&=-\delta e^{-\beta t}\left\{-\sum_{j=1}^{i-1}a_{ij}\xi_j
-u_i\left[\sum_{k=1}^m{\left(\int_0^1{\frac{\partial h_i}{\partial u_k}
(x,s\bm u+(1-s)\bm u^-)}ds\right)}\xi_k\right]-\xi_ih_i(x,\bm u^-)\right .\\
&\left .\hspace{2cm}+\frac{\sigma\beta}{\delta}\frac{\partial U_i}{\partial s}
+\chi e^{(\lambda_c+\epsilon)\check{s}}
\left[|\sigma_{\epsilon}|\phi_i^{\epsilon}+\sum_{j=1}^{i-1}a_{ij}\phi_j^{\epsilon}
+h_i(x,\bm 0)\phi_i^{\epsilon}-\beta\phi_i^{\epsilon}\right] \right .\\
&\left .\hspace{2cm}-(1-\chi)\left[\mu^-\psi_i-\sum_{k=1}^m
\frac{\partial f_i}{\partial u_k}(x,\bm 1)\psi_k+\beta\psi_i\right]
+R(x,\check{s})\right\}\\
&=-\delta e^{-\beta t}\left\{\frac{\sigma\beta}{\delta}\frac{\partial U_i}{\partial s}
+\chi e^{(\lambda_c+\epsilon)\check{s}}
\left[\beta\phi_i^{\epsilon}+(h_i(x,\bm 0)-h_i(x,\bm u^-))\phi_i^{\epsilon}
-u_i\sum_{k=1}^m{h_{i,k}(x,\hat{s};\delta)\phi_k^{\epsilon}}\right]\right .\\
&\left .\hspace{2cm}+(1-\chi)\left[-(\mu^-+\beta)\psi_i
+\sum_{k=1}^m\frac{\partial f_i}{\partial u_k}(x,\bm 1)\psi_k
-u_i\sum_{k=1}^m{h_{i,k}(x,\hat{s};\delta)\psi_k}\right . \right .\\
&\left .\left .\hspace{3.8cm}-\sum_{j=1}^{i-1}a_{ij}\psi_j
-h_i(x,\bm u^-)\psi_i\right]+R(x,\check{s})\right\}\\
&=-\delta e^{-\beta t}\left\{\frac{\sigma\beta}{\delta}\frac{\partial U_i}{\partial s}
+\chi e^{(\lambda_c+\epsilon)\check{s}}
\left[\beta\phi_i^{\epsilon}+(h_i(x,\bm 0)-h_i(x,\bm u^-))\phi_i^{\epsilon}
-u_i\sum_{k=1}^m{h_{i,k}(x,\hat{s};\delta)\phi_k^{\epsilon}}\right]\right .\\
&\left .\hspace{2cm}+(1-\chi)\Bigg[-(\mu^-+\beta)\psi_i
+\sum_{k=1}^m\left(\frac{\partial h_i}{\partial u_k}(x,\bm 1)
-u_ih_{i,k}(x,\hat{s};\delta)\right)\psi_k \right .\\
&\left .\hspace{3.9cm}+(h_i(x,\bm 1)-h_i(x,\bm u^-))\psi_i\Bigg]+R(x,\check{s})\right\},
\end{align*}
where $a_{ij}\equiv0$ if $i=1$, and
$$h_{i,k}(x,\hat{s};\delta):=\int_0^1{\frac{\partial h_i}{\partial u_k}(x,s\bm u+(1-s)\bm u^-)}ds
=\int_0^1{\frac{\partial h_i}{\partial u_k}\left(x,\bm u-(1-s)\delta\bm\xi e^{-\beta t}\right)}ds,$$
and
\begin{align*}
R(x,\check{s})&:=e^{(\lambda_c+\epsilon)\check{s}}
[\chi^\prime(c-\sigma\beta e^{-\beta t})\phi_i^{\epsilon}
-\chi(\lambda_c+\epsilon)\sigma\beta e^{-\beta t}\phi_i^{\epsilon}\\
&\hspace{2cm}+2d_i\chi^\prime\nabla\phi_i^{\epsilon}\cdot e
-d_i\chi^{\prime\prime}\phi_i^{\epsilon}
-2d_i\chi^{\prime}(\lambda_c+\epsilon)\phi_i^{\epsilon}+q_i\cdot e\chi^\prime\phi_i^{\epsilon}]\\
&\qquad-\chi^\prime(c-\sigma\beta e^{-\beta t})\psi_i
-2d_i\chi^\prime\nabla\psi_i\cdot e+d_i\chi^{\prime\prime}\psi_i-q_i\cdot e\chi^\prime\psi_i.
\end{align*}
Let
\begin{align*}
\Gamma_0^i(x,\hat{s};\delta)&=|h_i(x,\bm 0)-h_i(x,\bm u^-)|
+\sum_{k=1}^m|u_ih_{i,k}(x,\hat{s};\delta)|,\\
\Gamma_1^i(x,\hat{s};\delta)&=|h_i(x,\bm 1)-h_i(x,\bm u^-)|
+\sum_{k=1}^m\left|\frac{\partial h_i}{\partial u_k}(x,\bm 1)-u_ih_{i,k}(x,\hat{s};\delta)\right|.
\end{align*}
Noting that
$\mathop{\lim}\limits_{\hat{s}\to-\infty}\Gamma_0^i(x,\hat{s};\delta)=0$ uniformly
in $x\in\mathbb R^N$.
In view of Theorem \ref{Asy} and Corollary \ref{U-s-estimate}, there exists $M_0>0$ such that
\begin{align*}
&\frac{\partial U_i(x,s)}{\partial s}\ge\frac{\underline{\lambda}_i}{2}U_i(x,s)
\ge\frac{\underline{\lambda}_i\rho}{4}e^{\lambda_c s}\phi_i^c(x),
\quad\forall\;(x,s)\in\mathbb R^N\times(-\infty,-M_0],
\end{align*}
and
\begin{equation}\label{m0}
e^{-\epsilon M_0}\le\min_{i\in I}\left\{\frac{\underline{\lambda}_i\rho\min_x\phi_i^c(x)}
{4\delta_m(\lambda_c+\epsilon)\max_x\phi_i^{\epsilon}(x)}\right\}.
\end{equation}
Moreover,
since $\mathop{\lim}\limits_{\hat{s}\to+\infty}U(x,\hat{s})=\bm 1$ uniformly
in $x\in\mathbb R^N$, there exists $M_1\ge M_0$ such that
\begin{align*}
0\le\Gamma_1^i(x,\hat{s};\delta)&\le\sum_{k=1}^m
\left|\frac{\partial h_i}{\partial u_k}(x,s\bm 1+(1-s)\bm u^-)\right|
|1-u_k+\delta\psi_k e^{-\beta t}|\\
&\quad+\sum_{k=1}^m\left|\int_0^1{\frac{\partial h_i}{\partial u_k}(x,\bm 1)
-\frac{\partial h_i}{\partial u_k}
\left(x,\bm u-(1-s)\delta\bm\xi e^{-\beta t}\right)}ds\right|\\
&\quad+|1-u_i|\sum_{k=1}^m\int_0^1{\left|\frac{\partial h_i}{\partial u_k}
\left(x,\bm u-(1-s)\delta\bm\xi e^{-\beta t}\right)\right|}ds\\
&\le K_1\delta(1+|\bm{\Psi}|)+K_2\delta(1+2|\bm{\Psi}|)+\delta mK_1\\
&\le\delta K
\end{align*}
for all $(x,\hat{s})\in\mathbb R^N\times[M_1,\infty)$,
where $K=K_1(1+m+|\bm{\Psi}|)+K_2(1+2|\bm{\Psi}|)$, with
\begin{align*}
K_1&=\max_{k\in I}\left\{\max_{(x,\bm u)\in\mathbb R^N\times[-\theta\bm 1,\theta\bm 1]}
\frac{\partial h_i}{\partial u_k}(x,\bm u)\right\},\\
K_2&=\max_{k,l\in  I}\left\{\max_{(x,\bm u)\in\mathbb R^N\times[-\theta\bm 1,\theta\bm 1]}
\frac{\partial^2h_i}{\partial u_k\partial u_l}(x,\bm u)\right\},\quad
\theta=1+\delta_m|\bm{\Psi}|.
\end{align*}
Therefore, there exist $M\ge M_1$ with $M>\bar{s}$ and $-M<\underline{s}$,
and $\delta_0\le\delta_m$ such that for any $0<\delta\le\delta_0$, we have
\begin{align*}
\Gamma_0^i(x,\hat{s};\delta)|\bm{\Phi}_{\lambda_c+\epsilon}|
&\le\frac{\beta}{2}\min_{k\in I}\left\{\min_{x\in\mathbb R^N}\phi_k^{\epsilon}(x)\right\},
\qquad\forall\;(x,\hat{s})\in\mathbb R^N\times(-\infty,-M],\\
\Gamma_1^i(x,\hat{s};\delta)|\bm{\Psi}|
&\le\frac{\beta}{2}\min_{k\in I}\left\{\min_{x\in\mathbb R^N}\psi_k(x)\right\},
\qquad\forall\;(x,\hat{s})\in\mathbb R^N\times[M,+\infty).
\end{align*}
Consequently:

(i) For any $(x,\hat{s})\in\mathbb R^N\times(-\infty,-M]$, we have
\begin{align*}
&\quad\frac{\partial u_i^-(t,x)}{\partial t}-d_i(x)\Delta u_i^--q_i(x)\cdot\nabla u_i^--f_i(x,\bm u^-)\\
&\le-\delta e^{-\beta t}\left\{\frac{\sigma\beta}{\delta}
\frac{\underline{\lambda}_i\rho}{4}e^{\lambda_c s}\phi_i^c
+e^{(\lambda_c+\epsilon)\check{s}}\frac{\beta}{2}\phi_i^{\epsilon}
-(\lambda_c+\epsilon)\sigma\beta e^{-\beta t}
e^{(\lambda_c+\epsilon)\check{s}}\phi_i^{\epsilon}\right\}\\
&\le-\sigma\beta e^{-\beta t}e^{\lambda_c\check{s}}
\left\{\frac{\underline{\lambda}_i\rho}{4}\phi_i^c
-\delta(\lambda_c+\epsilon)e^{-\beta t}e^{\epsilon\check{s}}\phi_i^{\epsilon}\right\}\\
&\le0,
\end{align*}
where the last inequality follows from \eqref{m0}.

(ii) For any $(x,\hat{s})\in\mathbb R^N\times[M,+\infty)$, we have
\begin{align*}
&\quad\frac{\partial u_i^-(t,x)}{\partial t}-d_i(x)\Delta u_i^--q_i(x)\cdot\nabla u_i^--f_i(x,\bm u^-)\\
&\le-\delta e^{-\beta t}
\left[-(\mu^-+\beta)\psi_i-\Gamma_1^i(x,\hat{s};\delta)|\bm{\Psi}|\right]\\
&\le-\delta e^{-\beta t}\psi_i
\left[-(\mu^-+\beta)-\frac{\beta}{2}\right]\\
&\le0,
\end{align*}
where we used the fact that $\beta=\frac{|\sigma_\epsilon|}{2}\le\frac{|\mu^-|}{2}$.

(iii) For any $(x,\hat{s})\in\mathbb R^N\times[-M,M]$, let
$$\triangle_i(x,\hat{s})=e^{(\lambda_c+\epsilon)\check{s}}
\Gamma_0^i(x,\hat{s};\delta)|\bm{\Phi}_{\lambda_c+\epsilon}|
+\Gamma_1^i(x,\hat{s};\delta)|\bm{\Psi}|+|R(x,\check{s})|,$$
and define
\begin{align*}
\alpha_i&=\left\{\inf_{(x,s)\in\mathbb R^N\times[-M,M]}
\frac{\partial U_i(x,s)}{\partial s}\right\}>0,\\
\delta_c&=\min\left\{\delta_m, \delta_0,
\frac{\alpha_i}{\mathop{\sup}\limits_{(x,\hat{s})
\in\mathbb R^N\times[-M,M]}|\triangle_i(x,\hat{s})|}\right\}>0.
\end{align*}
Noting that $\sigma\beta\ge 1$, then
\begin{align*}
&\quad\frac{\partial u_i^-(t,x)}{\partial t}-d_i(x)\Delta u_i^--q_i(x)\cdot\nabla u_i^--f_i(x,\bm u^-)\\
&\le-\delta e^{-\beta t}\left\{\frac{\sigma\beta}{\delta}\alpha_i
-e^{(\lambda_c+\epsilon)\check{s}}\Gamma_0^i(x,\hat{s};\delta)|\bm{\Phi}_{\lambda_c+\epsilon}|
-\Gamma_1^i(x,\hat{s};\delta)|\bm{\Psi}|-|R(x,\check{s})|\right\}\\
&\le-\sigma\beta e^{-\beta t}\left(\alpha_i-\delta_c\triangle_i\right)\\
&\le0.
\end{align*}
By (i)-(iii), we conclude that $\bm u^-$ is a subsolution of \eqref{m-system} in $(0,\infty)\times\mathbb R^N$.
Using a similarly argument, one can prove that $\bm u^+$ is a subsolution in $(0,\infty)\times\mathbb R^N$. The proof is complete.
\end{proof}


In the following of this subsection, for any $s_0\in\mathbb R$, we denote
\begin{align*}
\bm u^\pm_\sigma(t,x,s_0)&=U(x,ct-x\cdot e+s_0\pm\sigma(1-e^{-\beta t}))
\pm\delta_c\bm{\xi}(x,ct-x\cdot e+s_0+z_0\pm\sigma(1-e^{-\beta t}))e^{-\beta t}.
\end{align*}

\begin{lemma}\label{sss}
Assume (H1)-(H8). Let $\bm 0<\bm u_0<\bm 1$ satisfy
\eqref{u0v0} for some $\varepsilon_0\in(0,\frac{\delta_c}{2\delta_M})$
and \eqref{u0v0-behavior} with $\tau=0$, where $\delta_c>0$ is given in Lemma \ref{ss-c}.
Then there exist $s_0\in\mathbb R$, $\sigma_c\ge1$ and $t_c>0$ such that for any $\sigma\ge\sigma_c$,
\begin{equation*}
\bm u^-_\sigma(t,x,s_0)\le\bm u(t,x;\bm u_0)\le\bm u^+_\sigma(t,x,s_0),
\quad\forall\;(t,x)\in[t_c,\infty)\times\mathbb R^N.
\end{equation*}
\end{lemma}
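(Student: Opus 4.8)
The plan is to sandwich $\bm u(t,\cdot;\bm u_0)$ between the barriers $\bm u^\pm_\sigma$ of Lemma \ref{ss-c} and then invoke the comparison principle; the substance of the proof is to arrange the ordering at a single, suitably chosen time $t=t_c>0$. First I fix the shift. By Theorem \ref{Asy}(i), $U(x,s)=\rho e^{\lambda_c s}\bm\Phi_{\lambda_c}(x)(\bm 1+o(1))$ as $s\to-\infty$ for some $\rho>0$, so putting $s_0:=\lambda_c^{-1}\ln(k/\rho)$ matches the leading-edge asymptotics of $U(x,\cdot+s_0)$ with those of $\bm u_0$ in \eqref{u0v0-behavior}; by Lemma \ref{asy} (applied on the compact set $\{t_c\}$), for any prescribed relative error $\eta>0$ there is $s_1\in\mathbb R$ with
\[ \bigl|\bm u(t_c,x;\bm u_0)-U(x,ct_c-x\cdot e+s_0)\bigr|\le\eta\,U(x,ct_c-x\cdot e+s_0)\qquad\text{whenever }ct_c-x\cdot e\le s_1 . \]
I also set $\delta:=\delta_c$, the constant from Lemma \ref{ss-c}. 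Note that one cannot carry this out at $t=0$: there the shift $\sigma(1-e^{-\beta t})$ vanishes, and on the leading edge the $o(1)$ relative error swamps the exponentially smaller correction $\delta_c\bm\xi\sim e^{(\lambda_c+\epsilon)s}$; it is precisely the multiplicative gap $e^{\pm\lambda_c\sigma(1-e^{-\beta t})}$ opened by letting time elapse that makes the comparison possible, which is why we work at $t=t_c>0$.

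I then fix $\eta$ small, choose $t_c$ large, and finally $\sigma_c\ge1$ large, and verify for every $\sigma\ge\sigma_c$ that $\bm u^-_\sigma(t_c,\cdot,s_0)\le\bm u(t_c,\cdot;\bm u_0)\le\bm u^+_\sigma(t_c,\cdot,s_0)$ on $\mathbb R^N$, splitting in $s:=ct_c-x\cdot e$. On the far leading edge $\{s\le s_1\}$ one has $U(x,s+s_0\pm\sigma(1-e^{-\beta t_c}))=e^{\pm\lambda_c\sigma(1-e^{-\beta t_c})}(\bm 1+o(1))\,U(x,s+s_0)$ uniformly in $x$, and since $\sigma(1-e^{-\beta t_c})\ge1-e^{-\beta t_c}>0$, taking $\eta<e^{\lambda_c(1-e^{-\beta t_c})}-1$ forces $U(x,s+s_0-\sigma(1-e^{-\beta t_c}))\le(1-\eta)U(x,s+s_0)\le\bm u(t_c,x;\bm u_0)\le(1+\eta)U(x,s+s_0)\le U(x,s+s_0+\sigma(1-e^{-\beta t_c}))$, the $\pm\delta_c\bm\xi e^{-\beta t_c}$ terms only helping. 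On the complement $\{s>s_1\}$ I split once more, with a fixed $s_5>s_1$ independent of $\sigma$: the upper bound is easy everywhere there, since for $\sigma$ large $s+s_0+\sigma(1-e^{-\beta t_c})$ is uniformly large, whence $U(x,\cdot)\ge\bm 1-\frac12\delta_c e^{-\beta t_c}\bm\Psi$ and so $\bm u^+_\sigma(t_c,\cdot,s_0)\ge\bm 1\ge\bm u(t_c,\cdot;\bm u_0)$; for the lower bound, Lemma \ref{z-0} with $\delta=\delta_c e^{-\beta t_c}\le\delta_m$ gives $\bm u^-_\sigma(t_c,\cdot,s_0)\le\bm 1-\frac12\delta_c e^{-\beta t_c}\bm\Psi$ throughout, while on the bounded slab $\{s_1<s<s_5\}$ the argument $s+s_0-\sigma(1-e^{-\beta t_c})\to-\infty$ as $\sigma\to\infty$, so $\bm u^-_\sigma(t_c,\cdot,s_0)$ is there as small as we please. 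Thus everything reduces to showing, for $t_c$ large, that $\bm u(t_c,\cdot;\bm u_0)\ge\bm 1-\frac12\delta_c e^{-\beta t_c}\bm\Psi$ on the deep back $\{s\ge s_5\}$ (which also fixes the choice of $s_5$) and $\bm u(t_c,\cdot;\bm u_0)\ge\varepsilon_1\bm 1$ for some fixed $\varepsilon_1>0$ on $\{s_1<s<s_5\}$; granting these, $\bm u^-_\sigma(t_c,\cdot,s_0)\le\bm u(t_c,\cdot;\bm u_0)$ on all of $\{s>s_1\}$ once $\sigma_c$ is large enough.

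These two lower bounds are the heart of the matter, and the step I expect to be the main obstacle. They follow by combining \eqref{u0v0} and \eqref{u0v0-behavior} with: the fact that the leading edge of $\bm u(t,\cdot;\bm u_0)$ advances at speed $c$ — a consequence of its heavy tail $\sim e^{-\lambda_c x\cdot e}$ and $\kappa_1(\lambda_c)=c\lambda_c$, obtained by squeezing $\bm u$ between the linearized flow from above and a compactly supported subsolution from below — so that for $t_c$ large the region $\{s\ge s_5\}$ lies well behind the front; the exponential stability of $\bm 1$ from (H8), which forces $\bm u(t_c,\cdot;\bm u_0)\to\bm 1$ there at a rate beating $e^{-\beta t_c}$ (possible because $\beta\le|\mu^-|/2$); and a parabolic Harnack inequality in the spirit of Lemma \ref{Harnack}, together with the uniform positivity of $\bm u_0$ near $x\cdot e\to+\infty$ supplied by \eqref{u0v0-behavior}, which delivers the uniform $\varepsilon_1>0$ on the bounded slab around the front. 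Carefully tracking all of this pins down the order of the choices: $\eta$ first, then $t_c$ large, then $s_5$, then $\sigma_c$ large.

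Finally, with the ordering at $t=t_c$ secured for every $\sigma\ge\sigma_c$, I conclude by comparison. By Lemma \ref{ss-c}, $\bm u^+_\sigma$ and $\bm u^-_\sigma$ are a super- and a subsolution of \eqref{m-system} on $(0,\infty)\times\mathbb R^N$, so $\bm v:=\min\{\bm u^+_\sigma,\bm 1\}$ is an irregular supersolution and $\bm w:=\max\{\bm u^-_\sigma,\bm 0\}$ an irregular subsolution, both valued in $[\bm 0,\bm 1]$ (for $\bm w$ this uses $\bm u^-_\sigma\le\bm 1$, from Lemma \ref{z-0}); since $\bm 0\le\bm u(t_c,\cdot;\bm u_0)\le\bm 1$ we get $\bm w(t_c,\cdot)\le\bm u(t_c,\cdot;\bm u_0)\le\bm v(t_c,\cdot)$, and the comparison principle for the cooperative system \eqref{m-system} (compare Lemma \ref{CP}, applied on $[t_c,\infty)\times\mathbb R^N$) yields $\bm w\le\bm u(\cdot,\cdot;\bm u_0)\le\bm v$, hence $\bm u^-_\sigma(t,x,s_0)\le\bm u(t,x;\bm u_0)\le\bm u^+_\sigma(t,x,s_0)$ for all $(t,x)\in[t_c,\infty)\times\mathbb R^N$, which is the assertion. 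The only role of the truncations $\bm v,\bm w$ is to absorb the harmless fact that $\bm u^+_\sigma$ may exceed $\bm 1$ (and $\bm u^-_\sigma$ dip below $\bm 0$) outside $[\bm 0,\bm 1]$.
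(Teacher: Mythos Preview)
Your overall plan—order at a fixed time, then compare—matches the paper's, but you diverge on one structural choice that turns out to matter: you take $t_c$ \emph{large} so that spreading and convergence to $\bm 1$ have already occurred, whereas the paper takes $t_c$ \emph{small} (close to $0$), so that $\bm u(t_c,\cdot;\bm u_0)$ still inherits \eqref{u0v0} essentially intact via $\lim_{t\to0}|\bm u(t,\cdot;\bm u_0)-\bm u_0|=0$. This lets the paper sidestep your two hard lower bounds entirely.

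Those two lower bounds are a genuine gap. The first—$\bm u(t_c,\cdot;\bm u_0)\ge\bm 1-\tfrac12\delta_c e^{-\beta t_c}\bm\Psi$ on the deep back $\{s\ge s_5\}$—is a quantitative exponential-convergence-behind-the-moving-front statement; establishing it requires constructing a front-like subsolution converging to $\bm 1$ at rate $e^{\mu^-t}$ in a moving half-space, which is work comparable to the lemma itself and is not supplied by (H5) or (H8) alone. The second—$\bm u(t_c,\cdot;\bm u_0)\ge\varepsilon_1\bm 1$ on the slab $\{s_1<s<s_5\}$—is more delicate than your sketch allows: the slab is unbounded in the directions orthogonal to $e$, $\bm u_0$ is not assumed periodic, and nothing in \eqref{u0v0}--\eqref{u0v0-behavior} forces $\inf_{\text{slab}}\bm u_0>\bm 0$, so a Harnack argument at fixed $t_c$ does not yield a uniform $\varepsilon_1$.

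The paper's device is to argue by contradiction along sequences $\sigma_n\to\infty$, $x_n$. Writing $s_n=ct_c-x_n\cdot e+s_0-\sigma_n(1-e^{-\beta t_c})$: if $\{s_n\}$ stays bounded below then $-x_n\cdot e\to+\infty$, and \eqref{u0v0} (preserved at time $t_c$ because $t_c$ is small) together with Lemma~\ref{z-0} gives the contradiction. If $s_n\to-\infty$ then $u_i(t_c,x_n;\bm u_0)\to 0$; now either $-x_n\cdot e\to-\infty$ and Lemma~\ref{asy} is violated, or $\{x_n\cdot e\}$ is bounded, in which case one translates by the lattice part $x_n'\in\mathcal L$ of $x_n$, passes to a limit $\bm u_\infty$ of $\bm u(\cdot,\cdot+x_n';\bm u_0)$ solving \eqref{m-system}, and the strong maximum principle forces $u_{i,\infty}\equiv 0$ on $(0,t_c]\times\mathbb R^N$—contradicting the fact that the translated initial data are, by \eqref{u0v0}, uniformly bounded away from $\bm 0$ on a half-space (since $\{x_n'\cdot e\}$ is bounded). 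This translation-plus-strong-maximum-principle step is precisely what replaces your uniform $\varepsilon_1$ on the slab, and it works because positivity far in the back propagates instantaneously.
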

\begin{proof}
Let $T(t)={\rm diag}(T_i(t))_{i\in I}$ be the operator defined on $Y$,
and $T_i(t)$ is the linear semigroup generated by
$w_t=d_i(x)\Delta w+q_i(x)\cdot\nabla w$, $i\in I$.
Then
\begin{equation*}
\bm u(t,x;\bm u_0)=T(t)\bm u_0+\int_0^t{T(t-s)\bm F(x,\bm u(s,x;\bm u_0))ds},\quad t>0.
\end{equation*}
Noting that $\lim_{t\to0}|\bm u(t,\cdot;\bm u_0)-\bm u_0|=0$, and for any $t>0$, there hold
\begin{align*}
\liminf_{\varsigma\to+\infty}\inf_{\substack{x\in\mathbb R^N\\-x\cdot e\ge\varsigma}}
(\bm u(t,x;\bm u_0)-\bm 1)
\ge\liminf_{\varsigma\to+\infty}\inf_{\substack{x\in\mathbb R^N\\-x\cdot e\ge\varsigma}}
(\bm u(t,x;\bm u_0)-\bm u_0(x))
+\liminf_{\varsigma\to+\infty}\inf_{\substack{x\in\mathbb R^N\\-x\cdot e\ge\varsigma}}
(\bm u_0(x)-\bm 1).
\end{align*}
Since there exists $\gamma>1$ such that
$0\le\gamma\varepsilon_0\le\frac{\delta_c}{2\delta_M}$,
it follows from \eqref{u0v0} that there exists $t_c>0$ such that
\begin{align*}
\liminf_{\varsigma\to+\infty}\left\{\inf_{\substack{x\in\mathbb R^N\\-x\cdot e\ge\varsigma}}
\frac{\bm u(t_c,x;\bm u_0)-\bm 1}{\bm{\Psi}(x)}\right\}
\gg-\delta_M\gamma\varepsilon_0 e^{-\beta t_c}\bm 1.
\end{align*}
It then follows from Lemma \ref{z-0} that
\begin{equation}\label{t-c}
\begin{aligned}
\sup_{(x,s)\in\mathbb R^N\times\mathbb R}
\frac{U(x,s)-\delta_c\bm\xi(x,s+z_0)e^{-\beta t_c}-\bm 1}{\bm{\Psi}(x)}
&\ll\liminf_{\varsigma\to+\infty}\left\{\inf_{\substack{x\in\mathbb R^N\\-x\cdot e\ge\varsigma}}
\frac{\bm u(t_c,x;\bm u_0)-\bm 1}{\bm{\psi}(x)}\right\}.
\end{aligned}
\end{equation}
By Theorem \ref{Asy}, there exists $s_0=s_0(k)$ such that
\begin{equation}\label{s0-c}
\limsup_{\varsigma\to-\infty}\left\{\sup_{\substack{x\in\mathbb R^N\\-x\cdot e\le\varsigma}}
\left|\frac{U(x,-x\cdot e+s_0)}{ke^{-\lambda_c(x\cdot e)}
\bm{\Phi}_{\lambda_c}(x)}-\bm 1\right|\right\}=0.
\end{equation}

Next, we prove that there exists $\sigma_1\ge1$ such that
$$\bm u^-_\sigma(t_c,x,s_0)\le\bm u(t_c,x;\bm u_0),
\quad\forall\;x\in\mathbb R^N,\quad\forall\;\sigma\ge\sigma_1.$$
Assume this is not true, then there exist $\{x_n\}_{n\in\mathbb N}$
and $\{\sigma_n\}_{n\in\mathbb N}$ such that
$$\sigma_n\to+\infty \ (n\to\infty),
\quad u^-_{i,\sigma_n}(t_c,x_n,s_0)>u_i(t_c,x_n;\bm u_0)$$
for some $i\in I$.
Let $s_n=ct_c-x_n\cdot e+s_0-\sigma_n(1-e^{-\beta t_c})$.
If $\{s_n\}_{n\in\mathbb{N}}$ is bounded from below,
then $-x_n\cdot e\to\infty$ as $n\to\infty$,
and it follows from \eqref{t-c} that
\begin{align*}
\sup_{n\in\mathbb N}\frac{u^-_{i,\sigma_n}(t_c,x_n,s_0)-1}{\psi_i(x_n)}
&\le\sup_{n\in\mathbb N}\frac{U_i(x_n,s_n)
-\delta_c\xi_i(x_n,s_n+z_0)e^{-\beta t_c}-1}{\psi_i(x_n)}\\
&<\liminf_{n\to\infty}\left\{\frac{u_i(t_c,x_n;\bm u_0)-1}{\psi_i(x_n)}\right\},
\end{align*}
which is a contradiction. Therefore $s_n\to-\infty$ as $n\to\infty$.
Noting that
$$0=\mathop{\lim}\limits_{n\to\infty}u^-_{i,\sigma_n}(t_c,x_n,s_0)
\ge\mathop{\lim}\limits_{n\to\infty}u_i(t_c,x_n;\bm u_0)\ge0,$$
then $\mathop{\lim}\limits_{n\to\infty}u_i(t_c,x_n;\bm u_0)=0$.
Now if $\{x_n\cdot e\}_{n\in\mathbb{N}}$ is bounded, we write $x_n=x_n^\prime+x_n^{\prime\prime}$, where $x_n^\prime\in \mathcal L$ and $x_n^{\prime\prime}\in\overline{\mathcal D}$ with
$x_n^{\prime\prime}\to x_\infty\in\overline{\mathcal D}$ as $n\to\infty$.
Let
$$\bm u_n(t,x)=\bm u(t,x+x_n^\prime;\bm u_0).$$
Then $\bm u_n$ solves \eqref{initial-P}, and $\bm 0\le\bm u_n\le\bm 1$ for any $(t,x)\in[0,\infty)\times\mathbb R^N$.
Up to an extraction of subsequence, we assume that $\bm u_n$ converges locally uniformly in $(0,\infty)\times\mathbb R^N$ to $\bm u_\infty\ge\bm 0$, which solves \eqref{initial-P},
and in particular, $u_{i,\infty}$ satisfies
$$\frac{\partial u_{i,\infty}(t,x)}{\partial t}-d_i(x)\Delta u_{i,\infty}
-q_i(x)\cdot\nabla u_{i,\infty}-
\left(\int_0^1{\frac{\partial f_i}{\partial u_i}(x,\tau \bm u_\infty)}d\tau\right)u_{i,\infty}
\ge 0.$$
Observe that $u_{i,\infty}(t_c,x_\infty)=\mathop{\lim}\limits_{n\to\infty}u_{i,n}(t_c,x_n^{\prime\prime})
=\mathop{\lim}\limits_{n\to\infty}u_i(t_c,x_n;\bm u_0)=0$.
It then follows from the maximum principle that $u_{i,\infty}(t,x)\equiv0$ for any $(t,x)\in(0,t_c]\times\mathbb R^N$.
On the other hand, since $\{x_n\cdot e\}_{n\in\mathbb{N}}$ is bounded, so is $\{x_n^\prime\cdot e\}_{n\in\mathbb{N}}$,
and then we obtain from \eqref{u0v0} that
$$\liminf_{\varsigma\to+\infty}\left\{\inf_{x\in\mathbb R^N,\ -x\cdot e\ge\varsigma}
\bm u_\infty(0,x)\right\}\ge(1-\varepsilon_0)\bm 1.$$
Hence there exists
$\hat{x}\in B_{\frac{R}{2}}(0):=\left\{x\in\mathbb R^N: |x|\le\frac{R}{2}\right\}$
with $R>0$ such that
$$\bm u_\infty(0,\hat{x})
\ge\frac{(1-\varepsilon_0)\bm 1}{2}\gg\bm 0.$$
Notice that $\bm u_\infty\ge\bm 0$ for any $(t,x)\in(0,t_c+1)\times\mathring{B}_{R}(0)$,
and $\bm u_\infty(0,x)>\bm 0$ for any $x\in B_{R}(0)$,
where $\mathring{B}_{R}(0):=\left\{x\in\mathbb R^N: |x|<R\right\}$.
Then $\bm u_\infty(t_c,x)\gg\bm 0$ for any $x\in B_{\frac{R}{2}}(0)$
due to the maximum principle, which is a contradiction since $u_{i,\infty}\equiv0$ in $(0,t_c]\times\mathbb R^N$.
As a result, $-x_n\cdot e\to-\infty$ as $n\to\infty$.
Consequently,
$$0=\limsup_{n\to\infty}\frac{u^-_{i,\sigma_n}(t_c,x_n,s_0)}{U_i(x_n,ct_c-x_n\cdot e+s_0)}
\ge\liminf_{n\to\infty}\frac{u_i(t_c,x_n;\bm u_0)}{U_i(x_n,ct_c-x_n\cdot e+s_0)}=1,$$
where the left-hand equality follows from Theorem \ref{Asy}
and the right-hand equality follows from Lemma \ref{asy}.
This contradiction shows that there exists $\sigma_1\ge1$ such that
$\bm u^-_\sigma(t_c,x,s_0)\le \bm u(t_c,x;\bm u_0)$ for any $x\in\mathbb R^N$
and $\sigma\ge\sigma_1$.

Now, we prove that there exists $\sigma_2\ge1$ such that
$\bm u(t_c,x;\bm u_0)\le \bm u^+_\sigma(t_c,x,s_0)$ for any $x\in\mathbb R^N$ and $\sigma\ge\sigma_2$.
Again we argue by a contradiction. If this is not true, then there exist $\{x_n\}_{n\in\mathbb N}$ and $\{\sigma_n\}_{n\in\mathbb N}$ such that
$\sigma_n\to+\infty$ as $n\to\infty$ and
$u^+_{j,\sigma_n}(t_c,x_n,s_0)<u_j(t_c,x_n;\bm u_0)$ for some $j\in I$.
Denote $z_n=ct_c-x_n\cdot e+s_0+\sigma_n(1-e^{-\beta t_c})$.
If $z_n\to+\infty$ as $n\to\infty$, then
\begin{equation*}
1\ge\limsup_{n\to\infty}u_j(t_c,x_n;\bm u_0)
\ge\liminf_{n\to\infty}u^+_{j,\sigma_n}(t_c,x_n,s_0)
\ge1+\delta_c e^{-\beta t_c}\min_{x}\psi_j(x)>1.
\end{equation*}
This contradiction shows that $\{z_n\}_{n\in\mathbb{N}}$ is bounded from above.
Hence $-x_n\cdot e\to-\infty$ as $n\to\infty$, and it follows from Lemma \ref{asy} that
$$0\le\liminf_{n\to\infty}u^+_{j,\sigma_n}(t_c,x_n,s_0)
\le\limsup_{n\to\infty}u_j(t_c,x_n;\bm u_0)
=\lim_{n\to\infty}U_j(x_n,ct_c-x_n\cdot e+s_0)=0,$$
which together with the fact that $\xi_j\ge0$ yields that
$\mathop{\lim}\limits_{n\to\infty}z_n=-\infty$.
Observe that
$$\infty=\liminf_{n\to\infty}\frac{u^+_{j,\sigma_n}(t_c,x_n,s_0)}{U_j(x_n,ct_c-x_n\cdot e+s_0)}
\le\limsup_{n\to\infty}\frac{u_j(t_c,x_n;\bm u_0)}{U(x_n,ct_c-x_n\cdot e+s_0)}=1,$$
this contradiction shows that $\bm u(t_c,x;\bm u_0)\le \bm u^+_\sigma(t_c,x,s_0)$
for any $x\in\mathbb R^N$ and $\sigma\ge\sigma_2$.

To this end, let $\sigma_c=\max\{\sigma_1,\sigma_2\}$,
then for any $\sigma\ge\sigma_c$,
\begin{equation*}
\bm u^-_\sigma(t_c,x,s_0)\le\bm u(t_c,x;\bm u_0)
\le\bm u^+_\sigma(t_c,x,s_0),\quad\forall\;x\in\mathbb R^N.
\end{equation*}
Noting that $\bm u^-_\sigma(t,x,s_0)\le\bm 1$ and
$\bm u^+_\sigma(t,x,s_0)>\bm 0$, and
$\bm 0\le\bm u(t,x;\bm u_0)\le\bm 1$ for all $(t,x)\in[t_c,\infty)\times\mathbb R^N$.
It then follows from the maximum principle that
\begin{equation*}
\bm u^-_\sigma(t,x,s_0)\le\bm u(t,x;\bm u_0)\le\bm u^+_\sigma(t,x,s_0),
\quad\forall\;(t,x)\in[t_c,\infty)\times\mathbb R^N.
\end{equation*}
The proof is complete.
\end{proof}

\begin{lemma}\label{c-s-s}
Suppose that all the assumptions in Lemma \ref{sss} are satisfied.
Let $\bm u(t,x)=U(x,ct-x\cdot e)$ be a pulsating traveling front of \eqref{m-system}
with $c>c_+^0$.
Then for any $\eta>0$, there exist $D_\eta>0$ and $s_\eta\in\mathbb R$ such that
\begin{align*}
U(x,ct-x\cdot e-\eta)-D_\eta e^{(\lambda_c+\epsilon)(ct-x\cdot e)}\bm{\Phi}_{\lambda_c+\epsilon}(x)
\le\bm u(t,x;\bm u_0)
\end{align*}
and
\begin{align*}
\bm u(t,x;\bm u_0)\le U(x,ct-x\cdot e+\eta)
+D_\eta e^{(\lambda_c+\epsilon)(ct-x\cdot e)}\bm{\Phi}_{\lambda_c+\epsilon}(x)
\end{align*}
for any $(t,x)\in\{\mathbb R^+\times\mathbb R^N: \ ct-x\cdot e\le s_\eta\}$.
\end{lemma}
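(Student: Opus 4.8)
The plan is to trap $\bm u(t,x;\bm u_0)$ in the slanted half-space $\Omega_{s_\eta}^-\cap\big((0,\infty)\times\mathbb R^N\big)$ between the barriers
\[
\overline{\bm u}(t,x)=U(x,ct-x\cdot e+\eta)+D_\eta\,e^{(\lambda_c+\epsilon)(ct-x\cdot e)}\bm\Phi_{\lambda_c+\epsilon}(x),
\]
\[
\underline{\bm u}(t,x)=\max\big\{U(x,ct-x\cdot e-\eta)-D_\eta\,e^{(\lambda_c+\epsilon)(ct-x\cdot e)}\bm\Phi_{\lambda_c+\epsilon}(x),\ \bm 0\big\},
\]
and to conclude by the comparison principle of Lemma \ref{CP} on $D=\{t>0,\ ct-x\cdot e<s_\eta\}$. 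As a preliminary normalization I would invoke Lemma \ref{asy} to produce the shift $s_0$ attached through \eqref{u0v0-behavior} to $\bm u_0$, and then replace $U$ by the pulsating front $U(x,\cdot+s_0)$; after this reduction $s_0=0$, so that by Theorem \ref{Asy} (here $c>c_+^0$, so no $|s|$-factor) the front and the initial datum share the same leading profile, $U(x,s)=(1+o(1))\rho\,e^{\lambda_c s}\bm\Phi_{\lambda_c}(x)$ as $s\to-\infty$ and $\bm u_0(x)=(1+o(1))\rho\,e^{-\lambda_c(x\cdot e)}\bm\Phi_{\lambda_c}(x)$ as $-x\cdot e\to-\infty$.

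\textbf{Step 1: the barriers solve the right inequalities far out.} Because $\bm\Phi_{\lambda_c+\epsilon}$ is the principal eigenfunction of \eqref{u1um-linear-PE} with eigenvalue $\kappa_1(\lambda_c+\epsilon)$ and $\sigma_\epsilon=\kappa_1(\lambda_c+\epsilon)-c(\lambda_c+\epsilon)<0$, the function $\bm v(t,x)=e^{(\lambda_c+\epsilon)(ct-x\cdot e)}\bm\Phi_{\lambda_c+\epsilon}(x)$ satisfies $\partial_t\bm v-D\Delta\bm v-q\cdot\nabla\bm v-D_{\bm u}\bm F(x,\bm 0)\bm v=|\sigma_\epsilon|\bm v$, i.e.\ it is a strict supersolution of the linearization \eqref{u1u2-linearized} with a componentwise reserve $|\sigma_\epsilon|\bm v$. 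Since the shifted fronts $U(x,ct-x\cdot e\pm\eta)$ solve \eqref{m-system} exactly, writing $\bm F(x,U)-\bm F(x,U\pm D_\eta\bm v)=\mp\big(\int_0^1 D_{\bm u}\bm F(x,U\pm\tau D_\eta\bm v)\,d\tau\big)D_\eta\bm v$ and using that both $U$ and $\bm v$ tend to $\bm 0$ as $ct-x\cdot e\to-\infty$, the matrix $\int_0^1 D_{\bm u}\bm F(x,U\pm\tau D_\eta\bm v)\,d\tau$ differs from $D_{\bm u}\bm F(x,\bm 0)$ by an amount whose action on $\bm\Phi_{\lambda_c+\epsilon}$ is, for $s_\eta$ sufficiently negative, dominated componentwise by $\tfrac12|\sigma_\epsilon|\bm\Phi_{\lambda_c+\epsilon}$ — and the strict positivity of $\bm\Phi_{\lambda_c+\epsilon}$ is exactly what lets this componentwise estimate survive the lower-triangular cooperative coupling $\sum_{k<i}a_{ik}$. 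It follows that $\overline{\bm u}$ is a supersolution and $U(x,ct-x\cdot e-\eta)-D_\eta\bm v$ a subsolution of \eqref{m-system} in $\Omega_{s_\eta}^-\cap((0,\infty)\times\mathbb R^N)$ for any fixed $D_\eta>0$, provided $s_\eta$ is negative enough; since $\bm 0$ is a solution, $\underline{\bm u}$ is an irregular subsolution there.

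\textbf{Step 2: boundary data and conclusion.} On the bottom face $\{t=0,\ -x\cdot e<s_\eta\}$, the two displayed asymptotics together with $e^{-\lambda_c\eta}<1<e^{\lambda_c\eta}$ give $\underline{\bm u}(0,x)\le\bm u_0(x)\le\overline{\bm u}(0,x)$ once $s_\eta$ is pushed far enough to the left. On the lateral face $\{ct-x\cdot e=s_\eta,\ t>0\}$ one needs $\bm u(t,x;\bm u_0)$ pinched around $U(x,s_\eta)$ \emph{uniformly in $t\ge0$}: for $t$ in a compact set this is Lemma \ref{asy}, and for all $t$ it comes from sandwiching $\bm u(t,x;\bm u_0)$ in the far field, via Lemma \ref{CP}, between the irregular supersolution $\min\{K e^{\lambda_c(ct-x\cdot e)}\bm\Phi_{\lambda_c},\bm 1\}$ (a supersolution of \eqref{m-system} by (H7), cf.\ Lemma \ref{super}, with $K$ so large that it equals $\bm 1$ on the slice) and a subsolution of the type in Lemma \ref{sub} with coefficient just below $\rho$; Lemma \ref{asy} pins the common leading constant to $\rho$, and then choosing $D_\eta$ large enough to absorb the residual $O(e^{(\lambda_c+\epsilon)s_\eta})$ discrepancy closes the comparison. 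All hypotheses of Lemma \ref{CP} on $D$ are now in force ($\underline{\bm u}\le\bm 1$, $\overline{\bm u}\ge\bm 0$, sub/supersolution in $D$, ordering on $\partial D$), so it yields $\underline{\bm u}(t,x)\le\bm u(t,x;\bm u_0)\le\overline{\bm u}(t,x)$ on $\Omega_{s_\eta}^-\cap((0,\infty)\times\mathbb R^N)$; dropping the inert truncation by $\bm 0$ in $\underline{\bm u}$ and recalling that $t=0$ is already \eqref{u0v0-behavior}, this is the claimed double inequality.

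\textbf{Expected main difficulty.} The crux is the uniform-in-$t$ sharp far-field control of the Cauchy solution used on the lateral face: Lemma \ref{asy} only delivers it for $t$ in compact sets, so one must marry it with time-independent exponential barriers and then propagate the precise constant $\rho$ to all $t\ge0$ — essentially redoing, for the evolution problem, the analysis behind Theorem \ref{main-1}, again leaning on the KPP structure (H7). A secondary, purely bookkeeping, obstacle is that every sub/supersolution inequality above has to be checked componentwise through the cooperative coupling, which is precisely why the corrector is built from the full system eigenfunction $\bm\Phi_{\lambda_c+\epsilon}$ of \eqref{u1um-linear-PE} and not from a scalar exponential.
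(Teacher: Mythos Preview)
Your proposal is correct and follows essentially the same architecture as the paper: build the barriers $U(x,ct-x\cdot e\pm\eta)\pm D_\eta e^{(\lambda_c+\epsilon)(ct-x\cdot e)}\bm\Phi_{\lambda_c+\epsilon}(x)$, verify the sub/supersolution inequalities in the slab $\{ct-x\cdot e\le s_\eta\}$ using the reserve $|\sigma_\epsilon|$, and invoke Lemma~\ref{CP}. The one place you diverge is the lateral face $\{ct-x\cdot e=s_\eta\}$. For the lower barrier the paper does exactly what your truncation by $\bm 0$ encodes: it chooses $D_\eta^-$ and $\underline s_\eta$ so that $\underline{\bm u}_\eta\le\bm 0$ on the slice, making the comparison with $\bm u(t,x;\bm u_0)\ge\bm 0$ trivial. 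For the upper barrier, however, the paper does \emph{not} try to pin the Cauchy solution to the sharp constant $\rho$ uniformly in $t$; it simply shows $\bm u(t,x;\bm u_0)$ is uniformly \emph{small} on the slice --- using Lemma~\ref{asy} for $t\in[0,t_c]$ and the already-proven Lemma~\ref{sss} for $t\ge t_c$ --- and then picks $D_\eta^+$ large enough (with $\bar s_\eta$ fixed first) that $D_\eta^+ e^{(\lambda_c+\epsilon)\bar s_\eta}\bm\Phi_{\lambda_c+\epsilon}$ alone dominates that small quantity. Your route through a crude barrier $\min\{K e^{\lambda_c s}\bm\Phi_{\lambda_c},\bm 1\}$ (Lemma~\ref{super}) would equally yield this uniform smallness and is a valid alternative, but the talk of ``pinning the leading constant to $\rho$'' and an ``$O(e^{(\lambda_c+\epsilon)s_\eta})$ discrepancy'' is a red herring: the gap to close on the lateral face is a fixed small number, not a sharply decaying one, and the argument does not need the precise constant at all. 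One technical point to watch: the sub/supersolution property requires the perturbation to stay small throughout the slab, while the lateral comparison requires it to be of order one at $s=s_\eta$; hence $D_\eta$ and $s_\eta$ are coupled (roughly $D_\eta e^{(\lambda_c+\epsilon)s_\eta}\sim 1$) and cannot be chosen independently as your Step~1 phrasing suggests.
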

\begin{proof}
Assume without loss of generality that $s_0=0$, where $s_0$ is given by \eqref{s0-c}.
For any $\eta>0$, it follows from \eqref{u0v0} that
\begin{equation*}
\limsup_{\varsigma\to-\infty}\sup_{\substack{x\in\mathbb R^N\\-x\cdot e\le\varsigma}}
\left|\frac{U(x,-x\cdot e-\eta)}{\bm u_0(x)}\right|\le\bm 1.
\end{equation*}
Hence there exists $M>0$ such that $U(x,-x\cdot e-\eta)\le\bm u_0(x)$ for any
$x\in\{\mathbb R^N:\ -x\cdot e\le-M\}$.
Since $\mathop{\inf}\limits_{-x\cdot e\ge-M}\bm u_0(x)\ge\bm 0$,
there exists $D_0=D_0(\eta)>0$ such that for any $D\ge D_0$,
\begin{align*}
U(x,-x\cdot e-\eta)-De^{(\lambda_c+\epsilon)(-x\cdot e)}\bm{\Phi}_{\lambda_c+\epsilon}(x)
\le\bm u_0(x)
\end{align*}
for any $x\in\{\mathbb R^N: -x\cdot e\ge-M\}$. Consequently,
\begin{align*}
U(x,-x\cdot e-\eta)-De^{(\lambda_c+\epsilon)(-x\cdot e)}\bm{\Phi}_{\lambda_c+\epsilon}(x)
\le\bm u_0(x),\quad\forall\;x\in\mathbb R^N,\ \ \forall\;D\ge D_0.
\end{align*}
Let
$$
m_0=\min_{i\in I}\left\{\min_{x\in\mathbb R^N}
\frac{\phi_i^{\epsilon}(x)}{\phi_i^c(x)}\right\},\qquad
m_{\epsilon}=\min_{i\in I}\left\{\min_{x\in\mathbb R^N}\phi_i^{\epsilon}(x)\right\}$$
and
$$K_1=\max_{i,k=1,2,\cdots,m}\left\{\max_{(x,\bm u)\in\mathbb R^N\times[-\bm 1,\bm 1]}
\frac{\partial h_i}{\partial u_k}(x,\bm u)\right\}.$$
Observe that there exists $\tau\in(0,1)$ such that for any $|\bm u|\le\tau$,
\begin{align*}
|\bm u|\le\frac{|\sigma_\epsilon|m_{\epsilon}}{2(K_1+1)|\bm{\Phi}_{\lambda_c+\epsilon}|},
\qquad |h_i(x,\bm u)-h_i(x,\bm 0)|
\le\frac{|\sigma_\epsilon|m_{\epsilon}}{2(K_1+1)|\bm{\Phi}_{\lambda_c+\epsilon}|}.
\end{align*}
By Theorem \ref{Asy}, there exists $s_0=s_0(\eta)<0$ such that
$$U(x,s-\eta)\le\frac{3\rho}{2}e^{\lambda_cs}\bm{\Phi}_{\lambda_c}(x),
\quad\forall\;(x,s)\in\mathbb R^N\times(-\infty,s_0].$$
Let $s_1\le s_0$ be such that
$$3\rho e^{\lambda_cs_1}|\bm{\Phi}_{\lambda_c}|\le\tau,\quad
\frac{3\rho}{m_0}e^{\lambda_cs_1}|\bm{\Phi}_{\lambda_c+\epsilon}|\le\tau,$$
and set
$D_\eta^-=\max\left\{D_0,e^{-(\lambda_c+\epsilon)s_1}\right\}$.
Then $\underline{s}_\eta:=\frac{1}{\epsilon}\ln\frac{3\rho}{2m_0D_\eta^-}\le s_1$
for $s_1$ small enough. Define
\begin{align*}
\underline{\bm u}_\eta(t,x)=\bm u\left(t-\frac{\eta}{c},x\right)
-D_\eta^-e^{(\lambda_c+\epsilon)(ct-x\cdot e)}\bm{\Phi}_{\lambda_c+\epsilon}(x).
\end{align*}
It is easy to see that for all $(t,x)\in\{\mathbb R^+\times\mathbb R^N: ct-x\cdot e\le\underline{s}_\eta\}$,
$$\left|\bm u\left(t-\frac{\eta}{c},x\right)\right|
=|U(x,s-\eta)|\le\frac{3\rho}{2}e^{\lambda_c s}|\bm{\Phi}_{\lambda_c}|\le\frac{\tau}{2}$$
and
$$|\underline{\bm u}_\eta(t,x)|\le\left|\bm u\left(t-\frac{\eta}{c},x\right)\right|
+D_\eta^-e^{(\lambda_c+\epsilon)(ct-x\cdot e)}|\bm{\Phi}_{\lambda_c+\epsilon}|
\le\frac{\tau}{2}+\frac{3\rho}{2m_0}e^{\lambda_c\underline{s}_\eta}|\bm{\Phi}_{\lambda_c+\epsilon}|
\le\tau.$$
Moreover, for any
$(t,x)\in\left\{\mathbb R^+\times\mathbb R^N: ct-x\cdot e=\underline{s}_\eta\right\}$,
\begin{align*}
\underline{\bm u}_\eta(t,x)&=U(x,\underline{s}_\eta-\eta)
-D_\eta^-e^{(\lambda_c+\epsilon)\underline{s}_\eta}\bm{\Phi}_{\lambda_c+\epsilon}(x)\\
&\le\rho e^{\lambda_c\underline{s}_\eta}
\left(\frac{3}{2}\bm{\Phi}_{\lambda_c}(x)-\frac{D_\eta^-}{\rho}e^{\epsilon\underline{s}_\eta}
\bm{\Phi}_{\lambda_c+\epsilon}(x)\right)
\le\bm 0.
\end{align*}
Observe that
$\underline{\bm u}_\eta=(\underline{u}_{1,\eta},\underline{u}_{2,\eta},\cdots,
\underline{u}_{m,\eta})$ satisfies
\begin{align*}
&\quad\frac{\partial\underline{u}_{i,\eta}(t,x)}{\partial t}-d_i(x)\Delta\underline{u}_{i,\eta}
-q_i(x)\cdot\nabla\underline{u}_{i,\eta}-f_i(x,\underline{\bm u}_\eta)\\
&=f_i(x,\bm u)-f_i(x,\underline{\bm u}_\eta)
-D_\eta^-e^{(\lambda_c+\epsilon)s}\left[\sum_{j=1}^{i-1}a_{ij}\phi_j^{\epsilon}
+(h_i(x,\bm 0)+|\sigma_\epsilon|)\phi_i^{\epsilon}\right]\\
&=u_i[h_i(x,\bm u)-h_i(x,\underline{\bm u}_\eta)]
+D_\eta^-e^{(\lambda_c+\epsilon)s}\left(h_i(x,\underline{\bm u}_\eta)
-h_i(x,\bm 0)-|\sigma_\epsilon|\right)\phi_i^{\epsilon}\\
&=D_\eta^-e^{(\lambda_c+\epsilon)s}\left\{u_i\sum_{k=1}^{m}
\left(\int_0^1{\frac{\partial h_i}{\partial u_k}(x,s\bm u+(1-s)
\underline{\bm u}_\eta)ds}\right)\phi_k^{\epsilon}
+(h_i(x,\underline{\bm u}_\eta)-h_i(x,\bm 0))\phi_i^{\epsilon}
-|\sigma_\epsilon|\phi_i^{\epsilon}\right\}\\
&\le D_\eta^-e^{(\lambda_c+\epsilon)s}
\left\{(K_1|\bm u|+|h_i(x,\underline{\bm u}_\eta)-h_i(x,\bm 0)|)|\bm{\Phi}_{\lambda_c+\epsilon}|
-|\sigma_\epsilon| m_{\epsilon}\right\}\\
&\le0, \quad\forall\;(t,x)\in\{\mathbb R^+\times\mathbb R^N: ct-x\cdot e\le \underline{s}_\eta\},
\ \quad i\in I,
\end{align*}
where $a_{ij}\equiv0$ if $i=1$.
Consequently, $\underline{\bm u}_\eta$ is a subsolution of \eqref{m-system}
in $\{\mathbb R^+\times\mathbb R^N: ct-x\cdot e\le \underline{s}_\eta\}$.
It then follows from Lemma \ref{CP} that
$$U(x,s-\eta)-D_\eta^-e^{(\lambda_c+\epsilon)s}\bm{\Phi}_{\lambda_c+\epsilon}(x)
\le\bm u(t,x;\bm u_0)$$
for any $(t,x)\in\{\mathbb R^+\times\mathbb R^N: ct-x\cdot e\le \underline{s}_\eta\}$.

Similarly, it can be shown that there exists $D_1>0$ such that for any $D\ge D_1$, there holds
\begin{align*}
\bm u_0(x)\le U(x,-x\cdot e+\eta)
+De^{(\lambda_c+\epsilon)(-x\cdot e)}\bm{\Phi}_{\lambda_c+\epsilon}(x),
\quad\forall\;x\in\mathbb R^N.
\end{align*}
Observe that
$\mathop{\lim}\limits_{-x\cdot e\to-\infty}\bm u(t,x;\bm u_0)=\bm 0$
uniformly in $t\in[0,t_c]$ by Lemma \ref{asy}, and
\begin{align*}
\bm u(t,x;\bm u_0)&\le U(x,ct-x\cdot e+\sigma_c(1-e^{-\beta t}))+\delta_c e^{(\lambda_c+\epsilon)
(ct-x\cdot e+z_0+\sigma_c(1-e^{-\beta t}))}\bm{\Phi}_{\lambda_c+\epsilon}(x)e^{-\beta t}
\end{align*}
for any
$(t,x)\in\left\{[t_c,\infty)\times\mathbb R^N: ct-x\cdot e\le\underline{s}-z_0-\sigma_c\right\}$
in terms of Lemma \ref{sss},
we only need to consider for each $\eta\le\sigma_c$.
Let
$$M_{\epsilon}=\max_{i\in I}\left\{\max_{x\in\mathbb R^N}\phi_i^{\epsilon}(x)\right\},
\qquad\theta_{\epsilon}=\frac{m_{\epsilon}}{M_{\epsilon}}.$$
In view of Lemma \ref{asy}, there exists $s_2\le0$ such that
$$|\bm u(t,x;\bm u_0)|\le\frac{\tau\theta_{\epsilon}}{2},
\quad |U(x,ct-x\cdot e+\eta)|\le\frac{\tau\theta_{\epsilon}}{2},
\quad\forall\;(t,x)\in\{\mathbb R^+\times\mathbb R^N: ct-x\cdot e\le s_2\}.$$

Let
$$\bar{s}_\eta:=\min\left\{s_2,\frac{1}{\lambda_c+\epsilon}\ln\frac{\tau}{2D_1M_{\epsilon}}\right\},
\qquad D_\eta^+:=
\max\left\{D_1,\frac{\tau e^{-(\lambda_c+\epsilon)\bar{s}_\eta}}{2M_{\epsilon}}\right\},$$
and define
\begin{align*}
\overline{\bm u}_\eta(t,x)=\bm u\left(t+\frac{\eta}{c},x\right)
+D_\eta^+e^{(\lambda_c+\epsilon)(ct-x\cdot e)}\bm{\Phi}_{\lambda_c+\epsilon}(x).
\end{align*}
One can easily obtain that
$|\overline{\bm u}_\eta(t,x)|\le\tau$
for all $(t,x)\in\{\mathbb R^+\times\mathbb R^N: ct-x\cdot e\le\bar{s}_\eta\}$,
and
$$\bm u(t,x;\bm u_0)\le\frac{\tau\theta_\epsilon}{2}\le\overline{\bm u}_\eta(t,x),
\quad\forall\;(t,x)\in\{\mathbb R^+\times\mathbb R^N: ct-x\cdot e=\bar{s}_\eta\}.$$
Furthermore, a similar argument as above shows that
$\overline{\bm u}_\eta$ is a supersolution of \eqref{m-system} in
$\{\mathbb R^+\times\mathbb R^N: ct-x\cdot e\le\bar{s}_\eta\}$.
Lemma \ref{CP} again implies that
$$\bm u(t,x;\bm u_0)\le U(x,s+\eta)+D_\eta^+e^{(\lambda_c+\epsilon)s}\bm{\Phi}_{\lambda_c+\epsilon}(x)$$
for any $(t,x)\in\{\mathbb R^+\times\mathbb R^N: ct-x\cdot e\le\bar{s}_\eta\}$.
Let $s_\eta=\min\{\underline{s}_\eta,\bar{s}_\eta\}$ and $D_\eta=\max\{D_\eta^-,D_\eta^+\}$,
then the proof is complete.
\end{proof}


\subsection{The critical case $c=c_+^0(e)$}

In this subsection, we consider the critical case $c_*=c_+^0(e)$.
Recall that $\lambda_*>0$ is such that $\kappa_1(\lambda_*)=c_*\lambda_*$, and
$$\bm{\Phi}_{\lambda_*}(x)
=\left(\phi_{1,*}(x),\phi_{2,*}(x),\cdots,\phi_{m,*}(x)\right)$$
is the positive periodic eigenfunction of \eqref{Ep} with $\lambda=\lambda_*$
associated with the principal eigenvalue $\kappa=\kappa_1(\lambda_*)$.
Let $\epsilon_*>0$ be a fixed constant satisfying \eqref{e} and
$|\sigma_*|\le\frac{|\mu^-|}{2}$, and
$$\bm{\Phi}_{\lambda_*+\epsilon_*}(x)
=\left(\phi_{1,\epsilon_*}(x),\phi_{2,\epsilon_*}(x),\cdots,\phi_{m,\epsilon_*}(x)\right)$$
be the positive periodic eigenfunction of \eqref{Ep} with $\lambda=\lambda_*+\epsilon_*$
associated with $\kappa=\kappa_1(\lambda_*+\epsilon_*)$,
where
$$\sigma_*=c_*(\lambda_*+\epsilon_*)-\kappa_1(\lambda_*+\epsilon_*)<0.$$

Let $\chi(s)$ be defined by \eqref{chi-s}, and define
\begin{align*}
\bm\xi_*(x,s)&=\chi(s)e^{\lambda_*s}\left(\bm{\Phi}_{\lambda_*}(x)
-e^{\epsilon_*s}\bm{\Phi}_{\lambda_*+\epsilon_*}(x)\right)+(1-\chi(s))\bm{\Psi}(x).
\end{align*}

\begin{lemma}\label{z-0-*}
Assume (H1)-(H8). Let $U(x,c_*t-x\cdot e)$ be the critical pulsating traveling front of \eqref{m-system}. Then there exists $z_0\in\mathbb R$ such that
\begin{equation*}
\sup_{(x,s)\in\mathbb R^N\times\mathbb R}
\frac{U(x,s)-\delta\bm\xi_*(x,s+z_0)-\bm 1}{\bm{\Psi}(x)}\le-\frac{\delta}{2}\bm 1,
\quad\forall\;\delta\in(0,\delta_m].
\end{equation*}
\end{lemma}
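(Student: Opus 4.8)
The plan is to follow the proof of Lemma~\ref{z-0} line by line; the only genuinely new point is that the critical corrector $\bm\xi_*$ contains the subtracted term $-e^{\epsilon_*s}\bm{\Phi}_{\lambda_*+\epsilon_*}(x)$ and is therefore not manifestly nonnegative, so a preliminary positivity check is needed. I would first record that $\bm\xi_*(x,s)\ge\bm 0$ on $\mathbb R^N\times\mathbb R$: since $\bm{\Phi}_{\lambda_*}$ and $\bm{\Phi}_{\lambda_*+\epsilon_*}$ are continuous, strictly positive periodic functions, one may take the threshold $\bar s$ in \eqref{chi-s} (hence $\underline s<\bar s$) so negative that $e^{\epsilon_*s}\phi_{i,\epsilon_*}(x)\le\phi_{i,*}(x)$ for all $i\in I$, $x\in\mathbb R^N$ and $s\le\bar s$; then on $(-\infty,\bar s]$ the first summand of $\bm\xi_*$ is $\ge\bm 0$ and $(1-\chi)\bm\Psi\ge\bm 0$, while on $[\bar s,+\infty)$ one has $\bm\xi_*=\bm\Psi>\bm 0$.

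With this in hand, I would reduce the assertion, exactly as in Lemma~\ref{z-0}, to proving
\begin{equation*}
\limsup_{z\to+\infty}\left\{\sup_{(x,s)\in\mathbb R^N\times\mathbb R,\ \delta\in(0,\delta_m]}\frac{U(x,s)-\delta\bm\xi_*(x,s+z)-\bm 1}{\delta\bm\Psi(x)}\right\}\le-\bm 1,
\end{equation*}
since once this holds one may pick $z_0\in\mathbb R$ for which the bracketed supremum is $\le-\frac{1}{2}\bm 1$, which is the claimed inequality. To prove the $\limsup$ bound I would argue by contradiction: if it fails there are sequences $\{(x_n,s_n)\}$, $\{\delta_n\}\subset(0,\delta_m]$ and $z_n\to+\infty$ with
\begin{equation*}
\frac{U_i(x_n,s_n)-\delta_n\xi_{*,i}(x_n,s_n+z_n)-1}{\delta_n\psi_i(x_n)}\ge-1+\tau
\end{equation*}
for some fixed $i\in I$ and $\tau\in(0,1)$, and by $L$-periodicity we may take $x_n\in\overline{\mathcal D}$, $x_n\to x_*$. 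If $s_n+z_n\to+\infty$, then eventually $\chi(s_n+z_n)=0$, so $\xi_{*,i}(x_n,s_n+z_n)=\psi_i(x_n)$, and together with $U_i\le 1$ this forces the left side to be $\le-1$, a contradiction. Otherwise $\{s_n+z_n\}$ is bounded from above, hence $s_n\to-\infty$; using $U_i(x,s)\to 0$ uniformly in $x$, the positivity $\xi_{*,i}\ge 0$ established above, and $\delta_n\le\delta_m$, one passes to the limit to obtain $-1/\psi_i(x_*)\ge(-1+\tau)\delta_m$, that is $1/\psi_i(x_*)\le(1-\tau)\delta_m<\delta_m$, contradicting $\delta_m=\min_{k}\min_{x}1/\psi_k(x)$.

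The main, and essentially the only, obstacle relative to the super-critical Lemma~\ref{z-0} is guaranteeing $\bm\xi_*\ge\bm 0$; once that is secured the two case analyses ($s_n+z_n\to+\infty$ versus $\{s_n+z_n\}$ bounded above) go through word for word as in the proof of Lemma~\ref{z-0}, and no new estimates on $U$ beyond those already available are required. I would therefore present the proof by carrying out the positivity step in detail and then stating that the remainder of the argument is identical to that of Lemma~\ref{z-0}, with $\bm\xi$ replaced by $\bm\xi_*$.
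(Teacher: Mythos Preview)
Your proposal is correct and takes essentially the same approach as the paper, which simply says ``The proof is similar to that of Lemma~\ref{z-0}, we omit it here.'' Your explicit verification that $\bm\xi_*\ge\bm 0$ (by choosing $\bar s$ in \eqref{chi-s} sufficiently negative) is a genuine point the paper glosses over, and it is precisely what is needed to make the second branch of the contradiction argument go through verbatim.
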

\begin{proof}
The proof is similar to that of Lemma \ref{z-0}, we omit it here.
\end{proof}

\begin{lemma}\label{ss-c-*}
Assume (H1)-(H8).
Let $\bm u(t,x)=U(x,c_*t-x\cdot e)$ be the critical pulsating traveling front of \eqref{m-system}.
Then there exists $\delta_*\in(0,\delta_m]$ such that for any $s_0\in\mathbb R$, $\delta\in(0,\delta_*]$ and $\sigma\ge\frac{1}{\beta}$, where $\beta:=|\sigma_*|$,
the functions $\bm u^\pm(t,x)$ defined by
\begin{equation*}
\begin{aligned}
\bm u^\pm(t,x)&=U(x,c_*t-x\cdot e+s_0\pm\sigma(1-e^{-\beta t}))
\pm\delta\bm\xi_*(x,c_*t-x\cdot e+s_0+z_0\pm\sigma(1-e^{-\beta t}))e^{-\beta t}
\end{aligned}
\end{equation*}
are super- and subsolutions of \eqref{m-system} in $(0,\infty)\times\mathbb R^N$,
where $z_0$ is given by Lemma \ref{z-0-*}.
\end{lemma}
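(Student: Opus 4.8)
The plan is to follow the proof of Lemma~\ref{ss-c} line by line, modified to accommodate the critical decay $|s|e^{\lambda_*s}$ of the front given by Theorem~\ref{Asy}(ii). It suffices to check that $\bm u^-$ is a subsolution of \eqref{m-system} on $(0,\infty)\times\mathbb R^N$; the supersolution property of $\bm u^+$ is entirely analogous. Setting $\hat s=c_*t-x\cdot e+s_0-\sigma(1-e^{-\beta t})$ and $\check s=\hat s+z_0$, so that $\bm u^-(t,x)=U(x,\hat s)-\delta\bm\xi_*(x,\check s)e^{-\beta t}=\bm u\bigl(\tfrac{\hat s+x\cdot e}{c_*},x\bigr)-\delta\bm\xi_*(x,\check s)e^{-\beta t}$, I would compute for each $i\in I$ the defect $\mathcal N_i(x,\bm u^-):=\partial_tu_i^--d_i(x)\Delta u_i^--q_i(x)\cdot\nabla u_i^--f_i(x,\bm u^-)$. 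Since $\bm u=U$ solves \eqref{m-system} exactly, the time shift $-\sigma(1-e^{-\beta t})$ contributes precisely $-\sigma\beta e^{-\beta t}\partial_sU_i$ to $\mathcal N_i$; combining this with the eigenvalue identities for $\bm\Phi_{\lambda_*}$ and $\bm\Phi_{\lambda_*+\epsilon_*}$ from \eqref{Ep}, with the identity $\mu^-\psi_i=d_i\Delta\psi_i+q_i\cdot\nabla\psi_i+\sum_k\frac{\partial f_i}{\partial u_k}(x,\bm 1)\psi_k$ from (H8), and with a mean value expansion of $f_i(x,\bm u^-)-f_i(x,U(x,\hat s))$, one arrives at $\mathcal N_i=-\delta e^{-\beta t}\bigl\{\tfrac{\sigma\beta}{\delta}\partial_sU_i+\chi e^{\lambda_*\check s}[\cdots]+(1-\chi)[\cdots]+R(x,\check s)\bigr\}$, in which the $\chi$-bracket carries the definite-sign residual (proportional to $\sigma_*<0$) of the corrector $e^{(\lambda_*+\epsilon_*)\check s}\bm\Phi_{\lambda_*+\epsilon_*}$ plus a nonlinear error controlled by $h_i(x,\bm 0)-h_i(x,\bm u^-)$, the $(1-\chi)$-bracket carries $-(\mu^-+\beta)\psi_i$ plus an error controlled by $h_i(x,\bm 1)-h_i(x,\bm u^-)$, and $R(x,\check s)$ gathers the $\chi',\chi''$ terms and is supported in the fixed compact interval $[\underline s,\bar s]$.

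Next I would split $\mathbb R^N\times\mathbb R$ into the three regimes $\hat s\le-M$, $\hat s\ge M$ and $-M\le\hat s\le M$, for $M$ large and $0<\delta_*\le\delta_m$ small. In the critical far field $\hat s\le-M$ one has $h_i(x,\bm u^-)\to h_i(x,\bm 0)$ uniformly, and by Theorem~\ref{Asy}(ii) together with Corollary~\ref{U-s-estimate} one has $\partial_sU_i\ge\tfrac{\underline{\lambda}_i}{2}U_i\ge\tfrac{\underline{\lambda}_i\rho}{4}|\hat s|e^{\lambda_*\hat s}\phi_{i,*}$; the term $\tfrac{\sigma\beta}{\delta}\partial_sU_i$, of order $|\hat s|e^{\lambda_*\hat s}$, together with the negative corrector contribution of order $e^{(\lambda_*+\epsilon_*)\check s}$, must absorb the nonlinear error of order $|\hat s|^2e^{2\lambda_*\hat s}$ and the $R$-remnant, which works because $|\hat s|^2e^{(\lambda_*-\epsilon_*)\hat s}\to0$ as $\hat s\to-\infty$ — exactly the rate bookkeeping used in Lemma~\ref{sub-star} — so enlarging $M$ forces $\mathcal N_i\le0$ there. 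In the near field $\hat s\ge M$ one has $\bm\xi_*(\cdot,\check s)=\bm\Psi$, $\bm u^-$ close to $\bm 1$, and $\mathcal N_i\le0$ follows from $-(\mu^-+\beta)\psi_i\ge\tfrac{|\mu^-|}{2}\psi_i>0$ (using $\beta=|\sigma_*|\le|\mu^-|/2$) against an $O(\delta)$ error, for $\delta_*$ small. On $-M\le\hat s\le M$ the term $\tfrac{\sigma\beta}{\delta}\partial_sU_i\ge\tfrac{\sigma\beta}{\delta}\alpha_i$ with $\alpha_i:=\inf_{\mathbb R^N\times[-M,M]}\partial_sU_i>0$ (positive since $U_s\gg\bm 0$ by Theorem~\ref{Ex-PTW}) dominates the remaining bounded quantities, including $R(x,\check s)$, once $\sigma\beta\ge1$ and $\delta\le\delta_*$ with $\delta_*$ chosen in terms of $\alpha_i$ and the supremum of those quantities over $[-M,M]$. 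Combining the three regimes yields $\mathcal N_i(x,\bm u^-)\le0$ for all $i$ and all $(t,x)\in(0,\infty)\times\mathbb R^N$; the cutoff levels $\underline s<\bar s$ are fixed once and, if needed, with $\bar s$ negative enough that $e^{\epsilon_*\bar s}\bm\Phi_{\lambda_*+\epsilon_*}\ll\bm\Phi_{\lambda_*}$, so that $\bm\xi_*$ is bounded and the Taylor expansions of $h_i$ remain valid on the relevant neighborhood of $[\bm 0,\bm 1]$.

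The main obstacle is the far-field regime $\hat s\to-\infty$: the algebraic factor $|\hat s|$ in the critical asymptotics forces one to keep precise track of the competing rates $|\hat s|e^{\lambda_*\hat s}$ (from $\partial_sU_i$), $e^{(\lambda_*+\epsilon_*)\hat s}$ (from the corrector and its residual $\sigma_*$) and $|\hat s|^2e^{2\lambda_*\hat s}$ (from the quadratic error $[h_i(x,\bm 0)-h_i(x,\bm u^-)]U_i$), and in particular to use the sharp lower bound $\partial_sU_i\ge\tfrac{\underline{\lambda}_i}{2}U_i$ of Corollary~\ref{U-s-estimate} rather than a crude estimate — which is exactly where Theorem~\ref{Asy}(ii) enters. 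Once these rate comparisons are secured, the near-field and compact regimes are routine and mirror the proof of Lemma~\ref{ss-c} almost verbatim.
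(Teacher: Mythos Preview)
Your proposal is correct and follows essentially the same approach as the paper: both reduce to the three-regime argument of Lemma~\ref{ss-c}, invoking Theorem~\ref{Asy}(ii) together with Corollary~\ref{U-s-estimate} to secure the lower bound $\partial_sU_i\ge\tfrac{\underline{\lambda}_i\rho}{4}|\hat s|e^{\lambda_*\hat s}\phi_{i,*}$ in the far field, (H8) with $\beta=|\sigma_*|\le|\mu^-|/2$ in the near field, and strict monotonicity $U_s\gg\bm 0$ on the compact middle strip. The paper's own proof is in fact only a sketch that records the defect computation and the far-field derivative bound, then defers to Lemma~\ref{ss-c}; your outline fills in slightly more detail but is otherwise the same argument.
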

\begin{proof}
We only give a sketch here since the proof is similar to that of Lemma \ref{ss-c}.
Let $\hat{s}=c_*t-x\cdot e+s_0-\sigma(1-e^{-\beta t})$ and $\check{s}=\hat{s}+z_0$,
then
$\bm u^-(t,x)=U(x,\hat{s})-\delta\bm\xi_*(x,\hat{s}+z_0)e^{-\beta t}$,
and for each $i$, a direct calculation shows that
\begin{align*}
&\quad\frac{\partial u_i^-(t,x)}{\partial t}-d_i(x)\Delta u_i^--q_i(x)\cdot\nabla u_i^--f_i(x,\bm u^-)\\
&=f_i(x,\bm u)-f_i(x,\bm u^-)-\frac{\sigma\beta}{c}\frac{\partial u_i}{\partial t}e^{-\beta t}\\
&\quad-\delta e^{-\beta t}\left\{\chi e^{\lambda_*\check{s}}
\left[\sum_{j=1}^{i-1}a_{ij}\phi_{j,*}+(h_i(x,\bm 0)-\beta)\phi_{i,*}\right] \right .\\
&\left .\hspace{2.2cm}-\chi e^{(\lambda_*+\epsilon_*)\check{s}}
\left[\sigma_*\phi_{i,\epsilon_*}+\sum_{j=1}^{i-1}a_{ij}\phi_{j,\epsilon_*}
+(h_i(x,\bm 0)-\beta)\phi_{i,\epsilon_*}\right]\right .\\
&\left .\hspace{2.2cm}-(1-\chi)\left[\mu^-\psi_i+\beta\psi_i
-\sum_{k=1}^m\frac{\partial f_i}{\partial u_k}(x,\bm 1)\psi_k\right]+R_*(x,\check{s})\right\}\\
&=-\delta e^{-\beta t}\left\{\frac{\sigma\beta}{\delta}\frac{\partial U_i}{\partial s}
+\chi e^{\lambda_*\check{s}}\left[(h_i(x,\bm 0)-h_i(x,\bm u^-))\phi_{i,*}
-u_i\sum_{k=1}^{m}h_{i,k}(x,\hat{s};\delta)\phi_{k,*}-\beta\phi_{i,*}\right] \right .\\
&\left .\hspace{2cm}-\chi e^{(\lambda_*+\epsilon_*)\check{s}}
\left[(h_i(x,\bm 0)-h_i(x,\bm u^-))\phi_{i,\epsilon_*}
-u_i\sum_{k=1}^{m}h_{i,k}(x,\hat{s};\delta)\phi_{k,\epsilon_*}
-2\beta\phi_{i,\epsilon_*}\right] \right .\\
&\left .\hspace{2cm}+(1-\chi)\Bigg[-(\mu^-+\beta)\psi_i
+\sum_{k=1}^m\left(\frac{\partial f_i}{\partial u_k}(x,\bm 1)
-u_ih_{i,k}(x,\hat{s};\delta)\right)\psi_k\right .\\
&\left .\hspace{3.9cm}+(h_i(x,\bm 1)-h_i(x,\bm u^-))\psi_i\Bigg]+R_*(x,\check{s})\right\},
\end{align*}
where $a_{ij}\equiv0$ if $i=1$, and
$$h_{i,k}(x,\hat{s};\delta)
=\int_0^1{\frac{\partial h_i}{\partial u_k}(x,s\bm u+(1-s)\bm u^-)}ds
=\int_0^1{\frac{\partial h_i}{\partial u_k}\left(x,\bm u-(1-s)\delta\bm\xi_*e^{-\beta t}\right)}ds,
$$
and
\begin{align*}
R_*(x,\check{s})=&\chi^{\prime}e^{\lambda_*\check{s}}
\left[(c-\sigma\beta e^{-\beta t}+q_i\cdot e)(\phi_{i,*}-e^{\epsilon_*\check{s}}\phi_{i,\epsilon_*})
+2d_i(\nabla\phi_{i,*}-e^{\epsilon_*\check{s}}\nabla\phi_{i,\epsilon_*})\cdot e\right]\\
&-\chi^\prime[(c-\sigma\beta e^{-\beta t})\psi_i+2d_i\nabla\psi_i\cdot e+q_i\cdot e\psi_i]+d_i\chi^{\prime\prime}[\psi_i-e^{\lambda_*\check{s}}(\phi_{i,*}
-e^{\epsilon_*\check{s}}\phi_{i,\epsilon_*})]\\
&-\chi e^{\lambda_*\check{s}}\sigma\beta e^{-\beta t}
[\lambda_*\phi_{i,*}-(\lambda_*+\epsilon_*)e^{\epsilon_*\check{s}}\phi_{i,\epsilon_*}].
\end{align*}
Noting from Theorem \ref{Asy} and Corollary \ref{U-s-estimate} that
for each $i$, there exists $M>0$ such that
\begin{align*}
&\frac{\partial U_i(x,s)}{\partial s}\ge\frac{\underline{\lambda}_i}{2}U_i(x,s)
\ge\frac{\underline{\lambda}_i\rho}{4}|s|e^{\lambda_c s}\phi_i^c(x),
\quad\forall\;(x,s)\in\mathbb R^N\times(-\infty,-M].
\end{align*}
By following the same line to the proof of Lemma \ref{ss-c}, one can prove that
$\bm u^-$ is a subsolution of \eqref{m-system} in $(0,\infty)\times\mathbb R^N$.
Similarly, $\bm u^+$ is a supersolution of \eqref{m-system}.
The proof is complete.
\end{proof}

In the following of this subsection, denote
\begin{align*}
\bm u^\pm_\sigma(t,x,s_0)&=U(x,c_*t-x\cdot e+s_0\pm\sigma(1-e^{-\beta t}))
\pm\delta_*\bm\xi_*(x,c_*t-x\cdot e+s_0+z_0\pm\sigma(1-e^{-\beta t}))e^{-\beta t}.
\end{align*}

\begin{lemma}\label{sss-*}
Assume (H1)-(H8). Let $\bm 0<\bm u_0<\bm 1$ satisfy
\eqref{u0v0} for some $\varepsilon_0\in(0,\frac{\delta_*}{2\delta_M})$
and \eqref{u0v0-behavior} with $\tau=1$, where $\delta_*>0$ is given in Lemma \ref{ss-c-*}.
Then there exist $s_0\in\mathbb R$, $\sigma_*\ge1$ and $t_*>0$ such that for any $\sigma\ge\sigma_*$,
\begin{equation*}
\bm u^-_\sigma(t,x,s_0)\le\bm u(t,x;\bm u_0)\le\bm u^+_\sigma(t,x,s_0),
\quad\forall\;(t,x)\in[t_*,\infty)\times\mathbb R^N.
\end{equation*}
\end{lemma}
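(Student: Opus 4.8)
The plan is to repeat the three-part scheme of the proof of Lemma \ref{sss} (the super-critical case), systematically replacing every exponential object by its critical counterpart: the front decays like $|s|e^{\lambda_*s}$ rather than $e^{\lambda_cs}$ (Theorem \ref{Asy}(ii)), the barrier $\bm\xi$ is replaced by $\bm\xi_*$, Lemma \ref{z-0} by Lemma \ref{z-0-*}, Lemma \ref{ss-c} by Lemma \ref{ss-c-*}, and Lemma \ref{asy} is invoked with $\tau=1$. First one fixes the time $t_*$ and the shift $s_0$; then one shows that $\bm u(t_*,\cdot;\bm u_0)$ is squeezed between $\bm u^-_\sigma(t_*,\cdot,s_0)$ and $\bm u^+_\sigma(t_*,\cdot,s_0)$ for all $\sigma$ large; finally one propagates this ordering to all $t\ge t_*$ via the comparison principle.

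For the first step, writing $\bm u(t,x;\bm u_0)=T(t)\bm u_0+\int_0^t T(t-s)\bm F(x,\bm u(s,x;\bm u_0))\,ds$ with $T(t)={\rm diag}(T_i(t))_{i\in I}$ the heat-type semigroups, one has $\lim_{t\to0}\|\bm u(t,\cdot;\bm u_0)-\bm u_0\|=0$. Choosing $\gamma>1$ with $\gamma\varepsilon_0\le\delta_*/(2\delta_M)$, assumption \eqref{u0v0} yields a small $t_*>0$ with
\[\liminf_{\varsigma\to+\infty}\Big\{\inf_{x\in\mathbb R^N,\,-x\cdot e\ge\varsigma}\frac{\bm u(t_*,x;\bm u_0)-\bm 1}{\bm\Psi(x)}\Big\}\gg-\delta_M\gamma\varepsilon_0 e^{-\beta t_*}\bm 1,\]
and Lemma \ref{z-0-*} then forces $U(x,s)-\delta_*\bm\xi_*(x,s+z_0)e^{-\beta t_*}-\bm 1$ to lie strictly below this liminf, uniformly in $(x,s)$. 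On the other side, by Theorem \ref{Asy}(ii) there is $s_0=s_0(k)$, uniquely determined by $k$, with
\[\limsup_{\varsigma\to-\infty}\Big\{\sup_{x\in\mathbb R^N,\,-x\cdot e\le\varsigma}\Big|\frac{U(x,-x\cdot e+s_0)}{k|x\cdot e|e^{-\lambda_*(x\cdot e)}\bm\Phi_{\lambda_*}(x)}-\bm 1\Big|\Big\}=0,\]
so that $U(x,\cdot+s_0)$ matches the behaviour of $\bm u_0$ in \eqref{u0v0-behavior} as $-x\cdot e\to-\infty$; this also makes Lemma \ref{asy} applicable with $\tau=1$.

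The core step is to produce $\sigma_1,\sigma_2\ge1$ with $\bm u^-_\sigma(t_*,\cdot,s_0)\le\bm u(t_*,\cdot;\bm u_0)$ for $\sigma\ge\sigma_1$ and $\bm u(t_*,\cdot;\bm u_0)\le\bm u^+_\sigma(t_*,\cdot,s_0)$ for $\sigma\ge\sigma_2$, which I would establish by contradiction exactly as in Lemma \ref{sss}. For the lower bound, suppose $u^-_{i,\sigma_n}(t_*,x_n,s_0)>u_i(t_*,x_n;\bm u_0)$ with $\sigma_n\to\infty$; put $s_n=c_*t_*-x_n\cdot e+s_0-\sigma_n(1-e^{-\beta t_*})$. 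If $\{s_n\}$ is bounded below then $-x_n\cdot e\to+\infty$ and the $t_*$-estimate above (through Lemma \ref{z-0-*}) is violated. Otherwise $s_n\to-\infty$, so $u_i(t_*,x_n;\bm u_0)\to0$; if moreover $\{x_n\cdot e\}$ is bounded, write $x_n=x_n'+x_n''$ with $x_n'\in\mathcal L$, $x_n''\in\overline{\mathcal D}$, pass to a locally uniform limit $\bm u_\infty$ of $\bm u(\cdot,\cdot+x_n';\bm u_0)$, and the strong maximum principle forces $u_{i,\infty}\equiv0$ on $(0,t_*]\times\mathbb R^N$, contradicting the positivity on a ball $\mathring B_{R/2}(0)$ propagated from \eqref{u0v0}; and if $\{x_n\cdot e\}$ is unbounded then $-x_n\cdot e\to-\infty$, and Theorem \ref{Asy}(ii) together with Lemma \ref{asy} (with $\tau=1$) give $0=\lim_n u^-_{i,\sigma_n}(t_*,x_n,s_0)/U_i(x_n,c_*t_*-x_n\cdot e+s_0)\ge\lim_n u_i(t_*,x_n;\bm u_0)/U_i(x_n,c_*t_*-x_n\cdot e+s_0)=1$, impossible. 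The upper bound is symmetric, using $\xi_{*,j}\ge0$ and Lemma \ref{asy} once more.

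Finally, with $\sigma_*=\max\{\sigma_1,\sigma_2\}$, Lemma \ref{ss-c-*} makes $\bm u^-_\sigma(\cdot,\cdot,s_0)$ and $\bm u^+_\sigma(\cdot,\cdot,s_0)$ sub- and supersolutions of \eqref{m-system} on $(0,\infty)\times\mathbb R^N$ for every $\sigma\ge\sigma_*$; since $\bm u^-_\sigma\le\bm 1$, $\bm u^+_\sigma>\bm 0$, $\bm 0\le\bm u(t,x;\bm u_0)\le\bm 1$, and the three functions are correctly ordered at $t=t_*$, the comparison principle (Lemma \ref{CP}, or the maximum principle for cooperative systems) yields $\bm u^-_\sigma(t,x,s_0)\le\bm u(t,x;\bm u_0)\le\bm u^+_\sigma(t,x,s_0)$ on $[t_*,\infty)\times\mathbb R^N$. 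I expect the main obstacle to be the careful handling of the region $c_*t-x\cdot e\to-\infty$: because the critical front carries the polynomial prefactor $|s|$, one must compare the ratios $U(x,s+z)/\big(|s|e^{\lambda_*s}\bm\Phi_{\lambda_*}(x)\big)$ with uniform control on the shift $z$, and the compactness/maximum-principle step in the sub-case with $\{x_n\cdot e\}$ bounded must be arranged so that positivity genuinely propagates to yield the contradiction.
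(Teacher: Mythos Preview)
Your proposal is correct and follows exactly the approach the paper takes: the paper's own proof of Lemma~\ref{sss-*} simply fixes $s_0$ via Theorem~\ref{Asy}(ii) (your displayed formula for $s_0$) and then states that ``the remaining of the proof is similar to that of Lemma~\ref{sss}, we omit the details here.'' Your three-part scheme---choosing $t_*$ through Lemma~\ref{z-0-*}, the contradiction arguments for $\sigma_1,\sigma_2$ via the case analysis on $\{s_n\}$ and $\{x_n\cdot e\}$ using Theorem~\ref{Asy}(ii) and Lemma~\ref{asy} with $\tau=1$, and the final comparison step using Lemma~\ref{ss-c-*}---is precisely the omitted argument, carried out with the correct critical substitutions.
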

\begin{proof}
By Theorem \ref{Asy}, there exists $s_0=s_0(k)$ such that
\begin{equation}\label{s0-*}
\limsup_{\varsigma\to-\infty}\left\{\sup_{\substack{x\in\mathbb R^N\\-x\cdot e\le\varsigma}}
\left|\frac{U(x,-x\cdot e+s_0)}{k|x\cdot e|e^{-\lambda_*(x\cdot e)}
\bm{\Phi}_{\lambda_*}(x)}-\bm 1\right|\right\}=0.
\end{equation}
The remaining of the proof is similar to that of Lemma \ref{sss}, we omit the details here.
\end{proof}

\begin{lemma}\label{s-eta-D-eta}
Suppose that all the assumptions in Lemma \ref{sss-*} are satisfied.
Let $\bm u(t,x)=U(x,c_*t-x\cdot e)$ be the critical pulsating traveling front of \eqref{m-system}.
Then for any $\eta>0$, there exist $D_\eta>0$ and $s_\eta\le\bar{s}$ such that
\begin{align*}
U(x,c_*t-x\cdot e-\eta)-D_\eta e^{\lambda_*(c_*t-x\cdot e)}
\left(\bm{\Phi}_{\lambda_*}(x)
-e^{\epsilon_*(c_*t-x\cdot e)}\bm{\Phi}_{\lambda_*+\epsilon_*}(x)\right)
\le\bm u(t,x;\bm u_0)
\end{align*}
and
\begin{align*}
\bm u(t,x;\bm u_0)\le U(x,c_*t-x\cdot e+\eta)+D_\eta e^{\lambda_*(c_*t-x\cdot e)}
\left(\bm{\Phi}_{\lambda_*}(x)
-e^{\epsilon_*(c_*t-x\cdot e)}\bm{\Phi}_{\lambda_*+\epsilon_*}(x)\right)
\end{align*}
for any $(t,x)\in\{\mathbb R^+\times\mathbb R^N\ | \ c_*t-x\cdot e\le s_\eta\}$.
\end{lemma}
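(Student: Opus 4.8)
The plan is to mirror the proof of Lemma \ref{c-s-s} (the super-critical case), with two substitutions forced by the fact that the critical front decays like $\rho|s|e^{\lambda_*s}\bm{\Phi}_{\lambda_*}(x)$ rather than purely exponentially (Theorem \ref{Asy}(ii)). First, the exponential corrector $e^{(\lambda_c+\epsilon)(c_*t-x\cdot e)}\bm{\Phi}_{\lambda_c+\epsilon}(x)$ used there is replaced by the ``linear part'' of the corrector $\bm\xi_*$ from Lemma \ref{ss-c-*}, namely
\[
\bm{G}(t,x):=e^{\lambda_*(c_*t-x\cdot e)}\bm{\Phi}_{\lambda_*}(x)-e^{(\lambda_*+\epsilon_*)(c_*t-x\cdot e)}\bm{\Phi}_{\lambda_*+\epsilon_*}(x),
\]
which is $\gg\bm 0$ as soon as $c_*t-x\cdot e$ is negative enough that $e^{\epsilon_*(c_*t-x\cdot e)}M_{\epsilon_*}<m_*$. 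Second, I would normalize via \eqref{s0-*}: since $U$ is determined only up to translation I take $s_0=0$ there, so that \eqref{u0v0-behavior} and Theorem \ref{Asy}(ii) give $U(x,-x\cdot e)\sim\bm u_0(x)$, hence $U(x,-x\cdot e\mp\eta)\sim e^{\mp\lambda_*\eta}\bm u_0(x)$ as $-x\cdot e\to-\infty$.

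The heart of the matter is to check that, for $D_\eta^->0$ large and $s_\eta$ sufficiently negative, $\underline{\bm u}_\eta(t,x):=\bm u(t-\tfrac{\eta}{c_*},x)-D_\eta^-\bm{G}(t,x)$ is a subsolution of \eqref{m-system} on $\{c_*t-x\cdot e\le s_\eta\}$, and, symmetrically, that $\overline{\bm u}_\eta(t,x):=\bm u(t+\tfrac{\eta}{c_*},x)+D_\eta^+\bm{G}(t,x)$ is a supersolution there. To this end I would first record, using $\kappa_1(\lambda_*)=c_*\lambda_*$, $c_*(\lambda_*+\epsilon_*)-\kappa_1(\lambda_*+\epsilon_*)=\sigma_*<0$ and the equations \eqref{Ep} for $\bm{\Phi}_{\lambda_*},\bm{\Phi}_{\lambda_*+\epsilon_*}$, the componentwise identity
\[
\bigl(\partial_t-d_i\Delta-q_i\cdot\nabla-h_i(x,\bm 0)\bigr)G_i=e^{\lambda_*s}\sum_{j=1}^{i-1}a_{ij}\phi_{j,*}-e^{(\lambda_*+\epsilon_*)s}\Bigl(\sigma_*\phi_{i,\epsilon_*}+\sum_{j=1}^{i-1}a_{ij}\phi_{j,\epsilon_*}\Bigr),\qquad s=c_*t-x\cdot e .
\]
Substituting $\underline{\bm u}_\eta$ into \eqref{m-system}, the $a_{ij}$-terms cancel and $\mathcal N_i(x,\underline{\bm u}_\eta)$ collapses to the strictly negative term $-D_\eta^-|\sigma_*|e^{(\lambda_*+\epsilon_*)s}\phi_{i,\epsilon_*}$ plus nonlinear errors which, by the local Lipschitz continuity of $h_i$ near $\bm 0$ together with $|\bm u(t-\tfrac{\eta}{c_*},x)|=|U(x,s-\eta)|\le C|s|e^{\lambda_*s}$ (Theorem \ref{Asy}(ii), or the cruder \eqref{Ui*-induc}) and $|\bm{G}(t,x)|\le Ce^{\lambda_*s}$ for $s\le 0$, are of size $O\bigl(D_\eta^-|s|e^{2\lambda_*s}\bigr)$. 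Since $\lambda_*-\epsilon_*>0$ by \eqref{e}, $|s|e^{(\lambda_*-\epsilon_*)s}\to 0$ as $s\to-\infty$, so the $e^{(\lambda_*+\epsilon_*)s}$-term dominates and the differential inequality holds for all $s\le s_\eta$; the supersolution case is identical, the sign flip turning $+\sigma_*$ into an equally favourable contribution.

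It then remains to feed this into the comparison principle Lemma \ref{CP} on $D=\{t>0,\ c_*t-x\cdot e<s_\eta\}$, exactly as in Lemma \ref{c-s-s}. On the bottom face $\{t=0,\ -x\cdot e<s_\eta\}$ the normalization and the monotonicity of $U$ give $U(x,-x\cdot e-\eta)\le\bm u_0(x)$ (resp.\ $\bm u_0(x)\le U(x,-x\cdot e+\eta)$) once $s_\eta$ is negative enough, and since $\bm{G}\gg\bm 0$ there one gets $\underline{\bm u}_\eta(0,\cdot)\le\bm u_0\le\overline{\bm u}_\eta(0,\cdot)$; on the lateral face $\{c_*t-x\cdot e=s_\eta\}$ one enlarges $D_\eta^\pm$, using that $\bm u(t,\cdot;\bm u_0)$ is small wherever $-x\cdot e$ is very negative — by Lemma \ref{asy} for bounded $t$ and by the sandwich of Lemma \ref{sss-*} for large $t$, the latter also disposing of large $\eta$ at one stroke — so that $\underline{\bm u}_\eta\le\bm 0\le\bm u(t,x;\bm u_0)\le\bm 1\le\overline{\bm u}_\eta$ there; note $\underline{\bm u}_\eta\le\bm u(\cdot-\tfrac{\eta}{c_*},\cdot)\le\bm 1$ and $\overline{\bm u}_\eta\ge\bm 0$ on $\overline D$. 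Lemma \ref{CP} then yields both inequalities on $\{c_*t-x\cdot e\le s_\eta\}$, and one takes $s_\eta$ to be the smallest of the thresholds produced above (in particular $s_\eta\le\bar{s}$) and $D_\eta:=\max\{D_\eta^-,D_\eta^+\}$.

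The step I expect to be the main obstacle is the sub-/supersolution verification in this critical regime. Because the front carries the polynomial factor $|s|$, every nonlinear error term inherits it, so one cannot simply quote the estimates of Lemma \ref{c-s-s}; instead one must exploit the strict inequality $\epsilon_*<\lambda_*$ built into \eqref{e} to ensure $|s|e^{(\lambda_*-\epsilon_*)s}\to 0$, so that the lone favourable term $-D_\eta^\pm|\sigma_*|\phi_{i,\epsilon_*}e^{(\lambda_*+\epsilon_*)s}$ still dominates. A secondary complication, absent in the super-critical proof, is that $\bm{G}$ is \emph{not} sign-definite — it becomes negative once $c_*t-x\cdot e$ is large — so every boundary comparison must be confined to the region $\{c_*t-x\cdot e\le s_\eta\}$ on which $\bm{G}\gg\bm 0$; fortunately that is exactly the region where the lemma is asserted, so the global positivity of the corrector that Lemma \ref{c-s-s} enjoys is not needed here.
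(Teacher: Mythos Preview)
Your proposal is correct and follows essentially the same route as the paper: the paper likewise defines $\underline{\bm u}_\eta=\bm u(t-\eta/c_*,x)-D_\eta^-\,\bm G$ and $\overline{\bm u}_\eta=\bm u(t+\eta/c_*,x)+D_\eta^+\,\bm G$ with your $\bm G$, verifies the sub/supersolution property by showing the nonlinear error is $O\big(|s|e^{(\lambda_*-\epsilon_*)s}\big)$ times the favourable term $-D_\eta^\pm|\sigma_*|e^{(\lambda_*+\epsilon_*)s}\phi_{i,\epsilon_*}$, arranges $\underline{\bm u}_\eta\le\bm 0$ on the lateral face by taking $D_\eta^-\sim 3\rho|\underline{s}_\eta|$, and concludes via Lemma~\ref{CP}. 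The only point to watch in execution is that the nonlinear remainder actually contains a $(D_\eta^-)^2e^{2\lambda_*s}$ contribution (from $|h_i(x,\underline{\bm u}_\eta)-h_i(x,\bm 0)|$), so one should tie $D_\eta^-$ to $|s_\eta|$ as the paper does to absorb it into the same $|s|e^{(\lambda_*-\epsilon_*)s}$ decay.
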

\begin{proof}
Assume without loss of generality that $s_0=0$, where $s_0$ is given by \eqref{s0-*}.
Let $s_*\le-1$ be such that $\theta^*e^{\epsilon_*s}\le\frac{1}{2}$ for all $s\le s_*$,
where $\theta^*=\max_{i\in I}\left\{\max_{x\in\mathbb R^N}
\frac{\phi_{i,\epsilon_*}(x)}{\phi_{i,*}(x)}\right\}$.
By following similar arguments to those of Lemma \ref{c-s-s},
one can prove that there exists $D_0(\eta)>0$ such that
\begin{align*}
&U(x,-x\cdot e-\eta)-D_0e^{-\lambda_*(x\cdot e)}
\left(\bm{\Phi}_{\lambda_*}(x)
-e^{-\epsilon_*(x\cdot e)}\bm{\Phi}_{\lambda_*+\epsilon_*}(x)\right)
\le\bm u_0(x)
\end{align*}
for any $x\in\{\mathbb R^N: -x\cdot e\le s_*\}$.
In view of Theorem \ref{Asy}, there exists $s_1\le s_*$ such that
$$\bm 0<U(x,s)\le\frac{3}{2}\rho|s|e^{\lambda_*s}\bm{\Phi}_{\lambda_*}(x),
\quad\forall\;(x,s)\in\mathbb R^N\times(-\infty,s_1].$$
Denote
$$m_{\epsilon_*}=\min_{i\in I}\left\{\min_{x\in\mathbb R^N}\phi_{i,\epsilon_*}(x)\right\},
\quad K_1=\max_{i,k\in I}\left\{\max_{(x,\bm u)\in\mathbb R^N\times[-\bm 1,\bm 1]}
\frac{\partial h_i}{\partial u_k}(x,\bm u)\right\}.$$
Let $\underline{s}_\eta\le s_1$ be such that
$$6\rho K_1|\bm{\Phi}_{\lambda_*}|^2|s|e^{(\lambda_*-\epsilon_*)s}\le|\sigma_*|m_{\epsilon_*},
\quad\forall\; s\le\underline{s}_\eta,$$
and $D_\eta^-:=\max\{D_0, 3\rho|\underline{s}_\eta|\}$.
Define
\begin{align*}
\underline{\bm u}_\eta(t,x)=\bm u\left(t-\frac{\eta}{c_*},x\right)
-D_\eta^-e^{\lambda_*(c_*t-x\cdot e)}
\left(\bm{\Phi}_{\lambda_*}(x)
-e^{\epsilon_*(c_*t-x\cdot e)}\bm{\Phi}_{\lambda_*+\epsilon_*}(x)\right).
\end{align*}
It is easy to see that
$\underline{\bm u}_\eta(t,x)\le\bm 0$ for any
$(t,x)\in\{\mathbb R^+\times\mathbb R^N: c_*t-x\cdot e=\underline{s}_\eta\}$,
and
$\underline{\bm u}_\eta(t,x)\le\bm 1$ for any
$(t,x)\in\{\mathbb R^+\times\mathbb R^N: c_*t-x\cdot e\le\underline{s}_\eta\}$.
Observe that
$\underline{\bm u}_\eta=(\underline{u}_{1,\eta},\underline{u}_{2,\eta},\cdots,
\underline{u}_{m,\eta})\textcolor{blue}{^T}$ satisfies
\begin{align*}
&\quad\frac{\partial\underline{u}_{i,\eta}(t,x)}{\partial t}-d_i(x)\Delta\underline{u}_{i,\eta}
-q_i(x)\cdot\nabla\underline{u}_{i,\eta}-f_i(x,\underline{\bm u}_\eta)\\
&=f_i(x,\bm u)-f_i(x,\underline{\bm u}_\eta)-D_\eta^-
\left[e^{\lambda_*s}\left(\sum_{j=1}^{i-1}a_{ij}\phi_{j,*}+h_i(x,\bm 0)\phi_{i,*}\right)\right .\\
&\left .\hspace{5.2cm}-e^{(\lambda_*+\epsilon_*)s}\left(\sum_{j=1}^{i-1}a_{ij}\phi_{j,\epsilon_*}
+(h_i(x,\bm 0)-|\sigma_*|)\phi_{i,\epsilon_*}\right)\right]\\
&=u_i(h_i(x,\bm u)-h_i(x,\underline{\bm u}_{\eta}))
+D_\eta^-e^{\lambda_*s}(\phi_{i,*}-e^{\lambda_*s}\phi_{i,\epsilon_*})
(h_i(x,\underline{\bm u}_{\eta})-h_i(x,\bm 0))
-D_\eta^-|\sigma_*|e^{(\lambda_*+\epsilon_*)s}\phi_{i,\epsilon_*}\\
&=D_\eta^-e^{\lambda_*s}\left\{u_i\sum_{k=1}^m{\left(\int_0^1{\frac{\partial h_i}{\partial u_k}
(x,s\bm u+(1-s)\underline{\bm u}_{\eta})ds}\right)
(\phi_{k,*}-e^{\lambda_*s}\phi_{k,\epsilon_*})}
-|\sigma_*|e^{\epsilon_*s}\phi_{i,\epsilon_*} \right .\\
&\left .\hspace{2cm}+(\phi_{i,*}-e^{\lambda_*s}\phi_{i,\epsilon_*})
\sum_{k=1}^m{\left(\int_0^1{\frac{\partial h_i}{\partial u_k}
(x,s\underline{\bm u}_{\eta})ds}\right)
(u_k-D_\eta^-e^{\lambda_*s}(\phi_{k,*}-e^{\lambda_*s}\phi_{k,\epsilon_*}))}\right\}\\
&\le D_\eta^-e^{\lambda_*s}\left\{K_1|\bm u||\bm{\Phi}_{\lambda_*}|
-|\sigma_*|m_{\epsilon_*}e^{\epsilon_*s}
+K_1|\bm{\Phi}_{\lambda_*}|(|\bm u|+D_\eta^-|\bm{\Phi}_{\lambda_*}|e^{\lambda_*s})\right\}\\
&\le D_\eta^-e^{(\lambda_*+\epsilon_*)s}
\left\{(3\rho|s|+D_\eta^-)K_1|\bm{\Phi}_{\lambda_*}|^2e^{(\lambda_*-\epsilon_*)s}
-|\sigma_*|m_{\epsilon_*}\right\}\\
&\le  D_\eta^-e^{(\lambda_*+\epsilon_*)s}
\left\{6\rho K_1|\bm{\Phi}_{\lambda_*}|^2|s|e^{(\lambda_*-\epsilon_*)s}
-|\sigma_*|m_{\epsilon_*}\right\}\\
&\le 0, \quad\forall\;(t,x)\in\{\mathbb R^+\times\mathbb R^N: c_*t-x\cdot e\le\underline{s}_\eta\},
\ \quad i=1,2,\cdots,m,
\end{align*}
where $a_{ij}\equiv0$ if $i=1$. It then follows from Lemma \ref{CP} that
\begin{align*}
&U(x,c_*t-x\cdot e-\eta)-D_\eta^-e^{\lambda_*(c_*t-x\cdot e)}
\left(\bm{\Phi}_{\lambda_*}(x)
-e^{\epsilon_*(c_*t-x\cdot e)}\bm{\Phi}_{\lambda_*+\epsilon_*}(x)\right)
\le\bm u(t,x;\bm u_0)
\end{align*}
for any $(t,x)\in\{\mathbb R^+\times\mathbb R^N: \ c_*t-x\cdot e\le \underline{s}_\eta\}$.
Similarly, one can prove that there exist $D_\eta^+>0$ and $\bar{s}_\eta\in\mathbb R$ such that
\begin{align*}
&\bm u(t,x;\bm u_0)\le U(x,c_*t-x\cdot e+\eta)+D_\eta^+e^{\lambda_*(c_*t-x\cdot e)}
\left(\bm{\Phi}_{\lambda_*}(x)
-e^{\epsilon_*(c_*t-x\cdot e)}\bm{\Phi}_{\lambda_*+\epsilon_*}(x)\right)
\end{align*}
for any $(t,x)\in\{\mathbb R^+\times\mathbb R^N: \ c_*t-x\cdot e\le\bar{s}_\eta\}$.
Set $s_\eta=\min\{\underline{s}_\eta,\bar{s}_\eta\}$ and $D_\eta=\max\{D_\eta^-,D_\eta^+\}$,
the proof is then complete.
\end{proof}

\subsection{Stability of pulsating traveling fronts}

We prove the stability of pulsating traveling fronts in this subsection,
which is induced by the following lemma.

\begin{lemma}\label{louville}
Assume (H1)-(H8).
Let $U(x,ct-x\cdot e)$ be a pulsating traveling front of \eqref{m-system} with $c\ge c_+^0$,
and $\bm u(t,x)$ be a solution of \eqref{m-system} such that
$$U(x,ct-x\cdot e+s_0+\underline{s})\le\bm u(t,x)\le U(x,ct-x\cdot e+s_0+\overline{s}),
\quad\forall\;(t,x)\in\mathbb R\times\mathbb R^N$$
for some $\underline{s}\le0\le\overline{s}$ and $s_0\in\mathbb R$.
Moreover, for each $\eta>0$, there exist $D_\eta>0$ and $s_\eta\in\mathbb R$ such that
for any $(t,x)\in\{\mathbb R^+\times\mathbb R^N: \ ct-x\cdot e\le s_\eta\}$,
\begin{align*}
&\quad U(x,ct-x\cdot e-\eta)-D_\eta e^{(\lambda_c+\epsilon)(ct-x\cdot e)}
\bm{\Phi}_{\lambda_c+\epsilon}(x)\\
&\le\bm u(t,x)\\
&\le U(x,ct-x\cdot e+\eta)+D_\eta e^{(\lambda_c+\epsilon)(ct-x\cdot e)}
\bm{\Phi}_{\lambda_c+\epsilon}(x),\quad\text{if}\ \ c>c_*(e),
\end{align*}
and
\begin{align*}
&\quad U(x,c_*t-x\cdot e-\eta)-D_\eta e^{\lambda_*(c_*t-x\cdot e)}
\left(\bm{\Phi}_{\lambda_*}(x)
-e^{\epsilon_*(c_*t-x\cdot e)}\bm{\Phi}_{\lambda_*+\epsilon_*}(x)\right)\\
&\le\bm u(t,x)\\
&\le U(x,c_*t-x\cdot e+\eta)+D_\eta e^{\lambda_*(c_*t-x\cdot e)}
\left(\bm{\Phi}_{\lambda_*}(x)
-e^{\epsilon_*(c_*t-x\cdot e)}\bm{\Phi}_{\lambda_*+\epsilon_*}(x)\right),
\quad\text{if}\ \ c=c_*(e).
\end{align*}
Then
$$\bm u(t,x)=U(x,ct-x\cdot e+s_0),\quad\forall\;(t,x)\in\mathbb R\times\mathbb R^N.$$
\end{lemma}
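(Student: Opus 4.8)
The plan is to adapt the sliding argument used in the proof of Theorem~\ref{uniqu}, now comparing the entire solution $\bm u$ with space--time translates of the front $U$. Since $U(x,ct-x\cdot e+s_0)$ is itself a pulsating traveling front of \eqref{m-system} (Definition~\ref{def}), we may and do assume $s_0=0$, so that the task is to prove $\bm u(t,x)\equiv U(x,ct-x\cdot e)$. We introduce
\[
s_*:=\inf\{s\in\mathbb R:\ \bm u\le U(x,ct-x\cdot e+s)\ \text{on}\ \mathbb R\times\mathbb R^N\},\qquad
z^*:=\sup\{s\in\mathbb R:\ U(x,ct-x\cdot e+s)\le\bm u\ \text{on}\ \mathbb R\times\mathbb R^N\}.
\]
By the hypothesis $U(x,ct-x\cdot e+\underline s)\le\bm u\le U(x,ct-x\cdot e+\overline s)$ together with the strict monotonicity $U_s\gg\bm 0$ (Theorem~\ref{Ex-PTW}), both quantities are finite and $\underline s\le z^*\le s_*\le\overline s$. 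It therefore suffices to show $s_*\le 0$ and $z^*\ge 0$, since then $0\le z^*\le s_*\le 0$ forces $s_*=z^*=0$ and hence $\bm u\equiv U(x,ct-x\cdot e)$. By the obvious symmetry (interchanging the upper and lower bounds, and using the subsolutions of Lemmas~\ref{sub}/\ref{sub-star} where the supersolution argument used Lemma~\ref{super}/\ref{super*}), it is enough to prove $s_*\le 0$.

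Assume to the contrary that $s_*>0$ and set $\bm w:=U(x,ct-x\cdot e+s_*)-\bm u\ge\bm 0$. First I would invoke the strong maximum principle for the cooperative system, exactly as in Step~2 of the proof of Theorem~\ref{uniqu}: either some component $w_{i_0}$ vanishes at an interior point, in which case $w_{i_0}\equiv 0$ on $\{t\le\hat t\}$ and hence, using $w_{i_0}(t-\tfrac{p\cdot e}{c},x)=w_{i_0}(t,x+p)$ for $p\in\mathcal L$, on all of $\mathbb R\times\mathbb R^N$; then $u_{i_0}(t,x)\equiv U_{i_0}(x,ct-x\cdot e+s_*)$, which contradicts the two-sided asymptotic bound on $\bm u$, because letting $\eta\to0$ in that bound and applying Theorem~\ref{Asy} forces $u_{i_0}(x,s)/\bigl(|s|^\tau e^{\lambda_c s}\phi^c_{i_0}(x)\bigr)\to\rho$ as $s=ct-x\cdot e\to-\infty$ (the remainder $D_\eta e^{(\lambda_c+\epsilon)s}$ is negligible since $\epsilon>0$), whereas the identity gives the limit $\rho e^{\lambda_c s_*}\ne\rho$. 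Hence $\bm w\gg\bm 0$ on $\mathbb R\times\mathbb R^N$.

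Now I would slide $U(x,ct-x\cdot e+s_*)$ back by a small $l>0$ and reach a contradiction with the minimality of $s_*$. Fix $\eta\in(0,s_*/2)$. On $\Omega^-_{-K}$ with $K$ large, the asymptotic hypothesis $\bm u\le U(x,ct-x\cdot e+\eta)+D_\eta e^{(\lambda_c+\epsilon)(ct-x\cdot e)}\bm\Phi_{\lambda_c+\epsilon}(x)$ (resp. its critical analogue) combined with Theorem~\ref{Asy} yields $\bm u\le U(x,ct-x\cdot e+s_*-l)$ for every $l\in(0,s_*/2]$: indeed $U(x,s+s_*-l)-U(x,s+\eta)$ is comparable to $|s|^\tau e^{\lambda_c s}\bm\Phi_{\lambda_c}(x)$ as $s\to-\infty$ (here $s_*-l>\eta$ is used) and thus dominates the purely exponential remainder $D_\eta e^{(\lambda_c+\epsilon)s}\bm\Phi_{\lambda_c+\epsilon}(x)$ — in the critical case it is the polynomial factor $|s|$ carried by the front that wins. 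On the compact strip $\Omega_{-K}^M$, periodicity in $x$ and $\bm w\gg\bm 0$ give $\bm w\ge\delta_0\bm 1$ there, while $U(x,\cdot+s_*)-U(x,\cdot+s_*-l)\le Cl\,\bm 1$, so $\bm u\le U(x,ct-x\cdot e+s_*-l)$ on $\Omega^-_M$ provided $Cl\le\delta_0$. Finally, choosing $M$ so large that $\bm u$ and $U(x,\cdot+s_*-l)$ both lie in $[(1-\varrho^*)\bm 1,\bm 1]$ on $\Omega^+_M$ and noting $\liminf_{s\to+\infty}\bigl(U(x,s+s_*-l)-\bm u\bigr)\ge\bm 0$, a comparison argument near $\bm 1$ of the type of Lemma~\ref{1-CP} (with $s^*=M$) propagates the inequality to $\Omega^+_M$ as well. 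Hence $\bm u\le U(x,ct-x\cdot e+s_*-l)$ everywhere, contradicting the definition of $s_*$; therefore $s_*\le 0$, and symmetrically $z^*\ge 0$, which completes the proof.

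The main obstacle I anticipate is the comparison step in the region where both solutions are close to $\bm 1$: Lemma~\ref{1-CP} as stated presumes its sub/supersolutions are periodic in $x$ in the moving frame, while here $\bm u$ is a priori only an entire solution, so one must either first establish that $\bm u$ inherits this structure (e.g. via the squeeze and the asymptotics) or upgrade Lemma~\ref{1-CP} to merely bounded uniformly continuous competitors. A secondary, more technical point is making the decay-rate comparison near $-\infty$ genuinely uniform in $x$ and across all small shifts $l$, especially keeping track of the $|s|$ prefactor in the critical case $c=c_+^0(e)$.
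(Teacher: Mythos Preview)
Your overall sliding strategy matches the paper's, and you correctly locate the main difficulty: $\bm u$ is only an entire solution, with no a priori periodicity in the moving frame. However, this difficulty bites earlier and harder than you indicate, and it undermines not just the $\Omega_M^+$ step but also the middle step.

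First, your derivation of $\bm w\gg\bm 0$ already uses the unjustified identity $w_{i_0}(t-\tfrac{p\cdot e}{c},x)=w_{i_0}(t,x+p)$: this holds for the pulsating front $U$ but not for $\bm u$, so you cannot propagate $w_{i_0}\equiv 0$ from $\{t\le\hat t\}$ to all of $\mathbb R\times\mathbb R^N$. More seriously, even granting $\bm w\gg\bm 0$ pointwise, the ``compact strip'' $\Omega_{-K}^M=\{-K\le ct-x\cdot e\le M\}$ is \emph{unbounded} in $(t,x)$; periodicity of $U(\cdot,s)$ alone does nothing for $\bm u$, so no uniform lower bound $\bm w\ge\delta_0\bm 1$ follows. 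This is the heart of the matter, and it cannot be fixed by merely upgrading Lemma~\ref{1-CP}.

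The paper circumvents all three occurrences of this issue by the same compactness-by-translation device. To obtain a uniform positive lower bound for $\bm w$ on a strip (your middle step, their Step~2), one assumes a minimizing sequence $(t_k,x_k)$ along which a component of $\bm w$ tends to $0$, writes $x_k=x_k'+x_k''$ with $x_k'\in\mathcal L$, $x_k''\in\overline{\mathcal D}$, and passes to a limit of the translated solutions $\bm u_k(t,x):=\bm u(t+t_k,x+x_k')$. The limit $\bm u_\infty$ is again an entire solution sandwiched by shifted fronts, and now the strong maximum principle applied to $\bm u_\infty$ forces equality of one component with $U_i$ on a half-space $\{t\le 0\}$, which contradicts the strict $\bar\eta/2$ gap already established on the far-left region (your $\Omega_{-K}^-$ step, their Step~1). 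The region near $\bm 1$ (their Step~3) is not handled by Lemma~\ref{1-CP} at all, but by a separate sliding in a scalar parameter $\theta$ along the linearized eigenfunction $\bm\Psi$, i.e.\ one studies $U(x,ct-x\cdot e+\eta_0)+\theta\bm\Psi(x)-\bm u(t,x)$ and drives $\theta$ to $0$, again using translation-and-limit to localize a touching point. Incorporating this translation/compactness mechanism is the missing idea in your proposal.
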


\begin{proof}
We only prove for the case $c=c_*(e)$ since the other one can be proved similarly.
Assume without loss of generality that $s_0=0$. Define
$$
\bar{\eta}=\inf\left\{\eta\ge0: \ U(x,c_*t-x\cdot e+\eta)\ge\bm u(t,x),
\ \forall\;(t,x)\in\mathbb R\times\mathbb R^N\right\}.
$$
Then $0\le\bar{\eta}\le\overline{s}$.
Assume that $\bar{\eta}>0$, next we argue by a contradiction, which shows that $\bar{\eta}=0$.

{\bf Step 1}. We claim that there exists $\underline{z}\in(-\infty,s_{\bar{\eta}/2}]$ such that
\begin{equation}\label{UV-uv}
U(x,c_*t-x\cdot e+\bar{\eta}/2)\ge\bm u(t,x),
\quad\forall\;(t,x)\in\{\mathbb R\times\mathbb R^N: c_*t-x\cdot e\le\underline{z}\}.
\end{equation}
Indeed, if this is not true, then there exists
$\{(t_n,x_n)\}_{n\in\mathbb N}$ such that
$$s_n:=c_*t_n-x_n\cdot e\to-\infty\ (n\to\infty),
\quad u_i(t_n,x_n)>U_i(x_n,s_n+\bar{\eta}/2)$$
for some $i\in I$. In view of Theorem \ref{Asy},
\begin{align*}
\limsup_{n\to\infty}\frac{U_i(x_n,s_n+\bar{\eta}/4)
+D_{\bar{\eta}/4}e^{\lambda_*s_n}[\phi_{i,*}(x_n)-e^{\epsilon_*s_n}\phi_{i,\epsilon_*}(x_n)]}
{U_i(x_n,s_n+\bar{\eta}/2)}<1,
\end{align*}
which together with the assumption shows that there exists $N$
such that for all $n\ge N$, $s_n=c_*t_n-x_n\cdot e\le s_{\bar{\eta}/4}$, and
\begin{align*}
u_i(t_n,x_n)&\le U_i(x_n,s_n+\bar{\eta}/4)+D_{\bar{\eta}/4}e^{\lambda_*s_n}
\left(\phi_{i,*}(x_n)-e^{\epsilon_*s_n}\phi_{i,\epsilon_*}(x_n)\right)
{\le U_i(x_n,s_n+\bar{\eta}/2)}
\end{align*}
which is a contradiction, and hence \eqref{UV-uv} holds.

{\bf Step 2}. We prove that
\begin{equation}\label{St-1}
U(x,c_*t-x\cdot e+\bar{\eta})\gg\bm u(t,x),
\quad\forall\;(t,x)\in\{\mathbb R\times\mathbb R^N:\ \underline{z}\le c_*t-x\cdot e\le z,
\ \forall\;z\ge\underline{z}\}.
\end{equation}
For any $z\ge\underline{z}$, assume to the contrary that there exists $i\in I$ such that
$$\mathop{\inf}\limits_{\underline{z}\le c_*t-x\cdot e\le z}
\{U_i(x,c_*t-x\cdot e+\bar{\eta})-u_i(t,x)\}=0.$$
Then there exists $\{(t_k,x_k)\}_{k\in\mathbb N}$ such that
$$\underline{z}\le s_k:=c_*t_k-x_k\cdot e\le z,\qquad
\mathop{\lim}\limits_{k\to\infty}\{U_i(x_k,c_*t_k-x_k\cdot e+\bar{\eta})-u_i(t_k,x_k)\}=0.$$
Let $x_k=x_k^\prime+x_k^{\prime\prime}$ with $x_k^\prime\in \mathcal L$ and $x_k^{\prime\prime}\in\overline{\mathcal D}$,
and assume (up to a subsequence) that $s_k\to s_\infty\in[\underline{z},z]$ and
$x_k^{\prime\prime}\to x_\infty\in\overline{\mathcal D}$ as $k\to\infty$.
Let
$$\bm u_k(t,x)=\bm u(t+t_k,x+x_k^\prime),$$
then $\bm u_k$ solves \eqref{m-system} for any $k\in\mathbb N$.
Up to an extraction of a subsequence, $\{\bm u_k\}_{k\in\mathbb{N}}$ converges
uniformly in any compact subset of $\mathbb R\times\mathbb R^N$
to a solution $\bm u_\infty\in[\bm 0,\bm 1]$ of \eqref{m-system}.
Noting that
$$\bm u_\infty(t,x)\le U(x,c_*t-x\cdot e+s_\infty+x_\infty\cdot e+\bar{\eta}),
\quad\forall\;(t,x)\in\mathbb R\times\mathbb R^N$$
by the definition of $\bar{\eta}$. In particular, $u_{i,\infty}(0,x_\infty)=\mathop{\lim}\limits_{k\to\infty}u_i(t_k,x_k)
=U_i(x_\infty,s_\infty+\bar{\eta})$.
Let
$$\bm\omega(t,x)=\bm u^{\bar{\eta}}(t,x)-\bm u_\infty(t,x),
\qquad\bm u^{\bar{\eta}}(t,x):=U(x,c_*t-x\cdot e+s_\infty+x_\infty\cdot e+\bar{\eta}).$$
Then $\bm\omega\ge\bm 0$, and $\omega_i(0,x_\infty)=0$.
Observe that
\begin{align*}
(\omega_i)_t-d_i\Delta\omega_i-q_i\cdot\nabla\omega_i
\ge\left(\int_0^1{\frac{\partial f_i}{\partial u_i}
\left(x,\tau\bm u^{\bar{\eta}}+(1-\tau)\bm u_\infty\right)d\tau}\right)\omega_i.
\end{align*}
It then follows from the maximum principle that
\begin{equation}\label{ui-smp}
u_{i,\infty}(t,x)=U_i(x,c_*t-x\cdot e+s_\infty+x_\infty\cdot e+\bar{\eta}),
\quad\forall\; (t,x)\in(-\infty,0]\times\mathbb R^N.
\end{equation}
On the other hand, it follows from \eqref{UV-uv} that
$$\bm u_k(t,x)\le U(x,c_*t-x\cdot e+s_k+x_k^{\prime\prime}\cdot e+\bar{\eta}/2)$$
for all $(t,x)\in\{\mathbb R\times\mathbb R^N: c_*t-x\cdot e\le\underline{z}-s_k-x_k^{\prime\prime}\cdot e\}$.
By passing limits, one obtains that
$$\bm u_\infty(t,x)\le U(x,c_*t-x\cdot e+s_\infty+x_\infty\cdot e+\bar{\eta}/2)$$
for all $(t,x)\in\{\mathbb R\times\mathbb R^N: c_*t-x\cdot e
\le\underline{z}-s_\infty-x_\infty\cdot e\}$,
which contradicts \eqref{ui-smp}. Hence \eqref{St-1} holds.

{\bf Step 3}. Noting from the assumptions that
$$\limsup_{\varsigma\to\infty}
\left\{\sup_{c_*t-x\cdot e\ge\varsigma}|\bm u(t,x)-\bm 1|\right\}=0.$$
Then there exists $\bar{z}>\underline{z}$ such that
$$(1-\varrho^*)\bm 1\le\bm u(t,x)\ll\bm 1,\qquad
(1-\varrho^*)\bm 1\le U(x,c_*t-x\cdot e)\ll\bm 1$$
for any $(t,x)\in\{\mathbb R\times\mathbb R^N: c_*t-x\cdot e\ge\bar{z}\}$,
where $\varrho^*=\min\{1,\min_{i\in I}\varrho_i\}$ with $\varrho_i$ given by \eqref{varrho}.
In view of \eqref{St-1}, there exists $\eta_0\in[\frac{\bar{\eta}}{2},\bar{\eta})$ such that
\begin{equation}\label{St-2}
U(x,c_*t-x\cdot e+\eta_0)\ge\bm u(t,x),
\quad\forall\;(t,x)\in\{\mathbb R\times\mathbb R^N: \underline{z}\le c_*t-x\cdot e\le\bar{z}\}.
\end{equation}
Next we prove that
\begin{equation}\label{infinities}
U(x,c_*t-x\cdot e+\eta_0)\ge\bm u(t,x),\quad
\forall\;(t,x)\in\{\mathbb R\times\mathbb R^N: c_*t-x\cdot e\ge\bar{z}\}.
\end{equation}
Let
\begin{align*}
\bm u^\theta(t,x)=U(x,c_*t-x\cdot e+\eta_0)+\theta\bm{\Psi}(x)-\bm u(t,x),
\end{align*}
and define
$$\bar{\theta}:=\inf\left\{\theta\ge0\ |\ \bm u^\theta(t,x)\ge\bm 0,
\ \forall\;(t,x)\in\{\mathbb R\times\mathbb R^N: c_*t-x\cdot e\ge\bar{z}\}\right\}.$$
Observe that $\bar{\theta}\ge0$ is well defined, since
$\min_{i\in I}\{\min_{x\in\mathbb R^N}\psi_i(x)\}>0$.
Suppose that $\bar{\theta}>0$, and
$\mathop{\inf}\limits_{c_*t-x\cdot e\ge\bar{z}}u_i^{\bar{\theta}}(t,x)=0$ for some $i\in I$.
Then there exists $\{(t_k,x_k)\}_{k\in\mathbb N}$ such that
$$s_k:=c_*t_k-x_k\cdot e\ge\bar{z},\quad \mathop{\lim}\limits_{k\to\infty}u_i^{\bar{\theta}}(t_k,x_k)=0.$$
Noting that $\mathop{\lim}\limits_{\varsigma\to\infty}\mathop{\inf}
\limits_{c_*t-x\cdot e\ge\varsigma}u_i^{\bar{\theta}}(t,x)
\ge\bar{\theta}\mathop{\min}\limits_{x\in\mathbb R^N}\psi_i(x)>0$,
the sequence $\{s_k\}_{k\in\mathbb{N}}$ is bounded from above.
Writing $x_k=x_k^{\prime}+x_k^{\prime\prime}$ with $x_k^{\prime}\in \mathcal L$ and $x_k^{\prime\prime}\in\overline{\mathcal D}$,
and assume up to an extraction of a subsequence that
$s_k\to s_\infty\ge\bar{z}$ and $x_k^{\prime\prime}\to x_\infty\in\overline{\mathcal D}$
as $k\to\infty$.
Let
\begin{align*}
\bm u_k^{\bar{\theta}}(t,x)&=\bm u^{\bar{\theta}}(t+t_k,x+x_k^{\prime})
=U(x,c_*t-x\cdot e+s_k+x_k^{\prime\prime}\cdot e+\eta_0)
+\bar{\theta}\bm{\Psi}(x)-\bm u(t+t_k,x+x_k^{\prime}).
\end{align*}
Observing that $\bm u_k^{\bar{\theta}}$ is uniformly bounded and nonnegative in
$\{\mathbb R\times\mathbb R^N:\ c_*t-x\cdot e\ge\bar{z}-s_k-x_k^{\prime\prime}\cdot e\}$.
Up to an extraction of a subsequence,
$\{\bm u(\cdot+t_k,\cdot+x_k^{\prime})\}_{k\in\mathbb{N}}$ converges to a solution $\bm u_\infty$
of \eqref{m-system} and $\{\bm u^{\bar{\theta}}_k\}_{k\in\mathbb{N}}$ converges to a function
$\bm u_\infty^{\bar{\theta}}$,
uniformly in any compact subset of $\mathbb R\times\mathbb R^N$. Moreover,
\begin{align*}
\bm u_\infty^{\bar{\theta}}(t,x)&
=U(x,c_*t-x\cdot e+s_\infty+x_\infty\cdot e+\eta_0)+\bar{\theta}\bm{\Psi}(x)-\bm u_\infty(t,x),
\end{align*}
and $\bm u_\infty^{\bar{\theta}}(t,x)\ge\bm 0$ for any
$(t,x)\in\{\mathbb R\times\mathbb R^N: c_*t-x\cdot e\ge\bar{z}-s_\infty-x_\infty\cdot e\}$.
In particular,
$u_{i,\infty}^{\bar{\theta}}(0,x_\infty)
=\mathop{\lim}\limits_{k\to\infty}u_i^{\bar{\theta}}(t_k,x_k)=0$.
In view of \eqref{UV-uv} and \eqref{St-2},
$$\bm u_k^{\bar{\theta}}(t,x)\ge\bar{\theta}\mathop{\min}\limits_{x}\bm{\Psi}(x),
\quad\forall\;(t,x)\in\{\mathbb R\times\mathbb R^N: c_*t-x\cdot e\le\bar{z}-s_k-x_k^{\prime\prime}\cdot e\},$$
and hence $\bm u_\infty^{\bar{\theta}}(t,x)\gg\bm 0$ for any
$(t,x)\in\{\mathbb R\times\mathbb R^N: c_*t-x\cdot e\le\bar{z}-s_\infty-x_\infty\cdot e\}$
by passing the limits.
Therefore $s_\infty>\bar{z}$ since $u_{i,\infty}^{\bar{\theta}}(0,x_\infty)=0$.
Furthermore, it is easy to see that
$$(1-\varrho^*)\bm 1\le\bm u_\infty(t,x)\le\bm 1,
\quad (1-\varrho^*)\bm 1\le U(x,c_*t-x\cdot e+s_\infty+x_\infty\cdot e+\eta_0)\le\bm 1$$
for any $(t,x)\in\{\mathbb R\times\mathbb R^N: c_*t-x\cdot e\ge\bar{z}-s_\infty-x_\infty\cdot e\}$.
By a direct calculation, we have
\begin{align*}
&\quad\frac{\partial u_{i,\infty}^{\bar{\theta}}(t,x)}{\partial t}
-d_i(x)\Delta u_{i,\infty}^{\bar{\theta}}-q_i(x)\cdot\nabla u_{i,\infty}^{\bar{\theta}}\\
&=f_i(x,U)-f_i(x,\bm u_\infty)+\bar{\theta}
\left(\sum_{k=1}^{m}\frac{\partial f_i}{\partial u_k}(x,\bm 1)\psi_k-\mu^-\psi_i\right)\\
&\ge\sum_{k=1}^{m}{
\left(\int_0^1{\frac{\partial f_i}{\partial u_k}(x,\tau U+(1-\tau)\bm u_\infty)d\tau}\right)
(u_{k,\infty}^{\bar{\theta}}-\bar{\theta}\psi_k)}
+\bar{\theta}
\left(\sum_{k=1}^{m}\frac{\partial f_i}{\partial u_k}(x,\bm 1)\psi_k-\mu^-\psi_i\right)\\
&\ge\left(\int_0^1{\frac{\partial f_i}{\partial u_i}(x,\tau U+(1-\tau)\bm u_\infty)d\tau}\right)
u_{i,\infty}^{\bar{\theta}}\\
&\qquad+\bar{\theta}\left\{\sum_{k=1}^{m}
{\left[\int_0^1\left(\frac{\partial f_i}{\partial u_k}(x,\bm 1)
-\frac{\partial f_i}{\partial u_k}(x,\tau U+(1-\tau)\bm u_\infty)\right)d\tau\right]\psi_k}
-\mu^-\psi_i\right\}\\
&\ge\left(\int_0^1{\frac{\partial f_i}{\partial u_i}(x,\tau U+(1-\tau)\bm u_\infty)d\tau}\right)
u_{i,\infty}^{\bar{\theta}}
\end{align*}
for any $(t,x)\in\{\mathbb R\times\mathbb R^N: c_*t-x\cdot e>\bar{z}-s_\infty-x_\infty\cdot e\}$.
The maximum principle then implies that
$u_{i,\infty}^{\bar{\theta}}\equiv0$ for any
$(t,x)\in\{\mathbb R\times\mathbb R^N:\
c_*t-x\cdot e\ge\bar{z}-s_\infty-x_\infty\cdot e\}\cap\{t\le0\}$,
which contradicts the fact that $\bm u_\infty^{\bar{\theta}}(t,x)>\bm 0$
for any $(t,x)\in\{\mathbb R\times\mathbb R^N:\ c_*t-x\cdot e=\bar{z}-s_\infty-x_\infty\cdot e\}$.
Therefore $\bar{\theta}=0$, and hence \eqref{infinities} holds.

To this end, we conclude from \eqref{UV-uv}, \eqref{St-2} and \eqref{infinities} that
$$U(x,c_*t-x\cdot e+\eta_0)\ge\bm u(t,x),\quad\forall\;(t,x)\in\mathbb R\times\mathbb R^N.$$
Recall that $\eta_0\in[\frac{\bar{\eta}}{2},\bar{\eta})$, this contradicts the definition of $\bar{\eta}$. Therefore $\bar{\eta}=0$, and consequently,
$$U(x,c_*t-x\cdot e)\ge\bm u(t,x),\quad\forall\;(t,x)\in\mathbb R\times\mathbb R^N.$$
Similarly, if we define
$$\underline{\eta}=\inf\left\{\eta\ge0:\ \bm u(t,x)\ge U(x,c_*t-x\cdot e-\eta),
\ \forall\;(t,x)\in\mathbb R\times\mathbb R^N\right\},$$
then $0\le\bar{\eta}\le-\underline{s}$.
One can prove by using analogous arguments as above that $\underline{\eta}=0$.
Therefore $\bm u(t,x)\ge U(x,c_*t-x\cdot e)$ for any $(t,x)\in\mathbb R\times\mathbb R^N$.
The proof is complete.
\end{proof}

\begin{theorem}
Assume (H1)-(H8).
Let $U(x,ct-x\cdot e)$ be a pulsating traveling front of \eqref{m-system} with $c\ge c_+^0$,
and $\bm u(t,x;\bm u_0)$ be a solution of \eqref{m-system} with
initial value $\bm{u}(0,\cdot;\bm{u}_0)=\bm{u}_0\in Y_+$.
Assume that $\bm 0<\bm u_0<\bm 1$ satisfies \eqref{u0v0} and \eqref{u0v0-behavior}.
Then there exists $s_0\in\mathbb R$ such that
\begin{equation*}
\lim_{t\to\infty}\sup_{x\in\mathbb R^N}|\bm u(t,x;\bm u_0)-U(x,ct-x\cdot e+s_0)|=0.
\end{equation*}
\end{theorem}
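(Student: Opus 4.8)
The plan is to treat the super-critical regime $c>c_+^0(e)$ and the critical regime $c=c_+^0(e)$ in parallel, relying on the sub- and supersolutions of Lemma \ref{ss-c} (resp. Lemma \ref{ss-c-*}), the trapping of Lemma \ref{sss} (resp. Lemma \ref{sss-*}), the tail estimates of Lemma \ref{c-s-s} (resp. Lemma \ref{s-eta-D-eta}), and finally the Liouville-type Lemma \ref{louville}. First I would fix the shift $s_0\in\mathbb R$ furnished by Theorem \ref{Asy} together with \eqref{u0v0-behavior} (it is uniquely determined by the constant $k$, through \eqref{s0-c}--\eqref{s0-*}), and record that, by Lemma \ref{sss} (resp. Lemma \ref{sss-*}), there are $\sigma\ge\sigma_c$ and $t_c>0$ with
\[
\bm u^-_\sigma(t,x,s_0)\le\bm u(t,x;\bm u_0)\le\bm u^+_\sigma(t,x,s_0),\qquad (t,x)\in[t_c,\infty)\times\mathbb R^N,
\]
together with the companion two-sided bounds of Lemma \ref{c-s-s} (resp. Lemma \ref{s-eta-D-eta}) on $\{ct-x\cdot e\le s_\eta\}$ for every $\eta>0$. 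Since $U$ is $L$-periodic in $x$, nondecreasing in $s$, and globally Lipschitz in $s$ (the interior parabolic estimates \eqref{inter-estimate} give $|\partial_s U|$ bounded), these two ingredients already confine $\bm u(t,\cdot;\bm u_0)$ to an $O(\sigma_c)$ neighbourhood of $U(x,ct-x\cdot e+s_0)$ for all large $t$; the whole difficulty is to shrink this neighbourhood to a point.

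The argument is by contradiction. Suppose $\sup_{x}|\bm u(t,x;\bm u_0)-U(x,ct-x\cdot e+s_0)|\not\to0$ as $t\to\infty$; then there are $\varepsilon_0>0$, $t_n\to+\infty$ and $x_n\in\mathbb R^N$ with $|\bm u(t_n,x_n;\bm u_0)-U(x_n,s_n+s_0)|\ge\varepsilon_0$, where $s_n:=ct_n-x_n\cdot e$. Write $x_n=x_n'+x_n''$ with $x_n'\in\mathcal L$ and $x_n''\in\overline{\mathcal D}$, and set $\bm u_n(t,x)=\bm u(t+t_n,x+x_n';\bm u_0)$, which again solves \eqref{m-system} by the $L$-periodicity of $D$, $q$ and $\bm F$. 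Passing to a subsequence, $x_n''\to x_\infty''\in\overline{\mathcal D}$ and $s_n$ either converges or tends to $\pm\infty$. If $s_n\to+\infty$, the lower bound $\bm u^-_\sigma$ forces $\bm u(t_n,x_n;\bm u_0)\to\bm 1$ while also $U(x_n,s_n+s_0)\to\bm 1$ (both uniformly, using periodicity in the spatial variable), contradicting the gap; if $s_n\to-\infty$, the tail estimates of Lemma \ref{c-s-s} (resp. Lemma \ref{s-eta-D-eta}) together with $U(x,\cdot)\to\bm 0$ and the exponential weights tending to $0$ show $|\bm u(t_n,x_n;\bm u_0)-U(x_n,s_n+s_0)|\to0$, again a contradiction.

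It remains to treat the case $s_n\to s_*\in\mathbb R$. By $\bm 0\le\bm u_n\le\bm 1$ and standard parabolic estimates, a subsequence of $\{\bm u_n\}$ converges in $C^{1,2}_{loc}(\mathbb R\times\mathbb R^N)$ to an entire solution $\bm u_\infty$ of \eqref{m-system}. Using the $x$-periodicity of $U$, $\bm\Phi_{\lambda_c}$, $\bm\xi$ and $\bm\Psi$, both the trapping and the $\eta$-tail bounds translate under $x\mapsto x+x_n'$ and pass to the limit: with $\zeta_\infty:=\lim_n(ct_n-x_n'\cdot e+s_0)$ one obtains $U(x,ct-x\cdot e+\zeta_\infty-\sigma)\le\bm u_\infty(t,x)\le U(x,ct-x\cdot e+\zeta_\infty+\sigma)$ for all $(t,x)$, together with, for each $\eta>0$, constants $D_\eta'>0$ and $s_\eta'\in\mathbb R$ for which the tail bounds of Lemma \ref{louville} hold about the centre $\zeta_\infty$ (the exponential weights survive with finite constants because $\zeta_\infty$ is finite). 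Since \eqref{m-system} is autonomous in $t$, the time-translate $\widetilde{\bm u}_\infty(t,x):=\bm u_\infty(t-(\zeta_\infty-s_0)/c,x)$ is still an entire solution and satisfies, verbatim, the hypotheses of Lemma \ref{louville} with $\underline s=-\sigma\le0\le\sigma=\overline s$; hence $\widetilde{\bm u}_\infty(t,x)=U(x,ct-x\cdot e+s_0)$, i.e. $\bm u_\infty(t,x)=U(x,ct-x\cdot e+\zeta_\infty)$. Evaluating at $t=0$, $x=x_\infty''$ and using $\bm u_n(0,x_n'')=\bm u(t_n,x_n;\bm u_0)$ together with $U(x_n,s_n+s_0)=U(x_n'',s_n+s_0)\to U(x_\infty'',\zeta_\infty-x_\infty''\cdot e)=\bm u_\infty(0,x_\infty'')$, we contradict $|\bm u(t_n,x_n;\bm u_0)-U(x_n,s_n+s_0)|\ge\varepsilon_0$. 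This establishes the uniform convergence, and hence the theorem; applied with $c=c_+^0(e)$ it yields in particular the uniqueness and global stability of the critical pulsating front.

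I expect the main obstacle to lie in the last paragraph: one must translate in the co-moving frame so that the front position stays bounded, then check that the exponential tail weights $e^{(\lambda_c+\epsilon)(ct-x\cdot e)}\bm\Phi_{\lambda_c+\epsilon}(x)$ — respectively $e^{\lambda_*(ct-x\cdot e)}\bigl(\bm\Phi_{\lambda_*}(x)-e^{\epsilon_*(ct-x\cdot e)}\bm\Phi_{\lambda_*+\epsilon_*}(x)\bigr)$ in the critical case — pass to the limit with the correct finite constants, and verify that the three sub-cases $s_n\to+\infty$, $s_n\to-\infty$, $s_n\to s_*$ are exhaustive and each genuinely contradicts the gap. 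Confirming that $\bm u_\infty$ (after the time-shift) meets \emph{all} hypotheses of Lemma \ref{louville} — the ordered sandwich between two shifts of the \emph{same} front and the $\eta$-dependent tail control — is the delicate point, and is precisely where the critical and super-critical cases must be kept apart because of the extra polynomial factor $|s|$.
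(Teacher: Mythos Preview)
Your proposal is correct and follows essentially the same strategy as the paper: argue by contradiction, extract a sequence $(t_n,x_n)$ violating the convergence, split into the three cases $s_n\to\pm\infty$ and $s_n$ bounded, and in the bounded case pass to an entire limit solution to which Lemma~\ref{louville} applies. The only cosmetic difference is that the paper shifts by $t_n'=t_n-t_\infty$ with $t_\infty=(s_\infty+x_\infty\cdot e)/c$, so that $ct_n'-x_n'\cdot e\to 0$ and the limit solution is already centred correctly for a direct application of Lemma~\ref{louville}; your choice of shifting by $t_n$ leaves a residual offset $\zeta_\infty$ which you then remove by the extra time-translate $\widetilde{\bm u}_\infty$ --- this is equivalent but slightly less economical.
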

\begin{proof}
We only prove the case $c=c_*$ since the other one can be proved similarly.
Let $s_0\in\mathbb R$ be such that
\begin{align*}
\limsup_{\varsigma\to-\infty}\left\{\sup_{\substack{x\in\mathbb R^N\\-x\cdot e\le\varsigma}}
\left|\frac{U(x,-x\cdot e+s_0)}{k|x\cdot e|e^{-\lambda_*(x\cdot e)}
\bm{\Phi}_{\lambda_*}(x)}-\bm 1\right|\right\}=0.
\end{align*}
Assume without loss of generality that $s_0=0$.
If the statement is not true, then there exist $\alpha>0$ and a sequence
$\{(t_n,x_n)\}_{n\in\mathbb N}$ such that
\begin{equation}\label{alpha}
t_n\to+\infty\ (n\to\infty),\quad
\lim_{n\to\infty}|u_i(t_n,x_n;\bm u_0)-U_i(x_n,c_*t_n-x_n\cdot e)|\ge\alpha
\end{equation}
for some $i\in I$. Denote $s_n=c_*t_n-x_n\cdot e$. If $\{s_n\}_{n\in\mathbb{N}}$ is bounded,
we write $x_n=x_n^{\prime}+x_n^{\prime\prime}$ with $x_n^{\prime}\in \mathcal L$ and
$x_n^{\prime\prime}\in\overline{\mathcal D}$.
Assume up to a subsequence that
$s_n\to s_\infty$ and $x_n^{\prime\prime}\to x_\infty\in\overline{\mathcal D}$ as $n\to\infty$,
and set $t_n^\prime=t_n-t_\infty$, where $t_\infty:=\frac{s_\infty+x_\infty\cdot e}{c_*}$.
Let
$$\bm u_n(t,x)=\bm u(t+t_n^\prime,x+x_n^\prime;\bm u_0),$$
then $\bm u_n$ is a solution of \eqref{initial-P} in
$(t,x)\in(-t_n^\prime,\infty)\times\mathbb R^N$ for each $n$.
Up to a subsequence, we assume that $\{\bm u_n\}_{n\in\mathbb{N}}$ converges to a solution $\bm u_\infty$ of \eqref{initial-P} uniformly in any compact subset of $\mathbb R\times\mathbb R^N$.
In view of Lemma \ref{sss-*},
\begin{align*}
U(x,s_n^\prime-\sigma_*)-\delta_*|\bm\xi_*|e^{-\beta(t+t_n^\prime)}\bm 1
\le\bm u_n(t,x)\le U(x,s_n^\prime+\sigma_*)+\delta_*|\bm\xi_*|e^{-\beta(t+t_n^\prime)}\bm 1
\end{align*}
for any $(t,x)\in[t_*-t_n^\prime,+\infty)\times\mathbb R^N$, where $s_n^\prime:=c_*(t+t_n^\prime)-(x+x_n^\prime)\cdot e$.
By passing the limits and noting that $c_*t_n^\prime-x_n^\prime\cdot e\to 0$ and $t_n^\prime\to+\infty$ as $n\to\infty$,
\begin{align*}
U(x,c_*t-x\cdot e-\sigma_*)&\le\bm u_\infty(t,x)\le U(x,c_*t-x\cdot e+\sigma_*),
\quad\forall\;(t,x)\in\mathbb R\times\mathbb R^N.
\end{align*}
On the other hand, it follows from Lemma \ref{s-eta-D-eta} that for any $\eta>0$, there exist $s_\eta\in\mathbb R$ and $D_\eta>0$ such that
\begin{align*}
&\quad U(x,s_n^\prime-\eta)-D_\eta e^{\lambda_*s_n^\prime}
\left(\bm{\Phi}_{\lambda_*}(x)-e^{\epsilon_*s_n^\prime}\bm{\Phi}_{\lambda_*+\epsilon_*}(x)\right)\\
&\le\bm u_n(t,x)\\
&\le U(x,s_n^\prime+\eta)+D_\eta e^{\lambda_*s_n^\prime}
\left(\bm{\Phi}_{\lambda_*}(x)-e^{\epsilon_*s_n^\prime}\bm{\Phi}_{\lambda_*+\epsilon_*}(x)\right)
\end{align*}
for any $(t,x)\in\{\mathbb R\times\mathbb R^N: \ t\ge -t_n^\prime,\ s_n^\prime\le s_\eta\}$.
By passing the limits,
\begin{align*}
&\quad U(x,c_*t-x\cdot e-\eta)-D_\eta e^{\lambda_*(c_*t-x\cdot e)}
\left(\bm{\Phi}_{\lambda_*}(x)-e^{\epsilon_*(c_*t-x\cdot e)}
\bm{\Phi}_{\lambda_*+\epsilon_*}(x)\right)\\
&\le\bm u_\infty(t,x)\\
&\le U(x,c_*t-x\cdot e+\eta)+D_\eta e^{\lambda_*(c_*t-x\cdot e)}
\left(\bm{\Phi}_{\lambda_*}(x)-e^{\epsilon_*(c_*t-x\cdot e)}
\bm{\Phi}_{\lambda_*+\epsilon_*}(x)\right)
\end{align*}
for any $(t,x)\in\{\mathbb R\times\mathbb R^N:\ c_*t-x\cdot e\le s_\eta\}$.
It then follows from Lemma \ref{louville} that
$$\bm u_\infty(t,x)=U(x,c_*t-x\cdot e),\quad
\forall\;(t,x)\in\mathbb R\times\mathbb R^N.$$
In particular,
$u_{i,\infty}(t_\infty,x_\infty)=U_i(x_\infty,c_*t_\infty-x_\infty\cdot e)$.
Noting that
\begin{align*}
u_{i,\infty}(t_\infty,x_\infty)&=\lim_{n\to\infty}u_i(t_n,x_n;\bm u_0),\\
U_i(x_\infty,c_*t_\infty-x_\infty\cdot e)&=\lim_{n\to\infty}U_i(x_n,c_*t_n-x_n\cdot e),
\end{align*}
which contradicts \eqref{alpha}. Now if $s_n\to-\infty$ as $n\to\infty$,
up to an extraction of a subsequence, Lemma \ref{sss-*} yields that
$\lim_{n\to\infty}\bm u(t_n,x_n;\bm u_0)=\lim_{n\to\infty}U(x_n,c_*t_n-x_n\cdot e)=\bm 0$,
and if $s_n\to+\infty$ as $n\to\infty$, then
$\lim_{n\to\infty}\bm u(t_n,x_n;\bm u_0)=\lim_{n\to\infty}U(x_n,c_*t_n-x_n\cdot e)=\bm 1$,
both contradict \eqref{alpha}. Hence the statement is true.
The proof is complete.
\end{proof}


\section*{Acknowledgments}

The authors would like to express great gratitude to Prof. Wenxian Shen (Auburn University) for her helpful comments. Research of L.-L. Du was partially supported by  NSF of China (No. 12201066) and the Fundamental Research Funds for  the Central Universities, CHD (No. 300102124202).  Research of W.-T. Li was partially supported by NSF of China (No. 12271226), NSF of Gansu Province of China (21JR7RA537) and the Fundamental Research Funds for the Central Universities (lzujbky-2022-sp07).


\end{document}